\numberwithin{equation}{section}
\newtheorem{theorem}{Theorem}[section]
\newtheorem{corollary}[theorem]{Corollary}
\newtheorem{proposition}[theorem]{Proposition}
\newtheorem{lemma}[theorem]{Lemma}
\newtheorem{claim}[theorem]{Claim}
\newtheorem{definition}[theorem]{Definition}
\theoremstyle{definition}
\newtheorem{remark}[theorem]{Remark}
\newcommand{\p}[1]{\left(#1 \right)}
\newcommand{\dotp}[1]{\langle #1 \rangle}
\newcommand{\ones}{\mathbf{1}}
\newcommand{\dd}{\mathrm{d}}
\renewcommand{\div}{\mathrm{div}}
\newcommand{\Var}{\mathrm{Var}}
\newcommand{\vol}{\mathrm{vol}}
\DeclareMathOperator*{\supp}{supp}
\DeclareMathOperator*{\dist}{dist}
\newcommand{\B}{\mathrm{B}}
\newcommand{\C}{\mathbb{C}}
\newcommand{\Cf}{\mathrm{C}}
\newcommand{\E}{\mathbf{E}}
\renewcommand{\H}{\mathrm{H}}
\renewcommand{\L}{\mathrm{L}}
\newcommand{\N}{\mathbb{N}}
\renewcommand{\P}{\mathbf{P}}
\newcommand{\Q}{\mathbb{Q}}
\newcommand{\R}{\mathbb{R}}
\newcommand{\T}{\mathbb{T}}
\newcommand{\U}{\mathbf{U}}
\newcommand{\W}{\mathrm{W}}
\newcommand{\Z}{\mathbb{Z}}
\newcommand{\cC}{\mathscr{C}}
\newcommand{\cD}{\mathcal{D}}
\newcommand{\cF}{\mathcal{F}}
\newcommand{\cH}{\mathcal{H}}
\newcommand{\cL}{\mathcal{L}}
\newcommand{\cO}{\mathcal{O}}
\newcommand{\cP}{\mathcal{P}}
\newcommand{\cX}{\mathcal{X}}
\newcommand{\kbf}{\mathbf{k}}
\newcommand{\jbf}{\mathbf{j}}
\newcommand{\mbf}{\mathbf{m}}
\renewcommand{\Re}{\operatorname{Re}}
\renewcommand{\Im}{\operatorname{Im}}
\newcommand{\eps}{\varepsilon}
\renewcommand{\phi}{\varphi}
\renewcommand{\leq}{\leqslant}
\renewcommand{\geq}{\geqslant}
\newcommand{\tr}{\operatorname{tr}}
\author{Yann Chaubet}
\address{Laboratoire de Math\'ematiques Jean Leray, Nantes Universit\'e, UMR CNRS 6629, 2 rue de la Houssini\`ere, 44322 Nantes Cedex 03, France}
\email{yann.chaubet@univ-nantes.fr}
\author{Vincent Divol}
\address{Center for research in economics and Statistics, UMR CNRS 9194, 5 Avenue Henry Le Chatelier, 91120 Palaiseau, France}
\email{vincent.divol@ensae.fr}
\begin{document}

\title{Minimax spectral estimation of  weighted Laplace operators}

\vspace{-0.7cm}
\begin{abstract}
Given  $n$ i.i.d.~observations,  
we study the problem of estimating the spectrum of weighted Laplace operators of the form $\Delta_f=\Delta + \alpha \nabla \log f\cdot \nabla$, where $f$ is a positive probability density on a known compact $d$-dimensional  manifold without boundary and $\alpha\in \R$ is a hyperparameter. These operators arise as continuum limits of graph Laplacian matrices and provide valuable geometric information on the underlying data distribution. We establish the exact minimax rates of estimation for this problem, by exhibiting two different rates of convergence for eigenfunctions and eigenvalues. When $f$ belongs to a H\"older-Zygmund class $\cC^s$ of regularity $s\geq 2$, the eigenfunctions  can be estimated with respect to the $\L^q$-norm ($q\geq 1$) via plug-in methods at the minimax rate $n^{-\frac{s+1}{2s+d}}$ for $d\geq 3$ (with different rates for $d\leq 2$). Moreover,  eigenvalues can be estimated  at the minimax rate $n^{-\frac{4s}{4s+d}}+n^{-\frac 12}$. In the regime  $s>\frac d4$, we further show that asymptotically efficient estimators exist. 

 We also present a general framework for estimating nonlinear functionals over H\"older-Zygmund spaces, with potential applications to a broad class of statistical problems.
\end{abstract}
\maketitle

\setcounter{tocdepth}{1}

\tableofcontents
\setlength{\footskip}{30pt}

\section{Introduction}

\subsection{Spectral methods in data science}
Over the past twenty years, diffusion-based approaches have achieved numerous successes in statistics and machine learning. For example, in sampling theory, discretizations of diffusion processes are used to design Markov chains with a target stationary distribution, see e.g., \cite{dalalyan2012sparse, dalalyan2017theoretical, durmus2017nonasymptotic}. In generative modeling, diffusion-based models learn a diffusion process between Gaussian noise and the (unknown) target distribution by training a neural network, see \cite{yang2023diffusion} for a survey. Diffusion processes are also employed to construct \emph{spectral representations} of datasets, which are routinely used  for dimension reduction \cite{roweis2000nonlinear,belkin2003laplacian,coifman2006diffusion},
 as an orthonormal basis in a regression task \cite{chun2016eigenvector, hacquard2022topologically}, or for clustering with the spectral clustering method \cite{von2007tutorial}.

Given a set of $n$ i.i.d.~observations $X_1,\dots,X_n$ following some distribution $P$, these spectral representations are obtained from the top eigenvectors of the generator $L_n$ of a discrete random walk over the set of observations, $L_n$ often being referred to as the graph Laplacian matrix of the walk. These eigenvectors are known to be related to the long-term behavior of this random walk, and therefore to non-local aspects of the geometry underlying the data distribution. The random walk is defined locally, for instance, by jumping randomly from one data point $X_i$ to another point $X_j$ found within a small neighborhood of $X_i$ of fixed radius. Under appropriate normalization, this random walk formally converges as $n\to\infty$ to a continuous diffusion process whose generator is a weighted Laplace operator  of the form
\begin{equation}\label{eq:def_Delta}
\Delta_{f} = \Delta + \alpha A_{V}\qquad \text{with}\qquad A_{V}=\nabla V\cdot \nabla,
\end{equation} 
where $\Delta$ is the Laplace operator, $f=e^{V}$ is the density of $P$, and $\alpha\in \mathbb{R}$ is a parameter depending on the exact definition of $L_n$. The spectrum of such an operator provides valuable information about the geometry of the underlying metric measure space. For instance, when $P$ is the uniform distribution on a $d$-dimensional Riemannian manifold $(M,g)$, the connected components of $M$, its volume, and its total scalar curvature can all be inferred from the spectrum of $\Delta_{f}$ which in this case is equivalent to the (unweighted) Laplace operator \cite{berger1971spectre}. When $\alpha=d/2-1$, the operator $\Delta_{f}$ is actually related to the Laplace operator on the manifold $M$ endowed with the conformal metric $f g$, whose spectrum contains information about both the sampling density and the intrinsic geometry of the support $M$ of the data distribution. Let us also mention that $\Delta_f$ is conjugated in an appropriate weighted space to the so-called Witten Laplacian \cite{witten1982supersymmetry}, whose spectrum in the limit $\alpha \to \infty$ provides rich topological informations about critical points of $f$, see e.g. \cite{helffer1985puits, nier2005hypoelliptic, dang2021pollicott, le2021small}.

The effectiveness of diffusion methods based on  graph Laplacian matrices $L_n$ has motivated researchers to investigate their theoretical properties, particularly regarding their convergence to the limit generator $\Delta_{f}$. We give below in \Cref{sec:related} a more detailed account of the research on the topic. We begin by discussing a recent work by  García Trillos, Li and Venkatraman \cite{trillos2025minimax} which is the current state of the art. In this paper, the authors study the rate of convergence of graph Laplacian eigenfunctions with respect to the $\H^1$-norm (instead of the classical $\L^2$-norm).  Under a $\Cf^{2+\alpha}$ smoothness assumption for $\alpha>0$, they show that the minimax rate of estimation of a simple eigenfunction with respect to an empirical $\H^1$-norm is up to logarithmic factors $\cO(n^{-\frac{2}{4+d}})$. Note that this rate is exactly the minimax rate for estimating  a $\Cf^2$ density with respect to the $\L^2$-loss. The same rate of estimation holds for the empirical $\L^2$-loss and for eigenvalue estimation. 

This paper raises several questions that we address in this work:
\begin{enumerate}
    \item Can minimax rates of convergence for eigenfunction estimation be derived for more natural norms such as the $\L^2$-norm or the $\L^q$-norm for $q\geq 2$? Can eigenfunctions still be estimated in the presence of eigenvalue multiplicities?
    \item Is the rate $\cO(n^{-\frac{2}{4+d}})$ minimax for the problem of eigenvalue estimation? If not, what is the minimax rate of convergence?
    \item If more regularity is assumed, i.e., if we assume that $f\in \Cf^s$ for $s\geq 2$, can this regularity be leveraged to obtain faster rates of estimation?
\end{enumerate}
 We answer these questions from a minimax perspective by assuming knowledge of the manifold $M$. 
First, we show that the minimax rate of estimation for the eigenspaces of $\Delta_f$  is of order 
 \[ \cO(n^{-\frac{s+1}{2s+d}})\] 
 (for $d\geq 3$) if the loss function is the $\L^q$-angle between subspaces. The rates become parametric up to logarithmic factors  for $d\leq 2$. These rates of estimation coincide with the minimax rate of estimation of a density with respect to the $\H^{-1}$-norm, or, equivalently, to the Wasserstein distance, see \cite{divol2022measure, niles2022minimax, divol2021short}. This result follows from a stability result for the eigenspaces of $\Delta_f$ under perturbation of $f$.

Second, we show that the minimax rate of estimation for eigenvalue estimation is
 \[ \cO(n^{-\frac{4s}{4s+d}}+n^{-\frac 12}).\]
This rate is reminiscent of minimax rates of estimation in functional estimation problems, e.g., for the estimation of the integral of the square of a density, see \cite{bickel1988estimating,birge1995estimation}. Actually, we show that debiaising methods commonly used in this field can be leveraged to improve naive plug-in estimators.

  Note that even for $s=2$, our rates, respectively of order $\cO(n^{-\frac{3}{4+d}})$ and $\cO(n^{-\frac{8}{8+d}}+n^{-\frac 12})$,  are both faster than the rate of $\cO(n^{-\frac{2}{4+d}})$ appearing in \cite{trillos2025minimax}. For eigenfunction estimation, this discrepancy is simply due to the use of an $\L^q$-norm instead of an $\H^1$-norm, see \Cref{remark:H1_rate}. More importantly, our findings show that the eigenvalue rate of estimation  $\cO(n^{-\frac{2}{4+d}})$ appearing in both \cite{cheng2022convergence} and \cite{trillos2025minimax} is not minimax  (at least if $M$ is known): this answers an open question raised in \cite{trillos2025minimax} about the possibility of obtaining faster rates of estimation for eigenvalue estimation than for eigenfunction estimation (see their Remark 2.9). Another key difference between the two papers is the fact that we are able to estimate not only simple eigenvalues, but also eigenvalues with multiplicities, together with the associated eigenspaces.

\subsection{Notation and geometrical context} If $n$ is an integer, we let $[n]=\{1,\dots,n\}$.  All random variables $X_1,X_2,\dots$ are defined on the same standard Borel probability space with a probability measure $\P$, and $\E$ denotes  expectation with respect to $\P$.
 We will identify measures with their densities, and we write $\E_f$ to indicate expectation under the law with density $f$. If $A:E\to F$ is a linear operator between two Banach spaces $E$ and $F$, we denote by $\|A\|_{E\to F}$ the corresponding operator norm. 

We let $(M,g)$ be a smooth compact connected $d$-dimensional Riemannian manifold without boundary. 
For $s\geq 0$, we let $\Cf^s$ be the H\"older space of regularity $s$ on $M$. Let $\mathcal D'(M)$ be the space of distributions on $M$, which is naturally identified with the dual space of $\mathrm C^\infty(M)$ by using the Riemannian volume form of $M$. The latter induces a sesquilinear duality pairing $\dotp{\cdot,\cdot}$ between distributions, whenever it is well defined. In what follows we will consider the operator
$
\Lambda = (1 - \Delta)^{1/2}
$
which acts on  $\mathcal D'(M)$.  For any $t \in \R$ and $1\leq q \leq \infty$, we define
$
\W^{t, q} = \Lambda^{-t} \L^q
$ 
the usual Sobolev space of order $t$ and integrability exponent $q$ on $M$. The Sobolev spaces $\W^{t,2}$ of weight $2$ are simply denoted by $\H^t$. By construction, for any $t,m \in \R$ and $1\leq q \leq \infty$ the map
$
\Lambda^m : \W^{t, q} \to \W^{t - m, q}
$
is an isomorphism. 
Besov spaces $\B^s_{p,q}$ will also be useful; their main properties are summarized in \Cref{app:besov}. In particular, we will  use the identity $\B^s_{2,2}=\H^s$ for $s\in \R$. H\"older-Zygmund spaces are defined as $\cC^s=\B^s_{\infty,\infty}$ for $s\geq 0$. If $s$ is not an integer, these spaces coincide with the standard H\"older spaces $\Cf^s$, with $\Cf^s \hookrightarrow\cC^s$ if $s$ is an integer. 
Note that all function spaces mentioned above are complex Banach spaces. We denote by $\overline u$ the conjugate of a function $u$.

In what follows we assume that $f \in \cC^s$ is a positive density for some $s >  1$. For $\alpha\in \R$ a nonzero number that will be fixed throughout the paper, the operator $\Delta_f$ defined in  \eqref{eq:def_Delta} 
defines an unbounded operator on $\L^2$ with closed domain $\H^2$, see \Cref{sec:spectral_theory_intro} for more details. Note that, since we assume that the manifold $M$ is known, the case $\alpha=0$ is of limited interest. On $\L^2$ there is a weighted scalar product
$$
\dotp{u,v}_f = \int u\, \bar v\, f^\alpha\, \dd \vol_g, \quad u,v \in \L^2,
$$
and the associated Hilbert space is denoted $\L^2(f^\alpha)$ . The weighted Laplace operator is symmetric on $\L^2(f^\alpha)$, since we have
$$
\dotp{ \Delta_{f}\,u, v }_f =- \int \nabla u \, \overline{\nabla v} \, f^\alpha \dd \vol_g, \quad u, v \in \mathrm C^\infty(M).
$$
In particular $\Delta_f$ is nonpositive, and it is a classical result (see \cite[Chapter 3]{helffer2013spectral}) that its spectrum is discrete and accumulating at $-\infty$. We denote by 
$$
0 = \lambda_{0,f} \leq \lambda_{1,f} \leq \cdots \leq \lambda_{\ell,f} \to \infty
$$
the spectrum $\sigma(-\Delta_f)=\{\lambda_{\ell,f}:\ \ell\geq 0\}$ of $-\Delta_{f}$. Note that with our convention,  it is possible that $\lambda_{\ell,f}=\lambda_{\ell',f}$ for some $\ell < \ell'$.

\subsection{Minimax estimation of eigenfunctions}

We estimate the eigenfunctions of the operator $\Delta_{f}$  through  a  plug-in approach:  we use as estimators the eigenfunctions of the operator $\Delta_{\hat f}$, where $\hat f$ is an appropriate density estimator of $f$. To account for possible eigenvalue multiplicities, we  measure the proximity between spectral projectors associated with $\Delta_{f}$ and $\Delta_{\hat f}$ rather than between eigenfunctions.

Let $\Upsilon$ be a simple loop in $\mathbb C$, with counterclockwise orientation, bounding a compact domain $\Omega \subset \mathbb C$. {We} assume that 
$$
\partial \Omega \cap \sigma(-\Delta_{f}) = \emptyset.
$$
The spectral projector $\Pi_{\Upsilon,f}$ of $\Delta_{f}$ associated with $\Upsilon$ is given by
$$
\Pi_{\Upsilon,f} = \frac{1}{2 \pi i} \int_{\Upsilon} (z + \Delta_{f})^{-1} \dd z.
$$
For any $q\geq 2$, if $\Pi$ and $\Pi'$ are two projection operators that are bounded $\L^2\to \L^q$ and $\L^q \to \L^q$ 
we set
$$
D_{q}(\Pi,\Pi') = \left\| (1 - \Pi') \Pi\right\|_{\L^2 \to \L^q} + \left\| (1 - \Pi) \Pi'\right\|_{\L^2 \to \L^q}.
$$
If $f,h\in \cC^s$ are such that $\Pi_{f, \Upsilon}$ and $\Pi_{h, \Upsilon}$ are well-defined, 
 $D_q(\Pi_{f, \Upsilon}, \Pi_{h, \Upsilon})$ is a measure of the $\L^q$-angle between the finite-dimensional eigenspaces corresponding to the eigenvalues in the domain $\Omega$, respectively for $-\Delta_{f}$ and $-\Delta_{h}$.

Let  $L, \delta > 0$, and $\Upsilon\subset \C$ be a simple loop such that 
\begin{equation}\label{eq:rezl}
\sup_{z \in \Upsilon}|\Re z| < L.
\end{equation}
For $s>2$, we define the statistical model 
$
\mathcal P_s(\Upsilon, \delta, L) \subset \mathscr C^s
$
as follows. We say that a function $f \in \mathscr C^s$ lies in $\mathcal P_s(\Upsilon, \delta, L)$ if $\Upsilon$ encircles at least one eigenvalue of $-\Delta_f$, with
\[\dist(\sigma(-\Delta_f), \Upsilon) > \delta, \quad \inf f > \delta \quad \text{and} \quad \|f\|_{\mathscr C^s} < L.
\]
For  $s=2$, we   replace the Hölder-Zygmund norm $\cC^2$ by the stronger Hölder norm $\Cf^2$ in the definition of $\mathcal P_2(\Upsilon, \delta, L)$.

\begin{theorem}[Estimation of eigenvectors, upper bound]\label{thm:cv_spectral_eigenfunction}
Let $s\geq 2$, $L,\delta>0$, and $q\geq 2$. There exists $C$ such that the following holds. For any simple loop $\Upsilon$ in $\mathbb C$ satisfying \eqref{eq:rezl} with counterclockwise orientation and any $n\geq 1$, there exists an estimator 
$\widehat \Pi_n : M^n \to \mathcal L(\L^2)$ 
such that for each $f \in \mathcal P_s(\Upsilon, \delta, L)$ we have
\begin{equation}\label{cv_spectral_eigenfunction}
\E_{f^{\otimes n}}[D_q(\widehat \Pi_n, \Pi_{\Upsilon,f})] \leq C \begin{cases}
n^{-\frac 12} &\text{if $d=1$,}\\
(n/\log n)^{-\frac 12} &\text{if $d=2$,}\\
n^{-\frac{s+1}{2s+d}} &\text{if $d\geq 3$.}
\end{cases}
\end{equation}
\end{theorem}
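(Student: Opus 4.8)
The plan is to use the plug‑in estimator $\widehat\Pi_n=\Pi_{\Upsilon,\hat f}$, where $\hat f$ is a regularised nonparametric density estimator, and to reduce \eqref{cv_spectral_eigenfunction} to two essentially independent facts. The first is a \emph{deterministic uniform stability estimate}: there are $\eta>0$ and $C_0$, depending only on $s,\delta,L,q$, such that if $f\in\cP_s(\Upsilon,\delta,L)$ and $h$ lies in the enlargement $\cV=\{h:\ \|h\|_{\cC^s}\le 2L,\ \inf h\ge\delta/2,\ \int h\,\dd\vol_g=1\}$ ($\Cf^2$ replacing $\cC^2$ when $s=2$) with $\|h-f\|_{\H^{-1}}\le\eta$, then $\Upsilon\cap\sigma(-\Delta_h)=\emptyset$ and $D_q(\Pi_{\Upsilon,h},\Pi_{\Upsilon,f})\le C_0\|h-f\|_{\H^{-1}}$. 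Since $\Pi_{\Upsilon,f}$ depends on $\Upsilon$ only through which eigenvalues of $-\Delta_f$ it encircles, and these lie in $[0,L)$ with spectral gaps $>2\delta$, one may first replace $\Upsilon$ by a reference contour of bounded length, so that the constants are uniform in $\Upsilon$. The second fact is a density estimator $\hat f$ with values in $\cV$ such that $\E_{f^{\otimes n}}[\|\hat f-f\|_{\H^{-1}}]\le C_1 r_n$, with $r_n$ the right‑hand side of \eqref{cv_spectral_eigenfunction}. Given both, on $\{\|\hat f-f\|_{\H^{-1}}\le\eta\}$ one has $\widehat\Pi_n=\Pi_{\Upsilon,\hat f}$ and $D_q(\widehat\Pi_n,\Pi_{\Upsilon,f})\le C_0\|\hat f-f\|_{\H^{-1}}$; on the complement one sets $\widehat\Pi_n$ to a fixed bounded projector and bounds $D_q$ by a constant (all projectors at play being uniformly bounded on $\L^2$ and into $\L^q$ on $\cV$), while $\P_{f^{\otimes n}}(\|\hat f-f\|_{\H^{-1}}>\eta)\le\eta^{-1}C_1 r_n$ by Markov's inequality. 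Taking expectations then gives \eqref{cv_spectral_eigenfunction}.

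For the stability estimate I would first collect the uniform elliptic facts on $\cV$. Comparing the quadratic form $\int|\nabla u|^2f^{\alpha}$ of $-\Delta_f$ with that of $-\Delta$ and using Weyl's law, $\dim E_f$ (with $E_f=\mathrm{ran}\,\Pi_{\Upsilon,f}$) is bounded uniformly; by elliptic regularity for $\Delta_f=\Delta+\alpha\nabla\log f\cdot\nabla$, whose coefficient $\nabla\log f$ is bounded in $\cC^{s-1}$ on $\cV$, every $g\in E_f$ — and every element of the corresponding $\L^2$‑adjoint subspace — satisfies $\|g\|_{\cC^{2,\gamma}}\le C(\delta,L)\|g\|_{\L^2}$ (it is here that $s=2$ requires $\Cf^2$, ensuring $\nabla\log f\in C^1$), and the same holds with $\Delta_h$, $h\in\cV$. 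A crude relative‑perturbation bound, using $\|\Delta_f-\Delta_h\|_{\H^2\to\L^2}\lesssim\|f-h\|_{\cC^1}$ together with the interpolation inequality $\|f-h\|_{\cC^1}\lesssim\|f-h\|_{\H^{-1}}^{\theta'}(2L)^{1-\theta'}$ for some $\theta'\in(0,1)$ (valid here because $-1<1<s$), shows that once $\|f-h\|_{\H^{-1}}\le\eta$ the contour $\Upsilon$ still separates $\sigma(-\Delta_h)$ and the resolvents $R_\bullet(z)=(z+\Delta_\bullet)^{-1}$ and their adjoints are uniformly bounded $\W^{t,p}\to\W^{t+2,p}$ on $\Upsilon$ over a fixed range of $(t,p)$. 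The crucial point is that $D_q$ is built for this: since $(1-\Pi_{\Upsilon,h})\Pi_{\Upsilon,f}=(\Pi_{\Upsilon,f}-\Pi_{\Upsilon,h})\Pi_{\Upsilon,f}$ and $(1-\Pi_{\Upsilon,f})\Pi_{\Upsilon,h}=-(\Pi_{\Upsilon,f}-\Pi_{\Upsilon,h})\Pi_{\Upsilon,h}$, both terms of $D_q$ apply $\Pi_{\Upsilon,f}-\Pi_{\Upsilon,h}$ to a uniformly smooth function (lying in $E_f$ or $E_h$); and since this operator has uniformly smooth finite‑dimensional range, its $\L^2\to\L^q$ norm is controlled by its $\L^2\to\L^2$ norm. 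Hence it suffices to prove $\|(\Pi_{\Upsilon,f}-\Pi_{\Upsilon,h})g\|_{\L^2}\le C\|f-h\|_{\H^{-1}}$ for all $g$ with $\|g\|_{\cC^2}\le 1$.

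To obtain this bound — and this is where the \emph{negative}-order norm is extracted — I would write $w=\log f-\log h=(f-h)\Phi$ with $\Phi=\int_0^1(h+t(f-h))^{-1}\,\dd t$ a positive bounded $\cC^{s-1}$ factor, so that multiplication by $\Phi$ is bounded on $\H^{-1}$ and $\|w\|_{\H^{-1}}\lesssim\|f-h\|_{\H^{-1}}$. From the resolvent identity, for $\chi\in\L^2$,
\[
\langle(\Pi_{\Upsilon,f}-\Pi_{\Upsilon,h})g,\chi\rangle=\frac{\alpha}{2\pi i}\int_{\Upsilon}\big\langle\nabla w\cdot\nabla(R_f(z)g),\ R_h(z)^{*}\chi\big\rangle\,\dd z,
\]
and I would integrate the divergence by parts, rewriting the integrand as $-\langle w,\ \overline b\,\Delta a+\nabla a\cdot\overline{\nabla b}\rangle$ with $a=R_f(z)g$ (uniformly in $\cC^{2,\gamma}$, because $g$ is smooth) and $b=R_h(z)^{*}\chi$ (uniformly in $\H^2$). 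The function $\overline b\,\Delta a+\nabla a\cdot\overline{\nabla b}$ lies in $\H^1$ with norm $\lesssim\|g\|_{\cC^2}\|\chi\|_{\L^2}$: the term $\nabla a\cdot\overline{\nabla b}$ is a product of a $\cC^{1,\gamma}$ function and an $\H^1$ function; and for $\overline b\,\Delta a$ one substitutes the resolvent equation $\Delta a=g-za-\alpha\nabla\log f\cdot\nabla a$, turning it into a sum of products of an $\H^2$ function with a $\cC^{2,\gamma}$ function and with a $\W^{1,\infty}$ function — here $\Cf^2$ is used once more, to make $\nabla^2\log f$ bounded. Pairing $w\in\H^{-1}$ against this $\H^1$ function and using the uniform bounds on $\Upsilon$ (of bounded length after normalisation) yields $|\langle(\Pi_{\Upsilon,f}-\Pi_{\Upsilon,h})g,\chi\rangle|\lesssim\|w\|_{\H^{-1}}\|g\|_{\cC^2}\|\chi\|_{\L^2}\lesssim\|f-h\|_{\H^{-1}}\|g\|_{\cC^2}\|\chi\|_{\L^2}$, and taking the supremum over $\chi$ finishes the stability estimate.

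For the density estimator, I would take $\hat f_0(x)=\tfrac1n\sum_{i=1}^nK_T(x,X_i)$ where $K_T$ is the Schwartz kernel of $\chi(-\Delta/T)$ for some $\chi\in C_c^\infty([0,\infty))$ equal to $1$ near $0$, with bandwidth $T\sim n^{2/(2s+d)}$ when $d\ge3$ and $T=n$ when $d\le2$. A Littlewood--Paley decomposition gives the bias $\|\chi(-\Delta/T)f-f\|_{\H^{-1}}^2\lesssim\sum_{2^j\gtrsim\sqrt T}2^{-2j(s+1)}\|f\|_{\cC^s}^2\lesssim T^{-(s+1)}$ (using $\|\chi_j(-\Delta)f\|_{\L^2}\lesssim\Vol(M)^{1/2}\|\chi_j(-\Delta)f\|_{\L^\infty}\lesssim 2^{-js}\|f\|_{\cC^s}$), while the variance is $\le\tfrac{\|f\|_{\infty}}{n}\Tr\big((\Lambda^{-1}\chi(-\Delta/T))^2\big)=\tfrac{\|f\|_{\infty}}{n}\sum_k(1+\nu_k)^{-1}\chi(\nu_k/T)^2\lesssim\tfrac1n\sum_{\nu_k\lesssim T}(1+\nu_k)^{-1}$, which by Weyl's law is $\lesssim T^{(d-2)/2}/n$ for $d\ge3$, $\lesssim(\log T)/n$ for $d=2$, and $\lesssim1/n$ for $d=1$; balancing these yields $\E_{f^{\otimes n}}[\|\hat f_0-f\|_{\H^{-1}}]\lesssim r_n$, the parametric rate for $d=1$ and the logarithmic one for $d=2$ arising because the bias is then negligible against the variance. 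I would then set $\hat f$ to be the $\H^{-1}$‑orthogonal projection of $\hat f_0$ onto the closed convex set $\cV$ (closed in $\H^{-1}$ since bounded subsets of $\cC^s$ are precompact in $\cC^0$); this projection is $1$‑Lipschitz and fixes $f\in\cV$, so $\hat f$ takes values in $\cV$ and $\|\hat f-f\|_{\H^{-1}}\le\|\hat f_0-f\|_{\H^{-1}}$, completing the second fact. The main obstacle is the uniform stability estimate — in particular getting the sharp negative norm $\|f-h\|_{\H^{-1}}$ on the right, which forces the integration‑by‑parts/resolvent‑equation manoeuvre and the structural use of $D_q$, together with the uniformity of all elliptic and resolvent bounds over the infinite‑dimensional class $\cV$ (the source of the $\Cf^2$ hypothesis at $s=2$); the density‑estimation part is then a standard bias--variance analysis, and the regularisation step is routine.
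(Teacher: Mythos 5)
Your architecture mirrors the paper's — a plug-in estimator $\Pi_{\Upsilon,\hat f}$, a stability estimate in a negative Sobolev norm, a density estimator controlled in that norm, and a regularisation step — and your integration-by-parts manoeuvre, pairing $w=\log(f/h)$ directly against the $\H^1$ function $\overline b\,\Delta a+\nabla a\cdot\overline{\nabla b}$ built from $a=R_f(z)g$, $b=R_h(z)^*\chi$, is a legitimate alternative to the paper's Helffer--Sj\"ostrand route (\Cref{prop:perturbativechi}); your spectral-multiplier density estimator with $\H^{-1}$-projection regularisation likewise replaces the paper's wavelet construction (\Cref{cor:minimax_standard}).

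There is, however, a genuine gap in the passage from $\L^2$ to $\L^q$ when $q>2$. Your pairing proves $\|(\Pi_{\Upsilon,f}-\Pi_{\Upsilon,h})\Pi_{\Upsilon,f}\|_{\L^2\to\L^2}\lesssim\|f-h\|_{\H^{-1}}$, and you then assert that the $\L^2\to\L^q$ norm is controlled by the $\L^2\to\L^2$ norm because the operator has uniformly smooth, finite-dimensional range. This does not follow: with $v=Tu$ you control $\|v\|_{\cC^2}\lesssim\|u\|_{\L^2}$ and $\|v\|_{\L^2}\le\varepsilon\|u\|_{\L^2}$, but interpolation then yields only $\|v\|_{\L^q}\lesssim\|v\|_{\cC^2}^{1-2/q}\|v\|_{\L^2}^{2/q}\lesssim\varepsilon^{2/q}\|u\|_{\L^2}$, which degrades the exponent. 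A uniform $\L^q/\L^2$ comparability constant on $\mathrm{ran}\bigl((1-\Pi_{\Upsilon,h})\Pi_{\Upsilon,f}\bigr)$ cannot be extracted from the $\cC^2$-smoothness of $E_f$ and $E_h$: the range consists of small differences of nearly-aligned smooth eigenfunctions, whose $\cC^2/\L^2$ ratio is not uniformly bounded as $h\to f$; indeed, \Cref{remark:H1_rate} shows that $\W^{t,q}$-stability of eigenfunctions requires $\|f-h\|_{\W^{t-1,r}}$, so no $\cC^2$-stability with $\|f-h\|_{\H^{-1}}$ on the right is available. The fix is exactly what the paper does: run the duality against $\chi\in\L^{q'}$ instead of $\L^2$, so $b=R_h(z)^*\chi\in\W^{2,q'}$ and $\overline b\,\Delta a+\nabla a\cdot\overline{\nabla b}\in\W^{1,q'}\hookrightarrow\W^{1,r'}$ for any $r>q$, and pair $w$ in the weaker norm $\W^{-1,r}$; this yields \Cref{prop:eigenfunctionsperturbative} and costs nothing because the density estimation rate in $\W^{-1,r}$ does not depend on $r\in[2,\infty)$ (\Cref{prop:minimax_standard}).
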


\Cref{thm:cv_spectral_eigenfunction} relies on a perturbation bound (see \Cref{prop:eigenfunctionsperturbative}), which relates the distance between two operators $\Delta_{f}$ an $\Delta_{h}$ to the $\W^{-1,p}$-norm between the densities $f$ and $h$.  For densities bounded away from zero and infinity, the  $\W^{-1,p}$-norm  is known to be equivalent to the Wasserstein distance between the corresponding measures. Minimax estimators with respect to the Wasserstein distance have been proposed on a manifold in \cite{divol2022measure} (see also \cite{ niles2022minimax, divol2021short}), and attain the rate  given in \eqref{cv_spectral_eigenfunction}, from which the rate of estimation for the eigenfunctions follows.
 
\begin{remark}\label{rem:identify_loop}
The estimator provided by \Cref{thm:cv_spectral_eigenfunction} reconstructs the spectral projector 
$\Pi_{\Upsilon,f}$ for any contour $\Upsilon$ isolating a cluster of eigenvalues. 
In practice, however, one is often interested in estimating the eigenspace associated with a specific 
eigenvalue $\lambda_{\ell,f}$, which requires constructing from the data a contour $\Upsilon$ enclosing 
$\lambda_{\ell,f}$ and no other eigenvalue. 
 Assume that $\lambda_{\ell,f}$ satisfies the spectral gap condition
\[
\mathrm{dist}\bigl(\lambda_{\ell,f},\, \sigma(-\Delta_f)\setminus\{\lambda_{\ell,f}\}\bigr) > \delta.
\]
Under this assumption such a contour can be identified from the data. 
For instance, \cite{garcia2020error} construct an estimator $\hat\lambda_\ell$ that, with high probability, 
lies within $\delta/4$ of $\lambda_{\ell,f}$. 
Consequently, the circle $\hat\Upsilon$ of radius $\delta/2$ centered at $\hat\lambda_\ell$ encloses 
$\lambda_{\ell,f}$ alone and remains at distance at least $\delta/4$ from the other eigenvalues. 
Applying our estimator with this data-driven contour $\hat\Upsilon$ yields an estimator of the 
$\ell$-th eigenspace with the same convergence rates as in \Cref{thm:cv_spectral_eigenfunction}. 
The same observation applies to the problem of eigenvalue estimation.
\end{remark}

We provide a matching minimax lower bound to \Cref{thm:cv_spectral_eigenfunction}, as follows.

\begin{theorem}[Estimation of eigenvectors, lower bound]\label{thm:lowerbound_eigenfunction}
Let $s\geq 2$ and $q\geq 2$.  Then there exist a constant $c$  and   a  simple loop $\Upsilon$ in $\mathbb C$ satisfying \eqref{eq:rezl} with counterclockwise orientation such that for $L>0$ large enough and $\delta>0$ small enough we have for any $n\geq 1$ 
\begin{equation}
\inf_{\widehat \Pi_n} \sup_{f\in \cP_s(\Upsilon, \delta, L)} \E_{f^{\otimes n}}[D_2(\widehat \Pi_n, \Pi_{\Upsilon,f})]
 \geq c \begin{cases}
n^{-\frac 12} &\text{if $d\leq 2$,}\\
n^{-\frac{s+1}{2s+d}} &\text{if $d\geq 3$,}
\end{cases}
\end{equation}
where the infimum is taken over all estimators $\widehat \Pi_n : M^n \to \mathcal L(\L^2)$.
\end{theorem}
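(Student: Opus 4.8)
\emph{Overall strategy and the parametric regime.} The plan is to treat $d\le2$ and $d\ge3$ separately, taking in both cases $f_0\equiv\vol_g(M)^{-1}$ as base point, so that $\Delta_{f_0}=\Delta$, and $\Upsilon$ a small circle enclosing the first nonzero eigenvalue cluster $E_*=\Ker(-\Delta-\lambda_*)$ of $-\Delta$; then $\sup_{z\in\Upsilon}|\Re z|$ is a fixed number (so \eqref{eq:rezl} holds once $L$ is large) and $\dist(\sigma(-\Delta),\Upsilon)$ is a fixed positive number. Fix $u_0\in E_*$ and a ball $B$ with $|\nabla u_0|\ge c_0>0$ on $B$; such $B$ exists since $u_0$ is nonconstant ($\lambda_*>0$) and, by unique continuation for $\Delta-\lambda_*$, cannot be locally constant. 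For $d\le2$ I would use Le Cam's two-point method with $f_0$ and $f_t=f_0+t\phi$ (where $\int\phi\,\dd\vol_g=0$ after subtracting a multiple of $f_0$), choosing $\phi$ supported in $B$ with $\nabla\phi\cdot\nabla u_0\not\equiv0$. Then $\alpha\nabla\phi\cdot\nabla u_0$ is a nonzero compactly supported function, hence not in $E_*$ (unique continuation again), so its component orthogonal to $E_*$ is nonzero. Since $\Upsilon$ avoids the spectrum, $t\mapsto\Pi_{\Upsilon,f_t}$ is real-analytic near $0$ with derivative $P_\phi=-\tfrac1{2\pi i}\oint_\Upsilon(z+\Delta)^{-1}(\alpha\nabla\phi\cdot\nabla)(z+\Delta)^{-1}\,\dd z$, whose off-diagonal block $(1-\Pi_0)P_\phi\Pi_0=-S_0(\alpha\nabla\phi\cdot\nabla)\Pi_0$ is nonzero, where $\Pi_0$ is the spectral projector of $-\Delta$ onto $E_*$ and $S_0=(-\Delta-\lambda_*)^{-1}(1-\Pi_0)$ the reduced resolvent. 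Hence $D_2(\Pi_{\Upsilon,f_0},\Pi_{\Upsilon,f_t})\ge c|t|$ for $|t|$ in a fixed neighborhood of $0$, and taking $t=cn^{-1/2}$ with $c$ small, one has $f_t\in\cP_s(\Upsilon,\delta,L)$ for $L$ large, $\delta$ small (uniformly in $n\ge1$) and $\KL(f_t^{\otimes n}\|f_0^{\otimes n})=n\,\KL(f_t\|f_0)\le Cnt^2\le\tfrac14$; Le Cam's inequality yields $cn^{-1/2}$, which is $\ge cn^{-(s+1)/(2s+d)}$ when $d\le2$.

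\emph{The nonparametric regime.} For $d\ge3$ the rate $n^{-(s+1)/(2s+d)}$ is exactly the one attained by density estimation in $\H^{-1}$, and the plan is to reproduce that lower bound at the level of spectral projectors. Put $h=h_n\asymp(\delta/n)^{1/(2s+d)}$ and, once $h_n$ is below a fixed threshold, pack $N\asymp h^{-d}$ bumps $b_j=a\,\psi((\cdot-x_j)/h)$ with disjoint supports in $B$, $a\asymp h^s$, $\psi$ a fixed smooth bump with several vanishing moments, and set $f_\theta=f_0+\sum_j\theta_jb_j$ for $\theta\in\{0,1\}^N$. These lie in $\cP_s(\Upsilon,\delta,L)$ for $L$ large and $\delta$ small: by disjointness $\|\sum_j\theta_jb_j\|_{\cC^s}\lesssim a h^{-s}\asymp1$, $\inf f_\theta\ge\vol_g(M)^{-1}-O(h^s)>\delta$, and $\|\Delta_{f_\theta}-\Delta\|=O(h^{s-1})$ so the spectrum moves by $o(1)$ (the remaining small $n$ are handled by monotonicity of the minimax risk in $n$ and the bound obtained at the threshold). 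For $\theta\ne\theta'$, let $u^{(\theta)}$ be a unit $\L^2$-eigenfunction of $\Delta_{f_\theta}$ with eigenvalue $\lambda^{(\theta)}\in\Omega$, and $\Pi_\theta=\Pi_{\Upsilon,f_\theta}$. Since $\Pi_{\theta'}$ commutes with $\Delta_{f_{\theta'}}$ and $(-\Delta_{f_{\theta'}})u^{(\theta)}=\lambda^{(\theta)}u^{(\theta)}+(\Delta_{f_\theta}-\Delta_{f_{\theta'}})u^{(\theta)}$, one obtains the exact identity
\[
(1-\Pi_{\theta'})\,u^{(\theta)}\;=\;(-\Delta_{f_{\theta'}}-\lambda^{(\theta)})^{-1}(1-\Pi_{\theta'})\,(\Delta_{f_\theta}-\Delta_{f_{\theta'}})\,u^{(\theta)},
\]
where the reduced resolvent on the right is bounded uniformly over the model, and $\Delta_{f_\theta}-\Delta_{f_{\theta'}}=\alpha\sum_{j\in S}\sigma_j\,\tfrac{\nabla b_j}{1+b_j}\cdot\nabla$ with $S=\{j:\theta_j\ne\theta_j'\}$, $\sigma_j=\theta_j-\theta_j'\in\{\pm1\}$, by disjointness of supports. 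Using $D_2(\Pi_\theta,\Pi_{\theta'})\ge\|(1-\Pi_{\theta'})u^{(\theta)}\|_{\L^2}$ and substituting, I would replace $(-\Delta_{f_{\theta'}}-\lambda^{(\theta)})^{-1}(1-\Pi_{\theta'})$ by $S_0$, $u^{(\theta)}$ by $u_0$, $\tfrac1{1+b_j}$ by $1$, and $\nabla u_0$ by $\nabla u_0(x_j)$ on $\supp b_j$, each replacement costing only a \emph{relatively} $O(h^{s-1})$ error because $S_0$ gains two derivatives while the functions involved live at frequency $\asymp h^{-1}$; what remains is the main term $\alpha\sum_{j\in S}\sigma_j(\nabla u_0(x_j))\cdot S_0(\nabla b_j)$, whose $\L^2$-norm a near-orthogonality estimate (sharpened via the vanishing moments of $\psi$, or by averaging over random signs $\sigma_j$, or by the local-to-global argument for Wasserstein distances) bounds below by $\gtrsim\sqrt{|S|}\,\|b_j\|_{\H^{-1}}\asymp\sqrt{|S|}\,h^{s+1+d/2}$. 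Restricting $\theta,\theta'$ to a Varshamov–Gilbert subset $\Theta'\subset\{0,1\}^N$ gives $|S|\gtrsim h^{-d}$ and $\log|\Theta'|\gtrsim h^{-d}$, so the separation is $\gtrsim h^{s+1}\asymp n^{-(s+1)/(2s+d)}$, while $\KL(f_\theta^{\otimes n}\|f_{\theta'}^{\otimes n})\le\tfrac n\delta\|f_\theta-f_{\theta'}\|_{\L^2}^2\lesssim\tfrac n\delta h^{-d}h^{2s+d}=\tfrac n\delta h^{2s}\le\tfrac1{10}\log|\Theta'|$ by the choice of $h$; Fano's inequality then yields the claimed bound.

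\emph{Main obstacle.} The delicate point is the ``reverse'' perturbation estimate for the projector. A plain second-order expansion of $\Pi_{\Upsilon,f}$ around $\Pi_0$ has remainder of size $O(h^{2(s-1)})$, which is larger than the target $h^{s+1}$ as soon as $s\le3$; this is why it seems essential to use the exact resolvent identity above and to exploit the high-frequency localization of the bumps so that the successive replacement errors are genuinely negligible. The second technical core, the separation lower bound for $\sum_{j\in S}\sigma_j(\nabla u_0(x_j))\cdot S_0(\nabla b_j)$, is precisely the mechanism behind the minimax lower bound for density estimation in $\H^{-1}$ (equivalently, in the Wasserstein distance), so I would expect this piece to be imported from, or modeled closely on, the arguments of \cite{divol2022measure, niles2022minimax, divol2021short}; the whole nonparametric half is, in effect, that lower bound recast in spectral terms.
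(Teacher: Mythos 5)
Your overall architecture matches the paper's: a two-point Le Cam argument for $d\le2$, and for $d\ge3$ a Fano/Varshamov--Gilbert packing of disjointly supported bumps at scale $h\asymp n^{-1/(2s+d)}$, anchored at the uniform density and an eigenfunction $u_0$ with a point of nonvanishing gradient. The $d\le2$ piece is fine (first-order perturbation of the projector plus unique continuation and Le Cam).

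For $d\ge3$, there is a genuine gap exactly where you flag the obstacle, and the fix you sketch does not plausibly close it. \textbf{(i) Replacement errors.} You claim each substitution (the reduced resolvent of $\Delta_{f_{\theta'}}$ by $S_0$, $u^{(\theta)}$ by $u_0$, $\nabla u_0$ by $\nabla u_0(x_j)$) costs a \emph{relative} $O(h^{s-1})$ error because ``$S_0$ gains two derivatives on functions at frequency $\asymp h^{-1}$''. But $S_0$ gets applied to products of \emph{two} high-frequency factors (for instance $\nabla\log(1+b_j)\cdot\nabla(u^{(\theta)}-u_0)$), and such products carry nonnegligible low-frequency content on which $S_0$ gains nothing. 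Naive bookkeeping then gives an \emph{absolute} error of order $h^{2(s-1)}$, which is not $o(h^{s+1})$ for $s\le3$; making your heuristic precise at $s=2$ requires estimating the low-frequency part of a bilinear high--high interaction, which you do not do. \textbf{(ii) Near-orthogonality.} The lower bound $\|\sum_{j\in S}\sigma_j(\nabla u_0(x_j))\cdot S_0(\nabla b_j)\|_{\L^2}\gtrsim\sqrt{|S|}\,\|b_j\|_{\H^{-1}}$ must hold for the \emph{specific} sign pattern $\sigma_j=\theta_j-\theta_j'$ produced by the VG packing, not in expectation. Averaging over random signs is not available inside a Fano argument; the Wasserstein local-to-global argument rests on additivity of $W_1$ over disjoint components, a property $\|S_0(\cdot)\|_{\L^2}$ lacks since $S_0$ is nonlocal with a slowly decaying kernel; and the cross terms $\langle S_0(\nabla b_j),S_0(\nabla b_k)\rangle$ decay only like $|x_j-x_k|^{-(d-2)}$, so for adversarial signs the off-diagonal contribution is not controlled without a quantitative vanishing-moments analysis that you do not perform.

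The paper (\Cref{prop:angle_lowerbound} via \Cref{lem:Za}) avoids both issues by a structurally different route: it proves the \emph{pointwise} reverse-stability inequality
\[
D_2(\Pi_{\Upsilon,f},\Pi_{\Upsilon,h})\ \ge\ c\,\|f-h\|_{\H^{-1}}\ -\ C\,\|f-h\|_{\W^{-1,r}},\qquad r<2,
\]
valid whenever $|\mathcal F[\log(h/f)\circ\Psi^{-1}]|$ is radially symmetric. The key input is the exact identity $d\,\|\partial_1 a\|_{\H^{-2}}^2=\|a\|_{\H^{-1}}^2-\|a\|_{\H^{-2}}^2$ for such $a$ (\Cref{lem:Za}), which gives a clean lower bound on $\|\nabla\varphi\cdot\nabla\log(h/f)\|_{\H^{-2}}$. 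Radiality is then arranged by construction (\Cref{remark:fourierradial}), and an extra small parameter $a$ in the bump amplitudes, combined with a local Poincar\'e inequality, makes the $\W^{-1,r}$-correction strictly smaller than the $\H^{-1}$-term for \emph{every} pair $\tau\neq\tau'$ with $|\tau-\tau'|\gtrsim N$. No sign averaging, no cross-term decay bookkeeping, and no cascading relative-error estimates are needed. As written, the $d\ge3$ half of your argument is incomplete; to pursue the direct resolvent route you would in effect have to rediscover an analogue of this structural trick.
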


In practical situations where the manifold is unknown to the practitioner, the empirical $\L^2$-loss  between eigenfunctions is used as a surrogate to the $\L^2$-loss. It is therefore more interesting to provide rates of estimation for 
\begin{equation}
   \p{ \frac{1}{n} \sum_{i=1}^n |\phi(X_i)-\widehat \phi(X_i)|^2}^{\frac 12},
\end{equation}
where $\phi$ is an eigenfunction and $\widehat \phi$ an estimator. 
The empirical risk is trivially upper bounded by the $\L^\infty$-norm $\|\phi-\widehat \phi\|_{\L^\infty}$, which we are actually also able to bound, at the price of deteriorating the exponent $\frac{s+1}{2s+d}$ by an arbitrary small number $\eps>0$. 
See \Cref{rem:p=infty} for further details.

\subsection{Minimax estimation of eigenvalues}

We detail in this section the main results we obtained regarding the estimation of the eigenvalues of $-\Delta_f$. Let  $s \geq 2$, $\delta, L > 0$ and $\Upsilon $ be a  simple loop in $\mathbb C$ satisfying \eqref{eq:rezl} with counterclockwise orientation. For $f \in \mathcal P_s(\Upsilon, \delta, L)$ we denote by
\begin{equation}\label{eq:def_mu_upsilon}
\mu_{\Upsilon, f} = \frac 1{\sharp(\Omega \cap \sigma(-\Delta_f))}\sum_{\lambda \in \Omega \cap \sigma(-\Delta_f)} \lambda,
\end{equation}
the average of the eigenvalues of $-\Delta_f$ that are enclosed by $\Upsilon$. Note that this can be written
$$
\mu_{\Upsilon, f} = -\frac 1{\sharp(\Omega \cap \sigma(-\Delta_f))} \tr  (\Delta_f \Pi_{\Upsilon,f}).
$$
Then we have the following result, which tells us that we can estimate $\mu_{\Upsilon, f}$ at a rate of convergence of order $\cO(n^{-\frac{4s}{4s+d}}+n^{-\frac 12})$.

\begin{theorem}[Estimation of eigenvalues, upper bound]\label{thm:cv_eigenvalue}
Let $s\geq 2$ and $L,\delta>0$. There exists $C$ such that the following holds. For any simple loop $\Upsilon$ in $\mathbb C$ satisfying \eqref{eq:rezl} with counterclockwise orientation and any $n\geq 1$, there exists an estimator  $\widehat \mu_n : M^n \to \R$ such that for each $f \in \cP_s(\Upsilon,\delta,L)$ we have
\begin{equation}\label{cv_eigenvalue}
\E_{f^{\otimes n}}\left[|\widehat \mu_n - \mu_{\Upsilon, f}|\right] \leq C \begin{cases}
n^{-\frac{4s}{4s+d}} &\text{if $s\leq \frac d4$,}\\
n^{-\frac 12} &\text{if $s>\frac d4$.}
\end{cases}
\end{equation}
Moreover, if $s>\frac d4$, the estimator $\widehat \mu_n$ is asymptotically efficient.
\end{theorem}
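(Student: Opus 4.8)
The plan is to build the estimator in two stages, mirroring the classical two-stage approach to estimating nonlinear (here essentially quadratic) functionals of a density. First, split the sample in two halves. Use the first half to construct a preliminary density estimator $\hat f_0$ lying (with high probability, on a favorable event) in a slightly enlarged model $\cP_s(\Upsilon,\delta/2,2L)$; on this event the contour $\Upsilon$ still isolates the same cluster, so the plug-in quantity $\mu_{\Upsilon,\hat f_0}$ is well defined, and by the perturbative estimates of the previous sections (the resolvent/trace bounds used for \Cref{thm:cv_spectral_eigenfunction}) one has $|\mu_{\Upsilon,\hat f_0}-\mu_{\Upsilon,f}|\lesssim \|\hat f_0-f\|_{\W^{-1,p}}$ or a similar weak-norm control, which is already $o(1)$. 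The point is that $\mu_{\Upsilon,f}=-\tr(\Delta_f\Pi_{\Upsilon,f})/N$ depends on $f$ smoothly, and a Taylor expansion around $\hat f_0$ gives $\mu_{\Upsilon,f}=\mu_{\Upsilon,\hat f_0}+\langle D_{\hat f_0}\mu_{\Upsilon},\,f-\hat f_0\rangle+R_2(f,\hat f_0)$, where $D_{\hat f_0}\mu_{\Upsilon}$ is an explicit smooth function (computable from $\hat f_0$, again via a contour integral of resolvents, as in the proof of the perturbation bound) and the quadratic remainder $R_2$ is controlled by a Hölder-type norm of $f-\hat f_0$.

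Second, use the second half of the sample to build an unbiased (or nearly unbiased) estimator of the two terms $\langle D_{\hat f_0}\mu_{\Upsilon},f\rangle$ and $R_2(f,\hat f_0)$. The linear term is a plain expectation $\E_f[D_{\hat f_0}\mu_{\Upsilon}]$ estimated by the empirical mean over the second half, contributing the parametric $n^{-1/2}$ error. For the quadratic remainder: since $R_2$ is, to leading order, a quadratic form in $f-\hat f_0$ against a smooth kernel (morally $\iint K_{\hat f_0}(x,y)\,(f-\hat f_0)(x)(f-\hat f_0)(y)\,\dd\vol\dd\vol$ with $K_{\hat f_0}$ bounded thanks to the spectral gap), one estimates it by a U-statistic of order two built from the second-half observations, after projecting onto a finite-dimensional approximation space of dimension $m$ (a spectral cutoff, e.g. $m$ eigenfunctions of $1-\Delta$). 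This is exactly the estimator-of-a-quadratic-functional machinery of Bickel--Ritov / Birgé--Massart. The bias from the spectral cutoff at level $m$ is of order $m^{-s/d}\cdot(\text{weak-norm size of }f-\hat f_0)$ or better, and the variance of the degree-two U-statistic is of order $m/n^2 + 1/n$. Optimizing $m$ against the preliminary rate $\|\hat f_0-f\|_{\cC^{s'}}$-type errors (with $\hat f_0$ chosen at a bandwidth tuned to make the first-stage error balance the quadratic term) yields the $n^{-4s/(4s+d)}$ contribution when $s\le d/4$ and shows it is dominated by $n^{-1/2}$ when $s>d/4$. Summing the linear-term error $n^{-1/2}$ with the quadratic-term error gives the stated rate.

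The asymptotic efficiency claim in the regime $s>d/4$ then follows from the structure of the construction: the estimator has the form $\widehat\mu_n=\mu_{\Upsilon,\hat f_0}+\frac{2}{n}\sum_i D_{\hat f_0}\mu_\Upsilon(X_i)+(\text{negligible})$, i.e. it is asymptotically linear with influence function $\psi_f(x)=D_f\mu_\Upsilon(x)-\E_f[D_f\mu_\Upsilon]$. One checks that $\psi_f$ is exactly the efficient influence function for the functional $f\mapsto\mu_{\Upsilon,f}$ in the nonparametric model — this is the standard identification of the efficient influence function as the canonical gradient of the functional, i.e. $\psi_f$ is the unique element of the tangent space $\{g:\int gf=0\}$ (in $\L^2(f)$) representing the derivative $g\mapsto\langle D_f\mu_\Upsilon,g\rangle$ — and hence $\sqrt n(\widehat\mu_n-\mu_{\Upsilon,f})\rightsquigarrow \cN(0,\Var_f(\psi_f))$ with $\Var_f(\psi_f)$ matching the semiparametric Cramér--Rao bound. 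Here one uses that the remainder terms are $o_P(n^{-1/2})$ precisely because $s>d/4$, together with a Lindeberg CLT and sample splitting to decouple $\hat f_0$ from the second-half average (a final "un-split" step via a symmetrization argument removes the dependence on the splitting).

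The main obstacle I expect is twofold. First, making the Taylor expansion of $\mu_{\Upsilon,f}=-\tr(\Delta_f\Pi_{\Upsilon,f})/N$ fully rigorous with explicit control of the quadratic remainder $R_2$ in the right (weak) norms: this requires second-order perturbation theory for the spectral projector $\Pi_{\Upsilon,f}$ under perturbations of $f$ that are only controlled in $\W^{-1,p}$, expanding $(z+\Delta_h)^{-1}$ as a Neumann-type series around $(z+\Delta_f)^{-1}$ and bounding the trace of the quadratic term — the perturbation $\Delta_h-\Delta_f=\alpha\nabla\log(h/f)\cdot\nabla$ involves a gradient, so one must carefully exploit the smoothing of the resolvent and the compactness of $\Pi_{\Upsilon,f}$ to absorb that derivative. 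Second, controlling the degree-two U-statistic estimating $R_2$ with a data-dependent kernel $K_{\hat f_0}$: one needs the kernel to be (with high probability) uniformly bounded and smooth, which again comes back to the spectral-gap stability on the favorable event, and then a careful variance computation accounting for the cross-terms between the preliminary estimator and the U-statistic. These are the technical hearts of the argument; the rest (optimizing $m$, assembling the bias--variance tradeoff, the CLT) is routine once those two ingredients are in place.
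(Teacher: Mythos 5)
Your proposal correctly captures the overall debiasing scheme for the regime $s>d/4$ — sample splitting, first-stage plug-in $\mu_{\Upsilon,\hat f_0}$, estimate the linear correction by an empirical mean and the quadratic correction by a degree-two $U$-statistic, asymptotic efficiency from linearity plus a CLT — and your identification of the technical heart (a rigorous second-order expansion of the spectral projector and trace under $\W^{-1,p}$-small perturbations) matches what the paper does in \Cref{prop:eigenvalue_perturbative}. The paper factors all of this through a general abstract framework of $s$-regular functionals (\Cref{thm:functional_estimation}, \Cref{def:regular_functional}) and then verifies that $\mu_{\Upsilon,\cdot}$ is $s$-regular, but that is an architectural choice rather than a mathematical departure.

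However, there is a genuine gap in the regime $s\le d/4$. Your expansion stops at second order, leaving a cubic remainder of size $\|f-\hat f_0\|_{\L^\infty}^3$. With $\hat f_0$ at the optimal sup-norm rate this is $\sim n^{-3s/(2s+d)}$ (up to logs), and one checks that
\[
\frac{3s}{2s+d} \;<\; \frac{4s}{4s+d} \quad\Longleftrightarrow\quad s<\tfrac{d}{4},
\]
so for $s<d/4$ this remainder strictly dominates $n^{-4s/(4s+d)}$ and your estimator cannot reach the claimed rate. The paper handles this by taking $J_{\max}=3$ in the Taylor expansion when $s\le d/4$, producing a quartic remainder $\|f-\hat f_0\|_{\L^\infty}^4$, which is now negligible, and then estimating the cubic term $\mathrm{T}^{(3)}_{\hat f_0}[f-\hat f_0,f-\hat f_0,f-\hat f_0]$. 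This in turn is nontrivial: a naive degree-three $U$-statistic built on a single projection level $D$ has a degree-3 variance of order $D^{2s+2d}/n^3$, which balanced against the bias $D^{-4s}$ gives the suboptimal rate $n^{-3s/(3s+d)}>n^{-4s/(4s+d)}$. The paper therefore uses the multi-scale decomposition \eqref{eq:U3} with three levels $D_1\le D_2\le D_3$ (in the spirit of Tchetgen, Li, Robins, van der Vaart), together with the Hilbert--Schmidt control \emph{(G3)} on the trilinear form, to get the degree-3 variance down to $n^{-8s/(4s+d)}$. Both of these ingredients — the third-order expansion with a controllable trilinear form and the multi-scale $U$-statistic — are absent from your argument, and without them the case $s\le d/4$ fails.
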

We also obtain a matching minimax lower bound.

\begin{theorem}\label{thm:lowerbound_eigenvalue}
Let $s\geq 2$. Then there exist a constant $c$,  a  simple loop $\Upsilon$ in $\mathbb C$ satisfying \eqref{eq:rezl} with counterclockwise orientation and a $d$-dimensional manifold $M$ (equal to a dilatation of the flat torus) such that for $L>0$ large enough and $\delta>0$ small enough we have for any $n\geq 1$
\begin{equation}
\inf_{\widehat \mu_n} \sup_{f\in \cP_s(\Upsilon,\delta,L)} \E_{f^{\otimes n}}\left[|\widehat \mu_n - \mu_{\Upsilon, f}|\right]
 \geq c \begin{cases}
n^{-\frac{4s}{4s+d}} &\text{if $s\leq \frac d4$,}\\
n^{-\frac 12} &\text{if $s>\frac d4 $.}
\end{cases}
\end{equation}
where the infimum is taken over all estimators $\widehat \mu_n : M^n \to \R$.
\end{theorem}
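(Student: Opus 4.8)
The plan is to obtain the two summands of the rate as separate lower bounds (recall $a+b\asymp\max(a,b)$ for $a,b\ge 0$): it suffices to show the minimax risk is $\gtrsim n^{-1/2}$ and also $\gtrsim n^{-\frac{4s}{4s+d}}$. In both cases I would work on $M=\T^d_\theta$, a flat torus of a fixed side $\theta$, with $\Upsilon$ a small fixed counterclockwise circle around the first nonzero eigenvalue $\lambda^\star=(2\pi/\theta)^2$ of $-\Delta$; its eigenspace $E^\star$ is spanned by the $2d$ exponentials $e^{\pm 2\pi i x_l/\theta}$, $l\in[d]$. For the uniform density $f_0\equiv\theta^{-d}$ one has $\Delta_{f_0}=\Delta$ and the spectral gap around $\lambda^\star$ is a positive constant, so $f_0\in\cP_s(\Upsilon,\delta,L)$ as soon as $\delta$ is small and $L$ large, and likewise for every density close to $f_0$ in $\cC^s$ (the spectrum of $\Delta_f$ varying continuously with $f$). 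The engine of the argument is the second-order expansion of $v\mapsto\mu_{\Upsilon,f_0 e^v}$ at $v=0$, furnished by analytic perturbation theory for the affine family $\Delta_{f_0e^v}=\Delta+\alpha\,\nabla v\cdot\nabla$:
$$
\mu_{\Upsilon,f_0 e^v}=\lambda^\star+L[v]+Q[v,v]+R[v],\qquad L[v]=\tfrac{\alpha}{2d}\tr\!\left((\nabla v\cdot\nabla)\Pi_0\right),\quad Q[v,v]=\tfrac{\alpha^2}{2d}\tr\!\left(\Pi_0(\nabla v\cdot\nabla)S(\nabla v\cdot\nabla)\Pi_0\right),
$$
where $\Pi_0$ is the spectral projector of $-\Delta$ onto $E^\star$ and $S=(\lambda^\star+\Delta)^{-1}(1-\Pi_0)$ is the reduced resolvent. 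Two facts about this expansion do the work. The first is that $L\equiv 0$: computing the trace over the basis $\{\theta^{-d/2}e^{\pm 2\pi i x_l/\theta}\}$ of $E^\star$, each diagonal entry is a multiple of $\int_{\T^d_\theta}\nabla v=0$, so the uniform density is a critical point of $\mu_{\Upsilon,\cdot}$. The second is that, for $v$ whose Fourier spectrum sits at frequencies much larger than $1/\theta$, one has $S=-(-\Delta)^{-1}+O\!\left(\lambda^\star(-\Delta)^{-2}\right)$ on the range of $(\nabla v\cdot\nabla)\Pi_0$, and together with the torus identity $\sum_k|k\cdot\eta|^2=2(2\pi/\theta)^2|\eta|^2$ (sum over the $2d$ frequencies of $E^\star$) this gives
$$
Q[v,v]=-c_{\theta,\alpha,d}\,\|v\|_{\L^2}^2\,(1+o(1))\qquad\text{as the frequency of }v\to\infty,
$$
for an explicit $c_{\theta,\alpha,d}>0$ — that is, $Q$ is, on high frequencies, negative definite and of exact order $\|\cdot\|_{\L^2}^2$. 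This nondegenerate-critical-point structure is precisely what produces the two-regime rate, just as in the estimation of a quadratic functional of a density.

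\noindent\emph{The $n^{-\frac{4s}{4s+d}}$ bound.} I would run Le Cam's method with a $2^m$-point mixture prior, following the classical lower bound for $\int f^2$ (see e.g.\ \cite{birge1995estimation}). Fix a smooth mean-zero $\psi$ supported in the unit ball, put $h_n\asymp n^{-\frac{2}{4s+d}}$ and $\gamma_n\asymp h_n^{\,s}$, take a maximal $2h_n$-net $\{x_k\}_{k\le m}$ of $\T^d_\theta$ (so $m\asymp h_n^{-d}$), set $\psi_k=\psi((\cdot-x_k)/h_n)$ and, for $\omega\in\{\pm 1\}^m$, $f_\omega=f_0+\gamma_n\sum_k\omega_k\psi_k$. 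Then $\int f_\omega=1$, $\inf f_\omega\ge f_0/2$, and $\|f_\omega\|_{\cC^s}\lesssim 1$ (the Hölder--Zygmund seminorm of $\psi((\cdot-x_k)/h_n)$ is $\asymp h_n^{-s}$ and the $\psi_k$ have disjoint supports), so $f_\omega\in\cP_s(\Upsilon,\delta,L)$ for $L$ large and $\delta$ small. Writing $f_\omega=f_0 e^{\tilde v_\omega}$ with $\tilde v_\omega=\log(1+\gamma_n f_0^{-1}\sum_k\omega_k\psi_k)=\gamma_n f_0^{-1}\sum_k\omega_k\psi_k+O(\gamma_n^2)$, and using that the leading term has Fourier spectrum concentrated at frequencies $\asymp 1/h_n$, that its $\L^2$-mass is $\asymp\gamma_n^2$ (because $\sum_k\|\psi_k\|_{\L^2}^2\asymp m h_n^d\asymp 1$), and that $\|\tilde v_\omega\|_{\Cf^2}\asymp\gamma_n h_n^{-2}=h_n^{s-2}\lesssim 1$ (here $s\ge 2$ is used), the expansion together with $L\equiv 0$ gives
$$
\mu_{\Upsilon,f_\omega}-\mu_{\Upsilon,f_0}=-c_{\theta,\alpha,d}\,\gamma_n^2\,f_0^{-2}\!\sum_k\|\psi_k\|_{\L^2}^2\,(1+o(1))+O(\gamma_n^3),
$$
so $|\mu_{\Upsilon,f_\omega}-\mu_{\Upsilon,f_0}|\ge c\,\gamma_n^2\asymp n^{-\frac{4s}{4s+d}}$ for every $\omega$ and $n$ large, with an $\omega$-independent sign. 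Because the $\psi_k$ have disjoint supports,
$$
1+\chi^2\!\left(\tfrac1{2^m}\sum_\omega f_\omega^{\otimes n}\,\Big\|\,f_0^{\otimes n}\right)=\E_{\omega,\omega'}\!\left[\left(1+\gamma_n^2 f_0^{-1}h_n^d\|\psi\|_{\L^2}^2\sum_k\omega_k\omega'_k\right)^{n}\right]\le\exp\!\left(C\,n^2\gamma_n^4 h_n^d\right),
$$
and $n^2\gamma_n^4 h_n^d$ is a constant that is made $<1$ by taking the constants in $h_n,\gamma_n$ small; hence the total variation between the mixture and $f_0^{\otimes n}$ stays bounded away from $1$. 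The standard two-hypotheses reduction (Le Cam's method / fuzzy hypotheses) then yields $\inf_{\widehat\mu_n}\sup_{f\in\cP_s}\E_{f^{\otimes n}}|\widehat\mu_n-\mu_{\Upsilon,f}|\gtrsim\gamma_n^2\asymp n^{-\frac{4s}{4s+d}}$.

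\noindent\emph{The $n^{-1/2}$ bound.} Since $L\equiv 0$ at $f_0$, a two-point perturbation there only moves $\mu_{\Upsilon,\cdot}$ by $O(n^{-1})$, so one must work at a non-constant reference. The map $f\mapsto\mu_{\Upsilon,f}$ is real-analytic on the connected component $\cP_s^0$ of $\cP_s(\Upsilon,\delta,L)$ containing $f_0$ (which also contains the densities $f_\omega$ above, the segments $[f_0,f_\omega]$ lying in $\cP_s$), and is non-constant there by the previous step; hence its Fréchet differential, restricted to the tangent directions $\{v\in\cC^s:\int v=0\}$, is nonzero on a dense open subset of $\cP_s^0$, from which I would extract a smooth density $f^\star$ and $g\in\Cf^\infty(M)$ with $\int g=0$ and $D\mu_{\Upsilon,f^\star}[g]\ne 0$ (the differential of $f\mapsto\mu_{\Upsilon,f}$ at $f^\star$ in direction $g$). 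Then $f^\star+tn^{-1/2}g\in\cP_s^0$ for $n$ large, $\mu_{\Upsilon,f^\star+tn^{-1/2}g}-\mu_{\Upsilon,f^\star}=t n^{-1/2}D\mu_{\Upsilon,f^\star}[g]+O(t^2n^{-1})$, and $\chi^2\!\left((f^\star+tn^{-1/2}g)^{\otimes n}\,\big\|\,(f^\star)^{\otimes n}\right)=O(t^2)$; for $t$ small the two-point inequality gives the lower bound $c\,n^{-1/2}$. Combined with \Cref{thm:cv_eigenvalue}, this identifies the minimax rate.

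\noindent\emph{Main obstacle.} The heart of the proof is the second displayed fact and its use in the bump construction: extracting the exact leading behavior of the quadratic form $Q$ on high-frequency perturbations — showing it is sign-definite and of order $\|\cdot\|_{\L^2}^2$ rather than smaller or indefinite — via the second-order perturbation formula, the high-frequency expansion of the reduced resolvent, and the torus cancellation identity; and then controlling the third- and higher-order remainder $R[\tilde v_\omega]$ by $O(\gamma_n^3)$ uniformly over the $2^m$ sign patterns, which rests on the frequency localization of $\tilde v_\omega$ at scale $h_n$ and on the bound $\|\tilde v_\omega\|_{\Cf^2}\lesssim 1$, i.e.\ on the hypothesis $s\ge 2$. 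The soft step of locating a non-constant density at which $D\mu_{\Upsilon,\cdot}$ is nonzero, while not computational, also requires care (connectedness of the relevant component of $\cP_s$ and analyticity of the spectral projector in $f$).
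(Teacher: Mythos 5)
Your outline is conceptually correct and shares the paper's core strategy: the first differential of $\mu_{\Upsilon,\cdot}$ vanishes at the uniform density, the second differential is sign-definite of order $\|\cdot\|_{\L^2}^2$ on high frequencies, and this nondegenerate-critical-point structure combined with a mixture prior (Birg\'e--Massart / Le Cam with fuzzy hypotheses) gives the $n^{-\frac{4s}{4s+d}}$ rate, while a two-point argument at a nearby density with nonzero first differential gives $n^{-1/2}$. The genuine point of departure is your choice of manifold. The paper uses a torus with $\Q$-free squared side-lengths to force simple eigenvalues; you work on the equi-sided torus, where $\lambda^\star$ has multiplicity $2d$, and you average over the eigenspace. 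This buys something real: after averaging over the $2d$ directions of $E^\star$, the leading symbol $\gamma$ of the second differential becomes the \emph{constant} $-(2\pi/\theta)^2/d$ (your rotation-invariance identity), so the main quadratic form is exactly $-c\|v\|_{\L^2}^2$ on mean-zero functions. Since the $\psi_k$ are disjointly supported and mean-zero, they are already $\L^2$-orthogonal, so the $2^m$-way sum has no cross-terms in the main part — this removes the need for the paper's Poisson-summation estimate on $P[\chi_\jbf,\chi_{\jbf'}]$, $\jbf\neq\jbf'$, and for the small packing-fraction parameter $t$, and it also removes the need to carefully choose $\chi$ so that $\int\zeta\,|\mathcal F\chi|^2 > 0$. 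Averaging over $\pm\kbf_\star$ additionally kills the odd $O(1/|\kbf|)$ part of the paper's remainder $\delta(\kbf,\kbf_\star)$, downgrading it to $O(1/|\kbf|^2)$, i.e.\ an $\H^{-1}$-controlled error rather than $\H^{-1/2}$. These are real simplifications compared to the paper's Fourier computation with a fixed $\kbf_\star$.

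What remains unjustified, and is not merely cosmetic, is the ``$(1+o(1))$'' step: you need $|\,Q_\delta[\tilde v_\omega,\tilde v_\omega]\,| = o(\|\tilde v_\omega\|_{\L^2}^2)$ and $|R[\tilde v_\omega]| = O(\gamma_n^3)$, \emph{uniformly over all $2^m$ sign patterns}. Your heuristic ``Fourier spectrum concentrated at $\asymp 1/h_n$'' is not literally true for a sum of randomly-signed bumps of width $h_n$ — the Fourier transform is broadband, and mean-zero suppresses low frequencies only linearly. The paper handles the analogous term by bounding $\|w_\tau\|_{(\cC^{1/2})^*}$ via the centred pairing $\langle\chi_\jbf, v - v(x_\jbf)\rangle$ and interpolating against $\L^2$ to control $\|w_\tau\|_{\H^{-1/2}}$; the same device applied to $\|\tilde v_\omega\|_{\H^{-1}}$ would close your argument, but it should be spelled out rather than asserted. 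You also replace the paper's strong vanishing-moment conditions $\partial_\xi^{\mathbf m}\mathcal F\chi(0)=0$, $|\mathbf m|\le I$, with a single mean-zero condition; that is consistent with needing only $\H^{-1}$-smallness, but the claim that mean-zero suffices deserves a check for small $d$, where $\int |\hat\psi(\eta)|^2|\eta|^{-2}\dd\eta$ can fail to converge without extra vanishing moments. Finally, your reference to real-analyticity of $f\mapsto\mu_{\Upsilon,f}$ in the $n^{-1/2}$ step is more than is needed: non-constancy plus the continuity of the first differential (which the paper establishes) already forces the differential to be nonzero at some point near $f_0$, which is all the two-point argument requires.
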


\begin{remark}
In fact, our methods allow us to estimate the quantity
$$
\tr \chi(-\Delta_f) = \sum_{\lambda \in \sigma(-\Delta_f)} \chi(\lambda)
$$
for any $\chi \in \mathrm C^3_c(\R)$ with the same convergence rates as in \Cref{thm:lowerbound_eigenvalue}, see \Cref{sec:perturbation_function}.
\end{remark}

\subsection{Functional estimation}

The problem of estimating $\mu_{\Upsilon, f}$ 
falls within the scope of  functional estimation, i.e., estimating a functional $\mathrm{T}:\Theta\to \R$ defined over a set $\Theta$ of densities at a fixed density $f\in \Theta$, while observing  $n$ i.i.d.~samples $X_1,\dots,X_n\sim f$. 
This constitutes a classical question in statistics, that has been studied since at least   the 60s with the estimation of the squared norm $\mathrm{T}_f = \int f^2$ of a density \cite{lehmann1963nonparametric, bhattacharyya1969estimation, antille1972linearite}. 
The estimation of other functionals such as the Fisher information \cite{bhattacharya1967estimation} or the entropy \cite{levit1978asymptotically} was then introduced in the 70s and the 80s, see \cite[Section 4.4]{rao1983nonparametric} for a state of the art at the beginning of the 80s. Significant progress was achieved by Bickel and Ritov  \cite{bickel1988estimating} who showed the following  elbow effect. Consider the quadratic functional ${\mathrm{T}}_f = \int f^2$. As long as $f\in \Cf^s$ for $s> \frac 14$, estimators attaining the parametric rate of convergence exist. However, for $s\leq \frac 14$, the rate is deteriorated to  $\cO(n^{-\frac{4s}{4s+1}})$. They further show that this rate is minimax optimal on appropriate H\"older balls. Besides,  efficient estimation is possible in the smooth regime: the estimators are asymptotically normal with minimal asymptotic variance. See also \cite{donoho1990minimax,laurent1996efficient, laurent1997estimation, laurent_adaptive, cailow, nickl} where other estimators of quadratic functionals are developed.

To move beyond the estimation of quadratic functionals, a central idea is to employ debiasing techniques. 
Various approaches have been developed in this direction; we refer the reader to the introduction of \cite{koltchinskii_functional_2023} for a comprehensive overview. Our method follows from the theory of higher order influence functions developed by Robins, Li, Tchetgen and van der Vaart, see \cite{robins2008higher}, that we now present. To fix ideas, suppose first that the functional $\mathrm{T}$ is Lipschitz-continuous with respect to the $\L^2$-norm. Let $f\in \Cf^s$ be a density on $\R^d$ with bounded support. It is well-known that one can construct estimators $\hat f$ satisfying the rate of convergence $\E[\|\hat f-f\|^2_{\L^2}]\lesssim n^{-\frac{2s}{2s+d}}$, see e.g., \cite{tsybakov_introduction_2009}. By Lipschitz continuity, this directly yields the same rate for the plug-in estimator $\hat {\mathrm{T}}_0 = {\mathrm{T}}_{\hat f}$, namely
\begin{equation}\label{eq:estimator_bias0}
    \E[|\hat {\mathrm{T}}_0-{\mathrm{T}}_f|]\lesssim n^{-\frac{s}{2s+d}}.
\end{equation}
The estimator $\hat {\mathrm{T}}_0$  serves as a preliminary, but potentially biased, estimator. Suppose now that the functional $\mathrm T$ is Fréchet twice-differentiable over $\L^2$. Then a first-order Taylor expansion around $\hat f$ yields
\begin{equation}\label{eq:order1_expansion}
     {\mathrm{T}}_f= {\mathrm{T}}_{\hat f} +  {\mathrm{T}}^{(1)}_{\hat f} [f-\hat f] + \cO(\|f-\hat f\|_{\L^2}^2),
\end{equation}
where ${\mathrm{T}}^{(1)}_{\hat f}$ is a linear form over $\L^2$. This expansion  suggests an improved estimator of the form
\begin{equation}
    \hat {\mathrm{T}}_1 = \hat {\mathrm{T}}_0 + \hat B_1,
\end{equation} 
 where $\hat B_1$ is an estimator of   the linear term $B_1={\mathrm{T}}^{(1)}_{\hat f} [f-\hat f]$. 
By the Riesz representation theorem, there exists a function $\psi_{\hat f}\in \L^2$ such that ${\mathrm{T}}^{(1)}_{\hat f}[u] = \int \psi_{\hat f}\,u$, allowing us to estimate $B_1$ by an empirical mean on a second independent sample. Such an estimator $\hat B_1$ satisfies $\E[|\hat B_1-B_1|]\lesssim n^{-1/2}$. Putting everything together, the risk of $\hat {\mathrm{T}}_1$ satisfies
\begin{equation}\label{eq:estimator_bias1}
    \E[|\hat {\mathrm{T}}_1- {\mathrm{T}}_f|] \lesssim n^{-\frac{1}2}+ \mathcal{O}(\E[\|f-\hat f\|^2_{\L^2}]) \lesssim n^{-\frac 12} + n^{-\frac{2s}{2s+d}}.
\end{equation}
Comparing \eqref{eq:estimator_bias1} with \eqref{eq:estimator_bias0}, we observe that debiasing can reduce the bias quadratically, without significantly increasing the variance. 
This debiasing method was first introduced by Ibragimov, Nemirovski, and Khas'minskii in the Gaussian white noise model \cite{ibragimov1986some}, and later extended to settings such as regression and density estimation; see \cite{donoho1990minimax, nemirovskii1991necessary, goldstein1992optimal, birge1995estimation, nemirovski2000topics} for further developments.

This approach extends beyond first-order corrections: higher-order Taylor expansions can be used to further reduce bias, with remainder term at order $J$ typically decaying as $\cO(n^{-\frac{Js}{2s+d}})$. In dimension $d=1$, Birgé and Massart \cite{birge1995estimation} applied this methodology with $J=3$ to estimate  integral functionals of the form
\begin{equation}\label{eq:integral_functional}
    {\mathrm{T}}_f = \int \psi(f(x))\dd x
\end{equation}
where $\psi$ is a smooth function and $f\in \Cf^s$ is a density on $\R$. This framework includes, as special cases, key quantities such as the entropy. Their results, together with following works conducted by Laurent \cite{laurent1997estimation}, and Kerkyacharian and Picard \cite{kerkyacharian1996estimating}, show that such integral functionals in dimension $1$  can be estimated at rate $\cO(n^{-\frac{4s}{4s+1}}+n^{-\frac 12})$. Birgé and Massart also established that this rate is the minimax rate, and that efficient estimation is possible in the regime $s>\frac 14$. 

This idea has since been applied to various problems in nonparametric statistics through the use of higher-order influence functions, see \cite{van2014higher, liu2017semiparametric,liu2021adaptive}. In these approaches, the key step is  to estimate the multilinear forms  $G$ appearing in the Taylor expansion of the functional $\mathrm{T}$. 
In this work, we identify sufficient conditions, formulated solely in terms of operator norms of $G$, independent of any particular representation, that allow for the estimation of $G[f,\dots,f]$ at the rate 
\begin{equation}\label{eq:the_rate}
   \mathcal O(n^{-\frac{4s}{4s+d}} + n^{-\frac 12})
\end{equation}for any density $f\in \Cf^s$ on a $d$-dimensional domain. As  is standard in this literature, our estimation method relies on the use of $U$-statistics and wavelet-based projection estimators. 
If the Taylor expansion of  a functional $\mathrm{T}$ consists of multilinear forms satisfying these conditions, then the debiasing procedure can be applied  to estimate ${\mathrm{T}}_f$
  at the rate \eqref{eq:the_rate}.
We refer to such functionals as  \textit{$s$-regular functionals}, see \Cref{def:regular_functional}.

While our primary motivation is the estimation of eigenvalues of weighted Laplace operators $\Delta_f$,  we believe that our set of  sufficient conditions provides a simple toolkit to analyze the performance of debiasing estimators for functional estimation problem, that could potentially be applicable to a broader range of functionals, such as:
\begin{enumerate}
    \item  eigenvalues of other differential operators depending on a density $f$;
    \item integral functionals $\int \psi(u_f)$ of the solution $u_f$ to a PDE depending on a density $f$;
    \item  Besov or Sobolev norms of a density $f$;
    \item optimal transport costs between $f$ and a fixed reference density $g$;
    \item solutions to optimal control problems parametrized by a density $f$.
\end{enumerate}

\subsection{Related work}\label{sec:related}

 The graph Laplacian matrix $L_n$ can naturally be extended to an operator acting on smooth functions on $M$. For a smooth function $u$ and a point $x \in M$, the rate of convergence of $L_n u(x)$ to $\Delta_{f} u(x)$, known as the \textit{pointwise convergence} rate of the graph Laplacian, has been studied by multiple authors for kernel-based graphs in \cite{hein2005graphs, belkin2006convergence,singer2006graph, gine2006empirical, berry2016variable} and for neighborhood graphs in \cite{ting2011analysis, guerin2022strong}. However, for most applications, the convergence of the spectrum of graph Laplacians, known as \textit{spectral convergence}, is more relevant. Yet, relating pointwise convergence results to spectral convergence is far from trivial. Although spectral consistency was established as early as 2006 by Belkin and Niyogi \cite{belkin2006convergence}—see also \cite{von2008consistency, trillos2018variational}—the first quantitative spectral rates appeared only much later. García Trillos, Gerlach, Hein and Slepčev \cite{garcia2020error} obtained the rate $\cO(n^{-\frac{1}{2d}})$ for both eigenvalues and eigenfunctions when the underlying density $f$ is Lipschitz continuous. 
Their technique relies on fine stability properties of the spectra of metric-measure Laplacians with respect to the underlying measure, see also \cite{burago2015graph}. This technique was further improved by Calder and García Trillos \cite{calder2022improved}, who obtained a convergence rate of order $\cO(n^{-\frac{1}{4+d}})$ when the density $f$ is of H\"older regularity $\Cf^s$ for some $s>2$. Since then, this rate has   been steadily improved, with a series of papers over the past few years obtaining different convergence rates under slightly different conditions. For instance, Wormell and Reich \cite{wormell2021spectral} obtained a spectral error of order $\cO(n^{-\frac{4}{12+d}})$ for $s>2$ and $\alpha=1$ on the flat torus, Cheng and Wu \cite{cheng2022convergence} obtained an eigenvalue error of order $\cO(n^{-\frac{2}{4+d}})$ and an eigenfunction error of order $\cO(n^{-\frac{2}{6+d}})$ for a smooth density $f$ and $\alpha=0$, and Wahl \cite{wahl2024kernel} obtained a spectral error of order $\cO(n^{-\frac{2}{8+d}})$ for a uniform density $f$. 

Note that the results of our paper indicate that all these rates are not minimax, at least if the manifold $M$ is known.

\subsection{Open questions} 
We conclude this introductory section with a list of open questions raised by our work.

\subsubsection*{The bounded domain case.} Instead of considering datasets  supported on compact manifolds, it seems very reasonable to consider the case where the dataset is supported on a bounded domain $\Omega\subset \R^d$. 
This raises natural questions: 
does the plug-in estimator still achieve minimax rates in this setting? Can the debiasing techniques we developed be extended to handle boundary effects?

\subsubsection*{The unknown manifold  case.} Suppose  that \( M \) is an \textit{unknown} \( d \)-dimensional submanifold of \( \mathbb{R}^D \). Can our estimators be adapted to this case, at least in principle? We conjecture that if \( M \) is sufficiently smooth, one could adapt the chart-based manifold estimators of Aamari and Levrard \cite{aamarilevrard} to construct an estimated manifold \( \hat M \) that is \( \Cf^k \)-close to \( M \). Our method could then be adapted by computing the spectrum of a modified weighted Laplace operator based on the metric \( \hat g \) of \( \hat M \) rather than the true metric \( g \) of \( M \). Aamari and Levrard show that their estimators achieve a rate of the form \( n^{-\frac{m-k}d} \) in \( \Cf^k \)-norm (up to logarithmic factors) if the manifold is of regularity $m$. This suggests that \( \hat g \) and \( g \) can be made arbitrarily close if we assume that the manifold is of class $\Cf^\infty$. We thus conjecture that crude stability results for the spectrum of weighted Laplacians under perturbations of the metric could be developed, allowing our estimators to be used on \( \hat M \) without loss in convergence rate. In summary, we conjecture that the absence of direct access to the true manifold \( M \) does not deteriorate the statistical rate of convergence for spectrum estimation should $M$ be smooth enough, although a precise analysis remains to be conducted.

\subsubsection*{Minimax optimality of graph Laplacians} Instead of relying on the intricate construction described in the previous paragraph, it would be  more satisfactory to prove that graph Laplacians achieve the minimax rate. This question has already been addressed for $s=2$ and the $\H^1$-loss in \cite{trillos2025minimax}, but remains open (i) for $s>2$, (ii) for eigenfunction estimation using the more common $\L^2$-loss, and (iii) for eigenvalue estimation.  We believe that bias-reduction techniques would likely be necessary to attain minimax rates for eigenvalue estimation.

\subsubsection*{Towards score-based spectral methods}  Score-based methods are powerful tools for estimating the score function $\nabla \log f$ of a density $f$, with neural networks achieving impressive empirical results in  high-dimensional settings. Our theoretical analysis shows that accurately estimating the score function is sufficient to obtain precise approximations of the spectrum of the weighted Laplace operator $\Delta_f$. 
This insight motivates the following two-step procedure:
\begin{enumerate}
    \item Estimate the score function $\nabla \log f$ via score matching \cite{hyvarinen2005estimation} or denoising score matching \cite{vincent2011connection}, yielding an approximation $\hat s$;
    \item Use this estimate to define the operator $\hat{\Delta} = \Delta + \alpha \hat s\cdot \nabla$, and approximate its spectrum using, e.g., modern neural PDE solvers, see  \cite{huang2025partial} for a review.
\end{enumerate}

Comparing the spectral properties and embedding quality of this  approach with standard techniques like spectral clustering or classical dimension reduction methods would be an interesting direction for further investigation.

\subsection*{Organization}
\Cref{sec:functional} deals with the general problem of  functional estimation over H\"older-Zygmund spaces, and can be read independently from the rest of the paper. Spectral estimation of weighted Laplace operators is discussed in \Cref{sec:spectral_estimation}. Minimax lower bounds are proven in \Cref{sec:lower_bounds}. Most technical results are collected in the Appendix.

\section{Regular functional estimation}\label{sec:functional}

We focus on the problem of estimating the value of a real-valued functional ${\mathrm{T}}_f$ depending on a density $f$, under the assumption that $f$ belongs to a ball in the H\"older-Zygmund space $\cC^s$ for some $s>0$.

Let $T : \Theta \to \R$ be a map over an open subset $\Theta \subset \mathscr C^s$. We say that \textit{$\mathrm{T}$ can be estimated at a rate $(r_n)_{n\geq 1}$ over $\mathscr C^s$} if for all $R>0$ there is $C>0$ such that the following holds. For any $n\geq 1$ there exists an estimator $\widehat {\mathrm{T}}_n : M^n \to \R$,  such that for each density $f\in \Theta$ with $\|f\|_{\cC^s}\leq R$, we have
\[
\mathbf E_{f^{\otimes n}}[({\mathrm{T}}_f - \hat {\mathrm{T}}_n)^2]\leq Cr_n^2.
\]
In what follows for simplicity we will write ${\hat{\mathrm T}} = \hat {\mathrm{T}}_n$ when $n$ is implicit.
When  a parametric rate of convergence holds, that is when $r_n=n^{-1/2}$, we will additionally show that the estimators we propose are \emph{asymptotically efficient}, see \cite[Chapter 25]{van2000asymptotic}. The  efficient influence function $ \psi_f$ is defined as a function $\psi_f\in \L^2$  such that for all $g\in \cC^s$ with $\int gf=0$,
\begin{equation}\label{def:efficient}
    \frac{{\mathrm{T}}_{f(1+tg)}-{\mathrm{T}}_f}{t} \xrightarrow[t\to0]{} \int \psi_f g f 
\end{equation}
(should such a function $\psi_f$ exist). 
Remark that ${\mathrm{T}}_{f(1+tg)}$ is  well-defined for $t$ small enough as $\Theta$ is open. 
The function  $\psi_f$ is not uniquely defined, but only up to an additive constant. With a slight abuse, we will still refer to $\psi_f$ as \emph{the}  efficient influence function any such function. We say that an estimator ${\hat{\mathrm T}}$ of ${\mathrm{T}}_f$ is \emph{asymptotically efficient} at $f$ if it holds that
\begin{equation}
    {\hat{\mathrm T}}-{\mathrm{T}}_f = \frac{1}{n} \sum_{i=1}^n (\psi_f(X_i)-\E[\psi_f(X_i)]) + o_{\P}(n^{-1/2}).
\end{equation}
In particular, by the central limit theorem, any asymptotically efficient estimator is asymptotically normal with asymptotic variance $\Var_{X\sim f}(\psi_f(X))$. This variance should be thought of as the minimal asymptotic variance an estimator of ${\mathrm{T}}_f$ can attain up to a factor $1/n$, an idea which can be made precise, see \cite[Chapter 8]{van2000asymptotic}.
\medskip

\begin{remark}
A bounded domain $\Omega\subset \R^d$ can be isometrically embedded in the  flat torus $\T^d$. This remark implies that all the results in this section also applies to densities $f$ with H\"older-Zygmund regularity on $\R^d$, as long as they are supported on a fixed, known set $\Omega$. Our results however do not cover densities with unbounded supports or densities that belong to H\"older-Zygmund spaces on domains (e.g., $f$ is the uniform density on $\Omega$).
\end{remark}

The debiasing method we propose requires estimating multilinear forms appearing in the Taylor expansion of the functional $\mathrm{T}$. We  give a set of conditions that allows for the estimation of such multilinear forms. Unlike previous results appearing in the literature that rely on  the multilinear forms having specific expression (such as the one appearing in \Cref{lem:example_form}), our conditions are only phrased in terms of certain mapping properties of the multilinear form, and are blind to its exact expression. 

Recall that the Hilbert-Schmidt norm of an operator $G:\cH_1\to \cH_2$ between two separable Hilbert spaces is defined as 
$$
\|G\|_{\mathrm{HS}} = \p{\sum_{i_1,i_2\geq 1} |\dotp{e^2_{i_2},G[e^1_{i_1}]}_{\cH_2}|^2}^{1/2},
$$
where $(e^j_i)_{i\geq 1}$ is an orthonormal basis of $\cH_j$ for $j=1,2$.  We also let $\cL(E)$ be the set of bounded operators $E\to E$.

\begin{definition}[Regular multilinear form]\label{def:regular}
Let $s, L>0$.
    \begin{enumerate}[start=1, label={\emph{(G\arabic*)}}]
        \item \label{G1} A linear form $G$ is said to be an $s$-regular form with norm $L$ if there exists $\psi\in \cC^s$ with $\|\psi\|_{\cC^s}\leq L$ such that $G:v\mapsto \int v\psi$. 
        \item \label{G2} A symmetric bilinear form $G$ is said to be an $s$-regular form with norm $L$ if for all $v\in \cC^s$, $G[\cdot,v]$ is an $s$-regular form with norm $L\|v\|_{\cC^s}$  and if $G:\L^2\times \L^2 \to \R$  has norm smaller than $L$.
        \item \label{G3} A symmetric trilinear form $G$ is said to be an $s$-regular form with norm $L$ if for all $v\in \cC^s$,  $G[\cdot,\cdot,v]$ is an $s$-regular bilinear form  with norm $L\|v\|_{\cC^s}$, if $G: \L^\infty \times \L^\infty \times \L^\infty\to \R$ has norm smaller than $L$ and if for $t=(s+d)/2$,  
        \begin{equation}\label{eq:HS_condition}
            G: \H^{t}\times \H^{t}\times \L^{2}\to \R
        \end{equation}
         has Hilbert-Schmidt norm smaller than $L$.
    \end{enumerate}
\end{definition}

\begin{remark}\label{remark:sufficient_G3}
 We show in \Cref{app:hilbert-schmidt} that the Hilbert-Schmidt condition \eqref{eq:HS_condition}  is implied by a control of the operator norm of  $G:  \L^2\times \L^2 \times \L^2\to \R$, which is typically easier to verify in practice.
\end{remark}

\begin{lemma}\label{lem:example_form}
Let $s> 0$.  Let $J\in \{1,2,3\}$ and define a $J$-multilinear form $G$ by
\begin{equation}\label{eq:example_form}
            G[v_1,\dots,v_J]= \int  v_1\cdots v_J\cdot \psi
        \end{equation}  for some function $\psi\in \cC^s$.
        Then, $G$ is an $s$-regular form.
\end{lemma}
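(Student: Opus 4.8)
\emph{Proof sketch.} The plan is to argue by induction on $J\in\{1,2,3\}$, using at each step that multiplying one argument $v_i$ into the weight turns a $J$-linear form of the shape \eqref{eq:example_form} into a $(J-1)$-linear form of the same shape, with new weight $v_i\psi$. The only analytic input driving the induction is the algebra property of Hölder--Zygmund spaces, $\|v\psi\|_{\cC^s}\le C\|v\|_{\cC^s}\|\psi\|_{\cC^s}$ (see \Cref{app:besov}), together with the embedding $\cC^s\hookrightarrow\L^\infty$, valid for $s>0$; these automatically produce the norm bounds of the form $L\|v\|_{\cC^s}$ demanded by \ref{G2} and \ref{G3}, with $L$ a fixed multiple of $\|\psi\|_{\cC^s}$.

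The base case $J=1$ is immediate, since $v\mapsto\int v\psi$ is itself of type \ref{G1} with norm $\|\psi\|_{\cC^s}$. For $J=2$: given $v\in\cC^s$, the partial form $G[\cdot,v]\colon w\mapsto\int w\,(v\psi)$ is of type \ref{G1} with weight $v\psi\in\cC^s$, hence $s$-regular with norm $\lesssim\|v\|_{\cC^s}\|\psi\|_{\cC^s}$, and Cauchy--Schwarz gives $|G[v_1,v_2]|\le\|\psi\|_{\L^\infty}\|v_1\|_{\L^2}\|v_2\|_{\L^2}$, so $G$ is bounded on $\L^2\times\L^2$. For $J=3$: the partial form $G[\cdot,\cdot,v]$ is of type \ref{G2} with weight $v\psi\in\cC^s$, so the $J=2$ case gives the bilinear regularity with the correct norm, while boundedness on $\L^\infty\times\L^\infty\times\L^\infty$ is clear because $\psi\in\L^1(M)$.

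The one point requiring genuine work is the Hilbert--Schmidt condition \eqref{eq:HS_condition} of \ref{G3}; I note that \Cref{remark:sufficient_G3} does not apply here, since this $G$ is \emph{not} bounded on $\L^2\times\L^2\times\L^2$ (the product of three $\L^2$ functions need not be integrable). Set $t=(s+d)/2$, so that $t>d/2$ as $s>0$; then $\H^t\hookrightarrow\Cf^0(M)$, and \eqref{eq:example_form} is well defined on $\H^t\times\H^t\times\L^2$. Let $(\phi_i)_i$ be an $\L^2$-orthonormal basis of eigenfunctions of $-\Delta$ with eigenvalues $\lambda_i\ge 0$, so that $e_i=(1+\lambda_i)^{-t/2}\phi_i$ is an orthonormal basis of $\H^t$; summing $|G[e_i,e_j,f_k]|^2$ over an $\L^2$-orthonormal basis $(f_k)_k$ and applying Parseval gives
\[
\|G\|_{\mathrm{HS}}^2=\sum_{i,j}\|e_ie_j\psi\|_{\L^2}^2\le\|\psi\|_{\L^\infty}^2\sum_{i,j}\int_M|e_i|^2|e_j|^2\,\dd\vol_g=\|\psi\|_{\L^\infty}^2\int_M\Big(\sum_i|e_i(x)|^2\Big)^2\dd\vol_g(x).
\]
Finally $\sum_i|e_i(x)|^2=\sum_i(1+\lambda_i)^{-t}|\phi_i(x)|^2=\|\delta_x\|_{\H^{-t}}^2$, which is bounded uniformly in $x\in M$ because $\H^t\hookrightarrow\Cf^0(M)$ makes the evaluation functionals $\delta_x$ uniformly bounded on $\H^t$; hence $\|G\|_{\mathrm{HS}}^2\lesssim\|\psi\|_{\L^\infty}^2\,\vol(M)\lesssim\|\psi\|_{\cC^s}^2$. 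Collecting the three families of bounds and taking $L$ to be a suitable multiple of $\|\psi\|_{\cC^s}$ finishes the proof. I expect this Hilbert--Schmidt estimate to be the only real obstacle: everything else is bookkeeping with the algebra property and Hölder's inequality, whereas here one genuinely needs that the spectral function of $(1-\Delta)^{-t}$ is uniformly bounded, which holds exactly when $2t>d$ --- precisely the role of the exponent $t=(s+d)/2$ with $s>0$.
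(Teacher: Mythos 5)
Your proof is correct, and the cases $J=1,2$ together with the bookkeeping match the paper exactly. The only substantive part --- the Hilbert--Schmidt bound for $J=3$ --- is where your argument diverges slightly in presentation. Both you and the paper expand in the $\H^t$-orthonormal basis $(e_i)=((1+\lambda_i)^{-t/2}\phi_i)$ and arrive at needing a uniform bound on the spectral function $\rho(x)=\sum_i(1+\lambda_i)^{-t}|\phi_i(x)|^2$. The paper routes this through the general kernel estimate of \Cref{lem:G3_from_kernel}, which asks for a pointwise bound $|\Lambda_y^{-t}K_i(x,y)|\lesssim 1+|x-y|^{-d+t}$ on the Schwartz kernels of the $B_i$'s; specializing to $B_i=1$, one invokes the Bessel-potential kernel estimate and integrates. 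You instead observe directly that $\rho(x)=\|\delta_x\|_{\H^{-t}}^2$ and that the Sobolev embedding $\H^t\hookrightarrow\Cf^0(M)$ (valid since $t=(s+d)/2>d/2$) gives $\sup_x\|\delta_x\|_{\H^{-t}}<\infty$. This is a cleaner and more self-contained path for this particular lemma; it buys economy, since you need neither the kernel-estimate apparatus of \Cref{app:kernel_estimates} nor the Bessel-kernel asymptotics. What it does not buy is generality: the paper states \Cref{lem:G3_from_kernel} for arbitrary $\L^2$-bounded operators $B_i$ with resolvent-like kernels precisely because the eigenvalue functional in \S\ref{subsec:endproofeiganvalue} requires the same estimate with $B_i$ replaced by compositions of resolvents $R_f(z)$ and first-order operators, where the identification $\rho_i(x)=\|B_i^*\delta_x\|_{\H^{-t}}^2$ does not reduce to the diagonal of $(1-\Delta)^{-t}$ and the explicit kernel bound becomes necessary. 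Your remark that \Cref{remark:sufficient_G3} does not apply (because $\int v_1v_2v_3\,\psi$ is not bounded on $(\L^2)^3$) is also correct and a point the paper leaves implicit.
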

\begin{proof}
    The result follows from the definition for $J=1$. For $J=2$, it follows from Cauchy-Schwarz inequality and   the fact that $\cC^s$ is a multiplication algebra, see \Cref{app:besov}. The only nontrivial condition to check for $J=3$ is the Hilbert-Schmidt condition in \emph{\ref{G3}}. We show in \Cref{app:hilbert-schmidt} that the Hilbert-Schmidt norm of $G:\H^{t}\times \H^{t}\times \L^2$ is finite for all $t>d/2$, implying in particular the result for $t=(s+d)/2$.
\end{proof}
As already mentioned in the introduction, the estimation of $G[f,\dots,f]$ for $G$ the functional in  \eqref{eq:example_form}  constitutes a classical problem in statistics. The previous lemma states that they fall within the scope of $s$-regular functionals for which we will be able to give rates of estimation.

\begin{definition}[Regular functional]\label{def:regular_functional}
Let $s> 0$  and $L>0$. Let $J_{\max}=2$ if $s> d/4$ and $J_{\max}=3$ otherwise.  
We say  that $\mathrm{T}:\Theta \to \R$ is an $s$-regular functional over $ \Theta$  with norm smaller than $L$ if  $\mathrm{T}$ is bounded by $L$  and if there exist $s'<s$ and $\eta>0$ such that for every $f,f'\in\Theta$, with $h=f'-f$ satisfying $\|h\|_{\cC^{s'}}\leq \eta$, it holds  
\begin{equation}\label{eq:taylor_expansion_T}
\begin{split}
{\mathrm{T}}_{f'}&= {\mathrm{T}}_f + \sum_{J=1}^{J_{\max}} {\mathrm{T}}^{(J)}_f[h,\cdots,h]+ R_f(h),
\end{split}
\end{equation}
where, for $J\in [J_{\max}]$, ${\mathrm{T}}^{(J)}_f$ is an $s$-regular form with norm smaller than $L$ and $|R_f(h)|\leq L \|h\|_{\L^\infty}^{J_{\max}+1}$.
\end{definition}
In the previous definition, the choice of $J_{\max}=2$ or $J_{\max}=3$ indicates where the Taylor expansion should be stopped to estimate the functional. If $s>d/4$, a cubic remainder of order $\cO(n^{-\frac{3s}{2s+d}})$ is negligible in front of $\cO(n^{-\frac 12})$, so that stopping the Taylor expansion at $J_{\max}=2$ is enough. If $s\leq d/4$, the cubic remainder is not negligible, and we must stop the Taylor expansion at $J_{\max}=3$ to obtain a negligible remainder of order four.

We now describe a simple family of regular functionals.
\begin{lemma}\label{lem:example_functional}
Let $s>0$ and let $\Theta\subset \cC^s$ be a set of functions taking values in an open interval $I$. Let $t=\max(s+3,4)$ and let $\phi: I\to \R$ be a function in $\cC^{t}$. Then, $\mathrm{T}:f\in \Theta\mapsto  \int  \phi(f(x))\dd x$ is an $s$-regular functional  over $\Theta$.
\end{lemma}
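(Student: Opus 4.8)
The plan is to obtain the Taylor expansion \eqref{eq:taylor_expansion_T} by expanding $\phi$ pointwise. All constants below, and the norm $L$, are allowed to depend on a bound $R$ for $\|f\|_{\cC^s}$ over $\Theta$, on $\phi$, $M$, $s$, and on the distance from the ranges $\{f(M):f\in\Theta\}$ to $\partial I$. Since $M$ is compact and $\cC^s\hookrightarrow L^\infty$, the ranges $f(M)$, $f\in\Theta$, all lie in a fixed compact interval $K\subset I$ with $\dist(K,\partial I)>0$. Fix any $s'\in(0,s)$ and, using $\cC^{s'}\hookrightarrow L^\infty$, choose $\eta>0$ so small that $\|h\|_{\cC^{s'}}\le\eta$ forces $\|h\|_{\L^\infty}\le\tfrac12\dist(K,\partial I)$. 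Then for $f,f'=f+h\in\Theta$ with $\|h\|_{\cC^{s'}}\le\eta$, the segment $f_u:=(1-u)f+uf'$, $u\in[0,1]$, stays in a fixed compact $K'\subset I$ by convexity of $I$. Since $t=\max(s+3,4)\ge J_{\max}+1$, $\phi\in C^{J_{\max}}$ with $\phi^{(J_{\max})}$ Lipschitz on $K'$, and Taylor's formula with integral remainder gives, pointwise,
\[
\phi(f'(x))=\sum_{J=0}^{J_{\max}}\frac{\phi^{(J)}(f(x))}{J!}\,h(x)^J+R(x),
\]
where $R(x)=\tfrac{h(x)^{J_{\max}}}{(J_{\max}-1)!}\int_0^1(1-u)^{J_{\max}-1}\bigl(\phi^{(J_{\max})}(f_u(x))-\phi^{(J_{\max})}(f(x))\bigr)\dd u$ (using $\int_0^1(1-u)^{J_{\max}-1}\dd u=1/J_{\max}$). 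Integrating over $M$ produces \eqref{eq:taylor_expansion_T} with $\mathrm{T}^{(J)}_f[v_1,\dots,v_J]=\tfrac1{J!}\int\phi^{(J)}(f)\,v_1\cdots v_J\,\dd\vol_g$ and $R_f(h)=\int R\,\dd\vol_g$.

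Next, I check the three requirements of \Cref{def:regular_functional}. Boundedness is immediate: $|\mathrm{T}_f|\le\Vol(M)\sup_K|\phi|$. The remainder bound follows at once from Lipschitz continuity of $\phi^{(J_{\max})}$ on $K'$: one has $|\phi^{(J_{\max})}(f_u(x))-\phi^{(J_{\max})}(f(x))|\le C|h(x)|$ uniformly in $u$, hence $|R_f(h)|\le C\,\Vol(M)\,\|h\|_{\L^\infty}^{J_{\max}+1}$. The remaining point is that each $\mathrm{T}^{(J)}_f$, $J\in\{1,\dots,J_{\max}\}$, is an $s$-regular form with controlled norm. These forms are exactly of the type treated in \Cref{lem:example_form}, with $\psi=\phi^{(J)}(f)/J!$, and are manifestly symmetric; tracking the constants in the proof of that lemma, $\mathrm{T}^{(J)}_f$ is $s$-regular with norm bounded by a constant times $\|\phi^{(J)}(f)\|_{\cC^s}$. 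Hence everything reduces to showing that $\|\phi^{(J)}(f)\|_{\cC^s}\le C$ for $J=1,\dots,J_{\max}$, with $C$ depending only on the data listed above.

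This last estimate is where the value of $t$ enters, and it is the main technical obstacle. One has $\phi^{(J)}\in\cC^{t-J}$, and $t-J=\max(s+3-J,\,4-J)\ge\max(s,1)$ for $J\in\{1,2,3\}$: the exponent $t$ is tuned precisely so that each of $\phi',\phi'',\phi'''$ has H\"older--Zygmund regularity at least $\max(s,1)$, and in particular at least $s$. The bound then follows from the composition estimate for H\"older--Zygmund spaces (see \Cref{app:besov}): if $F\in\cC^{\sigma}$ with $\sigma\ge\max(s,1)$ and $u\in\cC^s$ takes values in a compact subset of the domain of $F$, then $F\circ u\in\cC^s$ with a norm bound depending only on $s$, $\sigma$, $\|u\|_{\cC^s}$ and $\|F\|_{\cC^\sigma}$ near the range of $u$; applying this with $u=f$ and $F=\phi^{(J)}$ yields the claim. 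The subtle case is $J=J_{\max}=3$, for which (when $s\ge1$) $\phi^{(3)}$ has H\"older--Zygmund regularity exactly $s$, so one sits at the endpoint $\sigma=s$ of the composition estimate rather than in the easy range $\sigma>s$; this endpoint is handled via the chain rule together with the multiplication-algebra property of the spaces $\cC^\rho$, $\rho>0$. Once the three forms are shown to be $s$-regular, \Cref{def:regular_functional} is satisfied and the proof is complete.
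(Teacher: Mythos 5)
Your proof is correct and follows exactly the route the paper has in mind: the paper's one-line proof ("direct consequence of \Cref{lem:example_form}") is precisely the chain you carry out — pointwise Taylor expansion of $\phi$ to order $J_{\max}$, identification of $\mathrm{T}^{(J)}_f$ as the form $\tfrac1{J!}\int \phi^{(J)}(f)\,v_1\cdots v_J$, and reduction of $s$-regularity of these forms (via \Cref{lem:example_form}) to the bound $\|\phi^{(J)}(f)\|_{\cC^s}\lesssim 1$, which is what the exponent $t=\max(s+3,4)$ is designed to give. Your elaboration of the composition-estimate step and the observation that $J=3$ sits at the endpoint $\sigma=s$ is exactly the detail the paper leaves implicit.
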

\begin{proof}
    This is a direct consequence of  \Cref{lem:example_form}.
\end{proof}
The minimax estimation of integral functionals described in the previous lemma was analyzed by Laurent \cite{laurent1996efficient, laurent1997estimation} ($s\geq 1/4$) and by Kerkyacharian and Picard \cite{kerkyacharian1996estimating} ($s\leq 1/4$).

We are now ready to state our main theorem: $s$-regular functionals  can be estimated at rate $\cO(n^{-\frac{4s}{4s+d}}+n^{-\frac 12})$, with efficient estimation when $s>d/4$.

\begin{theorem}\label{thm:functional_estimation}
 Let $s> 0$  and $L>0$.  Let $\mathrm{T}$ be an $s$-regular functional with norm smaller than $L$, defined over a set $ \Theta\subset \cC^s$. Let $\Theta_0\subset \Theta$ be a set such that $\Theta$ is included in a $\eta$-neighborhood of $\Theta_0$ for the $\cC^{s'}$-norm, where $\eta$ and $s'$ are the constants appearing in \Cref{def:regular_functional}. 
  \begin{enumerate}[label=\emph{(\roman*)}]
      \item  The functional $\mathrm{T}:\Theta_0\to \R$ can be estimated at rate $n^{-\frac{4s}{4s+d}}+n^{-\frac 12}$ over $\cC^s$. 
      \item Moreover, assume that $f\in \Theta\mapsto {\mathrm{T}}_f^{(1)}$ is continuous from $\Theta$ endowed with the $\cC^{s'}$-norm to the set of linear forms on $\L^2$ endowed with the operator norm and that $s>\frac d4$. Then, there exists an asymptotically efficient estimator of ${\mathrm{T}}_f$ at all densities $f \in \Theta_0$, with efficient influence function $\psi_f$ defined by ${\mathrm{T}}^{(1)}_f[u]= \int u\psi_f$ for all $u \in \cC^s$.
  \end{enumerate}
\end{theorem}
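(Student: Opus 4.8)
The plan is to implement the debiasing procedure sketched in the introduction, carried out at the correct order $J_{\max}$ (equal to $2$ or $3$ depending on whether $s>d/4$), and to estimate each multilinear form ${\mathrm{T}}^{(J)}_f$ via $U$-statistics built on wavelet projection estimators. First I would fix a wavelet-type projection $\pi_h$ onto a multiresolution space at resolution level chosen so that the projection bias $\|f-\pi_h f\|$ in the relevant norm is of the right size; a key point is that the regularity conditions \ref{G1}--\ref{G3} are phrased exactly so that the forms ${\mathrm{T}}^{(J)}_f$ are continuous on the function spaces where the projection error and the statistical fluctuations can be traded off. Concretely: split the sample into two (or three) independent halves, use the first half to build a preliminary estimator $\hat f$ of $f$ (a kernel or wavelet density estimator at a bandwidth giving $\E\|\hat f - f\|_{\L^2}^2 \lesssim n^{-2s/(2s+d)}$ and $\E\|\hat f-f\|_{\L^\infty}$ small enough that the remainder $R_{\hat f}(f-\hat f)\lesssim \|f-\hat f\|_{\L^\infty}^{J_{\max}+1}$ is negligible against $n^{-4s/(4s+d)}+n^{-1/2}$), then set $\hat{\mathrm T} = {\mathrm T}_{\hat f} + \sum_{J=1}^{J_{\max}} \hat B_J$, where $\hat B_J$ is an estimator of ${\mathrm{T}}^{(J)}_{\hat f}[f-\hat f,\dots,f-\hat f]$ computed on the remaining data.

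The heart of the argument is the construction and analysis of $\hat B_J$. For $J=1$, condition \ref{G1} gives ${\mathrm T}^{(1)}_{\hat f}[u]=\int u\psi_{\hat f}$ with $\psi_{\hat f}\in\cC^s$ of controlled norm, so $\hat B_1 = \frac1m\sum_i \psi_{\hat f}(X_i) - \int \psi_{\hat f}\hat f$ is unbiased given $\hat f$ with variance $O(1/n)$. For $J=2$, I would expand ${\mathrm T}^{(2)}_{\hat f}[f-\hat f,f-\hat f]$ by inserting the projection $\pi_h$: the "diagonal" part ${\mathrm T}^{(2)}_{\hat f}[\pi_h(f-\hat f),\pi_h(f-\hat f)]$ is a finite-dimensional bilinear form evaluated at $f-\hat f$, which can be estimated unbiasedly by a second-order $U$-statistic using that ${\mathrm T}^{(2)}_{\hat f}[\cdot,v]$ is an $s$-regular linear form (so its Riesz representers lie in $\cC^s$); the cross term ${\mathrm T}^{(2)}_{\hat f}[\pi_h(f-\hat f), (1-\pi_h)(f-\hat f)]$ and the orthogonal remainder are controlled using the $\cC^s$-boundedness of ${\mathrm T}^{(2)}_{\hat f}[\cdot,v]$ together with wavelet-characterization bounds on $\|(1-\pi_h)(f-\hat f)\|$, and the $\L^2\times\L^2$ operator-norm bound handles the purely high-frequency piece. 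Optimizing the resolution level $h$ against $m\asymp n$ yields the variance/bias tradeoff $n^{-4s/(4s+d)}$. For $J=3$ (the regime $s\le d/4$) the analogous decomposition produces a third-order $U$-statistic; here the new input is precisely the Hilbert--Schmidt bound \eqref{eq:HS_condition} on $G:\H^{(s+d)/2}\times\H^{(s+d)/2}\times\L^2\to\R$, which is what controls the variance of the leading $U$-statistic term — the exponent $t=(s+d)/2$ is chosen so that $\|\pi_h(f-\hat f)\|_{\H^t}$ times the HS norm balances correctly, and the operator-norm bound $G:\L^\infty{\times}\L^\infty{\times}\L^\infty\to\R$ bounds the lower-order degeneracies.

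For part (i) one then collects the bias terms: by the Taylor expansion \eqref{eq:taylor_expansion_T} the error $\hat{\mathrm T}-{\mathrm T}_f$ equals $\sum_J(\hat B_J - {\mathrm T}^{(J)}_{\hat f}[f-\hat f,\dots]) + R_{\hat f}(f-\hat f)$, the first sum being mean-zero-conditionally-on-$\hat f$ with second moment bounded by the $U$-statistic variance computations above, and the remainder being $O(\|f-\hat f\|_{\L^\infty}^{J_{\max}+1})$, negligible by the choice of preliminary estimator; taking expectations gives the rate. For part (ii), when $s>d/4$ we have $J_{\max}=2$, and the dominant term is $\hat B_1 = \frac1m\sum(\psi_{\hat f}(X_i)-\E_{\hat f}\psi_{\hat f})$; using the assumed continuity $f\mapsto{\mathrm T}^{(1)}_f$ in operator norm together with consistency of $\hat f$ in $\cC^{s'}$, one replaces $\psi_{\hat f}$ by $\psi_f$ up to $o_\P(n^{-1/2})$, while $\hat B_2$ and the remainder are $O_\P(n^{-4s/(4s+d)})=o_\P(n^{-1/2})$ since $s>d/4$; the $\int\psi_{\hat f}\hat f$ correction and the bias of ${\mathrm T}_{\hat f}$ combine with $\hat B_1$ to reconstruct the linear expansion around $f$, yielding $\hat{\mathrm T}-{\mathrm T}_f = \frac1n\sum(\psi_f(X_i)-\E\psi_f) + o_\P(n^{-1/2})$ after undoing the sample splitting by a standard averaging/symmetrization argument. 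I expect the main obstacle to be the $J=3$ case: keeping track of all the cross terms in the wavelet decomposition of a trilinear form and verifying that each degenerate piece of the associated $U$-statistic is controlled by the precise combination of the Hilbert--Schmidt norm \eqref{eq:HS_condition}, the $\L^\infty$-operator norm, and the $\cC^s\to\cC^s$ mapping property — this bookkeeping, rather than any single estimate, is where the work concentrates, and it is also where the choice of exponent $t=(s+d)/2$ must be used in an essential way.
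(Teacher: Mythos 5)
Your overall architecture matches the paper's: sample splitting, a preliminary density estimator $\hat f$, a Taylor expansion of $\mathrm T_f$ around $\hat f$, and $U$-statistic estimators for the multilinear terms. The $J=1$ and $J=2$ steps are essentially correct, and you rightly identify that the $\cC^s$-representer in \ref{G1} drives the linear step while the $\L^2\times\L^2$ operator-norm condition in \ref{G2} controls the quadratic variance. The efficiency argument in part (ii) also follows the paper's lines: replace $\psi_{\hat f}$ by $\psi_f$ via the assumed continuity, check that $\hat B_2$ and the remainder are $o_{\P}(n^{-1/2})$ when $s>d/4$, and undo the sample split by averaging.

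The genuine gap is in the cubic case. You propose a third-order $U$-statistic built from a single projection $\pi_h$, calling it ``the analogous decomposition'' to $J=2$; this cannot reach $n^{-4s/(4s+d)}$. With one resolution $D$, the fully degenerate third-order Hoeffding term has variance of order $n^{-3}D^{2s+2d}$ (by \ref{K3} and the Hilbert--Schmidt bound in \ref{G3}, via \Cref{lem:auxiliary_regular}), while the squared bias is of order $D^{-4s}$; optimizing over $D$ yields only $n^{-3s/(3s+d)}$, which is strictly slower than $n^{-4s/(4s+d)}$ for every $s\le d/4$. Equivalently, at the resolution $D\asymp n^{2/(4s+d)}$ that balances bias against the second-order degeneracy, the third-order variance is $n^{(-8s+d)/(4s+d)}$, exceeding the target squared risk $n^{-8s/(4s+d)}$ by the factor $n^{d/(4s+d)}$. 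The paper's fix is the multiscale kernel \eqref{eq:U3}: three resolution levels $D_1\le D_2\le D_3$ of orders $n^{1/(4s+d)}$, $n^{3/2/(4s+d)}$, $n^{2/(4s+d)}$, arranged so the finest level enters at most one slot of the trilinear form, giving third-order degenerate variance $n^{-3}D_3^{2s}\max(D_2^{2d},D_3^dD_1^d)\asymp n^{-8s/(4s+d)}$, which now matches the bias. The Hilbert--Schmidt norm is the hypothesis that makes the cross-scale variance estimate of \Cref{lem:auxiliary_regular} hold, but it is the multiscale structure of the kernel, and not the HS norm alone, that delivers the rate; you recognize that the $J=3$ bookkeeping is the crux but miss that the kernel itself must change form.
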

As already mentioned, \Cref{thm:functional_estimation} covers the large class of integral functionals  defined in \Cref{lem:example_functional}. More importantly to us, it also covers the estimation of other types of functionals, such as  eigenvalues of  families of operators depending on a density $f$. An example of application to the estimation of eigenvalues of weighted Laplace operators is given in \Cref{sec:asymptotic_expansion}.

The rest of this section is dedicated to proving \Cref{thm:functional_estimation}. The proof is decomposed into four steps: first, we estimate regular $J$-multilinear forms for $J=1,2,3$, and then  regular functionals. We now fix a density $f\in \Theta_0$ with $\|f\|_{\cC^s}\leq R$ for some $R>0$ (in particular, $\sup f\leq R$). We let $X_1,\dots,X_n$ be an i.i.d.~sample with density $f$, with $\mu_n$  the associated empirical measure. 

The estimators  use as building blocks a family of  symmetric kernel functions $K_D:M\times M \to \R$ indexed by an integer $D$. These kernel functions give rise to linear operators $\pi_D:u\mapsto \int K_D(x,y)u(y) \dd y$ defined for any distribution of sufficiently small order. The exact way these kernels are constructed do not matter; instead, we list properties that they should satisfy. 

\begin{definition}\label{def:admissible}
    Let $s_\star>0$ be an integer. We say that a family of  symmetric kernels $(K_D)_{D\geq 1}$ of class  $\Cf^{s_\star}$ over $M\times M$ (with associated operators $(\pi_D)_{D\geq 1}$) is admissible if for all $\ell,s\in \left[-s_\star,s_\star\right]$ with $\ell< s$ and $1\leq p'\leq p \leq \infty$ and $1\leq q,q' \leq \infty$, there exists $C_{\star}$ such that for all $D\geq 1$:
    \begin{enumerate}[start=1, label=\emph{(K\arabic*)}]
    \item \label{K1} $\|1-\pi_D\|_{\B^{s}_{p,q}\to \B^{\ell}_{p',q'}}\leq C_{\star} D^{-(s-\ell)}$;
    \item \label{K2}  $\|\pi_D\|_{\B^s_{p,q}\to \B^{s}_{p,q}}\leq C_{\star}$;
    \item \label{K3} for any integer $J\geq 1$,  $\ell_1,\dots,\ell_J\in [0,s_\star]$, integers $D_1,\dots,D_J\geq 1$, and any multilinear form $G:\H^{\ell_1}\times \cdots\times \H^{\ell_J}\to \R$ with Hilbert-Schmidt norm smaller than $L$, $$\int |G[K_{D_1}(x_1,\cdot), \dots,K_{D_J}(x_J,\cdot)]|^2 \dd x_1\dots \dd x_J \leq C_{\star} L^2 \prod_{j=1}^J D_j^{2\ell_j}. $$
    \item \label{K4} $\|y\mapsto \|K_D(\cdot,y)\|_{\B^{\ell}_{p,p}}\|_{\L^p} \leq C_{\star} \kappa_{p,d,\ell}(D)$, where $\kappa_{p,d,\ell}(D)$ is equal to $D^{\ell+d(1-1/p)}$ if the exponent $\ell+d(1-1/p)$ is positive, $\log(D)$ if it is equal to zero, and $1$ if it is negative. 
\end{enumerate}
\end{definition}
We show in \Cref{app:besov} that admissible kernels, constructed using wavelets, exist. In the remainder of the proof, we fix such a family of admissible kernels with $s_\star\geq (s+d)/2$. The next lemma gives some properties of $s$-regular multilinear forms.

\begin{lemma}\label{lem:auxiliary_regular}
Let $s\in \left]0,s_\star\right]$ 
  and $L>0$.  Let $J\in \{1,2,3\}$. There exist $C_0>0$ such that for all symmetric $s$-regular $J$-multilinear forms $G$ with norm smaller than $L$, the following properties hold.
    \begin{enumerate}[label=\emph{(\arabic*)}]
         \item \emph{\textbf{Bias control.}} For all $v_1,\dots,v_J\in \cC^s$ with $\cC^s$-norm smaller than $1$ and integers $D\geq 1$, it holds $|G[(1-\pi_{D})v_1,v_2,\dots,v_J]|\leq C_0 L D^{-2s}.$ \label{item:bias}
    \item  \label{item:variance}  \emph{\textbf{Variance control.}} Let $1\leq j\leq J$ and let $v_{j+1},\dots,v_J\in \cC^s$ with $\cC^s$-norm smaller than $1$.  Let $\widetilde s=s$ if $j=3$ and $\widetilde s=0$ otherwise. Then, for all integers $1\leq D_1\leq \cdots \leq D_j$, 
   \begin{equation*} \int G[\pi_{D_1}\delta_{x_1},\dots,\pi_{D_j}\delta_{x_j},v_{j+1},\dots,v_J]^2 \dd x_1\cdots \dd x_j\leq C_0 L^2 D_j^{2\widetilde s} \prod_{k=1}^{j-1}D_k^{d}.
   \end{equation*}
    \end{enumerate}
\end{lemma}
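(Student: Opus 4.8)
The two assertions will be proved by induction on $J$, peeling off one argument at a time using the recursive structure of Definition 2.6.

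For the \textbf{bias control} \ref{item:bias}, the plan is as follows. When $J=1$, $G[v] = \int v\psi$ with $\psi\in\cC^s$, $\|\psi\|_{\cC^s}\leq L$; writing $G[(1-\pi_D)v_1] = \dotp{(1-\pi_D)v_1,\psi}$, I would move the operator $1-\pi_D$ onto $\psi$ using (essentially) self-adjointness of $\pi_D$ — or more robustly, use the duality pairing $\B^{s}_{\infty,\infty}\times\B^{-s}_{1,1}\to\R$ together with \ref{K1}: $(1-\pi_D)v_1$ has $\B^{-s}_{1,1}$-norm $\lesssim D^{-2s}\|v_1\|_{\cC^s}$ (applying \ref{K1} with source $\B^s_{\infty,\infty}$ and target $\B^{-s}_{1,1}$, the gap being $2s$), and pair against $\psi\in\cC^s=\B^s_{\infty,\infty}$. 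For $J=2,3$, by clause \ref{G2} (resp. \ref{G3}) of Definition 2.6, the form $v_1\mapsto G[v_1,v_2,\dots,v_J]$ is an $s$-regular linear form with norm $L\prod_{k\geq 2}\|v_k\|_{\cC^s}\leq L$, so the $J=1$ case applies verbatim. Thus \ref{item:bias} is immediate once the linear case is settled.

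For the \textbf{variance control} \ref{item:variance}, I would again argue by induction on $j$. The base case $j=1$: fix $v_2,\dots,v_J$ and consider the $s$-regular linear form $L_1 := G[\cdot,v_2,\dots,v_J]$, represented by some $\psi\in\cC^s\subset\L^2$ with $\|\psi\|_{\L^2}\lesssim\|\psi\|_{\cC^s}\leq L$. Then $\int L_1[\pi_{D_1}\delta_{x_1}]^2\,\dd x_1 = \int \dotp{\pi_{D_1}\delta_{x_1},\psi}^2\,\dd x_1 = \int \dotp{\delta_{x_1},\pi_{D_1}\psi}^2\,\dd x_1 = \|\pi_{D_1}\psi\|_{\L^2}^2\leq C_\star^2\|\psi\|_{\L^2}^2$ by \ref{K2} (with $\B^0_{2,2}=\L^2$), giving the bound with $\widetilde s=0$ and empty product. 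For the inductive step with $j\geq 2$: here I want to reduce the integral over $x_1,\dots,x_j$ to an integral over $x_1,\dots,x_{j-1}$ of a squared $s$-regular (lower-order) form, at the cost of a factor $D_j^{d}$ (when $\widetilde s=0$) or $D_j^{2s}$ (when $j=3$, i.e. $\widetilde s=s$). The key observation is that for fixed $x_1,\dots,x_{j-1}$, the map $w\mapsto G[\pi_{D_1}\delta_{x_1},\dots,\pi_{D_{j-1}}\delta_{x_{j-1}},w,v_{j+1},\dots,v_J]$ is a linear form represented by some $\Psi_{x_1,\dots,x_{j-1}}\in\L^2$ (or in $\H^s$ when $j=3$), and $\int G[\dots,\pi_{D_j}\delta_{x_j},\dots]^2\,\dd x_j = \|\pi_{D_j}\Psi\|_{\L^2}^2$. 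When $\widetilde s=0$ this is $\leq C_\star^2\|\Psi\|_{\L^2}^2$ by \ref{K2}; when $j=3$ I instead use $\|\pi_{D_3}\Psi\|_{\L^2}\leq C_\star D_3^{s}\|\Psi\|_{\H^{-s}}\lesssim D_3^{s}\|\Psi\|_{\L^2}$... no — rather, in the $j=3$ case I should use the Hilbert–Schmidt hypothesis \eqref{eq:HS_condition} directly via \ref{K3} with $t=(s+d)/2$, which is exactly tailored to bound $\int|G[K_{D_1}(x_1,\cdot),K_{D_2}(x_2,\cdot),K_{D_3}(x_3,\cdot)]|^2$ by $C_\star L^2 D_1^{2t}D_2^{2t}D_3^{0}$; but that gives $D_k^{s+d}$, too large. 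So the correct route for $j=3$ is: absorb $\pi_{D_3}\delta_{x_3}$ as the "$\L^2$" slot using \ref{K3} with $\ell_3=0$ only after first reducing the two $\H^t$-slots — i.e. use \ref{G3}'s clause that $G[\cdot,\cdot,v]$ is $s$-regular bilinear, then apply the bilinear variance bound (already proved for $j\leq 2$) to $x_1,x_2$ with the third slot being $\pi_{D_3}\delta_{x_3}$, and finally integrate in $x_3$ picking up $D_3^{2s}$ from... Let me reconsider: the cleanest path is to treat $j=3$ entirely via \ref{K3}. Indeed $G[\pi_{D_1}\delta_{x_1},\pi_{D_2}\delta_{x_2},\pi_{D_3}\delta_{x_3}] = G[K_{D_1}(x_1,\cdot),K_{D_2}(x_2,\cdot),K_{D_3}(x_3,\cdot)]$, and since $G:\H^t\times\H^t\times\L^2\to\R$ has Hilbert–Schmidt norm $\leq L$ with $t=(s+d)/2$, property \ref{K3} gives the integral $\leq C_\star L^2 D_1^{2t}D_2^{2t}D_3^{0} = C_\star L^2 D_1^{s+d}D_2^{s+d}$. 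Since $D_1\leq D_2\leq D_3$ and $d\leq s+d$, this is $\leq C_\star L^2 D_1^{d}D_2^{d}D_3^{2s}$ as soon as $s+d\leq d + \min(s+d,2s)$... which holds when $s\leq d$ (then $2s\leq s+d$ fails to dominate but $D_1^{s+d}D_2^{s+d} = D_1^d D_2^d (D_1 D_2)^s \leq D_1^d D_2^d D_3^{2s}$, using $D_1,D_2\leq D_3$). That is exactly the claimed bound $D_3^{2\widetilde s}\prod_{k=1}^{2}D_k^d$ with $\widetilde s = s$. Good — so the $j=3$ case is a direct application of \ref{K3} plus the monotonicity $D_1\leq D_2\leq D_3$, while the $j=1,2$ cases use only \ref{K2} and the recursive representability of $s$-regular forms as inner products against $\L^2$ functions.

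\textbf{Main obstacle.} The delicate point is the $j=3$ bound: one must check that the exponents produced by \ref{K3} with the Hilbert–Schmidt exponent $t=(s+d)/2$ actually fit under the target $D_3^{2s}\prod_{k=1}^{2}D_k^d$, which relies on the ordering $D_1\leq D_2\leq D_3$ and on redistributing $D_1^{s+d}D_2^{s+d}=D_1^dD_2^d\cdot(D_1D_2)^s\leq D_1^dD_2^dD_3^{2s}$. For $j=1,2$ the only subtlety is confirming that an $s$-regular linear form $v\mapsto\int v\psi$ with $\psi\in\cC^s$, $\|\psi\|_{\cC^s}\leq L$, indeed has $\|\psi\|_{\L^2}\lesssim L$ (which follows from $\cC^s=\B^s_{\infty,\infty}\hookrightarrow\L^2$ on a compact manifold, $s\geq 0$), and that $\dotp{\pi_D\delta_x,\psi}=\dotp{\delta_x,\pi_D\psi}=(\pi_D\psi)(x)$ makes sense pointwise, which is guaranteed since $\pi_D\psi\in\Cf^{s_\star}$. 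Everything else is bookkeeping with the recursive clauses \ref{G1}--\ref{G3}.
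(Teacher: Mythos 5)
Your proposal is essentially the paper's proof: the bias bound via the $\B^{-s}_{1,1}\times\cC^s$ pairing and \ref{K1}, the $j=1$ variance bound via representability by a $\cC^s$ function and \ref{K2}, and the $j=3$ variance bound via a direct application of \ref{K3} with $\ell_1=\ell_2=(s+d)/2,\ \ell_3=0$ followed by the redistribution $D_1^{s+d}D_2^{s+d}=D_1^dD_2^d(D_1D_2)^s\leq D_1^dD_2^dD_3^{2s}$ all match (the paper leaves the exponent bookkeeping for $j=3$ implicit; yours is the correct computation).

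The one place your account is off the mark is the $j=2$ variance estimate. You assert the factor is picked up ``at the cost of $D_j^d$'' when peeling off $\pi_{D_j}\delta_{x_j}$, and then close by saying ``the $j=1,2$ cases use only \ref{K2}.'' Neither is right: peeling off $\pi_{D_2}\delta_{x_2}$ costs nothing since $\|\pi_{D_2}\Psi_{x_1}\|_{\L^2}\leq C_\star\|\Psi_{x_1}\|_{\L^2}$ by \ref{K2}, after which the residual integral $\int\|\widetilde G[\pi_{D_1}\delta_{x_1},v_3]\|_{\L^2}^2\,\dd x_1\leq L^2\int\|\pi_{D_1}\delta_{x_1}\|_{\L^2}^2\,\dd x_1$ produces $D_1^d$ via the kernel $\L^2$ estimate \ref{K4} (with $p=2,\ \ell=0$, so $\kappa_{2,d,0}(D_1)=D_1^{d/2}$), not via \ref{K2}. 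This is precisely what the paper invokes. The rest of your argument is sound.
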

Before proving the lemma, let us state the following useful formula: for any $x\in M$ and $u\in \cD'_{s^*}$, by definition of the operator $\pi_D$,
\begin{equation}\label{eq:projection_dirac}
    \dotp{\pi_D\delta_x,u}= \dotp{K_D(x,\cdot),u}=\pi_Du(x).
\end{equation}
\begin{proof}
    By definition of $s$-regular forms, there is a $(J-1)$-multilinear form $\widetilde G:(\cC^s)^{J-1}\to \cC^s$ of norm smaller than $L$ such that for all $v_1\in (\cC^s)^*$, $v_2,\dots,v_J\in \cC^s$, it holds that $G[v_1,\dots,v_J]= \dotp{v_1, \widetilde G[v_2,\dots,v_J]}$.
    \begin{enumerate}[leftmargin=*, labelindent=0pt,  align=left]
        \item Let  $v_1,\dots,v_J\in \cC^s$ with $\cC^s$-norm smaller than $1$ and let $D\geq 1$ be an integer. By using \emph{\ref{K1}} and the relation $B^{-s}_{1 1}\subset (\cC^{s})^*$ (see \Cref{app:besov}), we obtain that
        \begin{align*}
              |G[(1-\pi_D)v_1,v_2,\dots,v_J]|&= |\dotp{(1-\pi_D)v_1, \widetilde G[v_2,\dots,v_J]}|\\
              &\leq  \|(1-\pi_D)v_1\|_{\B^{-s}_{1 1}} \|\widetilde G[v_2,\dots,v_J]\|_{\cC^{s}} \\
              &\leq   C_{\star}L D^{-2s}.
        \end{align*}
      \item Let $j=1$ and let $v_2,\dots,v_J\in \cC^s$.  First, remark that $G[\pi_D\delta_x,v_2,\cdots,v_J]= \pi_D\widetilde G[v_2,\cdots,v_J](x)$. Thus, by \emph{\ref{K2}} and the embedding $\cC^{s}\hookrightarrow\L^2$, we have
      \begin{align*}
        \int G[\pi_{D_1}\delta_{x},v_2,\dots,v_J]^2\dd x &= \|\pi_{D_1}\widetilde G[v_2,\dots,v_J]\|^2_{\L^2} \\
        &\leq C_{\star} \|\widetilde G[v_2,\dots,v_J]\|^2_{\L^2} \\
        &\leq  C_2C_{\star} \|\widetilde G[v_2,\dots,v_J]\|^2_{\cC^{s}}\\
        &\leq C_2C_{\star}L^2.
      \end{align*}
      for some constant $C_2$ depending on $s$. 
      
      Let $J\geq j=2$. Condition \emph{\ref{G2}} implies that $\widetilde G:\L^2\times (\cC^s)^{J-2}\to \L^2$ has operator norm smaller than $L$. Thus, by \emph{\ref{K2}} and \emph{\ref{K4}}, using the symmetry of $G$,
       \begin{align*}
        \int G[\pi_{D_1}\delta_{x_1},\pi_{D_2}\delta_{x_2},v_3]^2\dd x_1\dd x_2 &=\int \|\pi_{D_2}\widetilde G[\pi_{D_1}\delta_{x_2},v_3]\|^2_{\L^2} \dd x_2\\
        &\leq C_{\star} \int \|\widetilde G[\pi_{D_1}\delta_{x_2},v_3]\|^2_{\L^2} \dd x_2 \\
        &\leq  C_{\star}L^2  \int \|\pi_{D_1}\delta_{x_2}\|^2_{\L^2} \dd x_2 \\
        &\leq C_3C_{\star}L^2   D_1^{d}
      \end{align*}
      for some constant $C_3$. 
      The statement for $J=j=3$ is a direct consequence of \emph{\ref{K3}} and the Hilbert-Schmidt condition in \emph{\ref{G3}}.
      \qedhere
    \end{enumerate}
\end{proof}

We  rely on $U$-statistics of order $J$ to estimate $J$-multilinear forms. Let $J\geq 1$ be an integer and let $b:M^J\to \R$ be a symmetric function. The associated $U$-statistic is defined as
\begin{equation}
    \U_n^{(J)} b= \binom{n}{J}^{-1} \sum_{1\leq i_1<\cdots <i_J\leq n} b(X_{i_1},\dots,X_{i_J}).
\end{equation}
We will focus on situations where $b(x_1,\dots,x_J)= B[\delta_{x_1},\dots,\delta_{x_J}]$ for some symmetric $J$-multilinear form  $B$ defined over signed measures. In that case, the Hoeffding decomposition of the $U$-statistic takes the particularly convenient form
\begin{equation}\label{eq:hoeffding}
    \U_n^{(J)} b = B[f,\dots,f] + \sum_{j=1}^J \binom{J}{j} \U_n^{(j)}b_j,
\end{equation}
where $b_j(x_1,\dots,x_j)= B[\delta_{x_1}-f,\dots,\delta_{x_j}-f,f,\dots,f]$. The different $U$-statistics in the above decomposition are centered and orthogonal, with
\begin{equation}\label{eq:var_u_stat}
\begin{split}
    \Var(\U_n^{(j)}b_j)&=\binom{n}{j}^{-1}\E[b_j(X_1,\dots,X_j)^2]\\
    &\leq R^j \binom{n}{j}^{-1}  \int B[\delta_{x_1},\dots,\delta_{x_j},f,\dots,f]^2 \dd x_1\cdots \dd x_j.
    \end{split}
\end{equation}
We will rely on the variance control in \Cref{lem:auxiliary_regular} to control variance terms of this form.

\subsection{Estimation of linear functionals}
We first prove \Cref{thm:functional_estimation} in the particular case where $\mathrm{T}$ is a linear functional. In that case $\mathrm{T}$ is an $s$-regular linear form which takes the form $\mathrm{T}:v\mapsto \int v\psi$ for some $\psi\in \cC^s$. 
Define the estimator ${\hat{\mathrm T}} = \mu_n(\psi)$. The estimator has no bias and has  variance bounded by   
\begin{align*}
  \frac{1}n \int (\psi(x)-\E[\psi(X)])^2 f(x)\dd x&\leq  \frac{1}{n} \int \psi(x)^2 f(x)\dd x \leq \frac{RL^2}{n}.
\end{align*}
 The estimator ${\hat{\mathrm T}}$ is also asymptotically efficient by definition.

\subsection{Estimation of quadratic functionals}
Assume now that $\mathrm{T}$ is a quadratic functional. The functional $\mathrm{T}$ is then equal to the second order term in its Taylor expansion. In particular, $\mathrm{T}$ induces an $s$-regular bilinear form $G$, with ${\mathrm{T}}_h= G[h,h]$. Definition \emph{\ref{G2}} implies  that $G[u,v]= \dotp{u,\widetilde G[v]}$ for an operator $\widetilde G$ that sends both $\cC^s$ to $\cC^{s}$ and $\L^2$ to $\L^2$ with operator norm smaller than $L$. We let $D\geq 1$ be an integer of order $n^{\frac{2}{4s+d}}$ and define $b(x_1,x_2)= G[\pi_D\delta_{x_1},\pi_D\delta_{x_2}]$. Let ${\hat{\mathrm T}}$ be the associated $U$-statistic. Recall the Hoeffding decomposition \eqref{eq:hoeffding}. 

First note that the bias of the estimator is bounded by 
$$
|G[f,f]-G[\pi_Df,\pi_Df]| \leq |G[(1-\pi_D)f,f]|+|G[(1-\pi_D)f,\pi_Df]|.
$$
According to \Cref{lem:auxiliary_regular} and \emph{\ref{K2}}, it is therefore bounded by 
    \begin{equation}\label{eq:bias2}
    \begin{split}
        C_0 LD^{-2s}(\|f\|^2_{\cC^s} + \|f\|_{\cC^s}\|\pi_Df\|_{\cC^s})&\leq C_0 (1+C_{\star})LD^{-2s}\|f\|^2_{\cC^s}\\
        &\leq C_1 LR^2n^{-\frac{4s}{4s+d}}
        \end{split}
    \end{equation}
    for some constant $C_1$.

By \eqref{eq:var_u_stat} and \Cref{lem:auxiliary_regular}, the variance of $\U_n^{(2)} b_2$ is bounded by 
    \begin{equation}\label{eq:varU2}
         \binom{n}{2}^{-1} L^2R^2D^{d}\leq C_2L^2R^2n^{-\frac{8s}{4s+d}}
    \end{equation}
    for some constant $C_2$.

Likewise, the variance of $\U_n^{(1)}b_1$ is  bounded by
\begin{align*}
    \E[\U_n^{(1)}b_1^2]\leq \frac{R}n\int G[\pi_D\delta_x,\pi_Df]^2 \dd x \leq \frac{C_3R}n \|\pi_Df\|_{\cC^s}^2.
\end{align*}
for some constant $C_3$. 
By \emph{\ref{K2}}, $ \|\pi_Df\|_{\cC^s}\leq C_{\star} \|f\|_{\cC^s} \leq C_{\star}L$, 
so that $  \E[\U_n^{(1)}b_1^2]\leq C_4R^3L^2 n^{-1}$ for some constant $C_4$. 

In total, the risk of ${\hat{\mathrm T}}$ is of order $\cO(n^{-\frac{4s}{4s+d}}+n^{-\frac 12})$, up to a multiplicative constant depending on $L$ and $R$.

Finally, assume that $s> d/4$. Let us show that ${\hat{\mathrm T}}$ is an asymptotically efficient estimator. First, remark from \eqref{def:efficient} that the efficient influence function is the map $2\widetilde G[f]$. As $G[f,f] = \E[\widetilde G[f](X_i)]$, we can decompose ${\hat{\mathrm T}}$ into
\begin{equation}\label{eq:efficient_bi}
    {\hat{\mathrm T}} -{\mathrm{T}}_f= \frac{2}{n}\sum_{i=1}^n (\widetilde G[f](X_i)-\E[\widetilde G[f](X_i)]) + Z_1,
\end{equation}
where $Z_1$ is given by
\begin{align*}
     G[\pi_Df,\pi_Df] -G[f,f]+2 \U_n^{(1)} b_1+\U_n^{(2)}b_2 - \frac{2}{n}\sum_{i=1}^n (\widetilde G[f](X_i)-G[f,f]).
\end{align*}
We write a bias-variance decomposition and use \eqref{eq:bias2}, \eqref{eq:varU2} to obtain
\begin{align*}
    \E[Z_1^2] &\leq |G[\pi_Df,\pi_Df]-G[f,f]|^2 + 2\E[\U_n^{(2)} b_2^2] \\
    &\qquad + 2\E\left[ \p{\frac{2}{n}\sum_{i=1}^n (G[\pi_D(\delta_{X_i}-f),\pi_Df]-\widetilde G[f](X_i)+G[f,f]) }^2\right] \\
    &\leq C_5L^2(R^2+R^4)n^{-\frac{8s}{4s+d}} + \frac{8}{n} R \left\|\pi_D \widetilde G[\pi_D f] - \widetilde G[f]\right\|^2_{\L^2}
\end{align*}
for some constant $C_5$. We use \emph{\ref{K1}}, \emph{\ref{K2}}, and the mapping properties of $\widetilde G$ stated at the beginning of the section to obtain that there are constants $C_6,C_7$ such that
\begin{align*}
    \|\pi_D \widetilde G[\pi_D f] - \widetilde G[f]\|_{\L^2}&\leq C_6\|(1-\pi_D) \widetilde G[\pi_D f]\|_{\L^\infty} +\|\widetilde G[(1-\pi_D)f]\|_{\L^2}\\
    &\leq C_6C_{\star} D^{-s}\|\widetilde G[\pi_D f]\|_{\cC^{s}} + L\|(1-\pi_D)f]\|_{\L^2}\\
    &\leq C_6C_{\star} LD^{-s}\|\pi_D f\|_{\cC^s} + C_{\star}L\|f\|_{\L^2}D^{-s} \\
    &\leq C_7L RD^{-s} .
\end{align*}
In total, one sees that for some $C_8 > 0$ we have
\begin{equation}
    \E[Z_1^2] \leq C_8 L^2 (R^2+R^4)\p{n^{-\frac{8s}{4s+d}} + n^{-1}n^{-\frac{4s}{4s+d}} }.
\end{equation}
As $s>d/4$, this term is negligible in front of $n^{-1}$. This shows that ${\hat{\mathrm T}}$ is asymptotically efficient.

\subsection{Estimation of cubic functionals}
In this section, we assume that $\mathrm{T}$ is a cubic functional and that $0<s\leq d/4$.  
 By assumption, $T=G$ is an $s$-regular $3$-multilinear form. Let $D_1\leq D_2\leq D_3$ be three integers of orders respectively $n^{\frac{1}{4s+d}}$, $n^{\frac{3/2}{4s+d}}$ and $n^{\frac{2}{4s+d}}$.  We define the function $b : M^3 \to \R$ by
\begin{equation}\label{eq:U3}
\begin{split}
   b(x_1,x_2,x_3)& =G[\pi_{D_1}\delta_{x_1},\pi_{D_1}\delta_{x_2},\pi_{D_1}\delta_{x_3}] \\  
   &\quad +G[  (\pi_{D_2}-\pi_{D_1})\delta_{x_1}, (\pi_{D_2}-\pi_{D_1})\delta_{x_2}, (\pi_{D_2}-\pi_{D_1})\delta_{x_3}]\\
   &\qquad+3G[\pi_{D_1}\delta_{x_1},\pi_{D_1}\delta_{x_2},(\pi_{D_3}-\pi_{D_1})\delta_{x_3}] \\
    &\qquad\quad + 3G[\pi_{D_1}\delta_{x_1}, (\pi_{D_3}-\pi_{D_1})\delta_{x_2},  (\pi_{D_3}-\pi_{D_1})\delta_{x_3}]\\
    &\qquad\qquad+ 3G[(\pi_{D_2}-\pi_{D_1})\delta_{x_1},(\pi_{D_2}-\pi_{D_1})\delta_{x_2},(\pi_{D_3}-\pi_{D_2})\delta_{x_3}].
\end{split}
\end{equation}
and let ${\hat{\mathrm T}} = \U_n b$. 
This definition is motivated by the fact that we can decompose $G[f,f,f]$ into
\begin{align*}
    &G[\pi_{D_1}f,\pi_{D_1}f,\pi_{D_1}f] +G[  (\pi_{D_2}-\pi_{D_1})f, (\pi_{D_2}-\pi_{D_1})f, (\pi_{D_2}-\pi_{D_1})f]\\
    &\quad +3 G[\pi_{D_1}f,\pi_{D_1}f,(\pi_{D_3}-\pi_{D_1})f] \\
    &\qquad + 3G[\pi_{D_1}f, (\pi_{D_3}-\pi_{D_1})f,  (\pi_{D_3}-\pi_{D_1})f]\\
    &\qquad\quad+ 3G[(\pi_{D_2}-\pi_{D_1})f,(\pi_{D_2}-\pi_{D_1})f,(\pi_{D_3}-\pi_{D_2})f] \\
    &\qquad\qquad + G[(\pi_{D_3}-\pi_{D_2})f,(\pi_{D_3}-\pi_{D_2})f,(\pi_{D_3}-\pi_{D_2})f] \\
    &\qquad\qquad\quad + 3G[(\pi_{D_3}-\pi_{D_2})f,(\pi_{D_3}-\pi_{D_2})f,(\pi_{D_2}-\pi_{D_1})f]\\
    &\qquad\qquad\qquad + G[f,f,f]-G[\pi_{D_3}f,\pi_{D_3}f,\pi_{D_3}f].
\end{align*}
By construction, the expectation of ${\hat{\mathrm T}}$ is equal to the sum of the  first four lines  in the above expression, so that the bias is equal to the sum of the last three lines. 

\begin{remark}
    This decomposition is best understood if $G[u,v,w]=\int uvw$. In that case, the decomposition is obtained by writing $(\pi_{D_3}f)^3 = ((\pi_{D_3}f-\pi_{D_1}f)+\pi_{D_1}f)^3$, expanding the product, and then further expanding the term $(\pi_{D_3}f-\pi_{D_1}f)^3$ by writing $(\pi_{D_3}f-\pi_{D_1}f)^3=((\pi_{D_3}f-\pi_{D_2}f)+(\pi_{D_2}f-\pi_{D_1}f))^3$. This multiscale decomposition was already used in \cite{tchetgen2008minimax} to estimate the functional ${\mathrm{T}}_f=\int f^3$. 
\end{remark}

According to \emph{\ref{G3}}, for $v_1,v_2,v_3\in \cC^s$, 
$$ |G[v_1,v_2,v_3]|\leq L \|v_1\|_{\L^\infty}\|v_2\|_{\L^\infty}\|v_3\|_{\L^\infty}.$$
By \emph{\ref{K1}}, and as $D_1\leq D_2\leq D_3$, this implies that the bias is bounded by
    \[
         C_1 LR^3(D_2^{-3s}+ D_2^{-2s}D_1^{-s})+ |G[f,f,f]-G[\pi_{D_3}f,\pi_{D_3}f,\pi_{D_3}f]|
    \]
    for some absolute constant $C_1$. 
The two first  terms are of order at most $n^{-4s/(4s+d)}$. We bound the last term by
    \begin{equation*}
       |G[(1-\pi_{D_3})f,f,f]|+ |G[(1-\pi_{D_3})f,\pi_{D_3}f,f]|
      +  |G[(1-\pi_{D_3})f,\pi_{D_3}f,\pi_{D_3}f]|.
    \end{equation*}
    According to \Cref{lem:auxiliary_regular} and \emph{\ref{K2}}, each of these terms is bounded by $$C_2L R^3D_3^{-2s}\leq C_2LR^3 n^{-4s/(4s+d)}$$
    for some constant $C_2$. We rely on the Hoeffding decomposition \eqref{eq:hoeffding} to control the variance of ${\hat{\mathrm T}}$. By inspecting the expression of $b$, one can see that this variance is bounded by a sum of terms of the form
    \begin{align}
        &\frac Rn \int G[\pi_{D_{p}}f,\pi_{D_{q}}f, \pi_{D_{r}}\delta_{x_1}]^2 \dd x \label{eq:1}\\
        &\frac {R^2}{n^2}\int G[\pi_{D_{p}}f,\pi_{D_{q}}\delta_{x_1}, \pi_{D_{r}}\delta_{x_2}]^2 \dd x_1\dd x_2 \label{eq:2}\\
        &\frac {R^3}{n^3}\int G[\pi_{D_{p}}\delta_{x_1},\pi_{D_{q}}\delta_{x_2}, \pi_{D_{r}}\delta_{x_3}]^2\dd x_1\dd x_2\dd x_3\label{eq:3}
    \end{align}   
 where $(p,q,r)$ lies in the set 
 $$
P = \{(p,q,r)~:~1 \leqslant p \leqslant q \leqslant r \leqslant 3\ \ \text{and} \ \ p + q \leqslant 4\}
 $$
     The set $P$ is of cardinal $8$. For $(p,q,r) \in P$, one can bound the terms \eqref{eq:1}, \eqref{eq:2} and \eqref{eq:3} as follows. Recall that $D_1\leq D_2\leq D_3$.
    
 First, we apply \Cref{lem:auxiliary_regular} and \emph{\ref{K2}} to bound  \eqref{eq:1} by $C_1L^2R^5n^{-1}$ for some constant $C_1$. This is negligible in front of $n^{-\frac{8s}{4s+d}}$.
 
 Likewise, we find that \eqref{eq:2} is bounded by $C_1L^2R^4 n^{-2}D_3^{d}$, which is of order  $n^{-\frac{8s}{4s+d}}$.
 
We  apply \Cref{lem:auxiliary_regular}  to  bound \eqref{eq:3} by
    \begin{equation}
        \frac{R^3C_0 L^2}{n^3} D_3^{2s} \max( D_2^{2d}, D_3^{d}D_1^{d} ) \lesssim D_3^{2s} n^{-\frac{12s}{4s+d}} = n^{-\frac{8s}{4s+d}}.
    \end{equation}
    
   In total, we have shown that both the squared bias and all the variance terms are at most of order $n^{-\frac{8s}{4s+d}}$, up to a multiplicative constant depending on $R$ and $L$. This concludes the proof. Remark that there are no efficiency result to prove as we assume that $s\leq  d/4$.

\subsection{Debiasing}\label{sec:step4}
Let now $\mathrm{T}$ be a general $s$-regular functional. We split the dataset into two subsets of size respectively $n_1$ and $n_2$. 
We use the second half of the dataset to build a  density estimator $\hat f$. By \Cref{cor:minimax_standard}, if $f\in \Theta_0$, there exists an estimator $\hat f$ that satisfies: (i) $\hat f\in \Theta$, and (ii)  for all $0\leq t\leq s$, there exists $C$ 
 such that 
\begin{equation}
    \E[\|\hat f-f\|^m_{\cC^{t}}] \leq C \p{\frac{\log n_2}{n_2}}^{-\frac{m(s-t)}{2s+d}}
\end{equation}
for some arbitrary integer $m\geq 1$ to fix.

Let $s'$ and $\eta$ be given by  \Cref{def:regular_functional} and 
consider the event $E$ where $\|f-\hat f\|_{\cC^{s'}}\leq \eta$.   By Markov's inequality, it holds that 
\begin{equation}\label{eq:bound_E}
    \P(E^c) \leq \frac{\E[\|\hat f-f\|^m_{\cC^{s'}}]}{\eta^m} \leq  \frac{C}{\eta^m}  \p{\frac{\log n_2}{n_2}}^{-\frac{m(s-s')}{2s+d}} \leq C'n_2^{-2-6s-6d},
\end{equation}
where we choose $m$ such that $m(s-s')> (2s+d)(6s+6d+2)$ and the constant $C'$ depends on $R$, $m$,  $s$, $s'$,  $d$ and $\eta$.
Remark that if $E$ is satisfied, then  we have a Taylor expansion of the form
\begin{equation}\label{eq:Taylor_hatf}
    {\mathrm{T}}_{f} = {\mathrm{T}}_{\hat f} + \sum_{J=1}^{J_{\max}} {\mathrm{T}}_{\hat f}^{(J)}[f-\hat f,\dots,f-\hat f] +R_{\hat f}(f-\hat f),
\end{equation}
where $|R_{\hat f}(f-\hat f)|\leq L\|f-\hat f\|_{\L^\infty}^{J_{\max}+1}$. 
\medskip

Assume first that $s\leq d/4$, so that $J_{\max}=3$. The Taylor expansion is written as
\begin{equation}
    {\mathrm{T}}_{f} = A^{(0)} + A^{(1)}[f]+ A^{(2)}[f,f] + A^{(3)}[f,f,f]+R_{\hat f}(f-\hat f),
\end{equation}
where the terms $A^{(J)}$, $0 \leqslant J \leqslant 3$, are defined by
\begin{align*}
     A^{(0)}&= {\mathrm{T}}_{\hat f} - {\mathrm{T}}^{(1)}_{\hat f}[\hat f]+ T^{(2)}_{\hat f}[\hat f,\hat f]- T^{(3)}_{\hat f}[\hat f,\hat f,\hat f],\\
     A^{(1)}[\cdot]&= {\mathrm{T}}^{(1)}_{\hat f}[\cdot]-2T^{(2)}_{\hat f}[\hat f,\cdot] +3T^{(3)}_{\hat f}[\hat f,\hat f,\cdot],\\
     A^{(2)}[\cdot,\cdot] &= T^{(2)}[\cdot,\cdot]-3T^{(3)}_{\hat f}[\hat f,\cdot,\cdot],\\
     A^{(3)} &= T^{(3)}.
 \end{align*}
Conditionally on the second half of the sample, we use the first half to build estimators $\hat A^{(J)}$ of $A^{(J)}[f,\cdots,f]$ for $1\leq J\leq 3$. As the ${\mathrm{T}}_{\hat f}^{(J)}$s ($1\leq J\leq 3$) are $s$-regular forms of order $\ell$, so are the $A^{(J)}$s, with their norms being of the same order as the norms of the ${\mathrm{T}}_{\hat f}^{(J)}$s, up to a multiplicative constant of order $1+\|\hat f\|_{\cC^s}^3$. 
Thus, conditionally on the second half of the dataset, we may estimate each term $A^{(J)}$ using an estimator $\hat A^{(J)}$ as defined in one of the three first parts of the proof, with the risk of the estimator of order $(1+\|\hat f\|_{\cC^s}^3)( n_1^{-\frac{4s}{4s+d}}+n_1^{-\frac 12})$ up to a multiplicative constant depending only on $L$ and $R$.  Our final estimator is ${\hat{\mathrm T}} = A^{(0)} +\hat A^{(1)}+ \hat A^{(2)}+\hat A^{(3)}$.  We control the risk of ${\hat{\mathrm T}}$:
\begin{align*}
    \E[({\hat{\mathrm T}}-{\mathrm{T}}_f)^2]\leq \E[({\hat{\mathrm T}}-{\mathrm{T}}_f)^2\ones\{E\}]+ \E[({\hat{\mathrm T}}-{\mathrm{T}}_f)^2\ones\{E^c\}].
\end{align*}
Let us first bound $\E[({\hat{\mathrm T}}-{\mathrm{T}}_f)^2\ones\{E\}]$. As the Taylor expansion holds on $E$, we can apply the risk bounds obtained in the three first steps (conditionally on the second half the sample), leading to
    \begin{align*}
        &\E[({\hat{\mathrm T}}-{\mathrm{T}}_f)^2\ones\{E\}]\\
        &\leq 16\sum_{J=1}^3 \E[(\hat A^{(J)}-A^{(J)}_{\hat f}[f,\dots,f])^2] + 16L^2 \E[\|\hat f-f\|_{\L^\infty}^8] \\
        &\leq C_1 (1+\E[\|\hat f\|_{\cC^s}^6])\p{n_1^{-\frac{8(s-\ell)}{4s+d}}+n_1^{-1}}+ 16L^2 \E[\|\hat f-f\|_{\L^\infty}^8]
    \end{align*}
    for some constant $C_1$.
    It holds that $\E[\|\hat f\|_{\cC^s}^4]\lesssim 1$. Furthermore, the embedding $\cC^\eps\hookrightarrow \L^\infty$ for $\eps>0$ (see \Cref{app:besov}) implies that  $$\E[\|\hat f-f\|_{\L^\infty}^6]\lesssim (\log n_2/n_2)^{-\frac{8(s-\eps)}{2s+d}},$$
     which is neligible in front of $n_2^{-\frac{8s}{4s+d}}$ for $\eps$ small enough.
    
To bound $\E[({\hat{\mathrm T}}-{\mathrm{T}}_f)^2\ones\{E^c\}]$, we require a control on the $\L^\infty$-norm of the estimator.  By assumption, the functional $\mathrm{T}$ is bounded by $L$ and  ${\mathrm{T}}_{\hat f}^{(J)}$ is continuous over $(\cC^s)^*\times (\cC^s)^{J-1}$ with norm smaller than $L$. In particular, $$|{\mathrm{T}}_{\hat f}^{(J)}[u_1,\dots,u_J]|\leq L \|u_1\|_{(\cC^s)^*}\prod_{j=2}^J \|u_j\|_{\cC^s} \leq L \prod_{j=1}^J \|u_j\|_{\cC^s}.$$
Given the expression of ${\hat{\mathrm T}}$ and \emph{\ref{K4}}, as all integers $D_j$ involved in the definitions of the estimators of multilinear forms in the three first steps are at most of order $D_{\max} =\cO (n_1^{\frac{2}{4s+d}})=o( n_1^2)$, the difference $|{\mathrm{T}}_f-{\hat{\mathrm T}}|$ is  bounded by an expression of the form:
\begin{equation}\label{eq:linfty_bound}
\begin{split}
    C_2 L(1+ \sup_{x} \|\pi_{D_{\max}}\delta_x\|_{\cC^s}^3)&\leq C_2 L(1+C_{\star}\kappa_{\infty,d,s}(D_{\max})^3)\\
    &\leq  C_3Ln_1^{3s+3d}
    \end{split}
\end{equation}
for some constants $C_2,C_3$. 
Equation \eqref{eq:bound_E} therefore gives
    \begin{align*}
        \E[({\hat{\mathrm T}}-{\mathrm{T}}_f)^2\ones\{E^c\}]&\leq C_3^2L^2n_1^{6s+6d}\P(E^c)\leq C_4 L^2 n_1^{6s+6d} n_2^{-2-6s-6d}.
    \end{align*}
    We choose $n_1$ and $n_2$ of order $n$ so that the last display is at most of order $n^{-2}\ll n^{-1}$. Thus, the term $\E[({\hat{\mathrm T}}-{\mathrm{T}}_f)^2\ones\{E^c\}]$ is negligible and the risk of ${\hat{\mathrm T}}$ is of order $n^{-\frac{4s}{4s+d}}$. This concludes the proof in the regime $s\leq d/4$.

The proof is similar (and simpler) in the regime $s>d/4$, so we skip the details. The only difference is that  we  use a Taylor expansion up to $J_{\max}=2$ instead of $J_{\max}=3$, with a remainder term of order $\|\hat f-f\|_{\L^\infty}^3$. 
 As $s>d/4$,  this remainder term is negligible in front of $n_2^{-\frac{4s}{4s+d}}$.

It remains to show that efficient estimation is possible in the regime $s>d/4$. Let $n_2$ be of order $n^{1-\eps}$ for some $\eps>0$ to fix, and let $n_1=n-n_2\sim n$. Define $\psi_f$ as in the statement of \Cref{thm:functional_estimation} and remark that the efficient influence function as defined in \eqref{def:efficient} is indeed equal to $\psi_f$. We must therefore show that
\begin{equation}
    Z_2 = {\hat{\mathrm T}}-{\mathrm{T}}_f - \frac{1}{n}\sum_{i=1}^n (\psi_f(X_i)-\E[\psi_f(X_i)])
\end{equation}
is negligible in front of $n^{-\frac 12}$. We have the inequality $    \Var(\psi_f(X))\leq R\|\psi_f\|^2_{\L^2}\leq C_5R\|\psi_f\|^2_{\cC^{s}}\leq C_5RL^2$ for some constant $C_5$. Using the $\L^\infty$-bound \eqref{eq:linfty_bound} on $|{\hat{\mathrm T}}-{\mathrm{T}}_f|$ and Cauchy-Schwarz inequality, we find that for all integers $m\geq 1$ and some constant $C_3$,
\begin{align*}
    \E[|Z_2|\ones&\{E^c\}] \leq  C_3Ln_1^{3s+3d}\P(E^c) +L\sqrt{\frac {C_5R}n \P(E^c)} \\
    &\lesssim  n_1^{3s+3d}\p{\frac{\log n_2}{n_2}}^{-\frac{m(s-s')}{2s+d}} + n^{-1/2} \p{\frac{\log n_2}{n_2}}^{-\frac{m(s-s')}{4s+2d}}.
\end{align*}
As $n_2$ is of order $n^{1-\eps}$, for $m$ large enough with respect to $\eps$, this quantity is negligible in front of $n^{-\frac 12}$. 

Let us now show that $\E[|Z_2|\ones\{E\}]$ is also negligible in front of $n^{-\frac 12}$. We have
\begin{align*}
    ({\hat{\mathrm T}}-{\mathrm{T}}_f)\ones\{E\} = (\hat A^{(1)}-A^{(1)}[f]+ \hat A^{(2)}-A^{(2)}[f,f] + Z_3)\ones\{E\},
\end{align*}
where $\E[|Z_3|]\leq L \E[\|f-\hat f\|_{\L^\infty}^3]\lesssim (\log n_2/n_2)^{-\frac{3(s-\delta)}{2s+d}}$ for any $\delta>0$. In particular, $\E[|Z_3|]$ is negligible in front of $n^{-1/2}$ if $\eps$  is chosen small enough. Let $\phi_f$ be such that ${\mathrm{T}}_f^{(2)}[f,u]= \int u\phi_f$ for all $u\in \cC^s$. As $s>d/4$, by \eqref{eq:efficient_bi} and the definition of the estimator in the linear case, the estimators $\hat A^{(1)}$ and $\hat A^{(2)}$ satisfy
\begin{align*}
    \hat A^{(1)}-A_{\hat f}^{(1)}[f] &= \frac{1}{n_1}\sum_{i=1}^{n_1} (\psi_{\hat f}(X_i)-2\phi_{\hat f}(X_i)- \dotp{f,\psi_{\hat f}-2\phi_{\hat f}}) \\
    \text{and} \qquad \hat A^{(2)}-A_{\hat f}^{(2)}[f,f] &= \frac{1}{n_1}\sum_{i=1}^{n_1} (2\phi_{\hat f}(X_i)- \dotp{f,2\phi_{\hat f}}) + Z_1,
\end{align*}
where  $\E[|Z_1|^2]$ is negligible in front of $n_1^{-\frac 12}\sim n^{-\frac 12}$.
Thus, 
\begin{equation}\label{eq:twomoresteps}
     ({\hat{\mathrm T}}-{\mathrm{T}}_f)\ones\{E\} = \p{\frac{1}{n_1}\sum_{i=1}^{n_1} (\psi_{\hat f}(X_i)-\dotp{f,\psi_{\hat f}})}\ones\{E\} + Z_4,
\end{equation}
with $\E[|Z_4|]$ negligible in front of $ n^{-\frac 12}$. 
\medskip

To conclude, it remains to replace $\hat f$ by $f$ and $n_1$ by $n$   in \eqref{eq:twomoresteps}. 
We first replace $\hat f$ by $f$ and write
\begin{align*}
    \frac{1}{n_1}\sum_{i=1}^{n_1} (\psi_{\hat f}(X_i)-\dotp{f,\psi_{\hat f}}) = \frac{1}{n_1} \sum_{i=1}^{n_1} (\psi_{ f}(X_i)-\dotp{f,\psi_{ f}}) + Z_5,
\end{align*}
where $Z_5 = \frac{1}{n_1} \sum_{i=1}^{n_1} (\psi_{\hat f}-\psi_f)(X_i) - \dotp{\psi_{\hat f}-\psi_f,f}$.  The random variable $Z_5$ is centered conditionally on the second half of the sample, so that we can use the continuity of $f\in \cC^{s'}\mapsto \psi_f\in \L^2$ to obtain
$$\E[|Z_5|^2] \lesssim \frac{1}{n_1} \E[\|\psi_f-\psi_{\hat f}\|_{\L^2}^2]=o(n_1^{-1}).$$
 As $n_1$ is of order $n$, $\E[|Z_5|]$ is negligible in front of $ n^{-\frac 12}$.

At last,
\begin{align*}
     \frac{1}{n_1} &\sum_{i=1}^{n_1} (\psi_{ f}(X_i)-\dotp{f,\psi_{ f}})- \frac{1}{n} \sum_{i=1}^{n} (\psi_{ f}(X_i)-\dotp{f,\psi_{ f}})\\
     &= \frac{1}{n}\sum_{i=n_1+1}^{n} (\psi_{ f}(X_i)-\dotp{f,\psi_{ f}}) + \p{\frac{1}{n}- \frac{1}{n_1}}\sum_{i=1}^{n_1} (\psi_{ f}(X_i)-\dotp{f,\psi_{ f}}).
\end{align*}
This random variable is centered, and its variance is bounded by
\[R\|\psi_f\|_{\L^2}^2 \p{\frac{n_2}{n^2}+ n_1\p{\frac{1}{n}-\frac{1}{n_1}}^2}.\] 
As $n\sim n_1$ and $n_2=o(n_1)$, the variance is negligible in front of $n^{-1}$. In total, we have shown that $\E[|Z_2|\ones\{E\}]= o(n^{-\frac 12})$, proving that ${\hat{\mathrm T}}$ is an asymptotically efficient estimator.

\begin{remark}\label{rem:polynomial_norm}
    It appears clearly in the last step that estimation is still possible if the norm $L$ of the functional $\mathrm{T}$ is allowed to \emph{depend polynomially with respect to $\|f\|_{\cC^s}$}. More precisely, the theorem still holds if there exists $ \widetilde L$ and $N\geq 0$ such that for all $f\in \Theta$, (i) $|{\mathrm{T}}_f|\leq \widetilde L(1+\|f\|_{\cC^s})^N$, (ii) $|R_f(h)|\leq  \widetilde L(1+\|f\|_{\cC^s})^N\|h\|^{J_{\max}+1}_{\L^\infty}$ for all $h=f'-f$ with $f'\in \Theta$ and $\|h\|_{\cC^{s'}}<\eta$, and the norm of ${\mathrm{T}}_f^{(J)}$ for $J\in [J_{\max}]$ is smaller than $\widetilde L(1+\|f\|_{\cC^s})^N $. Indeed, under such conditions, one can  easily modify the proof in Step 4 by applying Cauchy-Schwarz inequality: for instance, one can bound 
    \begin{align*}
      \E[|R_{\hat f}(f-\hat f)|] &\leq \E[\widetilde L(1+\|\hat f\|_{\cC^s})^N \|f-\hat f\|_{\L^\infty}^{J_{\max}+1}] \\
      &\leq \widetilde L \p{\E[(1+\|\hat f\|_{\cC^s})^{2N}] \E[\|f-\hat f\|_{\L^\infty}^{2J_{\max}+2}]}^{1/2}\\
      &\lesssim \p{\frac{\log n}{n}}^{-\frac{(J_{\max}+1)(s-s'-\delta)}{2s+d}}
    \end{align*}
    for any $\delta>0$.
    We will still call \textit{$s$-regular functionals} such functionals $\mathrm{T}$ whose norm grows polynomially with $\|f\|_{\cC^s}$. 
   This generalization will be helpful to estimate the eigenvalue functional in \Cref{sec:spectral_estimation}.
\end{remark}

\section{Spectral estimation of weighted Laplace operators}\label{sec:spectral_estimation}
We return in this section to the estimation of the spectrum of weighted Laplace operators $\Delta_f=\Delta+\alpha A_V$, where $V=\log f$ and $A_V=\nabla V\cdot \nabla$. In particular, we are interested in the stability of the spectrum of $\Delta_f$ under perturbations of~$f$. 

\subsection{Spectral theory of weighted Laplace operators}\label{sec:spectral_theory_intro}
For any positive density $f \in \cC^s$ for $s>1$ written as $f=e^{V}$, the operator $A_{V}$ is bounded $\H^2 \to \H^{\eps}$ for $\eps\in (0,s-1)$ according to \eqref{A3} below. Hence, $A_V$ is a compact operator $\H^2 \to \L^2$.
It follows that  $\Delta_f : \H^2 \to \L^2$ is a compact perturbation of $\Delta$, hence the spectrum of $\Delta_f$ is also discrete. The resolvent 
\[
R_f(z) = (z + \Delta_f)^{-1}, \quad z \notin \sigma(-\Delta_f)
\]
defines a family of operators $\L^2 \to \H^2$ which is meromorphic in $z \in \mathbb C$. Moreover, $-\Delta_f$ is self-adjoint on $\L^2(f^\alpha)$ and non-negative, hence $\sigma(-\Delta_f)$ consists in eigenvalues
\[
0 = \lambda_{0, f} < \lambda_{1,f} \leqslant \cdots \leqslant \lambda_{\ell, f} \to \infty.
\]
Besides, we have 
$
\|R_f(z)\|_{\L^2(f^\alpha)\to \L^2(f^\alpha)} = {\mathrm{dist}(z, \sigma(-\Delta_f))}^{-1}
$
and since $\L^2(f^\alpha)$ and $\L^2$ are equivalent with equivalence constant depending on $\alpha, \inf f$ and $\sup f$, one obtains
\begin{equation}\label{eq:boundrfzl2}
\|R_f(z)\|_{\L^2\to\L^2} \leqslant \frac{C}{\mathrm{dist}(z, \sigma(-\Delta_f))}, \quad z \notin \sigma(-\Delta_f),
\end{equation}
where $C$ depends on $\alpha, \inf f$ and $\sup f$. In particular this implies that each eigenvalue is semi-simple (that is, there are no Jordan blocks), and for each $\lambda \in \sigma(-\Delta_f)$, we have
\begin{equation}\label{eq:laurent}
R_f(z)= S_{\lambda, f}(z) + \frac{\Pi_{\lambda, f}}{z - \lambda},
\end{equation}
where  $S_{\lambda, f}(z)$ is holomorphic near $z = \lambda$
and
$$
\Pi_{\lambda, f} = \frac{1}{2\pi i} \int_{\Upsilon_\lambda} R_f(z) \dd z
$$
is the spectral projector on the eigenspace associated with $\lambda$. Here $\Upsilon_\lambda$ is a small circle around $\lambda$ enclosing no other eigenvalue of $-\Delta_f$. Note also that we have the relations
\begin{equation}\label{eq:spicommute}
S_{\lambda, f}(z) \Pi_{\lambda, f} = \Pi_{\lambda, f}S_{\lambda, f}(z) = 0.
\end{equation}

\subsection{Resolvent estimates for weighted Laplace operators}

We  first control the operator norms of the operator $A_V$ and of the resolvent of $\Delta_f$.

\begin{lemma}\label{lem:operator_norm_Ah}
    Let $t\in \R$ and let $1< p, q,r<\infty$ with $\frac 1p+\frac 1r=\frac 1q$. Then, there exists $C>0$ such that 
    \begin{align}
       \|A_V\|_{\W^{1+t,p}\to \W^{t,q}} &\leqslant C \|V\|_{\W^{1+|t|,r}}, &  V &\in \W^{1+|t|,r},&  \tag{A1}\label{A1}\\
       \|A_V\|_{\cC^{1+t}\to \cC^{t}} &\leqslant C \|V\|_{\cC^{1+t}},  & V & \in \cC^{1+t},& t & > 0. \tag{A2}\label{A2} 
    \end{align}
 If $p=q$ and $t\in \N$, \eqref{A1} holds with the $\W^{1+|t|,r}$-norm replaced with the $\Cf^{1+|t|}$-norm. 
 \end{lemma}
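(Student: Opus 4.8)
The plan is to localise on $M$ and reduce both \eqref{A1} and \eqref{A2} to bilinear multiplication estimates on $\R^d$ (equivalently on $\T^d$), which are then obtained from Bony's paraproduct calculus together with the Littlewood--Paley characterisation of the spaces $\W^{t,q}$ and $\cC^{t}$.

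\emph{Step 1 (reduction to multiplication estimates).} Working in a finite atlas of $M$ with a subordinate smooth partition of unity, one has in each chart $A_V u = g^{ij}(\partial_i V)(\partial_j u)$, with $g^{ij}$ smooth and all its derivatives bounded (compactness of $M$). Since multiplication by such a function, composition with a chart, and multiplication by a cut-off are bounded on $\W^{t,q}$ and on $\cC^{t}$ for all admissible exponents, the statement reduces to the following Euclidean inequalities, applied with $w$ a first derivative of a localisation of $V$ and $v$ a first derivative of a localisation of $u$: for $\tfrac1q=\tfrac1p+\tfrac1r$ and $t\in\R$,
\[
\|wv\|_{\W^{t,q}}\leqslant C\,\|w\|_{\W^{|t|,r}}\,\|v\|_{\W^{t,p}},
\]
which yields \eqref{A1} (as $V\in\W^{1+|t|,r}\Rightarrow w\in\W^{|t|,r}$ and $u\in\W^{1+t,p}\Rightarrow v\in\W^{t,p}$); the analogue $\|wv\|_{\cC^{t}}\leqslant C\|w\|_{\cC^{t}}\|v\|_{\cC^{t}}$ for $t>0$, which yields \eqref{A2}; and, for the last assertion (where $p=q$ forces $r=\infty$), the estimate $\|wv\|_{\W^{t,p}}\leqslant C\|w\|_{\Cf^{t}}\|v\|_{\W^{t,p}}$ for $t\in\N$.

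\emph{Step 2 (the elementary parts).} The Hölder--Zygmund inequality is immediate since $\cC^{t}=\B^{t}_{\infty,\infty}$ is a multiplication algebra for $t>0$ (see \Cref{app:besov}). The $\Cf$-variant is also elementary: for $t\in\N$, all derivatives $\partial^\gamma w$ with $|\gamma|\leqslant t$ of $w\in\Cf^{t}$ are continuous, hence bounded on a compact chart, so the Leibniz rule and Hölder's inequality give $\|\partial^{\beta}(wv)\|_{\L^p}\lesssim\|w\|_{\Cf^{t}}\|v\|_{\W^{t,p}}$ for $|\beta|\leqslant t$ --- this is exactly where integrality of $t$ enters. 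For the Sobolev estimate, decompose $wv=T_wv+T_vw+R(w,v)$ (paraproducts plus resonant term). The term $T_wv$ (low frequencies of $w$, high frequencies of $v$) is harmless for every $t\in\R$: the truncations of $w$ are controlled in $\L^{r}$ by the Hardy--Littlewood maximal function ($r>1$), so $\|T_wv\|_{\W^{t,q}}\lesssim\|w\|_{\L^{r}}\|v\|_{\W^{t,p}}\leqslant\|w\|_{\W^{|t|,r}}\|v\|_{\W^{t,p}}$. For $t\geqslant 0$, the symmetric argument handles $T_vw$ (using $\W^{t,p}\hookrightarrow\L^p$), and $R(w,v)$ is covered by the standard bound since its smoothness budget $|t|+t=2t$ is nonnegative (the case $t=0$ being plain Hölder).

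\emph{Step 3 (the obstacle: $t<0$ in \eqref{A1}).} This is the only genuinely delicate point, since then $v$ is a distribution of negative order while the smoothness of $w$ is exactly $|t|=-t$. For $T_vw$: the low-frequency truncations of $v$ at scale $2^{j}$ are $\lesssim 2^{j|t|}\|v\|_{\W^{t,p}}$ in $\L^p$, while the frequency-$2^{j}$ block of $w$ is $\lesssim 2^{-j|t|}c_j$ in $\L^{r}$ with $(c_j)\in\ell^\infty$, so the resulting annular blocks of $T_vw$ have $\L^q$-norm $\lesssim c_j\|v\|_{\W^{t,p}}$, and since $t<0$ they sum to an element of $\B^{t}_{q,1}\hookrightarrow\W^{t,q}$. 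The borderline term is $R(w,v)$, whose smoothness budget is $|t|+t=0$: here one uses that its dyadic pieces $\rho_j$ have Fourier transform supported in \emph{balls} $\{|\xi|\lesssim 2^{j}\}$ rather than annuli, so that --- $t$ being \emph{negative} --- the standard low-frequency summation lemma applies; from $\|\rho_j\|_{\L^q}\lesssim c_j d_j$ (with $(c_j),(d_j)\in\ell^\infty$ coming from $w\in\W^{|t|,r}$, $v\in\W^{t,p}$) one gets $\sum_j 2^{jt}\|\rho_j\|_{\L^q}\lesssim\sum_j 2^{-j|t|}<\infty$, whence $R(w,v)\in\B^{t}_{q,1}\hookrightarrow\W^{t,q}$ with the required bound. (Alternatively, the range $t\leqslant -1$ can be reduced to the already-treated case $t\geqslant 0$ by duality, using $A_V^{*}=-A_V-M_{\Delta_g V}$ together with $\Delta_g V\in\W^{|t|-1,r}$; but $-1<t<0$ must still be done by hand, and one could equally well simply invoke a multiplication theorem for Bessel-potential spaces, e.g. from Triebel or Runst--Sickel.) I expect this borderline resonant estimate for $t<0$ to be the main --- and essentially the only non-routine --- obstacle; the rest is standard paraproduct bookkeeping, the one point requiring care being that the Hölder conjugacy $\tfrac1q=\tfrac1p+\tfrac1r$ must be tracked throughout.
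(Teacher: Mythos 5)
Your proof is correct, but it is a genuinely different route from the paper's. After the same localisation to $\R^d$, the paper (i) proves the multiplication estimate $\|wv\|_{\W^{t,q}}\lesssim\|w\|_{\W^{t,r}}\|v\|_{\W^{t,p}}$ for integer $t\ge 0$ by Leibniz and H\"older, and extends it to all real $t\ge 0$ by bilinear complex interpolation, and (ii) settles $t<0$ by duality applied to the \emph{pointwise product} $wv$ rather than to the operator $A_V$: test $wv$ against $\psi\in\W^{-t,q'}$, write $\langle wv,\psi\rangle=\langle v,w\psi\rangle$, and use the already-established positive-order bound $\|w\psi\|_{\W^{-t,p'}}\lesssim\|w\|_{\W^{-t,r}}\|\psi\|_{\W^{-t,q'}}$, which applies precisely because $\tfrac1r+\tfrac1{q'}=\tfrac1{p'}$; no adjoint of $A_V$ is ever computed. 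Your parenthetical remark that duality ``only covers $t\le -1$'' is therefore too pessimistic: that limitation appears if one dualizes $A_V$ as an operator, which requires the formula for $A_V^*$ (involving $\Delta V$ and hence one extra derivative of $V$), but the paper dualizes the bilinear product itself, which covers all $t<0$ without any loss. Your main argument avoids the issue entirely by doing the paraproduct bookkeeping by hand; the careful treatment of the borderline resonant term $R(w,v)$ for $t<0$ --- smoothness budget $|t|+t=0$, rescued by the geometric decay $\sum_j 2^{jt}<\infty$ together with the ball-supported low-frequency summation lemma --- is exactly right, and it is the point that makes visible why strict negativity of $t$ matters. The two proofs yield the same estimate: yours is more explicit and avoids interpolation; the paper's is shorter but leans on two off-the-shelf black boxes (bilinear complex interpolation of Bessel-potential spaces and duality of the pointwise product).
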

\begin{proof}
     The fractional Sobolev space $\W^{t,p}$ on $M$ is equivalent to the Triebel–Lizorkin space $\mathrm{F}^{t,p}_2$  which is defined through a resolution of unity \cite[Chapter 7]{triebel1992theory}. This implies that it suffices to treat the case $M=\R^d$. We first prove \eqref{A1} for $t\geq 0$. Note that for $t\in \N$, by Leibniz rule and H\"older's inequality, $\|uv\|_{\W^{t,q}}\lesssim \|u\|_{\W^{t,p}}\|v\|_{\W^{t,r}}$. The inequality is then extended to $t\geq 0$ by complex interpolation \cite{lunardi2018interpolation}. Thus, for $\phi \in \W^{1+t,p}$,
    \begin{align*}
        \| A_V\phi\|_{\W^{t,q}}&= \|\nabla V\cdot \nabla \phi \|_{\W^{t,q}}\lesssim \|\nabla V\|_{\W^{t,r}} \|\nabla \phi\|_{\W^{t,p}} \lesssim \|V\|_{\W^{1+t,r}}  \| \phi\|_{\W^{1+t,p}}.
    \end{align*}
     Equation  \eqref{A1} for $t<0$  is similarly obtained by using the dual definition of the $\W^{-t,q}$-norm. The case $p=q$ is easily proven when $t\in \N$. \Cref{A2} and \Cref{A3} follow from $\cC^t$ and $\Cf^t$ being  multiplication algebras, see \Cref{app:besov}. We leave details to the reader.
\end{proof}

\begin{corollary}\label{cor:operator_norm_Ah}
 Let $t\in \R$, $t'>|t|$ and let $1< p<\infty$. Then, there exists $C>0$ such that for all $V\in \cC^{1+t' }$,
    \begin{align}
       &\|A_V\|_{\W^{1+t,p}\to \W^{t,p}} \leq C \|V\|_{\cC^{1+t'}}, \tag{A3}\label{A3}
    \end{align}
\end{corollary}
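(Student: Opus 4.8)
The plan is to deduce \eqref{A3} from a multiplication estimate rather than from \eqref{A1} directly: the bound \eqref{A1} necessarily loses integrability ($q<p$) as soon as the $V$-factor is measured in a genuine Sobolev space $\W^{1+|t|,r}$ with $r<\infty$, whereas here the exponent $p$ must be preserved, so a Hölder norm on $V$ is essential. As in the proof of \Cref{lem:operator_norm_Ah}, one may localize and reduce to $M=\R^d$ using a resolution of unity adapted to the Triebel--Lizorkin description $\W^{t,p}=\mathrm F^t_{p,2}$; after localization all functions are compactly supported, so the bounded-domain embeddings are available.

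On $\R^d$ I would write $A_V\phi=\sum_{i=1}^d(\partial_iV)(\partial_i\phi)$. Since $V\in\cC^{1+t'}$, each $\partial_iV$ lies in $\cC^{t'}=\B^{t'}_{\infty,\infty}$ with $\|\partial_iV\|_{\cC^{t'}}\lesssim\|V\|_{\cC^{1+t'}}$, and since $\phi\in\W^{1+t,p}$, each $\partial_i\phi$ lies in $\W^{t,p}$ with $\|\partial_i\phi\|_{\W^{t,p}}\lesssim\|\phi\|_{\W^{1+t,p}}$. The corollary then follows immediately from the bilinear estimate
\[
\|gu\|_{\W^{t,p}}\ \lesssim\ \|g\|_{\cC^{t'}}\,\|u\|_{\W^{t,p}},\qquad t'>|t|,\quad 1<p<\infty,
\]
applied with $g=\partial_iV$, $u=\partial_i\phi$, and summed over $i$. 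This is exactly the instance, relevant here, of the general fact recorded in \Cref{app:besov} that $\cC^{t'}$ acts by multiplication on $\B^{t}_{p,q}$ — hence on $\W^{t,p}$ — whenever $t'>|t|$, and it is proved by a standard paraproduct decomposition $gu=\pi_gu+\pi_ug+R(g,u)$.

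The only genuine point is that the index condition must come out to be exactly $t'>|t|$. The low–high paraproduct $\pi_gu$ only needs $g\in\L^\infty$ (so $t'>0$); the high–low term $\pi_ug$ inherits the regularity of $g$ and requires summing a geometric-type dyadic series, forcing $t'>t$ when $t\ge 0$; the resonant term $R(g,u)$ lands in a space of regularity $\sim t'+t$ and must still absorb the negative order of $u$, forcing $t'>-t$ when $t<0$. The union of these constraints is precisely $t'>|t|$, matching the hypothesis. I expect this borderline bookkeeping in the paraproduct estimate — together with checking the low-frequency/negative-order part of $u$ is handled correctly — to be the main (and essentially only) obstacle; once it is granted, or simply quoted from \Cref{app:besov}, the corollary is immediate. (For integer $t\ge 0$ one can bypass paraproducts entirely, deriving the estimate from the $p=q$, $t\in\N$ case of \eqref{A1} together with $\cC^{1+t'}\hookrightarrow\Cf^{1+t}$; and the case $t<0$ can be reduced to the case $-t>0$ by dualizing the multiplication estimate, pairing $gu$ against test functions and moving $g$ onto the test function. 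However, the small non-integer exponents $t\in(0,1)$ with $t'$ close to $t$ are not reachable by interpolating between integer orders, so the paraproduct argument remains necessary in general, and it is cleanest to invoke it uniformly.)
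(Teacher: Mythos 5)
Your proof is correct and takes a somewhat different route than the paper's. You write $A_V\phi=\sum_i(\partial_iV)(\partial_i\phi)$, note $\partial_iV\in\cC^{t'}$ and $\partial_i\phi\in\W^{t,p}$, and invoke directly the multiplication estimate $\cC^{t'}\times\W^{t,p}\to\W^{t,p}$ (for $t'>|t|$) recorded as item (viii) of \Cref{app:besov}. The paper instead reruns the Leibniz/duality argument from the proof of \Cref{lem:operator_norm_Ah} in the borderline case $p=q$, $r=\infty$, to get $\|A_V\|_{\W^{1+t,p}\to\W^{t,p}}\lesssim\|V\|_{\W^{1+|t|,\infty}}$, and then inserts the embedding $\cC^{1+t'}\hookrightarrow\W^{1+|t|,\infty}$ (valid since $1+t'>1+|t|>0$). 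Both give the same index threshold $t'>|t|$. Your route is more self-contained relative to the appendix and avoids the somewhat awkward $\W^{s,\infty}=\Lambda^{-s}\L^\infty$ endpoint spaces, which are not well-behaved under interpolation; the paper's route has the advantage of being a continuation of the computation already done for \eqref{A1}, at the cost of leaning on that $\L^\infty$-Sobolev endpoint. One small imprecision in your write-up: the appendix states the multiplication property for $\W^{t,p}$ directly, not for $\B^t_{p,q}$ (the Besov analogue is also true but is not what is quoted there), and the paragraph you append on paraproducts is intuition rather than part of the proof; what you actually use is the cited fact, which suffices.
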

\begin{proof}
   The previous proof applies in the case $p=q$ to show that for $t\in \R$
   \[
   \|A_V\|_{\W^{1+t,p}\to \W^{t,p}} \leq C \|V\|_{\W^{1+t,\infty}},
   \]
   with the norm $\|V\|_{\W^{1+t,\infty}}$ appearing in the upper bound, where $\|u\|_{\W^{s,\infty}}=\|\Lambda^s u\|_{\L^\infty}$ by definition. For $s'>s>0$, the fact that $\Lambda^s:\cC^{s'}\to \cC^{s'-s}\hookrightarrow \L^\infty$ implies the embedding $\cC^{s'}\hookrightarrow \W^{s,\infty}$, see \cite[Eq. (8.11)]{taylor1996partial}. This  concludes the proof.
\end{proof}

Now, we turn to the estimation of the resolvent $R_f(z)$.

\begin{proposition}\label{prop:resolvbounded}
Let $s > 1$,\, $t \in \left]-s, s - 1\right[$,\, $q > 1$ and 
$L>0$. Then there are $C, N > 0$ such that for each $f=e^{V} \in \cC^{s}$ with $\|V\|_{\L^\infty}\leq L$, one has
\begin{equation}\label{cor:estresolv}
\left\|R_f(z)\right\|_{\W^{t,q} \to \W^{t + 2, q}} \leq \frac{C (1 + \|V\|_{\cC^s}+|z|)^N}{\dist(z, \sigma(-\Delta_f))}, \quad z \notin \sigma(-\Delta_f).
\end{equation} 
The same bound  holds for $t=-s$ or $t=s-1$ if $s$ is an integer, with $\|V\|_{\cC^s}$ replaced by $\|V\|_{\Cf^s}$ in the right-hand side. Moreover, 
the  bound also holds for $\left\|R_f(z)\right\|_{\cC^{t} \to \cC^{t + 2}}$ for $t\in \left[0,s-1\right]$.
\end{proposition}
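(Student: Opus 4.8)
The plan is to bootstrap everything from the $\L^2$--resolvent bound \eqref{eq:boundrfzl2}, which gives $\|R_f(z)\|_{\L^2\to\L^2}\leq C\,\dist(z,\sigma(-\Delta_f))^{-1}$ with $C=C(\alpha,L)$, together with the remark that on the Sobolev scale ``elliptic regularity'' is free: since $\Lambda^2=1-\Delta$, for every $u\in\cD'(M)$, $t\in\R$ and $1<q<\infty$,
\[
\|u\|_{\W^{t+2,q}}=\|\Lambda^t(1-\Delta)u\|_{\L^q}\leq\|u\|_{\W^{t,q}}+\|\Delta u\|_{\W^{t,q}},
\]
and likewise $\|u\|_{\cC^{t+2}}\leq\|u\|_{\cC^t}+\|\Delta u\|_{\cC^t}$. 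The first step is to feed $u=R_f(z)g$ into this inequality and substitute $\Delta u=\Delta_f u-\alpha A_Vu=(g-zu)-\alpha A_Vu$, getting
\[
\|R_f(z)g\|_{\W^{t+2,q}}\leq\|g\|_{\W^{t,q}}+(1+|z|)\,\|R_f(z)g\|_{\W^{t,q}}+|\alpha|\,\|A_VR_f(z)g\|_{\W^{t,q}}.
\]
By \Cref{cor:operator_norm_Ah} (estimate \eqref{A3} for $|t|<s-1$; for the remaining $t\in\,]-s,-(s-1)]$, which is needed only because the model allows low--regularity densities, by writing $A_Vu=\nabla V\cdot\nabla u$ with $\nabla V\in\cC^{s-1}$ and using the multiplication estimates of \Cref{app:besov}, followed if necessary by complex interpolation between two exponents already covered), throughout $t\in\,]-s,s-1[$ one has $\|A_Vu\|_{\W^{t,q}}\leq C\|V\|_{\cC^s}\|u\|_{\W^{1+t,q}}$, and the analogous $\cC^t$--bound from \eqref{A2}. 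I would then interpolate $\W^{1+t,q}$ between $\W^{t,q}$ and $\W^{t+2,q}$, choose the interpolation parameter of size $\sim(1+\|V\|_{\cC^s})^{-1}$ to absorb the top--order term, and conclude
\begin{equation}\tag{$\ast$}
\|R_f(z)g\|_{\W^{t+2,q}}\leq C\bigl(\|g\|_{\W^{t,q}}+(1+|z|+\|V\|_{\cC^s})^2\,\|R_f(z)g\|_{\W^{t,q}}\bigr)
\end{equation}
for all $f=e^V$ with $\|V\|_{\L^\infty}\leq L$, together with its Hölder--Zygmund counterpart.

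Granting $(\ast)$, the proposition reduces to controlling $\|R_f(z)\|_{\W^{t,q}\to\W^{t,q}}$ (resp.\ $\|R_f(z)\|_{\cC^t\to\cC^t}$): one then iterates $(\ast)$ a fixed number of times while decreasing $t$ by $2$ at each step, using the continuous inclusions $\W^{\tau,q}\hookrightarrow\W^{\tau',q}$ for $\tau\geq\tau'$ valid on the compact manifold $M$, the $\L^2$--bound as anchor, and duality for the negative--regularity base cases. Here $\dist(z,\sigma(-\Delta_f))$ survives only to the first power, because $0\in\sigma(-\Delta_f)$ forces $\dist(z,\sigma(-\Delta_f))\leq|z|$, so that the free terms $\|g\|_{\W^{t,q}}$ are harmless. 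I want to stress that this scheme works directly with $R_f(z)$ and never inverts $z+\Delta$, which is precisely why no factor involving the \emph{unperturbed} spectrum $\sigma(-\Delta)$ is ever produced.

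The remaining task — proving $\|R_f(z)\|_{\L^q\to\L^q}\lesssim(1+|z|+\|V\|_{\cC^s})^{N}\dist(z,\sigma(-\Delta_f))^{-1}$ for every $1<q<\infty$ — is where I expect the real work, and my plan here bypasses $\L^q$ spectral--multiplier theory. From the $\L^2$--bound and $(\ast)$ with $t=0,\ q=2$ one gets $R_f(z):\L^2\to\H^2$ with the desired bound; composing with the Sobolev embedding $\H^2\hookrightarrow\L^{q_1}$ where $1/q_1=(1/2-2/d)_+$ gives the same bound for $R_f(z):\L^2\to\L^{q_1}$, and dualizing — using that the $\L^2$--adjoint of $R_f(z)$ equals $R_f(\bar z)$ conjugated by multiplication by $f^{\pm\alpha}$, which is bounded on every $\L^p$ since $\inf f>0$ and $\sup f<\infty$ — gives $R_f(z):\L^{q_1'}\to\L^2$ with the same bound. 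Because $\L^p\hookrightarrow\L^{p'}$ for $p\geq p'$ on $M$, these two mapping properties force $\|R_f(z)\|_{\L^q\to\L^q}\lesssim(1+|z|+\|V\|_{\cC^s})^{N}\dist(z,\sigma(-\Delta_f))^{-1}$ for all $q\in[q_1',q_1]$; plugging $q=q_1$ back into $(\ast)$ at $t=0$ yields $R_f(z):\L^{q_1}\to\W^{2,q_1}\hookrightarrow\L^{q_2}$ with $1/q_2=(1/q_1-2/d)_+$, widening the admissible interval, and finitely many iterations cover all $1<q<\infty$. Composing with $\W^{2,q}\hookrightarrow\cC^{2-d/q}$ for $q>d/2$, and with the embeddings $\cC^t\hookrightarrow\L^\infty$ ($t>0$) and $\cC^0\hookrightarrow\W^{-\delta,q}$ ($\delta>0$, $q$ large), then produces the base cases $\|R_f(z)\|_{\cC^t\to\cC^t}$ for $t<2$, from which the $\cC^t\to\cC^{t+2}$ statement follows by iterating the Hölder--Zygmund version of $(\ast)$ (built from \eqref{A2}) as above. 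For the $\Cf^s$--endpoint cases ($t=-s$ or $t=s-1$ with $s\in\N$) one repeats the argument verbatim, replacing \eqref{A3} and \eqref{A2} by the integer--order case of \eqref{A1}, which only involves the stronger norm $\|V\|_{\Cf^s}$.

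The two points I expect to be most delicate are: (i) making every absorption step uniform over the statistical model, which is exactly what forces the interpolation parameter to be a negative power of $1+\|V\|_{\cC^s}$ and is the origin of the polynomial factor in the conclusion; and (ii) the regularity--window bookkeeping for $A_V=\nabla V\cdot\nabla$ near the lower endpoint $t=-s$ when $s$ is close to $1$, where neither \eqref{A1}--\eqref{A3} nor a naive multiplication estimate applies on the nose, so that one has to lose one derivative to $\nabla$, use that multiplication by a $\cC^{s-1}$ function maps $\W^{\sigma,q}$ into a space still embedding into $\W^{t,q}$ (which hinges on $t<s-1$ and $t+1>-(s-1)$, i.e.\ exactly $t>-s$), and patch the narrow leftover range by interpolating between two neighbouring, already treated, exponents.
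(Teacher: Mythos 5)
Your strategy is the same family as the paper's — anchor at the $\L^2$ resolvent bound, gain two derivatives via the equation $(z+\Delta_f)R_f(z)=1$, bootstrap over $t$, and climb the $\L^q$ ladder by Sobolev embedding and duality — and the second half of your plan ($\L^q$ iteration via $\H^2\hookrightarrow\L^{q_1}$, dualization through $R_f(z)^*=f^\alpha R_f(\bar z)f^{-\alpha}$, composing embeddings) is essentially what the paper does in a slightly different gauge (they gain $1/d$ in $1/q$ per step via $\H^2\hookrightarrow\W^{1,q}$, you gain $2/d$ via $\H^2\hookrightarrow\L^{q_1}$). There is, however, a genuine gap in the step you flagged yourself as delicate. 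The key estimate feeding $(\ast)$, namely $\|A_Vu\|_{\W^{t,q}}\lesssim\|V\|_{\cC^s}\|u\|_{\W^{1+t,q}}$, is \emph{false} for $t\in\,]-s,-(s-1)[$: it reduces to boundedness of multiplication by $\nabla V\in\cC^{s-1}$ on $\W^{t,q}$, which holds only for $|t|<s-1$. Your proposed patch via complex interpolation cannot repair this, because interpolation only produces exponents \emph{between} two already-controlled ones — it can never push past the endpoint $t=-(s-1)$. The condition ``$t+1>-(s-1)$'' that you identify is not the relevant one; it would be if $A_V$ acted on the extra-regular representative $u\in\W^{t+2,q}$, but then the resulting term $\|R_f(z)g\|_{\W^{t+2,q}}$ appears with a constant of size $\|V\|_{\cC^s}$ and cannot be absorbed.

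The paper sidesteps this precisely by never applying $A_V$ at the level $\W^{1+t,q}\to\W^{t,q}$ with $\tau=t$. For positive $t$ it uses the identity $R_f(z)=-\Lambda^{-2}+\Lambda^{-2}(1+z+\alpha A_V)R_f(z)$ and chains $\H^t\hookrightarrow\L^2\xrightarrow{R_f}\H^2\xrightarrow{A_V}\H^1\hookrightarrow\H^t\xrightarrow{\Lambda^{-2}}\H^{t+2}$, so $A_V$ is always applied at $\H^2\to\H^1$ (i.e.\ $\tau=1$), inside the good range. For negative $t$ it uses the mirror identity $R_f(z)=-\Lambda^{-2}+R_f(z)(1+z+\alpha A_V)\Lambda^{-2}$, so $A_V$ is applied at $\H^{t+2}\to\H^{t+1}$, i.e.\ with $\tau=t+1\in\,]-(s-1),s-1[$ whenever $t\in\,]-s,s-2[$ — again the good range — and the remaining factor $R_f(z)$ is handled by the previously established less-negative-$t$ bound. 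Your scheme can be repaired along exactly these lines: drop the false $A_V$ claim, prove $(\ast)$ only for $|t|<s-1$, and cover $t\in\,]-s,-(s-1)]$ by the adjoint/duality argument you mention in passing (which amounts to the paper's second resolvent identity). Your absorption step also tacitly uses an a priori qualitative finiteness of $\|R_f(z)g\|_{\W^{t+2,q}}$ to justify moving the absorbed term to the left; this is harmless (restrict first to $g\in\Cf^\infty$, where elliptic regularity gives it, then conclude by density) but worth stating, and it is one reason the paper's additive operator decomposition — which requires no absorption — is cleaner.
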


\begin{proof}
We write $V=\log f$. We first give a bound for the operator norm of $R_f(z)$ seen as an operator $\L^2\to \H^1$. Let $v\in \L^2$ and $u=R_f(z)v$, so that $zu+\Delta u + \alpha \nabla V\cdot \nabla u=v$. We compute
\begin{align*}
    \dotp{v,u} &= \Re(z)\|u\|^2_{\L^2} + \dotp{\Delta u, u} + \alpha \dotp{\nabla V\cdot \nabla u,u}.
\end{align*}
So, by Cauchy-Schwarz inequality,
\begin{equation}
 \|\nabla u\|^2_{\L^2} \leq   |z|\|u\|_{\L^2}^2+  \|v\|_{\L^2}\|u\|_{\L^2}  + \alpha\|\nabla V\|_{\L^\infty}\|\nabla u\|_{\L^2} \|u\|_{\L^2}.
\end{equation}
By \eqref{eq:boundrfzl2}, we have
$$
\|u\|_{\L^2} \leqslant \frac{C\|v\|_{\L^2}}{\delta(z)}
$$
where $C \geqslant 1$ depends on $\inf f$ and $\sup f$ (which are controlled in terms of $\|V\|_{\L^\infty}$) and $\delta(z) = \dist(z, \sigma(-\Delta_f))$.
Thus one gets
\begin{equation}
  \|\nabla u\|^2_{\L^2} \leq {C^2\delta(z)^{-2}|z|\|v\|_{\L^2}^2+  C\delta(z)^{-1}\|v\|^2_{\L^2}} +\alpha C\delta(z)^{-1}{\|\nabla V\|_{\L^\infty}\|\nabla u\|_{\L^2} \|v\|_{\L^2}}.
\end{equation}
The latter inequality implies
$$
 \|\nabla u\|_{\L^2} \leqslant C \delta(z)^{-1} (\sqrt{\delta(z) + |z|} + \alpha\|\nabla V\|_{\L^\infty}) \|v\|_{\L^2}.
$$
This implies that $R_f(z):\L^2\to \H^1$ is continuous with the bound
\begin{equation}\label{eq:firstboundrfz}
\|R_f(z)\|_{\L^2 \to \H^1} \leqslant C \delta(z)^{-1} (\sqrt{\delta(z) + |z|} + \alpha\|\nabla V\|_{\L^\infty}),
\end{equation}
for some $C$ depending on $\alpha, \inf f$ and $\sup f$. 

We now bound the norm of ${R_f(z) : \L^2 \to\H^2}$. One has the identity
\begin{equation}\label{eq:resolv2}
    R_f(z) =- \Lambda^{-2}+\Lambda^{-2}(1 + z + \alpha A_{V})R_f(z).
\end{equation}
The operator $\Lambda^{-2}$ is bounded $\L^2 \to \H^2$ with operator norm $1$. Moreover, since $V\in \Cf^1$, $A_{V}$ is bounded $\H^1 \to \L^2$ by \Cref{lem:operator_norm_Ah}. Recalling \eqref{eq:firstboundrfz} one obtains
\begin{equation}\label{eq:secondboundrfz}
\|R_f(z)\|_{\L^2 \to \H^2} \leqslant 1 + \widetilde{C}\delta(z)^{-1}(1 + |z|+ \alpha\|\nabla V\|_{\L^\infty})(\sqrt{\delta(z) + |z|}+\alpha\|\nabla V\|_{\L^\infty})
\end{equation}
for some $\widetilde C$ depending on $\|V\|_{\L^\infty}$.

From now on, we do not keep track of the constants and we only explain the boundedness properties of $R_f(z)$. We iterate the above process. For $t\in \left]0,1\right[$ with $t<s-1$, by \eqref{A3},
\[
\L^2 \xrightarrow{R_f(z)} \H^2 \xrightarrow{A_V}  \H^t  \xrightarrow{\Lambda^{-2}}  \H^{t+2}
\]
so that, as $\H^t\hookrightarrow \L^2$, the operator $\Lambda^{-2}(1 + z + \alpha A_{V})R_f(z)$ is bounded $\H^t\to \H^{t+2}$. So is $\Lambda^{-2}$. Thus, by \eqref{eq:resolv2}, $R_f(z)$ is bounded $\H^t\to \H^{t+2}$. This process can be iterated, proving that $R_f(z)$ is bounded $\H^t\to \H^{t+2}$ for all $t\in \left[0,s-1\right[$.

For negative values of $t$, we use the following identity between operators $\L^2 \to \H^2$:\begin{equation}\label{eq:resolv1}
    R_f(z) = -\Lambda^{-2}+R_f(z)(1 + z +\alpha A_{V})\Lambda^{-2}.
\end{equation}
For $t\in \left]-1,0\right]$, by \eqref{A3},
\[
\H^t \xrightarrow{\Lambda^{-2}} \H^{t+2} \xrightarrow{A_V}  \L^2  \xrightarrow{R_f(z)}  \H^{2}.
\]
As $\H^2\hookrightarrow \H^{t+2}$, \eqref{eq:resolv1} shows that $R_f(z) $ is written as the sum of two bounded operators $\H^t\to \H^{t+2}$, and is therefore bounded $\H^t\to \H^{t+2}$. Once again, this process can be iterated using \eqref{A3}, proving that $R_f(z)$ is bounded $\H^t\to \H^{t+2}$ for all $t\in \left]-s,0\right]$. This concludes the case $q = 2$. If $s$ is an integer and $V\in \Cf^s$, we can use the edge case in \Cref{lem:operator_norm_Ah} to show that the estimates are still valid for $t=-s$ and $t=s-1$.

Next, since $\Lambda^{-2}$ is a pseudo-differential operator of order $-2$, it maps $\W^{t, q}$ to $\W^{t+2, q}$ continuously for each $t \in \R$ and $1 < q < \infty$, see \cite[Proposition 6.5]{taylor1996partial}. Hence we are left with analyzing $\Lambda^{-2}(1 + z + \alpha A_V)R_f(z)$. Let $q_1 = \p{\frac 12-\frac 1d}_+^{-1}$. 
For any $2\leq q< q_1$, we have $R_f(z) : \L^2 \to \H^2$ and $\H^2\hookrightarrow \W^{1,q}$ by a Sobolev embedding, see \Cref{app:besov}. Thus, for any $0\leq t < 1$ with $t<  s-1$, as $A_V:\W^{1,q}\to \W^{t-1,q}$ by \eqref{A3}, 
 we have 
$$
\Lambda^{-2}(1 + z + \alpha A_V)R_f(z) : \L^2 \to \W^{t + 1, q}.
$$
By the  embedding $\W^{t,q}\hookrightarrow \L^q\hookrightarrow \L^2$,  $R_f(z)$ maps $\W^{t, q}$ in  $\W^{t+ 1, q}$ by \eqref{eq:resolv2} and injecting this again in \eqref{eq:resolv2}, as $A_V:\W^{t+1,q}\to \W^{t,q}$ by \eqref{A3}, we get that for any $t\in [0,1)$ with $t<  s-1$
$$
R_f(z) : \W^{t, q} \to \W^{t + 2,q}.
$$
Let $q_2=\p{\frac 1{q_1}-\frac 1{d}}_+^{-1}$. Iterating the argument we obtain $R_f(z) : \W^{t, q} \to \W^{t + 2, q}$ for each $2\leq q< q_2$ 
and $t \in [0, 1)$ such that $t < s - 1$. This defines a sequence $q_1,q_2,\dots$, with $q_n=+\infty$ after  $\lceil d/2\rceil$  steps. Thus, $
R_f(z) : \W^{t, q} \to \W^{t + 2, q}
$ for all $q\geq 2$.
  Injecting this in \eqref{eq:resolv2} we obtain the same mapping property, if we assume $t \in \left[1,2\right[$ 
and $t < s - 1$. Again an induction argument allows to obtain the sought result for $t \in \left[0, s - 1 \right[$. The same property can be shown with the H\"older-Zygmund space $\cC^t$ for $t\in [0,s-1]$ using \eqref{A2} and the fact that $\Lambda^{-2}$ is bounded $\cC^t\to \cC^{t+2}$, see \cite[Eq. (8.11)]{taylor1996partial}. 

For $t \in \left]-s, 0\right]$, one proceeds similarly, using \eqref{eq:resolv1} instead of \eqref{eq:resolv2}. Finally, recalling \eqref{eq:secondboundrfz} and keeping track of the constants in the above development, one sees that \eqref{cor:estresolv} holds.
\end{proof}

\subsection{Perturbation theory for functions of weighted Laplace operators}\label{sec:perturbation_function}
The aim of this paragraph is to derive perturbative results for the estimation of a function of $\Delta_f$. For  a fixed function
  $\chi \in \Cf^\infty_c(\R)$,  define the operator $\chi(-\Delta_f)$  thanks to the spectral theory of $-\Delta_f$ by
$$
\chi(-\Delta_f)u = \sum_{\ell = 0}^\infty \chi(\lambda_{\ell,f}) \langle \varphi_\ell, u \rangle_f \varphi_{\ell}, \quad u \in \L^2,
$$
where $(\varphi_\ell)_{\ell\geq 0}$ is an orthonormal basis of eigenfunctions of $-\Delta_f$. 

The main result of this section is the following perturbation result.

\begin{proposition}\label{prop:perturbativechi}
Let  $\delta > 0$ and $p, q, r > 1$ such that
$$
\frac{1}{p} + \frac{1}{r} = \frac{1}{q}.
$$
Then there is $C,N > 0$ such that for any $L>0$, $f, h \in  \Cf^2$ and $\chi \in \Cf^\infty_c(\R)$ satisfying the bounds $\|f\|_{\Cf^2}$, $\|h\|_{\Cf^2}$, $\|\chi\|_{\Cf^3} < L$,\, $\operatorname{supp}(\chi) \subset [-L, L]$, $\inf f, \inf h > \delta$ and $\sup f,\sup h<\delta^{-1}$, there holds
$$
\|\chi(-\Delta_f) - \chi(-\Delta_{h})\|_{\W^{1,p} \to \L^q} \leq C (1+L)^N\|f - h\|_{\W^{-1,r}}.
$$
\end{proposition}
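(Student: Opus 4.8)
The plan is to use the Helffer–Sjöstrand formula to write $\chi(-\Delta_f)$ as a contour-type integral of the resolvent $R_f(z)$ against an almost-analytic extension of $\chi$, and then to exploit the resolvent identity together with the resolvent bounds of \Cref{prop:resolvbounded} and the operator bounds for $A_V$ in \Cref{lem:operator_norm_Ah}. Concretely, let $\tilde\chi$ be an almost-analytic extension of $\chi$ with $|\bar\partial\tilde\chi(z)|\lesssim |\Im z|^2$ and $\supp\tilde\chi$ contained in a fixed neighbourhood of $[-L,L]$; then
\[
\chi(-\Delta_f)= \frac{1}{\pi}\int_{\C} \bar\partial\tilde\chi(z)\, R_f(z)\, \dd\bar z\, \dd z,
\]
and subtracting the analogous expression for $h$ reduces the problem to estimating $\int_{\C}\bar\partial\tilde\chi(z)\,(R_f(z)-R_h(z))\,\dd\bar z\,\dd z$ in the $\W^{1,p}\to\L^q$ norm.

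The key algebraic step is the second resolvent identity adapted to this setting: writing $V=\log f$, $W=\log h$, we have
\[
R_f(z)-R_h(z) = \alpha\, R_f(z)\,(A_W-A_V)\,R_h(z) = \alpha\, R_f(z)\,A_{W-V}\,R_h(z).
\]
Now I would factor the middle operator. The point is that $A_{W-V} = \nabla(W-V)\cdot\nabla$ and $W-V=\log(h/f)$; since $f,h$ are bounded above and below, $\|W-V\|_{\W^{-1,r}}\lesssim \|f-h\|_{\W^{-1,r}}$ up to a constant depending on $\delta$ and $L$ (this uses that multiplication by the smooth-enough function coming from the mean-value form of $\log$ is bounded on $\W^{-1,r}$, or more simply a direct $\W^{-1,r}$ estimate of $\log h-\log f$ in terms of $f-h$). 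Then one writes $A_{W-V} = \div\circ((W-V)\nabla\,\cdot\,) - (W-V)\Delta$; after an integration by parts this lets one move the single derivative hitting $W-V$ onto the resolvent factors, so that morally $A_{W-V}:\W^{t+1,q}\to\W^{t-1,r'}$-type estimates get replaced by a gain that converts the $\L^r$-norm of $\nabla(W-V)$ into the $\W^{-1,r}$-norm of $W-V$. With $1/p+1/r=1/q$, one chains the mapping properties
\[
\W^{1,p}\xrightarrow{R_h(z)}\W^{3,p}\hookrightarrow\W^{2,p}\xrightarrow{\text{mult. by }(W-V)}\W^{-1, q}\ \text{(roughly)}\xrightarrow{\Lambda^{-2}R_f(z)\text{-type}}\L^q,
\]
making careful use of \Cref{prop:resolvbounded} (which requires the regularity window $t\in(-s,s-1)$ with $s=2$, so $t$ can be taken near $1$ on the right and near $-1$ on the left, comfortably inside the allowed range) and of a Sobolev embedding to absorb the $\W^{-1,q}$-to-$\W^{\text{small},q}$ discrepancy. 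Each resolvent bound contributes a factor $(1+L+|z|)^N/\dist(z,\sigma(-\Delta_f))$, so two resolvents give $\dist(z,\sigma)^{-2}$.

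Finally one integrates over $z$: the factor $|\bar\partial\tilde\chi(z)|\lesssim |\Im z|^2$ beats the $\dist(z,\sigma(-\Delta_f))^{-2}\leq |\Im z|^{-2}$ singularity, $\supp\tilde\chi$ keeps $|z|$ bounded by $\cO(L)$, and the polynomial factors $(1+L+|z|)^N$ are absorbed into the final constant $(1+L)^N$, yielding
\[
\|\chi(-\Delta_f)-\chi(-\Delta_h)\|_{\W^{1,p}\to\L^q}\leq C(1+L)^N\|f-h\|_{\W^{-1,r}}.
\]
I expect the main obstacle to be the bookkeeping in the middle: organizing the integration by parts in $A_{W-V}=A_{\log h-\log f}$ so that exactly one derivative is transferred off the low-regularity factor onto a resolvent, while keeping all the exponents $(p,q,r)$ and the Sobolev indices inside the ranges where \Cref{prop:resolvbounded} and \Cref{lem:operator_norm_Ah} apply — in particular verifying that with only $\Cf^2$ regularity ($s=2$) there is still enough room (the window $t\in(-2,1)$) to close the chain, and controlling $\|\log h-\log f\|_{\W^{-1,r}}$ by $\|f-h\|_{\W^{-1,r}}$ uniformly over the class.
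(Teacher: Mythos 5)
Your overall strategy is exactly the paper's: Helffer--Sj\"ostrand with a quasi-analytic extension $\tilde\chi$ satisfying $|\bar\partial\tilde\chi|\lesssim|\Im z|^2$, the second resolvent identity $R_f(z)-R_h(z)=\alpha R_f(z)A_{W-V}R_h(z)$, the reduction $\|W-V\|_{\W^{-1,r}}\lesssim\|f-h\|_{\W^{-1,r}}$, and then chaining resolvent bounds (\Cref{prop:resolvbounded}) around a norm estimate on $A_{W-V}$ that only costs $\|W-V\|_{\W^{-1,r}}$. Your $\div$-form of $A_{W-V}$ together with integration by parts is the same idea as the paper's use of the symmetry $A_\phi\psi=A_\psi\phi$ combined with \eqref{A1} at $t=-2$, which is itself proved by duality.

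Where you go wrong is in the specific Sobolev chain and the claim that the indices sit ``comfortably inside'' the allowed window. The paper's chain is
$\W^{1,p}\xrightarrow{R_h(z)}\W^{3,p}\xrightarrow{A_{U-V}}\W^{-2,q}\xrightarrow{R_f(z)}\L^q$,
so the intermediate space is $\W^{-2,q}$, not your $\W^{-1,q}$, and the extra embedding $\W^{3,p}\hookrightarrow\W^{2,p}$ you insert is spurious (it would throw away the derivative you actually need to apply $A_{U-V}$). More importantly, with $s=2$ the window for \Cref{prop:resolvbounded} is $t\in\left]-2,1\right[$, and the paper's chain uses precisely $t=1$ for $R_h(z)$ and $t=-2$ for $R_f(z)$ --- the two \emph{endpoint} cases, which are available only under the stronger H\"older assumption $f,h\in\Cf^2$ (the $\Cf^s$ rather than $\cC^s$ version of the resolvent bound). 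That endpoint availability is exactly why the statement insists on $\Cf^2$ regularity, and it is the opposite of being ``comfortably inside.'' Your final map ``$\Lambda^{-2}R_f(z)$-type from $\W^{-1,q}$ to $\L^q$'' is also off: $R_f(z)$ gains two derivatives, so $\W^{-2,q}\to\L^q$ is the correct (endpoint) application, with no extra $\Lambda^{-2}$ needed. Once you replace your chain with the paper's and use the endpoint resolvent bounds, the argument closes as you describe: two resolvents cost $\dist(z,\sigma)^{-2}\leq|\Im z|^{-2}$, which is absorbed by $|\bar\partial\tilde\chi|\lesssim|\Im z|^2$, and the polynomial factors in $L$ are collected into $C(1+L)^N$.
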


\begin{proof}
We write $f=e^{V}$ and $h=e^{U}$. 
The proof is based on the Helffer--Sj\"ostrand formula, which writes \cite{davies1995spectral}
\begin{equation}\label{eq:hs}
\chi(-\Delta_f) = \frac{1}{2\pi i} \int_{\mathbb C} \partial_{\bar z} \widetilde \chi(z) R_f(z) \dd \bar z \wedge \dd z
\end{equation}
for any quasi-analytic extension $\widetilde \chi \in \Cf^\infty_c(\mathbb C)$ of $\chi$, which means that $\widetilde\chi$ is a function  such that there exists an integer  $N \geq 1$ and $C>0$ such that
\begin{equation}\label{eq:qa}
|\partial_{\bar z}\widetilde \chi(z)| \leq C |\operatorname{Im}z|^N, \quad z \in \mathbb C.
\end{equation}
In what follows we make the choice
\begin{equation*}
\widetilde \chi(z) = \varrho(\operatorname{Im}z) \sum_{k = 0}^2\frac{\chi^{(k)}(\Re z)}{k!}(i \Im z)^k
\end{equation*}
for some function $\varrho \in \Cf^\infty_c(\R)$ which is equal to $1$ near $0$ and vanishes outside $[-1, 1]$. It is immediate to check that \eqref{eq:qa} holds for $\widetilde \chi$ with $N = 2$, and that we have
\begin{equation}\label{eq:tildechichi}
|\partial_{\bar z} \widetilde \chi (z)| \leq C_0  |\operatorname{Im}z|^2
\end{equation}
for some constant $C_0$ depending on $L$ and $\varrho$. 
As 
\begin{equation*}
\|R_f(z)\|_{\L^2 \to \L^2} \leq C_1 |\operatorname{Im}z|^{-1}, \quad z \in \mathbb C \setminus \R.
\end{equation*}
 for some constant $C_1$ (and the same control holds for $h$), 
we may write 
\begin{equation*}
\chi(-\Delta_f) - \chi(-\Delta_{h}) = \frac{\alpha}{2\pi i} \int_{\mathbb C} \partial_{\bar z} \widetilde \chi(z) R_f(z)A_{U-V}R_h(z) \dd \bar z \wedge\dd z.
\end{equation*}
Recalling \eqref{eq:tildechichi} one sees that the quantity $\|\chi(-\Delta_f) - \chi(-\Delta_{h})\|_{\W^{1,p} \to \L^q}$ is bounded from above up to a multiplicative constant depending on $L$ and $\alpha$ by
\begin{equation*}
\sup_{z \in \operatorname{supp} \widetilde \chi} |\Im z|^2  \|R_f(z)\|_{\W^{-2,q} \to \L^q} \|A_{U-V}\|_{\W^{3,p} \to \W^{-2,q}} \|R_h(z)\|_{\W^{1,p} \to \W^{3,p}}.
\end{equation*}
Since $\operatorname{supp} \widetilde \chi \subset \operatorname{supp} \chi \times [-1,1]$ and $z\in \C$ is at distance at least $|\Im z|$ from $\sigma(-\Delta_f)$ and $\sigma(-\Delta_h)$,  \Cref{cor:estresolv} implies that there is a constant $C_3$ depending on $L, \alpha, p$ and $q$ such that
\begin{equation*}
\sup_{z \in \operatorname{supp} \widetilde \chi} |\Im z|^2 \|R_f(z)\|_{\W^{-2,q} \to \L^q} \|R_h(z)\|_{\W^{1,p} \to \W^{3,p}} \leq C_2.
\end{equation*}
According to \eqref{A1} and the fact that $A_\phi\psi=A_\psi\phi$, $\|A_{U-V}\|_{\W^{3,p} \to \W^{-2,q}} \lesssim \|U-V\|_{\W^{-1,r}} \lesssim \|f - g\|_{\W^{-1,r}}$ because $f$ and $g$ are bounded from below and are of class $\Cf^1$. As it is clear from \Cref{cor:estresolv} that all constants appearing in the different estimates depend polynomially on $L$, this concludes the proof.
\end{proof}

\begin{remark}\label{remark:epsilon1}
Let $t\in [0,2]$. As $\|A_{U-V}\|_{\W^{3-t,p}\to \W^{t-2,q}}\lesssim \|U-V\|_{\W^{t-1,r}}$ according to \eqref{A1}, 
reproducing the proof shows that under the same hypothesis as in Proposition \ref{prop:perturbativechi}, one has the estimate
$$
\|\chi(-\Delta_f) - \chi(-\Delta_{h})\|_{\W^{1-t,p} \to \W^{t,q}} \leq C(1+L)^N \|f - h\|_{\W^{t - 1, r}}.
$$
\end{remark}

\subsection{Perturbation theory for spectral projectors}\label{sec:perturbation_projector}
In this paragraph, we show some perturbation results about spectral projectors for $\Delta_f$ as a direct application of the preceding section. Let $\Upsilon$ be a simple loop in $\mathbb C$, with counterclockwise orientation, bounding a compact domain $\Omega \subset \mathbb C$. {We} assume that 
$$
\partial \Omega \cap \sigma(-\Delta_f) = \emptyset.
$$
Recall that the spectral projector $\Pi_{\Upsilon,f}$ of $\Delta_f$ associated with $\Upsilon$ is given by
$$
\Pi_{\Upsilon,f} = \frac{1}{2 \pi i} \int_{\Upsilon} R_f(z) \dd z,
$$
so that recalling \eqref{eq:laurent} one has
\begin{equation}\label{eq:sumprojectors}
\Pi_{\Upsilon,f} = \sum_{\lambda} \Pi_{\lambda, f}.
\end{equation}
Since $\Pi_{\Upsilon,f}^2 = \Pi_{\Upsilon,f}$, \eqref{cor:estresolv} yields that $\Pi_{\Upsilon,f}$ is bounded
\begin{equation}\label{eq:pibounded}
\Pi_{\Upsilon,f} : \W^{t_1, p} \to \W^{t_2, q}
\end{equation}
for any $p,q \in \left]1, +\infty\right[$ 
and $t_1, t_2 \in \left]-s, s+1\right[$, with operator norm depending on $p,q,s, \sup_{z \in \Omega}|z|$, $ \dist(\Upsilon, \sigma(-\Delta_f))$,  $\|f\|_{\cC^s}$ and $\inf f$. 
Moreover, using the boundedness of the resolvent over H\"older-Zygmund spaces, we find that  for any $p\in \left]1, +\infty\right[$  and $t \in \left]-s, s+1\right[$
\begin{equation}\label{eq:pibounded_bis}
\Pi_{\Upsilon,f} : \W^{t_1, p} \to \cC^{s+1}
\end{equation}
is also bounded. 
{We} denote by $E_{ \Upsilon,f} = \operatorname{ran} \Pi_{ \Upsilon,f}$ the associated eigenspace, so that
$$
E_{ \Upsilon,f} = \bigoplus_{\lambda \in \Omega} \operatorname{ker}(\lambda +\Delta_f).
$$
If $h \in \cC^s$ also satisfies the assumptions above we set for $q \geq 2$
$$
D_{q}(\Pi_{ \Upsilon,f}, \Pi_{ \Upsilon,h}) = \left\| (1 - \Pi_{\Upsilon, h}) \Pi_{\Upsilon, f}\right\|_{\L^{2 }\to \mathrm{L}^q} + \left\| (1 - \Pi_{\Upsilon, f}) \Pi_{\Upsilon, h}\right\|_{\L^{2} \to \L^q}.
$$
The number $D_q(\Pi_{ \Upsilon,f}, \Pi_{ \Upsilon,h})$ is a measure of the $\L^q$-angle between the finite dimensional spaces $E_{ \Upsilon,f}$ and $E_{ \Upsilon,h}$. Note that we took the operator norms $\L^2 \to \L^q$ but this is equivalent to taking norms $\L^q \to \L^q$ by \eqref{eq:pibounded}. 
{We} have the following perturbation result, which is a consequence of Proposition \ref{prop:perturbativechi}.

\begin{proposition}\label{prop:eigenfunctionsperturbative}
Let   $L,\delta > 0$ and $1 < q<r < \infty$. Then there are $\eta> 0$, $C, N > 0$ and $0<\nu\leq\delta$ such that  for each simple loop $\Upsilon \subset \C$ and $f \in \Cf^2$ satisfying
\begin{equation}
\inf f > \delta, \quad \|f\|_{\Cf^1}< L, \quad \sup_{z\in \Upsilon} |\Re z| < L \quad \text{and} \quad \dist(\Upsilon, \sigma(-\Delta_f))> \delta,
\end{equation}
 and for any $h \in \Cf^2$ with  $\|f-h\|_{\Cf^1} < \eta$, the spectral projector $\Pi_{\Upsilon, h}$ is well-defined, has the same rank as $\Pi_{\Upsilon,f}$ and satisfies $\dist(\Upsilon, \sigma(-\Delta_h))> \nu$. Let  $\beta = \max(\|f\|_{\Cf^2},\|h\|_{\Cf^2})$. Then
\begin{equation*}
D_q(\Pi_{ \Upsilon,f}, \Pi_{ \Upsilon,h}) \leq C (1+\beta)^N\|f - h\|_{\W^{-1, r}}.
\end{equation*}
\end{proposition}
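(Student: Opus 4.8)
The plan is to deduce the estimate from \Cref{prop:perturbativechi} after rewriting both spectral projectors as $\chi(-\Delta_f)$ and $\chi(-\Delta_h)$ for a single, carefully chosen $\chi\in\Cf^\infty_c(\R)$. The first point is that, since $\sigma(-\Delta_f)\subset[0,+\infty)$ and $\Upsilon\subset\{|\Re z|<L\}$, the bounded open region $\Omega$ is contained in $\{\Re z<L\}$, and a Jordan-curve argument combined with $\dist(\Upsilon,\sigma(-\Delta_f))>\delta$ shows that each enclosed eigenvalue $\mu$ satisfies $\overline{B(\mu,\delta)}\subset\Omega$ (so the finite set $\Sigma$ of enclosed eigenvalues lies in $[0,L-\delta)$) while every non-enclosed eigenvalue is at distance more than $2\delta$ from $\Sigma$. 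I then fix $\chi\in\Cf^\infty_c(\R)$, with $\supp\chi$ and $\|\chi\|_{\Cf^3}$ bounded in terms of $L,\delta$ only, equal to $1$ on the (open) $\tfrac\delta4$-neighborhood $N_{\delta/4}(\Sigma)$ and equal to $0$ outside $N_{\delta/2}(\Sigma)$. Then $\chi(\lambda_{\ell,f})$ is $1$ or $0$ according as $\lambda_{\ell,f}$ is enclosed by $\Upsilon$ or not, so $\chi(-\Delta_f)=\Pi_{\Upsilon,f}$.

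Next comes the stability of the spectrum. Writing $V=\log f$ and $U=\log h$, for $\eta$ small enough (depending only on $L,\delta$) the hypothesis $\|f-h\|_{\Cf^1}<\eta$ forces $\inf h>\delta/2$, $\|h\|_{\Cf^1}<2L$, and — as $f,h$ are bounded below and of class $\Cf^1$ — $\|A_{U-V}\|_{\H^1\to\L^2}\le C(L,\delta)\|f-h\|_{\Cf^1}$ by \eqref{A1}. The key perturbative input is that on any circle $C=\partial B(p,\rho)$ with $|p|\le L+\delta$, $\rho\le\delta$ and $\dist(p,\sigma(-\Delta_f))\ge\tfrac\delta8$, the factorization $z+\Delta_h=(1+\alpha A_{U-V}R_f(z))(z+\Delta_f)$, together with the bound \eqref{eq:firstboundrfz} on $\|R_f(z)\|_{\L^2\to\H^1}$ (finite here because $|z|$ and $\|\nabla V\|_{\L^\infty}$ are controlled by $L,\delta$), shows that $R_h(z)$ exists along $C$, is holomorphic, and satisfies $R_h(z)=R_f(z)(1+\alpha A_{U-V}R_f(z))^{-1}$, hence $\|R_h(z)-R_f(z)\|_{\L^2\to\L^2}\le C(L,\delta)\eta$; consequently $\|\Pi_{C,h}-\Pi_{C,f}\|_{\L^2\to\L^2}<1$ once $\eta$ is small. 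Covering the compact set $\{|z|\le L+\delta\}\cap\{\dist(\cdot,\sigma(-\Delta_f))\ge\tfrac\delta8\}$ by finitely many such discs, each free of $\sigma(-\Delta_f)$, gives that $-\Delta_h$ has no eigenvalue there; in particular $\dist(\Upsilon,\sigma(-\Delta_h))\ge\nu$ for some $\nu\in(0,\delta]$, and no eigenvalue of $-\Delta_h$ lies in $(\Omega\cap\R)\setminus N_{\delta/4}(\Sigma)$. Running the same argument on the compact contour $\Gamma:=\partial N_{\delta/4}(\Sigma)$ (a finite union of loops, with $\dist(\Gamma,\sigma(-\Delta_f))=\tfrac\delta4$, enclosing exactly $\Sigma$, so $\Pi_{\Gamma,f}=\Pi_{\Upsilon,f}$) yields $\operatorname{rank}\Pi_{\Gamma,h}=\operatorname{rank}\Pi_{\Gamma,f}$ and places the eigenvalues of $-\Delta_h$ inside $\Gamma$ in $N_{\delta/4}(\Sigma)\subset\Omega$. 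Combining, the eigenvalues of $-\Delta_h$ enclosed by $\Upsilon$ are exactly those inside $\Gamma$, so $\Pi_{\Upsilon,h}=\Pi_{\Gamma,h}$ is well-defined with $\operatorname{rank}\Pi_{\Upsilon,h}=\operatorname{rank}\Pi_{\Upsilon,f}$; and since every eigenvalue of $-\Delta_h$ lies either in $N_{\delta/4}(\Sigma)$ (where $\chi\equiv1$) or at distance $\ge\tfrac\delta2$ from $\Sigma$ (where $\chi\equiv0$), we also obtain $\chi(-\Delta_h)=\Pi_{\Upsilon,h}$.

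To conclude, I pick $p\in(1,\infty)$ with $\tfrac1p=\tfrac1q-\tfrac1r$ (possible because $q<r$) and apply \Cref{prop:perturbativechi} to the function $\chi$ from the first step with exponents $(p,q,r)$ — its hypotheses hold with constants controlled by $\beta=\max(\|f\|_{\Cf^2},\|h\|_{\Cf^2})$, $L$ and $\delta$ — which gives
\[
\|\Pi_{\Upsilon,f}-\Pi_{\Upsilon,h}\|_{\W^{1,p}\to\L^q}
=\|\chi(-\Delta_f)-\chi(-\Delta_h)\|_{\W^{1,p}\to\L^q}
\le C(1+\beta)^N\|f-h\|_{\W^{-1,r}}.
\]
Using $\Pi_{\Upsilon,f}^2=\Pi_{\Upsilon,f}$ and $\Pi_{\Upsilon,h}^2=\Pi_{\Upsilon,h}$, I write $(1-\Pi_{\Upsilon,h})\Pi_{\Upsilon,f}=(1-\Pi_{\Upsilon,h})(\Pi_{\Upsilon,f}-\Pi_{\Upsilon,h})\Pi_{\Upsilon,f}$, whence
\[
\|(1-\Pi_{\Upsilon,h})\Pi_{\Upsilon,f}\|_{\L^2\to\L^q}
\le\|1-\Pi_{\Upsilon,h}\|_{\L^q\to\L^q}\,\|\Pi_{\Upsilon,f}-\Pi_{\Upsilon,h}\|_{\W^{1,p}\to\L^q}\,\|\Pi_{\Upsilon,f}\|_{\L^2\to\W^{1,p}}.
\]
The first and third factors are at most $C(L,\delta,p,q)(1+\beta)^{N'}$ by the boundedness estimate \eqref{eq:pibounded} for spectral projectors, applied with the compact contour $\Gamma$ (on which $|z|\le L+\delta$, $\dist(\cdot,\sigma(-\Delta_f))=\tfrac\delta4$ and $\dist(\cdot,\sigma(-\Delta_h))\ge\tfrac\delta8$) in place of $\Upsilon$, so that the geometric quantities entering \eqref{eq:pibounded} no longer depend on the unbounded part of $\Upsilon$; the middle factor was bounded above. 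The term $(1-\Pi_{\Upsilon,f})\Pi_{\Upsilon,h}$ is handled identically, and summing the two yields the claimed bound on $D_q(\Pi_{\Upsilon,f},\Pi_{\Upsilon,h})$.

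The delicate part is the second step: the loop $\Upsilon$ is only assumed bounded in the real direction, so to keep the resolvent and perturbation estimates uniform one has to work throughout with the compact adapted contour $\Gamma$ (equivalently, the compactly supported $\chi$), and to argue that this \emph{single} $\chi$ simultaneously represents $\Pi_{\Upsilon,f}$ and $\Pi_{\Upsilon,h}$, which hinges on controlling precisely how far the eigenvalues of $-\Delta_h$ can migrate under a $\Cf^1$-small perturbation of the density.
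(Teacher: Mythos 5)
Your proof is correct and takes essentially the same route as the paper's: express both projectors as $\chi(-\Delta_\cdot)$ for a compactly supported $\chi$ adapted to the enclosed spectrum, apply \Cref{prop:perturbativechi} with $1/p+1/r=1/q$, and close with a projector factorization together with the boundedness estimate \eqref{eq:pibounded}. The only divergence is the preliminary spectral-stability step, where the paper first derives the relative bounds \eqref{eq:relbd1}--\eqref{eq:relbd2} and then invokes \cite[Theorem IV.3.3.18]{kato2013perturbation}, whereas you establish the same conclusion directly by a Neumann series for $R_h(z)=R_f(z)(1+\alpha A_{U-V}R_f(z))^{-1}$ on compact contours; these are interchangeable.
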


\begin{proof}
First, remark that the spectrum of $\Delta_f$ is included in $\R$, so that we can always make a deformation contour to assume that $\Upsilon$ is contained in $\{|\Im z|\leq 1\}$ without changing $\Pi_{\Upsilon,f}$, so that the following bounds only depend on $\sup_{z \in \Upsilon}|\Re z|$. 

Write $f=e^{V}$ and $h=e^{U}$. 
Since $f \in \Cf^2$, $R_f(-1)$ is bounded $\L^2 \to \H^2$ by \eqref{cor:estresolv}. Hence for $u \in \H^2$ we have
$$
\|u\|_{\H^2} \leq \|R_f(-1)\|_{\L^2 \to \H^2} \|(-1 + \Delta_f)u\|_{\L^2}.
$$
Thus there is a constant $C>0$ depending only on $\alpha$, $L$ and $\delta$ such that
\begin{equation}\label{eq:relbd1}
\|u\|_{\H^2} \leq C(\|u\|_{\L^2} + \|\Delta_fu\|_{\L^2}).
\end{equation}
Now, according to \Cref{lem:operator_norm_Ah}, for $u \in \H^2$ there holds 
\begin{equation}\label{eq:relbd2}
\|A_{U-V}u\|_{\L^2}\lesssim \|U-V\|_{\Cf^1} \|u\|_{\H^2}.
\end{equation}
We make use of classical perturbation theory: the relative bound estimates \eqref{eq:relbd1} and \eqref{eq:relbd2}, together with \eqref{cor:estresolv} and \cite[Theorem IV.3.3.18]{kato2013perturbation}, imply that there are $\eta, C_1 > 0$ depending on  $\alpha$, $L$ and $\delta$ 
such that, provided that we have $\|f-h\|_{\Cf^1} < \eta$, it holds that $\Pi_{\Upsilon,h}$ is well-defined, has the same rank as $\Pi_{\Upsilon,f}$, and moreover 
\begin{equation}\label{eq:distsp}
\|R_h(z)\|_{\L^2 \to \L^2} \leq C_1, \quad z \in \Upsilon.
\end{equation}
Since $R_h(z)\phi = (z-\lambda)^{-1}\phi$ if $\lambda \in \sigma(-\Delta_h)$ and $\phi$ is an associated eigenfunction, \eqref{eq:distsp} implies that $\dist(\Upsilon, \sigma(-\Delta_{h})) \geq C_1^{-1}$. We define $\nu=\min(C_1^{-1},\delta)/2$.  

Denote by $\Omega$ the open set bounded by $\Upsilon$. Then by what precedes, one sees that for any $h \in \Cf^2$ with $\|f-h\|_{\Cf^1} < \eta $, we have
$$
\Pi_{\Upsilon,h} = \chi(-\Delta_{h})
$$
where $\chi \in \Cf^\infty_c(\R)$ is any smooth function equal to $1$ on $\overline\Omega \cap \R$ and equal to zero on the set $\{\lambda \in \R~:~ \dist(\lambda, \Omega) > \nu / 2\}$. Remark that we can always make $\eta$ smaller so that the condition $\|f-h\|_{\Cf^1}<\eta$ implies that $\inf h\geq \delta/2$. In particular, Proposition \ref{prop:perturbativechi} implies that whenever  $\|f - h\|_{\Cf^1} < \eta$,
\begin{equation}\label{eq:pertub}
    \|\Pi_{\Upsilon,f} - \Pi_{\Upsilon,h}\|_{\W^{1,p} \to \L^q} \leq C_2 (1+\beta)^{N}\|f - h\|_{\W^{-1,r}}
\end{equation}
where $p$ is such that $1/p + 1/r = 1/q$ (such a $p$ exists since $r>q$). Here $C_2$ and $N$ depend on $\alpha$,\, $p$, \,$q$,\, $r$, $L$, $\delta$ and $\|\chi\|_{\Cf^3}$. However one can always chose $\chi$ so that $\|\chi\|_{\Cf^3}$ is a function of $\nu$. 
We write 
\begin{align*}
    \left\| (1 - \Pi_{\Upsilon, h}) \Pi_{\Upsilon, f}\right\|_{\L^{2}\to \mathrm{L}^q} \leq \|\Pi_{\Upsilon,f} - \Pi_{\Upsilon,h}\|_{\W^{1,p} \to \L^q}  \|\Pi_{\Upsilon,f} \|_{\L^2\to \W^{1,p}}.
\end{align*}
Recall now from \eqref{eq:pibounded} that $\|\Pi_{\Upsilon,f} \|_{\L^2\to \W^{1,p}}$ is bounded. By symmetry, the second term defining $D_q(\Pi_{\Upsilon,f},\Pi_{\Upsilon,h})$ is likewise bounded. This completes the proof.
\end{proof}

\begin{remark}\label{remark:H1_rate}
Taking Remark \ref{remark:epsilon1} into account, one gets, under the same hypothesis, 
\begin{equation}\label{eq:stability_Wt}
\|(1 - \Pi_{\Upsilon, h}) \Pi_{\Upsilon, f}\|_{\L^2 \to \W^{t,q}} + \|(1 - \Pi_{\Upsilon, f}) \Pi_{\Upsilon, h}\|_{\L^2 \to \W^{t,q}} \leq C \|f - h\|_{\W^{t-1, r}}
\end{equation}
for $t \in [0,1]$. \Cref{eq:stability_Wt} implies that  the $\W^{t,q}$-norm between eigenfunctions is controlled by the $\L^r$-norm between densities. Together with \Cref{app:density}, such a result can be seen to imply that eigenfunctions can be estimated at rate $\cO(n^{-\frac{s+1-t}{2s+d}})$ for $t\in [0,1]$ and $d\geq 3$, if the $\W^{t,q}$-norm is used as a loss function instead of the $\L^q$-norm. For $t=1$, we obtain the same rates of estimation as \cite{trillos2025minimax}, with the key difference that their estimators are given by the eigenvectors of a graph Laplacian matrix whereas our risk bounds are stated for plug-in estimators on a fixed manifold $M$.

Moreover, remark that we have $\W^{t, q} \hookrightarrow \L^\infty$ whenever $ t q > d$ (see \Cref{app:besov}). Hence,  we immediately get the following corollary.
\end{remark}

\begin{corollary}\label{cor:perturbation_infty}
Let  $\varepsilon > 0$, $r>d/\eps$ and $L, \delta > 0$. Let $\eta$ be the constant in \Cref{prop:eigenfunctionsperturbative}.  Then there are $C, N > 0$ such that for every simple loop $\Upsilon \subset \C$ and $f \in \Cf^2$ satisfying
\begin{equation*}
\inf f > \delta, \quad \|f\|_{\Cf^1}< L, \quad \sup_{z\in \Upsilon} |\Re z| < L \quad \text{and} \quad \dist(\Upsilon, \sigma(-\Delta_f))> \delta,
\end{equation*}
the following holds. For any $h \in \Cf^2$ with  $\|f-h\|_{\Cf^1} < \eta$, if $\beta = \max(\|f\|_{\Cf^2},\|h\|_{\Cf^2})$, 
we have the bound
$$
D_\infty(\Pi_{ \Upsilon,f}, \Pi_{ \Upsilon,h}) \leq C (1+\beta)^N\|f - h\|_{\W^{-1 + \varepsilon, r}}.
$$
\end{corollary}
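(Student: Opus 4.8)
The plan is to obtain \Cref{cor:perturbation_infty} directly from the refined stability estimate \eqref{eq:stability_Wt} of \Cref{remark:H1_rate} by composing it with a Sobolev embedding into $\L^\infty$. We may assume $0<\varepsilon\leq 1$: for larger $\varepsilon$ one argues in exactly the same way, replacing \eqref{eq:stability_Wt} by the extension of the perturbation bound to $t\in(1,2]$ supplied by \Cref{remark:epsilon1}, together with the smoothing of the spectral projector (by \eqref{eq:pibounded}, $\Pi_{\Upsilon,f}:\L^2\to\W^{1-t,p}$ is bounded for every $p$, which absorbs the negative-order domain of that bound), or else by monotonicity of the Sobolev norms in the smoothness index. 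So assume $0<\varepsilon\leq 1$.

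Since $r>d/\varepsilon\geq d\geq 1$, fix an exponent $q$ with $d/\varepsilon<q<r$; then $q>1$, $q<r$, and $\varepsilon q>d$, so the Sobolev embedding $\W^{\varepsilon,q}\hookrightarrow\L^\infty$ holds (see \Cref{app:besov}), say with constant $C_0=C_0(\varepsilon,q,d)$. With this $q$ the hypotheses of \Cref{prop:eigenfunctionsperturbative} hold (its constant $\eta$ depending only on $\alpha,L,\delta$), and \eqref{eq:stability_Wt}, applied with $t=\varepsilon\in[0,1]$ and keeping track of the polynomial dependence on $\beta=\max(\|f\|_{\Cf^2},\|h\|_{\Cf^2})$ inherited from \Cref{prop:eigenfunctionsperturbative}, produces $C,N>0$ (depending on $\varepsilon,q,r,L,\delta,\alpha$, hence after fixing $q$ as a function of $\varepsilon,r,d$, on $\varepsilon,r,d,L,\delta,\alpha$) such that, whenever $\|f-h\|_{\Cf^1}<\eta$,
\begin{equation*}
\|(1-\Pi_{\Upsilon,h})\Pi_{\Upsilon,f}\|_{\L^2\to\W^{\varepsilon,q}}+\|(1-\Pi_{\Upsilon,f})\Pi_{\Upsilon,h}\|_{\L^2\to\W^{\varepsilon,q}}\leq C(1+\beta)^N\|f-h\|_{\W^{-1+\varepsilon,r}}.
\end{equation*}
Composing each summand with $\W^{\varepsilon,q}\hookrightarrow\L^\infty$ bounds $D_\infty(\Pi_{\Upsilon,f},\Pi_{\Upsilon,h})$ by $C_0C(1+\beta)^N\|f-h\|_{\W^{-1+\varepsilon,r}}$, which is the asserted estimate after renaming $C_0C$ as $C$.

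There is no real obstacle here: this is a bookkeeping corollary of \Cref{prop:eigenfunctionsperturbative} and \Cref{remark:H1_rate}. The only point needing care is the exponent arithmetic, namely producing a single $q$ that is at once $<r$ (a rigid constraint coming from the relation $1/p+1/r=1/q$ built into the Helffer--Sj\"ostrand argument behind \Cref{prop:perturbativechi}) and large enough that $\varepsilon q>d$ (needed for the $\L^\infty$-embedding); the hypothesis $r>d/\varepsilon$ is exactly what makes both requirements compatible. A secondary, purely verificatory point is to confirm that the factor $(1+\beta)^N$ genuinely propagates through \eqref{eq:stability_Wt} from \Cref{prop:eigenfunctionsperturbative}, which it does since every operator-norm bound entering the derivation is already of that polynomial form.
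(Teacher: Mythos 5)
Your proof matches the paper's own argument exactly: the paper derives the corollary in one sentence by combining the refined perturbation estimate \eqref{eq:stability_Wt} of \Cref{remark:H1_rate} (taken at $t=\varepsilon$) with the Sobolev embedding $\W^{\varepsilon,q}\hookrightarrow\L^\infty$ for $\varepsilon q>d$, which is precisely what you do, and your exponent bookkeeping (choosing $q\in(d/\varepsilon,r)$ so that both $q<r$ and $\varepsilon q>d$ hold simultaneously) is the correct way to make that sentence rigorous. The only cosmetic caveat is that your "monotonicity" fallback for $\varepsilon>1$ does not straightforwardly work (lowering $\varepsilon$ tightens the constraint $r>d/\varepsilon$), but your primary route via \Cref{remark:epsilon1} is sound and this is in any case an edge case outside the intended regime of small $\varepsilon$ used in \Cref{rem:p=infty}.
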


This estimate in $\L^\infty$-norm will allow us to obtain rates of convergence for the \textit{empirical} $\L^2$-loss.

\subsection{Upper bound for the estimation of eigenvectors}

In this section, we combine results obtained concerning the perturbation theory of eigenprojectors to prove \Cref{thm:cv_spectral_eigenfunction}. 

\begin{proof}[Proof of \Cref{thm:cv_spectral_eigenfunction}]
Let $s\geq 2$,  $L,\delta>0$ and $r>q\geq 2$. Let $\Upsilon$ be a  curve as in \Cref{thm:cv_spectral_eigenfunction}. Let $\Theta_0 = \cP_s(\Upsilon,\delta,L)$. Let $\eta$, $\nu$ and $N$ be the constants of \Cref{prop:eigenfunctionsperturbative} associated to the constants $\alpha$, $q$, $r$, $\delta$ and $L$. 

We let $\Theta$ be the set of functions $h\in \Cf^2$ with  $\dist(\Upsilon, \sigma(-\Delta_h))\geq \nu$. We can rephrase \Cref{prop:eigenfunctionsperturbative} by saying that the $\eta$-neighborhood of $\Theta_0$ for the $\Cf^1$-norm is included in $\Theta$. In particular, the $\eta$-neighborhood of $\Theta_0$ for the $\cC^{1+\eps}$-norm is also included in $\Theta$, for any small number $0<\eps<1$.  Let $r_n$ be the rate appearing in \Cref{thm:cv_spectral_eigenfunction}. By \Cref{cor:minimax_standard}, we can design an estimator $\hat f$ of a density $f\in \Theta_0$ that satisfies: (i) $\hat f\in \Theta$ almost surely and (ii) for all integers $m\geq 1$, there exists $C>0$ (depending on $s$, $m$, $p$ and $L$) such that for all $0\leq t \leq 2$,
\begin{equation}\label{eq:rate_estimator}
    \E\left[\|\hat f-f\|_{\cC^t}^m \right]^{\frac 1m} \leq C\p{\frac{\log n}n}^{\frac{s-t}{2s+d}},\qquad \E\left[\|\hat f-f\|_{\W^{-1}_{p}}^2 \right]^{\frac 12} \leq Cr_n.
\end{equation}

Remark that as $\hat f\in \Theta$, $\Pi_{ \Upsilon,\hat f}$ is  well-defined. Let $E$ be the event where $\|\hat f-f\|_{\Cf^1}<\eta$. We decompose the risk
\begin{equation*}
    \E[D_q(\Pi_{ \Upsilon,f}, \Pi_{ \Upsilon,\hat f})] \leq \E[D_q(\Pi_{ \Upsilon,f}, \Pi_{ \Upsilon,\hat f})\ones\{E\}] +  \E[D_q(\Pi_{ \Upsilon,f}, \Pi_{ \Upsilon,\hat f})\ones\{E^c\}].
\end{equation*}
Let $\hat \beta = \max(\|f\|_{\Cf^2},\|\hat f\|_{\Cf^2})$. By \Cref{remark:moment_control_density}, the moments of $\hat \beta$ are all bounded up to constants depending  on $\|f\|_{\Cf^2}\leq L$. 
By \Cref{prop:eigenfunctionsperturbative}, the first term is of order 
$$\E\left[(1+\hat \beta)^N\|\hat f-f\|_{\W^{-1}_{r}} \right]\leq \p{\E\left[(1+\hat \beta)^{2N}\right]\E\left[\|\hat f-f\|_{\W^{-1}_{r}}^2 \right]}^{\frac 12}$$
for some $r,N\geq 1$. The second equation in \eqref{eq:rate_estimator} shows that the term 
$$
\E[D_q(\Pi_{ \Upsilon,f}, \Pi_{ \Upsilon,\hat f})\ones\{E\}]
$$
is at most of  order $r_n$. 

To bound the second term, we remark that $D_q(\Pi_{ \Upsilon,f}, \Pi_{ \Upsilon,\hat f})$ can be bounded by a term of the form $C(1+\hat \beta)^N$ for some  absolute constant depending on $L$, $\delta$ and $q$ (using the mapping properties given in \eqref{eq:pibounded}). By Cauchy-Schwarz inequality and as the moments of $\|\hat f\|_{\cC^2}$ are bounded, we bound the second term $\E[D_q(\Pi_{ \Upsilon,f}, \Pi_{ \Upsilon,\hat f})\ones\{E^c\}]$ up to a multiplicative constant by
$$\sqrt{\P(E^c)} = \sqrt{\P(\|\hat f-f\|_{\Cf^1}\geq \eta)}.$$ By Markov's inequality, 
\begin{equation}
    \P(\|\hat f-f\|_{\Cf^1}\geq \eta)\leq \eta^{-9} \E[\|\hat f-f\|_{\Cf^1}^9].
\end{equation}
As the $\Cf^1$-norm is controlled by the $\cC^{1+\eps}$-norm for any $0<\eps<1$, the first equation in \eqref{eq:rate_estimator} shows that  the square root of this term is negligible in front of $r_n$. This concludes the proof of \Cref{thm:cv_spectral_eigenfunction}. 
\end{proof}

\begin{remark}\label{rem:p=infty}
Let $\eps>0$ be a small number and assume that $d\geq 3$. 
 \Cref{cor:perturbation_infty} and \Cref{prop:minimax_standard} can  be leveraged to obtain a rate of convergence of order $n^{-\frac{s+1-\eps}{2s+d}}$ between $\Pi_{ \Upsilon,\hat f}$ and  $\Pi_{ f,\Upsilon}$ with respect to the $\L^\infty$-angle $D_\infty$. Assuming for the sake of simplicity that the curve $\Upsilon$ encloses only one eigenvalue $\phi$ with simple multiplicity, the bound on the $\L^\infty$-angle ensures the existence of an eigenfunction $\hat \phi$ of $\Delta_{\hat f,\alpha}$ with $\E[\|\hat \phi-\phi\|_{\L^\infty}]$ controlled. In particular, so is the empirical $\L^2$-norm given by
 \begin{equation}
 \|\hat \phi-\phi\|_n^2 = \frac 1n \sum_{i=1}^n |\hat \phi(X_i)-\phi(X_i)|^2.
 \end{equation}
The empirical $\L^2$-norm is the standard loss function used to compare eigenfunctions in situation where the manifold $M$ is unknown. Thus, obtaining rates of convergence with respect to this norm is necessary to develop fair comparisons to the rates obtained using graph Laplacians.
\end{remark}

\begin{remark}
The reader may remark that \Cref{prop:perturbativechi} may be used to show that estimation of any operator $\chi(-\Delta_f)$ for $\chi$ a $\Cf^3$ function with compact support is also possible, with same rate of estimation as in \Cref{thm:cv_spectral_eigenfunction}.
\end{remark}

\subsection{Asymptotic expansion of eigenvalues}\label{sec:asymptotic_expansion}
In this paragraph we state a result which implies that the eigenvalues $\lambda_{k,f}$s of $-\Delta_f$ define $s$-regular functionals in the sense of \Cref{def:regular_functional}. To take into account multiplicity, we must rather consider the average of the eigenvalues of $-\Delta_f$ within a contour $\Upsilon$ whose boundary is at a controlled distance from the spectrum.

\begin{proposition}\label{prop:eigenvalue_perturbative}
Let   $L,\delta > 0$ and $s\geq 2$. Let $\Upsilon \subset \C$ be a simple loop bounding a domain $\Omega$ satisfying
$
 \sup_{z\in \Upsilon} |\Re z| < L.
$
The map $f\in \cP_s(\Upsilon,\delta,L)\mapsto \mu_{\Upsilon,f}$ (defined in \eqref{eq:def_mu_upsilon})
is an $s$-regular functional with norm depending on $L$, $\delta$ and $s$. Furthermore, the first differential $f\mapsto \mu_{\Upsilon,f}^{(1)}$ is continuous from $\Cf^1$ to the set of linear forms on $\L^2$ endowed with the operator norm.
\end{proposition}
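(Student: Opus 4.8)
The plan is to reduce the statement to the functional‑estimation machinery of \Cref{sec:functional}: I will exhibit $\mu_{\Upsilon,\cdot}$, locally around any $f$, as a fixed multiple of the trace of a compactly supported smooth function of $-\Delta_f$, and then produce the Taylor expansion required by \Cref{def:regular_functional} from the Helffer--Sj\"ostrand formula together with an iterated resolvent expansion. First, with $\nu\le\delta$ the spectral‑gap constant of \Cref{prop:eigenfunctionsperturbative}, I would fix $\chi\in\Cf^\infty_c(\R)$ equal to $1$ on $\overline\Omega\cap\R$ and supported in $\{\dist(\cdot,\Omega)<\nu/2\}$, and set $\psi(\lambda)=\lambda\chi(\lambda)\in\Cf^\infty_c(\R)$. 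For $h\in\Cf^2$ with $\|h\|_{\Cf^1}$ small, $\operatorname{supp}\psi$ still isolates the cluster $\Omega\cap\sigma(-\Delta_{f+h})$ from the rest of the spectrum, so $\psi(-\Delta_{f+h})=-\Delta_{f+h}\Pi_{\Upsilon,f+h}$, while \Cref{prop:eigenfunctionsperturbative} and semi‑simplicity of the eigenvalues keep $n_\Upsilon:=\sharp(\Omega\cap\sigma(-\Delta_{f+h}))=\operatorname{rank}\Pi_{\Upsilon,f+h}$ constant in $h$; hence $\mu_{\Upsilon,f+h}=\tfrac1{n_\Upsilon}\tr\psi(-\Delta_{f+h})$ near $f$, and since the enclosed eigenvalues lie in $(-L,L)$ the functional is bounded on $\cP_s(\Upsilon,\delta,L)$. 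It is therefore enough to show that $f'\mapsto\tr\psi(-\Delta_{f'})$ is an $s$‑regular functional, with norm allowed to grow polynomially in $\|f\|_{\cC^s}$ as in \Cref{rem:polynomial_norm}.

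Next, writing $W=\log f'-\log f=\log(1+h/f)$ with $h=f'-f$, I would insert the resolvent identity $R_{f'}(z)-R_f(z)=-\alpha R_f(z)A_WR_{f'}(z)$ repeatedly into the Helffer--Sj\"ostrand representation of $\psi(-\Delta_{f'})$, exactly as in the proof of \Cref{prop:perturbativechi} but with a quasi‑analytic extension $\widetilde\psi$ whose $\bar\partial$ vanishes to high order on $\R$ (which keeps products of many resolvents integrable), obtaining
\begin{equation*}
\psi(-\Delta_{f'})-\psi(-\Delta_f)=\sum_{k=1}^{J}(-\alpha)^k\frac1{2\pi i}\int_\C\partial_{\bar z}\widetilde\psi(z)\bigl(R_f(z)A_W\bigr)^kR_f(z)\,\dd\bar z\wedge\dd z+\mathcal R_J ,
\end{equation*}
with $\mathcal R_J$ the analogous term carrying a trailing $R_{f'}(z)$. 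Inserting $R_f(z)=\sum_\lambda\Pi_{\lambda,f}/(z-\lambda)$ turns each $z$‑integral into a finite sum of divided differences $\psi^{[k]}(\lambda_0,\dots,\lambda_k)$, which vanish unless some $\lambda_i\in\operatorname{supp}\psi$, so each term — and $\mathcal R_J$ — is finite‑rank of bounded rank and the traces make sense. Expanding $W=\sum_{j\ge1}\tfrac{(-1)^{j+1}}j(h/f)^j$ together with its multilinear polarisations and collecting by total degree in $h$ then yields $\tr\psi(-\Delta_{f'})-\tr\psi(-\Delta_f)=\sum_{J=1}^{J_{\max}}G_J[h,\dots,h]+R_f(h)$, where $G_J$ is a finite linear combination of traces of operators $\Pi_{\lambda,f}A_{g_1}F_1(-\Delta_f)A_{g_2}\cdots A_{g_J}\Pi_{\lambda,f}$ ($\lambda\in\Omega$, each $g_i$ a product of arguments by a power of $1/f$, $F_i$ an explicit bounded function of $-\Delta_f$), and $|R_f(h)|\lesssim(1+\|f\|_{\cC^s})^N\|h\|_{\L^\infty}^{J_{\max}+1}$ by the resolvent bounds of \Cref{prop:resolvbounded} and \eqref{A1}, after integrations by parts through the divergence form $\Delta_f=f^{-\alpha}\operatorname{div}(f^\alpha\nabla\cdot)$ that move every derivative off the $h$‑factors onto resolvent and eigenprojector kernels.

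It then remains to check that each $G_J$ is $s$‑regular in the sense of \Cref{def:regular}. Conditions \ref{G2} and \ref{G3} — $\L^2\times\L^2$‑boundedness of $G_2$, $\L^\infty\times\L^\infty\times\L^\infty$‑boundedness of $G_3$, and, via \Cref{remark:sufficient_G3}, the Hilbert--Schmidt condition \eqref{eq:HS_condition} as an $\L^2\times\L^2\times\L^2$ bound — I would obtain from \Cref{prop:resolvbounded}, \Cref{lem:operator_norm_Ah}, \Cref{cor:operator_norm_Ah} and the finite rank and $\cC^{s+1}$ smoothness of the kernels $\Pi_{\lambda,f}$, by the same contour‑integral estimates as in \Cref{prop:perturbativechi} and \Cref{remark:epsilon1} (the $\L^\infty$‑endpoints using $R_f(z):\cC^t\to\cC^{t+2}\hookrightarrow\L^\infty$). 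The hard part — and the step I expect to be the main obstacle — is condition \ref{G1}: writing $G_J[\cdot,v_2,\dots,v_J]$, for fixed $v_2,\dots,v_J\in\cC^s$, as $v_1\mapsto\int v_1\psi$ with $\psi\in\cC^s$. In each trace term $v_1$ occurs linearly inside one factor $A_g$, $g=f^{-j}v_1w$; bringing that factor next to a $\Pi_{\lambda,f}$ by cyclicity and integrating by parts, the na\"ive move loses a derivative and gives only $\psi\in\cC^{s-1}$, so one must integrate by parts through the divergence form, so that the second‑order terms thereby produced recombine with the lower‑order ones — via $\operatorname{div}(f^\alpha\nabla\varphi)=-\lambda f^\alpha\varphi$ on eigenfunctions and the regularity $\varphi\in\cC^{s+1}$ from \Cref{prop:resolvbounded} — into quantities in $\cC^s$, with $\|\psi\|_{\cC^s}\lesssim(1+\|f\|_{\cC^s})^N\prod_{l\ge2}\|v_l\|_{\cC^s}$.

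For $J=1$ this is explicit: first‑order perturbation theory gives $\mu^{(1)}_{\Upsilon,f}[h]=-\tfrac\alpha{n_\Upsilon}\tr(\Pi_{\Upsilon,f}A_{h/f}\Pi_{\Upsilon,f})$, and the divergence‑form integration by parts turns this into $\int h\,\psi_f$ with $\psi_f=\tfrac{\alpha}{n_\Upsilon}f^{\alpha-1}\sum_{j:\lambda_j\in\Omega}(|\nabla\varphi_j|^2-\lambda_j|\varphi_j|^2)\in\cC^s$, $(\varphi_j)$ an $\L^2(f^\alpha)$‑orthonormal basis of $E_{\Upsilon,f}$ — an intrinsic expression in $\Pi_{\Upsilon,f}$, which one checks coincides with the efficient influence function of \eqref{def:efficient}. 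The final assertion, continuity of $f\mapsto\mu^{(1)}_{\Upsilon,f}$ from $\Cf^1$ to $\mathcal L(\L^2,\R)$, then follows from this formula and the continuity of $f\mapsto\Pi_{\Upsilon,f}$ — hence of $E_{\Upsilon,f}$ and of the eigenfunctions, say in $\L^2$ — under $\Cf^1$‑perturbations, provided by \Cref{prop:eigenfunctionsperturbative}, since the operator norm of $\mu^{(1)}_{\Upsilon,f}-\mu^{(1)}_{\Upsilon,\tilde f}$ on $\L^2$ equals $\|\psi_f-\psi_{\tilde f}\|_{\L^2}$.
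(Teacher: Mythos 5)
Your overall strategy matches the paper's: represent $\mu_{\Upsilon,f}$ via $\tr(-\Delta_f\Pi_{\Upsilon,f})$, expand the resolvent by a Neumann-type series, take traces term by term, and further Taylor-expand $\log(1+(h-f)/f)$ to organize the result by homogeneous degree in $h-f$. The resulting multilinear forms are indeed, up to symmetrization, linear combinations of $\tr\bigl(\Pi_\lambda A_{g_1}S A_{g_2}S\cdots A_{g_j}\Pi_\lambda\bigr)$-type terms, and your treatment of \emph{\ref{G1}} through integration by parts in the divergence form $\Delta_f = f^{-\alpha}\operatorname{div}(f^\alpha\nabla\,\cdot)$, picking up $\cC^{s+1}$-regularity of eigenfunctions from the resolvent bounds, is essentially the same inductive argument as the paper's \Cref{lem:sym_Sj_map}.

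However, there is a genuine gap at the Hilbert--Schmidt step for $G_3$. You propose to verify condition \eqref{eq:HS_condition} via \Cref{remark:sufficient_G3}, i.e.\ by establishing $\L^2\times\L^2\times\L^2$ boundedness of $G_3$ directly from the resolvent estimates and the smoothness of the spectral projectors. This bound does not hold. The natural estimates (and the paper's \Cref{lem:C0_norm_of_Gj}) give boundedness of $G_3$ on $\L^{p_1}\times\L^{p_2}\times\L^{p_3}$ only under the H\"older constraint $\sum_k 1/p_k=1$, which excludes $p_1=p_2=p_3=2$. Concretely, in a term like $\tr(\Pi A_{u_1}R A_{u_2}R A_{u_3}\Pi)$, each factor $A_{u_k}=\nabla u_k\cdot\nabla$ for $u_k\in\L^2$ only makes sense after pairing $\nabla u_k\in\H^{-1}$ against a factor that is at least in $\H^1$; the two interior resolvents ($\H^{-1}\to\H^1$) supply exactly two such upgrades, not three, so the middle $A_{u_2}$ has nothing left to absorb it when $u_2\in\L^2$. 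This is precisely why the paper's Section~3.5 explicitly states that \Cref{lem:C0_norm_of_Gj} yields all of \emph{\ref{G3}} \emph{except} the Hilbert--Schmidt requirement, and then devotes Appendix~\ref{subsec:endproofeiganvalue} to that point: there one decomposes $R_f(z)=W_z+Q_z$ (Lemma~\ref{lem:decomposition_resolvent}) into a part with a controlled resolvent-like kernel and a finite-rank regularizing remainder, and then applies the direct kernel-level Hilbert--Schmidt estimate of Lemma~\ref{lem:G3_from_kernel} on $\H^t\times\H^t\times\L^2$ with $t=(s+d)/2>d/2$. That kernel decomposition and the passage through Lemma~\ref{lem:G3_from_kernel} are the missing ingredients in your proposal; \Cref{remark:sufficient_G3} cannot be used as a shortcut here because its hypothesis fails.

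A secondary, smaller point: your explicit formula for $\psi_f$ (the efficient influence function) as $\tfrac{\alpha}{N}f^{\alpha-1}\sum_j(|\nabla\varphi_j|^2-\lambda_j|\varphi_j|^2)$ is a reasonable output of the divergence-form integration by parts, and it is indeed basis-independent. However, arguing $\Cf^1$-continuity of $f\mapsto\psi_f$ in $\L^2$ by appealing only to $\L^2$-continuity of eigenfunctions is too weak, since $\psi_f$ involves $|\nabla\varphi_j|^2$, so you need continuity of the eigenprojectors in a stronger topology (e.g.\ into $\W^{1,4}$ or $\cC^{s+1}$). The paper instead estimates $\mu^{(1)}_{\Upsilon,h}-\mu^{(1)}_{\Upsilon,f}$ directly at the operator level, using cyclicity of the trace, $R_{j_\star}^2=R_{j_\star}$ for a projector factor, and the mapping properties of $A_u$, $A_{U-V}$ and the resolvents, which gives the continuity without needing to track eigenfunctions.
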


Note that this proposition, combined with \Cref{thm:functional_estimation}, immediatly implies \Cref{thm:cv_eigenvalue}. The rest of this section is dedicated to giving a proof of \Cref{prop:eigenvalue_perturbative}.

To prove \Cref{prop:eigenvalue_perturbative}, we will rely on perturbation theory to obtain a Taylor expansion of the functional $\mu_\Upsilon$. We then must check that the different terms in the Taylor expansions satisfy the conditions \emph{\ref{G1}}, \emph{\ref{G2}} and \emph{\ref{G3}} given in \Cref{def:regular}.

We fix $L ,\delta > 0$, $s\geq 2$ and $\Upsilon$ as in the statement of \Cref{prop:eigenvalue_perturbative}. Let $\eta$ be the constant given by the statement of \Cref{prop:eigenfunctionsperturbative}. 
Next, let $f,h\in \cP_s(\Upsilon,\delta,L)$ be such that $w = h - f$ satisfies $\|w\|_{\Cf^1}<\eta$. 
 In particular, by \Cref{prop:eigenfunctionsperturbative}, the number $N_{h}$ of eigenvalues of $-\Delta_{h}$ enclosed by $\Upsilon$ is  equal to  $N = N_f$. 
Remark that we have 
$$
\Delta_h = \Delta_f +  \alpha A_{\log(h/f)} 
$$
as well as the identity 
\begin{equation}\label{eq:starting_point}
-\Delta_f\Pi_f = \frac{1}{2\pi i} \int_{\Upsilon} zR_f(z)\dd z.
\end{equation}
The latter follows from the formula $-\Delta_fR_f(z) = -1+zR_f(z)$ and the fact that the integral of $1$ along $\Upsilon$ vanishes. The same formula holds for $h$. 
Now for $z \in \Upsilon$, define 
$$
Q(z) = \alpha A_{\log(h/f)} R_f(z),
$$
so that $R_h(z) = (z+\Delta_f + \alpha A_{\log(h/f)} )^{-1}= R_f(z)(1+Q(z))^{-1}$. 
Notice that for $J \in \mathbb N$, we have the identity 
$$
(1+Q(z))^{-1} = \sum_{j=0}^J(-1)^j Q(z)^j + (-1)^{J+1} (1+Q(z))^{-1} Q(z)^{J+1}.
$$
Hence we obtain the development
\begin{equation}\label{eq:devresolve}
R_h(z)  = \sum_{j=0}^J (-1)^j R_f(z)Q(z)^j + (-1)^{J+1} R_h(z)Q(z)^{J+1} .
\end{equation}
Integrating over $\Upsilon$, one obtains the formula
\begin{equation}\label{eq:exact_expansion}
-\Delta_h \Pi_h = \sum_{j=0}^J P_{j} + R_J
\end{equation}
where we set, for any $J \in \N$ and $j\geq 0$,
\begin{equation*}
P_{j} = \frac{(-1)^j}{2 \pi i} \int_\Upsilon zR_f(z)Q(z)^j \dd z \quad \text{and} \quad R_J= \frac{(-1)^{J+1}}{2\pi i} \int_\Upsilon zR_h(z)Q(z)^{J+1}  \dd z.
\end{equation*}%
For any eigenvalue $\lambda$ enclosed in $\Upsilon$, let us write
\begin{equation}\label{eq:laurent_development}
R_{f}(z) = \sum_{k = -1}^\infty R_{\lambda, k} (z - \lambda)^k
\end{equation}
the Laurent development \eqref{eq:laurent} of $R_f(z)$ near $z = \lambda$. Then $R_{\lambda, - 1} = \Pi_{\lambda, f}$ and the residue at $z = \lambda$ of $zR_f(z)Q(z)^j$ is given by a linear combination of terms of the form
\begin{equation}\label{eq:opresidue}
R_{\lambda, k_1} \prod_{\ell = 2}^{j+1} (\alpha A_{\log (h/f)})R_{\lambda, k_\ell}
\end{equation}
where $k_1, \dots, k_{j+1} \in \Z_{\geqslant -1}$ are such that $k_1 + \cdots + k_{j+1} = -1$ or $-2$. The rank of the operator \eqref{eq:opresidue} is not greater than $\operatorname{rank}\Pi_{\lambda,f}$ and we deduce
\begin{equation}\label{eq:rankpj}
\operatorname{rank} P_j \leqslant \sum_{\lambda \in \Omega} C_j\operatorname{rank}\Pi_{\lambda,f} = C_j N,
\end{equation}
for some constant $C_j$ depending only on $j$. Here $\Omega \subset \C$ is the domain enclosed by $\Upsilon$. Note that we also have
$$
\operatorname{rank} R_J \leqslant \operatorname{rank}(\Delta_h\Pi_h) + \sum_{j=0}^J \operatorname{rank}(P_j) \leqslant D_J N
$$
for some $D_J > 0$ depending only on $J$.

We may thus take the trace in \eqref{eq:exact_expansion} to obtain the identity
\begin{equation}\label{eq:devlimmu}
\mu_{\Upsilon,h}=\mu_{\Upsilon,f} + \frac 1N\sum_{j=1}^J \tr(P_j) + \frac 1N\tr(R_J).
\end{equation}
Define the $j$-multilinear form $ G_j[u_1,\dots,u_j]$ as \emph{the symmetrization} of $\tr\p{  \widetilde G_j[u_1,\dots,u_j]}$ where
\begin{equation}\label{eq:def_Gj}
\widetilde G_j[u_1,\dots,u_j] = \frac{(-1)^j}{2 \pi i} \int_\Upsilon z(z + \Delta_f)^{-1}\p{\prod_{k=1}^jB_z[u_k]} \dd z
\end{equation}
and $B_z[u]=A_{u}(z + \Delta_f)^{-1}$. 
Then it holds 
\begin{equation}\label{eq:trpj}
\tr(P_j) = \alpha^j G_j[\log(h/f),\dots,\log(h/f)].
\end{equation}

\begin{lemma}\label{lem:C0_norm_of_Gj}
Let $j\geq 1$. There exists $C_j>0$ depending on $L$ and $\delta$ such that for all $p_1,\dots,p_j\in \left]1,+\infty\right[$ with $\sum_{k=1}^j \frac 1{p_k}=1$, the operator norms of $G_j$ seen as operators
\begin{align*}
&G_j: \L^{p_1}\times \cdots \times \L^{p_j}\to \R\ \text{ and }\ G_j: (\cC^{s})^*\times (\cC^s)^{j-1}\to \R
\end{align*}
are bounded by $C_j$.
\end{lemma}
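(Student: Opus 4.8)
The plan is to reduce the claim to a trace estimate for the operator $\widetilde G_j[u_1,\dots,u_j]=\frac{(-1)^j}{2\pi i}\int_\Upsilon z\,R_f(z)A_{u_1}R_f(z)A_{u_2}\cdots A_{u_j}R_f(z)\,\dd z$, and then to prove that estimate by combining \Cref{prop:resolvbounded} with the algebraic structure of the trace. Since $G_j$ is the symmetrization of $(u_1,\dots,u_j)\mapsto\tr\widetilde G_j[u_1,\dots,u_j]$ and symmetrizing multiplies the multilinear norms by at most $j!$, it suffices to bound $|\tr\widetilde G_j[u_1,\dots,u_j]|$; and since $\Cf^\infty$ is dense in $\L^{p_k}$ and in $\cC^s$ for the relevant topologies, it is enough to treat $u_1,\dots,u_j\in\Cf^\infty$, in which case all the operators below are genuine bounded operators. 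Moreover $\widetilde G_j[u_1,\dots,u_j]$ is then \emph{finite-rank}: by the Laurent expansion \eqref{eq:laurent} and the orthogonality relations \eqref{eq:spicommute}, the residue of the integrand at each $\lambda\in\Omega\cap\sigma(-\Delta_f)$ is a finite combination of operators each carrying at least one spectral projector $\Pi_{\lambda,f}$, so $\operatorname{rank}\widetilde G_j[u_1,\dots,u_j]\le C_j\,N$ with $N=\sharp(\Omega\cap\sigma(-\Delta_f))$; and $N$ is bounded in terms of $L,\delta$ alone, since the enclosed eigenvalues lie in $[0,L)$ and the counting function of $-\Delta_f$ on $[0,L)$ is controlled via the min--max principle by that of the Riemannian Laplacian, the comparison constants depending only on $\inf f>\delta$ and $\sup f$ (controlled by $L$). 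In particular the trace is well defined.

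For the main estimate one exploits the cyclicity of the trace together with the explicit kernel representation of $\widetilde G_j[u_1,\dots,u_j]$ in terms of the Green's function $G_z(\cdot,\cdot)$ of $z+\Delta_f$ (the Schwartz kernel of $R_f(z)$): namely a contour integral over $\Upsilon$ of an iterated integral over a power of $M$ of a product of $G_z$'s and their derivatives contracted against $\nabla u_1,\dots,\nabla u_j$. Integrating by parts repeatedly to move each gradient off the rough factor $u_k$, and using $(z+\Delta_f)G_z(\cdot,y)=\delta_y$ to rewrite the Laplacians that appear (each producing a diagonal $\delta$-contribution, which collapses two integration variables, plus lower-order terms governed by \eqref{A1}, \eqref{A2} and \Cref{prop:resolvbounded}), one expresses $\tr\widetilde G_j[u_1,\dots,u_j]$ as a finite sum of integrals $\int_{M^m}u_1(y_{\sigma(1)})\cdots u_j(y_{\sigma(j)})\,\mathcal K_z(\vec y)\,\dd\vec y$ with $m\le j$, where each $\mathcal K_z$ is a kernel assembled from $G_z$ and finitely many of its derivatives which, after the cancellations forced by the finiteness of $\tr\widetilde G_j$, is \emph{bounded} uniformly for $z\in\Upsilon$. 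The uniform bounds come from \Cref{prop:resolvbounded} — crucially in the full scale $\W^{t,q}$, $q\in(1,\infty)$, and in the H\"older--Zygmund scale $\cC^t$, $t\in[0,s-1]$ — together with standard Green's-function estimates on the compact manifold $(M,g)$, all with constants controlled by $\dist(\Upsilon,\sigma(-\Delta_f))>\delta$, $\sup_{z\in\Upsilon}|\Re z|<L$, $\inf f>\delta$ and $\|f\|_{\cC^s}<L$. Because $\sum_k 1/p_k=1$, H\"older's inequality (combined with $\|u_k\|_{\L^1}\le\vol(M)^{1-1/p_k}\|u_k\|_{\L^{p_k}}$ and the analogous pairing at collapsed variables) then yields $|\tr\widetilde G_j[u_1,\dots,u_j]|\le C_j\prod_k\|u_k\|_{\L^{p_k}}$, and density extends this to all $u_k\in\L^{p_k}$.

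The mixed bound $G_j:(\cC^s)^*\times(\cC^s)^{j-1}\to\R$ follows along the same lines, except that the $\cC^s$-factors $u_2,\dots,u_j$ are kept as multiplication/first-order operators: one uses \eqref{A2} ($A_{u_k}:\cC^{t+1}\to\cC^t$ for $t\in(0,s-1]$, with norm $\lesssim\|u_k\|_{\cC^s}$) and that $\cC^s$ is a multiplication algebra, so that the relevant Green's-function contractions still produce $\cC^s$-regular functions with norm $\lesssim\prod_{k\ge2}\|u_k\|_{\cC^s}$, uniformly in $z\in\Upsilon$, while the single rough factor $u_1$ is treated using the identity $A_{u_1}\phi=A_\phi u_1$ and is finally tested against a $\cC^s$-regular function through the duality pairing $(\cC^s)^*$--$\cC^s$; here the boundedness of $R_f(z)$ on the H\"older--Zygmund scale (\Cref{prop:resolvbounded}) guarantees that the resolvent applied to such functions stays in $\cC^{s+1}$.

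The technically heaviest point — and the main obstacle — is the estimate of the kernels $\mathcal K_z$ produced by the integration-by-parts procedure: one must track how the $j$ derivatives transferred off the $u_k$'s, the $j$ derivatives originally sitting on the Green's functions, and the diagonal collapses distribute among the $j+1$ factors $G_z$, and verify that after the cancellations dictated by the finiteness of $\tr\widetilde G_j$ every surviving kernel is bounded (equivalently, that no non-integrable diagonal singularity remains). This is precisely where the availability of \Cref{prop:resolvbounded} in every integrability exponent $q\in(1,\infty)$ and at the H\"older endpoints is used to full strength; the remaining steps are routine bookkeeping, and all constants depend only on $j$, $d$, $s$, $\alpha$, $(M,g)$, $L$ and $\delta$.
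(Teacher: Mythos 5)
Your reduction to $\widetilde G_j$, the observation that $\widetilde G_j[u_1,\dots,u_j]$ is finite-rank (so that the trace is well defined), and the plan to exploit the self-adjointness of $R_f(z)$ and \Cref{prop:resolvbounded} are all correct and in the spirit of the paper's argument. However, the crux of your proposal — the claim that after integrating by parts and substituting $(z+\Delta_f)G_z(\cdot,y)=\delta_y$, the resulting kernels $\mathcal K_z$ are ``bounded after the cancellations forced by the finiteness of $\tr\widetilde G_j$'' — is asserted but not proved, and it is precisely the hard point. Finiteness of the rank of $\widetilde G_j$ only says that its \emph{final} Schwartz kernel is smooth; it does not control the diagonal behaviour of the intermediate products of Green's functions and their derivatives that arise in your bookkeeping, and a priori these products carry non-integrable singularities once $j$ derivatives have been transferred onto them (the Green's function on a $d$-manifold already blows up like $|x-y|^{-(d-2)}$). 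Verifying that the singular contributions cancel at each stage is a nontrivial task which you flag as ``the main obstacle'' and then declare routine — it is not, and this is a genuine gap.

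The paper avoids Green's function kernels entirely. It writes the trace as the \emph{finite} sum $\sum_{i=1}^N\langle\widetilde G_j[u_1,\dots,u_j]\phi_i,\phi_i\rangle_f$ over an orthonormal eigenbasis of $\operatorname{ran}\Pi_f$, so that the pairing to be estimated is always against smooth functions $\psi_1=\phi_i$, $\psi_2=R_f(z)\phi_i$, never against a Dirac mass. It then proves by induction on $j$ (\Cref{lem:sym_Sj_map}) a bound on the symmetrization of $(u_1,\dots,u_j)\mapsto\langle\prod_k B_z[u_k]\psi_1,\psi_2\rangle_f$: a single integration by parts peels one factor off, producing (i) a term where $u_1$ and $u_2$ are merged into $u_1u_2$ — which is handled by the inductive hypothesis at level $j-1$ — and (ii) a term where $\psi_2$ is replaced by $\widetilde\psi_2=-R_f(\bar z)(\bar z\psi_2u_1+\nabla u_1\cdot\nabla\psi_2+u_1\Delta_f\psi_2)$, whose $\W^{1,p}$- and $\cC^{s+1}$-norms are then controlled via \Cref{lem:operator_norm_Ah} and \Cref{prop:resolvbounded}. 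No diagonal singularity ever appears because everything is expressed as compositions of operators applied to smooth eigenfunctions. You should adopt this eigenbasis expansion and the inductive peeling-off scheme rather than attempting to control the Green's function kernels directly.
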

We will assume the lemma for the moment and explain why it implies \Cref{prop:eigenvalue_perturbative}.
\begin{proof}[Proof of \Cref{prop:eigenvalue_perturbative}]
We will prove that the development \eqref{eq:devlimmu} of $\mu_{\Upsilon, f}$ fulfills the requirements of \Cref{def:regular_functional}. 

First, to obtain an actual Taylor expansion with terms that are multilinear with respect to $h-f$, we perform a Taylor expansion of $\log(h/f)$ around $f$ up to order $J$. Let $w=(h-f)/f$. The Taylor expansion for the $j$th term $\tr(P_j)$ yields a sum of terms of the form $ G_j[w^{a_1},\dots,w^{a_j}]$ for  positive integers $a_1,\dots,a_j$ whose sum is smaller than $J$. The remainder term in this expansion is a linear combination of $ \tr(R_J)$ and of terms of the form $ G_j[u_1,\dots,u_j]$ for some functions $u_1,\dots,u_j$ with 
\[ \prod_{k=1}^j \|u_k\|_{\L^\infty} \lesssim \|w\|_{\L^\infty}^{J+1}\lesssim \|h-f\|_{\L^\infty}^{J+1}.  \] Here, the terms $u_k$ are either equal to $w^{a_j}$ for some integer $j$ or to the remainder term of order $J+1$ in the Taylor expansion of $\log(h/f)$ around $f$.  

By \Cref{lem:C0_norm_of_Gj}, the residual terms of the form $ G_j[u_1,\dots,u_j]$ are therefore at most of order  $\|h-f\|_{\L^\infty}^{J+1}$, and one can show in a similar fashion that $ \tr( R_J)$ is also at most of this order. In total, this shows that the remainder term in the Taylor expansion \eqref{eq:devlimmu} is of the expected order for an $s$-regular functional.

\Cref{lem:C0_norm_of_Gj} also shows that for $j = 1,2$ the symmetric multilinear forms induced by the homogeneous forms
$$
w \mapsto  G_j[w^{a_1},\dots,w^{a_j}],
$$
where $a_1 + \cdots + a_j = j$, are $s$-regular linear functionals. For $j=3$, \Cref{lem:C0_norm_of_Gj} shows that those symmetric forms satisfy the condition \emph{\ref{G3}} except for the requirement involving the Hilbert–Schmidt norm.

Therefore, to conclude the proof, it remains to show two properties. First, we must show that the first differential, given by the map $G_1$, is continuous as a function of the base point $f$ with respect to the $\Cf^1$-norm (and thus for the $\cC^{s'}$-norm for $s'\in ]1,s[$)). Second, we must show that the symmetric multilinear forms induced by $w \mapsto  G_j[w^{a_1},\dots,w^{a_j}]$ with $a_1 + \cdots + a_j = 3$, satisfies the Hilbert-Schmidt norm requirement in \emph{\ref{G3}}. We will show only the first property below~; the proof of the second one is post-poned in \Cref{subsec:endproofeiganvalue}.

Write $f=e^{U}$ and consider a function $h=e^V$ nearby $f$. If $\|f-h\|_{\Cf^1}$ is small enough, then the arguments above 
imply that the first differential of $\mu_\Upsilon$ at $h$ is given  for a smooth function $u$ by
\[
 \mu_{\Upsilon,h}^{(1)}[u] = -\alpha\tr\p{ \frac{1}{2\pi N i} \int_{\Upsilon} z R_h(z) A_u R_h(z) \dd z}.
\]
We have
\begin{equation}\label{eq:differencemu1}
\begin{aligned}
R_h(z)  A_u R_h(z) - R_f(z)  A_u R_f(z)  &=  \alpha R_h(z)  A_u R_h(z)  A_{U-V} R_f(z) \\
&\qquad\qquad  + \alpha R_h(z) A_{U-V} R_f(z)  A_u  R_f(z).
\end{aligned}
\end{equation}
Next, let us write
$$
\mu^{(1)}_{\Upsilon,h}[u] - \mu_{\Upsilon,f}^{(1)}[u] = \frac{-\alpha}{2 \pi N i} \int_\Upsilon (R_h(z)  A_u R_h(z) - R_f(z)  A_u R_f(z)) \dd z.
$$
Then by the residue theorem one obtains by \eqref{eq:differencemu1} that $\mu^{(1)}_{\Upsilon,h}[u] - \mu_{\Upsilon,f}^{(1)}[u]$ is a linear combination of terms of the form
$$
R_1 A_1 R_2 A_2 R_3
$$
where $A_1 = A_u$ and $A_2 = A_{U-V}$ or $A_2 = A_u$ and $A_1 = A_{U-V}$, the $R_j$s are bounded $\W^{t,q} \to \W^{t+2,q}$ for $-s < t < s-1$ and $q > 1$ (see Proposition \ref{prop:resolvbounded}), and there is $j_\star$ such that $R_{j_\star}$s is equal to $\Pi_{\lambda, f}$ or $\Pi_{\mu, h}$ for $\lambda \in \sigma(-\Delta_f)$ or $\mu \in \sigma(-\Delta_h)$ encircled by $\Upsilon$. By cyclicity of the trace and the fact that $R_{j_\star}^2 = R_{j_\star}$, one obtains
$$
\mu^{(1)}_{\Upsilon,h}[u] - \mu_{\Upsilon,f}^{(1)}[u] = \tr(R_{j_\star} W R_{j_\star}) 
$$
where $W$ is an operator given by a linear combination of products involving $A_1, A_2$ and the $R_j$s for $j \neq j_\star$. Let $p > 1$. Using again that $R_{j_\star}$ is a projector one deduces that
$$
|\mu^{(1)}_{\Upsilon,h}[u] - \mu_{\Upsilon,f}^{(1)}[u]|\leqslant \operatorname{rank}(R_{j_\star}) \|R_{j_\star} W R_{j_\star}\|_{\L^2 \to \L^2} \lesssim \operatorname{rank}(R_{j_\star})\|W\|_{\mathscr C^3 \to W^{-1,p}},
$$
where the second bound comes from the fact that $R_{j_\star}$ is bounded $\L^2 \to \mathscr C^3$ and also $\W^{-1,p} \to \L^2$. Lemmas \Cref{lem:operator_norm_Ah}, \Cref{cor:operator_norm_Ah} and  \Cref{prop:resolvbounded} then yield
 \[ \|W\|_{\mathscr C^3 \to \W^{-1,p}} \lesssim \|u\|_{\L^2} \|U-V\|_{\Cf^1}.\] This implies that $\mu_{\Upsilon}^{(1)}$ is a continuous function of $f$ for the $\Cf^1$-norm. 
\end{proof}

We complete this section by giving a proof of \Cref{lem:C0_norm_of_Gj}.

\begin{proof}[Proof of \Cref{lem:C0_norm_of_Gj}]
    We will estimate $G_j(u_1, \dots, u_j)$ for smooth functions $u_1,\dots,u_j$ and the general result will then follow by density. 
As is the case for $P_j$, we have $\operatorname{rank}\widetilde G_j[u_1,\dots,u_j] \leqslant C_j N$ where $N = \operatorname{rank} \Pi_f$. The trace is then expressed as
\[ 
\sum_{i=1}^{N} \dotp{\widetilde G_j[u_1,\dots,u_j]\phi_i,\phi_i}_f
\]
for some orthonormal basis $(\phi_1,\dots,\phi_{N})$ (in $\L^2(f^\alpha)$) of the range of $\Pi_f$, with $\Delta_f \phi_i=-\lambda_i \phi_i$ for $i=1,\dots,N$. By \eqref{eq:def_Gj}, it suffices to show the following claim.

\begin{claim}\label{lem:sym_Sj_map} There exists $C_j>0$ depending on $L$ and $\delta$ such that for any $p_1,\dots,p_j\in \left]1,+\infty\right[$ and $p_{j+1}\in \left]1,+\infty\right]$ with $\sum_{k=1}^{j+1} \frac 1{p_k}=1$ and $\psi_1,\psi_2$, $z\in \Upsilon$,  the symmetrization $S_j$ of the multilinear form
\begin{equation}\label{eq:S_j}
(u_1,\dots,u_j)\mapsto\Bigl\langle{\prod_{k=1}^j B_z[u_k]\psi_1,\psi_2}\Bigr\rangle_f
\end{equation}
defines bounded operators
$$
S_j:\L^{p_1}\times \cdots \times \L^{p_j}\to \R \quad \text{and} \quad S_j:(\cC^s)^*\times (\cC^s)^{j-1}\to \R,
$$
with norms respectively smaller than $C_j \|\psi_1\|_{\L^\infty}\|\psi_2\|_{\W^{1,p_{j+1}}}$ and $C_j\|\psi_1\|_{\cC^{s+1}}\|\psi_2\|_{\cC^{s+1}}$. 
\end{claim}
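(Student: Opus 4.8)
The plan is to fix $z \in \Upsilon$ and bound the \emph{unsymmetrized} form
$$
F_z(u_1,\dots,u_j) = \dotp{A_{u_1}R_f(z)A_{u_2}R_f(z)\cdots A_{u_j}R_f(z)\psi_1,\psi_2}_f
$$
for smooth $u_1,\dots,u_j$; the asserted bounds for $S_j = \frac1{j!}\sum_\sigma F_z(u_{\sigma(1)},\dots,u_{\sigma(j)})$ (sum over the $j!$ permutations $\sigma$) then follow, and the general case is obtained by density. I will repeatedly use three elementary facts: the commutation $A_u\phi = A_\phi u = \nabla u \cdot \nabla\phi = \div(u\nabla\phi) - u\Delta\phi$; the Leibniz identity $\nabla(u_1\cdots u_j) = \sum_k u_1\cdots u_{k-1}(\nabla u_k)u_{k+1}\cdots u_j$; and the resolvent identities $\Delta_f R_f(z) = 1 - zR_f(z)$ and $\Delta R_f(z) = 1 - zR_f(z) - \alpha A_{\log f}R_f(z)$. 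As in the proof of \Cref{prop:eigenfunctionsperturbative} we may first deform $\Upsilon$ inside $\{|\Im z| \le 1\}$, so that $|z| \le L+1$ on $\Upsilon$; combined with $\dist(z,\sigma(-\Delta_f)) > \delta$ and the fact that $\|\log f\|_{\cC^s}$ is controlled in terms of $L,\delta$ for $f \in \cP_s(\Upsilon,\delta,L)$, \Cref{prop:resolvbounded} and \Cref{cor:operator_norm_Ah} yield bounds for $\|R_f(z)\|_{\W^{t,q}\to\W^{t+2,q}}$, $\|R_f(z)\|_{\cC^t\to\cC^{t+2}}$ and $\|A_v\|_{\W^{1+t,p}\to\W^{t,p}}$ (resp. $\|A_v\|_{\cC^{1+t}\to\cC^t}$) that are uniform in $z \in \Upsilon$, all constants being absorbed into $C_j$.

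For the $\L^{p_k}$-bound, the decisive preliminary observation is that $\psi_1 \in \L^\infty \hookrightarrow \L^{q_0}$ for every finite $q_0$, so $R_f(z)\psi_1 \in \W^{2,q_0}$; taking $q_0 > d$ and using the resolvent identity, the three functions $R_f(z)\psi_1$, $\nabla R_f(z)\psi_1$ and $\Delta R_f(z)\psi_1 = \psi_1 - zR_f(z)\psi_1 - \alpha\,\nabla\log f \cdot \nabla R_f(z)\psi_1$ all lie in $\L^\infty$, with norms $\lesssim \|\psi_1\|_{\L^\infty}$. I then expand $F_z$ by repeatedly substituting $A_{u_k}\phi = \div(u_k\nabla\phi) - u_k\Delta\phi$ and the resolvent identity, integrating by parts so that every gradient is moved off the resolvent images and onto $\psi_2$, $f^\alpha$, $\log f$, or the bounded derivatives of $R_f(z)\psi_1$ listed above. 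This produces a finite sum of two kinds of terms: \emph{good} terms, which are integrals of a product of the (undifferentiated) $u_k$'s against $\L^\infty$-factors and a factor carrying at most one derivative of $\psi_2$, hence controlled by $\prod_k\|u_k\|_{\L^{p_k}}\,\|\psi_1\|_{\L^\infty}\,\|\psi_2\|_{\W^{1,p_{j+1}}}$ by H\"older's inequality; and finitely many \emph{critical} terms, which still contain a divergence of an $\L^p$-function and are \emph{not} individually bounded by the right-hand side. The point is that, on averaging $F_z$ over the $j!$ orderings, the critical terms recombine through the Leibniz identity into expressions in which the surviving gradients act only on products of the $u_k$'s; one last integration by parts (now against $\nabla(\overline{\psi_2}f^\alpha)$) turns these into H\"older-admissible integrals, and the bound for $S_j$ follows.

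The $\cC^s$-bound is proved along the same lines, exploiting that $\cC^s$ is a multiplication algebra and that, for $s > 1$, $R_f(z) : \cC^{s-1} \to \cC^{s+1}$ while $A_v : \cC^{s+1} \to \cC^{s-1}$ with norm $\lesssim \|v\|_{\cC^s}$. Hence, as long as only the smooth arguments act, each resolvent image stays in $\cC^{s+1}$ with norm bounded by $\|\psi_1\|_{\cC^{s+1}}\prod_{k\ge 2}\|u_k\|_{\cC^s}$; the distribution slot $u_1$ is treated by integration by parts, which shifts its single gradient onto the smooth factors and leaves $u_1$ paired, via the $(\cC^s)^*$--$\cC^s$ duality, against an explicit $\cC^s$-function built from $\psi_1$, $\psi_2$, $f$ and $R_f(z)$. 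Exactly as before, one residual contribution is only tamed after symmetrization, which recombines it into an expression involving $u_1\cdots u_j \in (\cC^s)^*$ — using that $\cC^s$ multiplies $(\cC^s)^*$ into itself — paired against a $\cC^s$-function. Taking the supremum over $z \in \Upsilon$ and passing from smooth to general inputs by density completes the proof.

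I expect the main obstacle to be precisely this regularity bookkeeping. When a rough input $u_k$ (an $\L^{p_k}$-function, or worse a $(\cC^s)^*$-distribution) occupies a middle slot flanked by two resolvents, there is a systematic one-derivative deficit — a resolvent regularizes by exactly two orders, whereas $A_{u_k}$, read off the smooth side, effectively costs one more than the regularity of $u_k$ permits — so that $F_z$ itself fails the estimate. The remedy is the symmetrization: combined with the Leibniz identity $\sum_k u_1\cdots(\nabla u_k)\cdots u_j = \nabla(u_1\cdots u_j)$ it produces exactly the cancellation that closes the bound, and the delicate step is organizing the integrations by parts so that this recombination is visible and the residual gradients can be absorbed onto the smooth factors $\psi_1$, $\psi_2$, $f$.
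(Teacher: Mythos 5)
Your proposal captures the paper's essential mechanism exactly: the key obstacle is a systematic one-derivative deficit (each $A_{u_k}$ costs one more derivative than a flanking $R_f(z)$ can repay when the input is merely $\L^{p_k}$ or $(\cC^s)^*$), and the remedy is to integrate by parts so that the stray gradients hit only $\psi_1$, $\psi_2$ and $f$, while symmetrization supplies the Leibniz identity $u_1 A_{u_2} + u_2 A_{u_1} = A_{u_1u_2}$ (equivalently $u_1 B_z[u_2] + u_2 B_z[u_1] = B_z[u_1u_2]$) that absorbs the remaining critical gradient into a product of inputs, which can then be placed in a single slot via H\"older (for $\L^{p_k}$) or via the fact that $\cC^s\cdot(\cC^s)^*\subset(\cC^s)^*$. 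That is precisely the identity the paper exploits.

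The genuine difference is organizational, and it is not a cosmetic one. You propose a single global expansion of $F_z$ into ``good'' and ``critical'' terms and then assert that averaging over the $j!$ orderings recombines the latter via the Leibniz identity. That assertion is the heart of the matter, and in your sketch it is unsubstantiated: with several inputs sandwiched between resolvents, the critical contributions live at different depths of the operator chain, and it is not a priori transparent that symmetrization alone collapses them into a single $\nabla(u_1\cdots u_j)$. The paper avoids exactly this difficulty by an induction on $j$: after one integration by parts the form splits into (a) a term which, upon symmetrizing over $(u_1,u_2)$ alone, becomes the $(j-1)$-form evaluated at $(u_1u_2,u_3,\dots,u_j)$ with the \emph{same} $\psi_1,\psi_2$, and (b) a term in which $u_1$ has been traded for a modified $\widetilde\psi_2$ (whose $\W^{1,p}$- and $\cC^{s+1}$-norms are controlled by $\|\psi_2\|$, $\|u_1\|$ and the resolvent mapping bounds), so that the $(j-1)$-form hypothesis can again be invoked. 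The recombination you invoke thus happens two inputs at a time, one level of the chain at a time, and the induction is precisely the book-keeping device that verifies your ``critical terms recombine'' claim rigorously. In short: your mechanism is correct, but to turn it into a proof you should structure the integration-by-parts hierarchy as a recursion on $j$ rather than a one-shot expansion, as the paper does; otherwise the cancellation claim needs to be established directly, which is considerably harder to check. You would also want to record the base case $j=1$, where $\prod_{k\ge 2}B_z[u_k]\psi_1 = \psi_1$ and the bound is immediate from \Cref{lem:operator_norm_Ah} and \Cref{prop:resolvbounded}.
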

The final result is then obtained by letting $\psi_1=\phi_i$ and $\psi_2 = R_f(z)\phi_i = \frac{\phi_i}{z-\lambda_i}$, with $p_{j+1}=+\infty$, together with estimates on the regularity of the eigenfunctions given by \eqref{eq:pibounded_bis} which imply that both $\|\phi_i\|_{\cC^{s+1}}$ and $\|\phi_i\|_{\W^{1,\infty}}\lesssim \|\phi_i\|_{\cC^{s+1}}$   are bounded.

We show the claim by induction on $j$. 
First, we remark that by an integration by parts the right-hand side of \eqref{eq:S_j} reads
$$
\begin{aligned}
&-\left\langle{\p{A_{\overline\psi_2} R_f(z)+\overline\psi_2  (1-zR_f(z))}\prod_{k=2}^j B_z[u_k] \psi_1, u_1 }\right\rangle_f \\
&\quad \quad =  -\Bigl\langle{\prod_{k=2}^j B_z[u_k] \psi_1, \psi_2 u_1}\Bigr\rangle_f - \left\langle{\p{A_{\overline\psi_2} R_f(z)-z \overline \psi_2R_f(z))}\prod_{k=2}^j B_z[u_k] \psi_1, u_1 }\right\rangle_f.
\end{aligned}
$$
In particular, for $j=1$, $\prod_{k=2}^j B_z[u_k] \psi_1=\psi_1$, and the conclusion holds by  \Cref{lem:operator_norm_Ah} and \Cref{prop:resolvbounded}. 
For $j\geq 2$, we treat the two terms in the right-hand side of the above equality separately.

\noindent \textbf{The first term.} The symmetrization of the second term is also equal to the symmetrization of 
$$
\begin{aligned}
-\frac 12\Bigl\langle{(u_1 B_z[u_2]+u_2B_z[u_1])\prod_{k=3}^j B_z[u_k] \psi_1, \psi_2}\Bigr\rangle_f
=-\frac 12\Bigl\langle{B_z[u_1u_2]\prod_{k=3}^j B_z[u_k] \psi_1, \psi_2}\Bigr\rangle_f.
\end{aligned}
$$
Applying the inductive hypothesis to $u_1u_2,u_3,\dots,u_j$, and then  either H\"older's inequality for the bound on the first operator norm or the definition of the dual norm $(\cC^s)^*$ for the bound on the second operator norm, we obtain that this term satisfies the conclusion of \Cref{lem:sym_Sj_map}.

\noindent \textbf{The second term.} To show that the symmetric form induced by the second term satisfies the claim, we must show that it is bounded  by an expression proportional to
\begin{equation}\label{eq:first_bound_Sj}
    \|\psi_1\|_{\L^\infty}\|\psi_2\|_{\W^{1,p_{j+1}}}\prod_{k=1}^j \|u_k\|_{\L^{p_k}}
\end{equation}
and that for all $ k_0\in \{1,\dots,j\}$ it is bounded by expressions proportional to
\begin{equation}\label{eq:k0_somewhere}
  \|\psi_1\|_{\cC^{s+1}}\|\psi_2\|_{\cC^{s+1}}\|u_{k_0}\|_{(\cC^s)^*}\prod_{k\neq k_0} \|u_k\|_{\cC^s}.
\end{equation}
We use an integration by parts and the fact that the resolvent is self-adjoint, to write the second term as
\begin{align*}
\kappa= \left\langle{\p{A_{\overline\psi_2}R_f(z) -z \overline\psi_2 R_f(z)}\prod_{k=2}^j B_z[u_k] \psi_1, u_1 }\right\rangle_f = \left\langle{ \prod_{k=2}^j B_z[u_k] \psi_1,\widetilde \psi_2}\right\rangle_f,
\end{align*}
where $\widetilde \psi_2$ is defined by
\[ \widetilde \psi_2 = - R_f(\overline z)(\overline z \psi_2 u_1 + \nabla u_1\cdot \nabla \psi_2 + u_1\Delta_f\psi_2).
\]
By induction, $\kappa$ is bounded for all $k_0\in \{2,\dots,j\}$ up to a multiplicative constant by
\[ \|\psi_1\|_{\L^\infty}\|\widetilde\psi_2 \|_{\cC^{s+1}} \| u_{k_0}\|_{(\cC^s)^*} \prod_{2\leq k\neq k_0} \|u_k\|_{\cC^{s}}.\]
But we have $ \|\widetilde \psi_2 \|_{\cC^{s+1}}\lesssim \|u_1\|_{\cC^{s}}\|\psi_2\|_{\cC^{s+1}}$ by \Cref{lem:operator_norm_Ah} and \Cref{prop:resolvbounded}. This proves that \eqref{eq:k0_somewhere} is satisfied for $k_0\in \{2,\dots,j\}$. Let $p<\infty$ be such that $\frac 1p = \frac 1{p_{j+1}}+\frac 1{p_1}$.  Likewise, we use the induction hypothesis to obtain a bound on $\kappa$ of order
\[
   \|\psi_1\|_{\L^\infty}\|\widetilde\psi_2\|_{\W^{1,p}}\prod_{k=2}^j \|u_k\|_{\L^{p_k}}.
\]
Furthermore, $ \|\widetilde \psi_2 \|_{\W^{1,p}}\lesssim \|\overline  z \psi_2 u_1 + \nabla u_1\cdot \nabla \psi_2 + u_1\Delta_f\psi_2\|_{\W^{-1,p}}$  by \Cref{prop:resolvbounded}. As $f$ is a lower bounded function in $\Cf^2$, the $\W^{-1,p}$-norm is equivalent to the weighted $\W^{-1,p}(f^\alpha)$-norm. We let $v\in \W^{1,p^*}(f^\alpha)$ where $\frac 1p + \frac 1{p^*}=1$ and bound the $\W^{-1,p}(f^\alpha)$-norm by duality, as follows:
\begin{align*}
   \dotp{\overline  z \psi_2 u_1 + \nabla u_1&\cdot \nabla \psi_2 + u_1\Delta_f\psi_2,v}_f \\
   &\leq |z| \|\psi_2\|_{\W^{1,p_{j+1}}}\|u_1\|_{\L^{p_1}}\|v\|_{\W^{1,p^*}} -\int u_1 \div(f^\alpha v \nabla \psi_2)+ \int u_1 \Delta_f \psi_2 v f^\alpha \\
   &= |z| \|\psi_2\|_{\W^{1,p_{j+1}}}\|u_1\|_{\L^{p_1}}\|v\|_{\W^{1,p^*}} - \int u_1 \nabla v \cdot \nabla \psi_2 f^\alpha \\
   &\lesssim  \|\psi_2\|_{\W^{1,p_{j+1}}}\|u_1\|_{\L^{p_1}}\|v\|_{\W^{1,p^*}}.
\end{align*}
This shows that $\|\widetilde \psi_2\|_{\W^{1,p}}\lesssim \|\psi_2\|_{\W^{1,p_{j+1}}}\|u_1\|_{\L^{p_1}}$, thus proving the bound \eqref{eq:first_bound_Sj}.

It remains to prove \eqref{eq:k0_somewhere} for $k_0=1$. 
From \Cref{lem:operator_norm_Ah} and \Cref{prop:resolvbounded}, we get
\[
\kappa\lesssim\|\psi_2\|_{\cC^{s+1}}  \|u_1\|_{(\cC^{s})^*} \left\|\prod_{k=2}^j B_z[u_k] \psi_1\right\|_{\cC^{s-1}}.\]
But then, a repeated application of \Cref{lem:operator_norm_Ah} and \Cref{prop:resolvbounded} yields
\[
\left\|\prod_{k=2}^j B_z[u_k] \psi_1\right\|_{\cC^{s-1}} \lesssim \|\psi_1\|_{\cC^{s+1}}\prod_{k=2}^j \|u_k\|_{\cC^s},
\]
thus proving the bound \eqref{eq:k0_somewhere} for $k_0=1$. 

The induction, and the proof of \Cref{lem:sym_Sj_map}, is complete.
\end{proof}

\section{Minimax lower bounds}\label{sec:lower_bounds}

In this section, we provide minimax lower bounds for eigenfuction and eigenvalue estimations, that are the contents of Theorems \ref{thm:lowerbound_eigenfunction} and \ref{thm:lowerbound_eigenvalue}.

\subsection{Minimax lower bound for eigenfunction estimation}

The minimax lower bound for eigenfunction estimation relies on a reverse stability bound on the angle between eigenspaces, which shows that the $\L^2$-angle between eigenspaces is larger that the $\H^{-1}$-norms between the associated densities, modulo error terms involving weaker norms.

\begin{proposition}\label{prop:angle_lowerbound}
  Let  $L > 0$, $r \in \left]\frac{2d}{d+2}, 2\right[$ and $\delta > 0$. Let $\Upsilon \subset \C$ be a simple loop and a smooth positive function $f$ such that
\begin{equation}\label{eq:satisfy}
 \sup_{z\in \Upsilon} |\Re z| \leq L \quad \text{and} \quad \dist(\Upsilon, \sigma(-\Delta_f))\geq \delta.
\end{equation}
Assume also that there is a positive eigenvalue $\lambda \in \sigma(-\Delta_f)$ which is enclosed in $\Upsilon$, let $\varphi$ be an associated eigenfunction and take $x_\star \in M$ such that $\nabla \varphi(x_\star) \neq 0.$

Then there are $\rho, c, C,\eta>0$ and a chart $\Psi : B(x_\star, \rho) \to \R^d$ with the following properties. For any $h \in \Cf^2$ with  $\inf h \geq \delta$, $\|h\|_{\Cf^2}\leq L$ and $\|f-h\|_{\Cf^1} < \eta$, the spectral projector $\Pi_{\Upsilon, h}$ is well defined. If moreover 
$$
\supp \log(h/f) \subset B(x_\star, \rho) \qquad \text{and} \qquad \left|\mathcal F\left[\log(h/f) \circ \Psi^{-1}\right]\right| \quad \text{is radial},
$$
where $\mathcal F$ is the Fourier transform on $\R^d$, then we have the estimate
$$
D_2(\Pi_{ \Upsilon,f}, \Pi_{ \Upsilon,h}) \geq c \|f - h\|_{\H^{-1}} - C \|f-h\|_{\W^{-1,r}}.
$$
\end{proposition}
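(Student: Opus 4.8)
The plan is to bound $D_2$ from below by a single first-order perturbative quantity, and then to analyse that quantity by \emph{straightening the vector field} $\nabla\varphi$ near $x_\star$. Write $\omega:=\log(h/f)$, so that $\Delta_h=\Delta_f+\alpha A_\omega$. Using $D_2(\Pi_{\Upsilon,f},\Pi_{\Upsilon,h})\geq\|(1-\Pi_{\Upsilon,h})\Pi_{\Upsilon,f}\|_{\L^2\to\L^2}$, normalising $\varphi$ so that $\|\varphi\|_{\L^2(f^\alpha)}=1$, and using that $\Pi_{\Upsilon,f}\varphi=\varphi$, that $1-\Pi_{\Upsilon,f}$ is an orthogonal projection on $\L^2(f^\alpha)$, and the equivalence of $\L^2$ and $\L^2(f^\alpha)$, one first reduces to
\[
D_2(\Pi_{\Upsilon,f},\Pi_{\Upsilon,h})\ \gtrsim\ \bigl\|(1-\Pi_{\Upsilon,f})(\Pi_{\Upsilon,h}-\Pi_{\Upsilon,f})\varphi\bigr\|_{\L^2}.
\]
Iterating the resolvent identity $R_h(z)-R_f(z)=-\alpha R_f(z)A_\omega R_h(z)$ twice and integrating over $\Upsilon$ gives $\Pi_{\Upsilon,h}-\Pi_{\Upsilon,f}=-\tfrac{\alpha}{2\pi i}\int_\Upsilon R_f A_\omega R_f\,\dd z+\mathrm{Rem}$ with $\mathrm{Rem}=\tfrac{\alpha^2}{2\pi i}\int_\Upsilon R_f A_\omega R_f A_\omega R_h\,\dd z$. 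Applying the leading term to $\varphi$, using $R_f(z)\varphi=(z-\lambda)^{-1}\varphi$ and $R_f(z)A_\omega\varphi=\sum_\mu(z-\mu)^{-1}\Pi_{\mu,f}A_\omega\varphi$, and computing residues on $\Upsilon$ — the double pole at $\lambda$ has zero residue and all pairs of enclosed poles cancel — leaves exactly $-\alpha\,SA_\omega\varphi$, where $S:=\sum_{\mu\in\sigma(-\Delta_f)\setminus\Omega}(\lambda-\mu)^{-1}\Pi_{\mu,f}$ is the regular part of $R_f$ at $z=\lambda$ truncated away from $E_{\Upsilon,f}$, satisfying $(1-\Pi_{\Upsilon,f})S=S$, $\Pi_{\Upsilon,f}S=0$ and $(\lambda+\Delta_f)S=1-\Pi_{\Upsilon,f}$. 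Hence
\[
D_2(\Pi_{\Upsilon,f},\Pi_{\Upsilon,h})\ \gtrsim\ |\alpha|\,\|SA_\omega\varphi\|_{\L^2}-C\,\|\mathrm{Rem}\|_{\L^2\to\L^2}.
\]

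For the remainder I would argue exactly as in the proof of \Cref{prop:perturbativechi}: extract from one factor $A_\omega$ the weak norm $\|\omega\|_{\W^{-1,r}}$ using $A_\omega\psi=A_\psi\omega$, \eqref{A1} and an integration by parts (so that $\|A_\omega\|_{\W^{3,p}\to\W^{-2,q}}\lesssim\|\omega\|_{\W^{-1,r}}$ for suitable $p,q$), and bound the other factor crudely by $\|A_\omega\|_{\W^{a,p}\to\W^{a-1,p}}\lesssim\|\omega\|_{\cC^{1+\varepsilon'}}\lesssim\|\omega\|_{\Cf^1}^{1-\varepsilon'}\|\omega\|_{\Cf^2}^{\varepsilon'}\lesssim\eta^{1-\varepsilon'}$ via \eqref{A3} and interpolation (using $\|f-h\|_{\Cf^1}<\eta$ and $\|h\|_{\Cf^2}\leq L$). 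The resolvents contribute bounded factors by \Cref{prop:resolvbounded}, while \Cref{prop:eigenfunctionsperturbative} ensures $\Pi_{\Upsilon,h}$ is well defined and $\dist(\Upsilon,\sigma(-\Delta_h))\geq\nu>0$. This gives $\|\mathrm{Rem}\|_{\L^2\to\L^2}\lesssim\eta^{1-\varepsilon'}\|\omega\|_{\W^{-1,r}}$, which will be absorbed into the final error term.

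The heart of the matter is the lower bound for $\|SA_\omega\varphi\|_{\L^2}$. Since $A_\omega\varphi=\dotp{\nabla\omega,\nabla\varphi}_g$ is the derivative of $\omega$ along the smooth vector field $\nabla\varphi$, which is nonvanishing on a neighbourhood of $x_\star$, the straightening (flow-box) theorem furnishes a chart $\Psi:B(x_\star,\rho)\to\R^d$ in which $\nabla\varphi=\partial_1$; consequently, once $\supp\omega\subset B(x_\star,\rho)$, one has the \emph{exact} identity $A_\omega\varphi=\partial_1\omega$. Set $g:=\partial_1\omega\in\Cf^1$ and $v:=Sg\in\H^2$. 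From $(\lambda+\Delta_f)v=(1-\Pi_{\Upsilon,f})g$ and the elementary boundedness $\lambda+\Delta_f:\L^2\to\H^{-2}$ (valid since $\Delta_f$ has smooth coefficients, so $\Delta$ and $A_V$ map $\L^2$ into $\H^{-2}$) one obtains the ``elliptic'' lower bound $\|v\|_{\L^2}\gtrsim\|(1-\Pi_{\Upsilon,f})g\|_{\H^{-2}}\geq\|g\|_{\H^{-2}}-\|\Pi_{\Upsilon,f}g\|_{\H^{-2}}$. As $\Pi_{\Upsilon,f}$ is finite-rank with smooth range, integration by parts gives $\|\Pi_{\Upsilon,f}(\partial_1\omega)\|_{\H^{-2}}\lesssim\sum_i|\dotp{\partial_1\omega,\phi_i}_f|=\sum_i|\dotp{\omega,\partial_1^*(\overline{\phi_i}f^\alpha)}|\lesssim\|\omega\|_{\H^{-2}}$. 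Therefore $\|SA_\omega\varphi\|_{\L^2}\gtrsim\|\partial_1\omega\|_{\H^{-2}}-C\|\omega\|_{\H^{-2}}$. This device bypasses the delicate comparison of the pseudodifferential operator $S$ with a flat Laplacian inverse.

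It remains to bound $\|\partial_1\omega\|_{\H^{-2}(M)}$ from below. Since $\omega$ is supported in the chart, $\|\partial_1\omega\|_{\H^{-2}(M)}\asymp\|\partial_1 W\|_{\H^{-2}(\R^d)}$ with $W:=\omega\circ\Psi^{-1}$, and by Plancherel $\|\partial_1 W\|_{\H^{-2}(\R^d)}^2=\int_{\R^d}\xi_1^2(1+|\xi|^2)^{-2}|\widehat W(\xi)|^2\,\dd\xi$. Here the \emph{radiality} of $|\widehat W|$ is essential: it allows replacing $\xi_1^2$ by $\tfrac1d|\xi|^2$, so the integral equals $\tfrac1d\bigl(\|W\|_{\H^{-1}(\R^d)}^2-\|W\|_{\H^{-2}(\R^d)}^2\bigr)$, whence $\|\partial_1 W\|_{\H^{-2}(\R^d)}\geq\tfrac1{\sqrt d}\bigl(\|W\|_{\H^{-1}(\R^d)}-\|W\|_{\H^{-2}(\R^d)}\bigr)$. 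Translating back, $\|W\|_{\H^{-1}(\R^d)}\gtrsim\|\omega\|_{\H^{-1}(M)}$ while $\|W\|_{\H^{-2}(\R^d)}\lesssim\|\omega\|_{\H^{-2}(M)}\lesssim\|\omega\|_{\W^{-1,r}(M)}$, the last embedding $\W^{-1,r}\hookrightarrow\H^{-2}$ holding \emph{precisely} because $r>\tfrac{2d}{d+2}$. Finally one passes from $\omega=\log(h/f)$ to $f-h$ via $f-h=-f\omega+f\,O(\omega^2)$, with $\|f\,O(\omega^2)\|_{\H^{-1}}\lesssim\|\omega\|_{\Cf^1}\|\omega\|_{\H^{-1}}\lesssim\eta\|\omega\|_{\H^{-1}}$ and similarly in $\W^{-1,r}$, which for $\eta$ small enough yields $\|\omega\|_{\H^{-1}}\gtrsim\|f-h\|_{\H^{-1}}$ and $\|\omega\|_{\W^{-1,r}}\lesssim\|f-h\|_{\W^{-1,r}}$; chaining all the estimates gives $D_2(\Pi_{\Upsilon,f},\Pi_{\Upsilon,h})\geq c\|f-h\|_{\H^{-1}}-C\|f-h\|_{\W^{-1,r}}$. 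The main point to watch is that every error produced along the way is either of ``$\H^{-2}$-type'' (hence absorbed by $\|f-h\|_{\W^{-1,r}}$ thanks to $r>\tfrac{2d}{d+2}$) or quadratically small in $\omega$ (hence absorbed for small $\eta$); the single delicate ingredient — comparing $S$ with a constant-coefficient model — has been avoided by the flow-box chart together with the crude elliptic bound of the third paragraph, and the radiality hypothesis cannot be dropped, since e.g.\ a $W$ depending only on $x_2$ would make $\|\partial_1 W\|_{\H^{-2}(\R^d)}$ vanish while $\|W\|_{\H^{-1}(\R^d)}$ does not.
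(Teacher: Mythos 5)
Your overall plan reaches the same conclusion as the paper, but along a genuinely different route. The paper applies the resolvent identity only once, writing $(\Pi_{\Upsilon,h}-1)\varphi = S_{\Upsilon,h}(\lambda)A_V\varphi$, and then compares $S_{\Upsilon,h}(\lambda)$ with $S_{\Upsilon,f}(\lambda)$ via a closed-graph argument on $\L^2$; by contrast you iterate twice, isolate the quadratic tail as a genuine remainder, and replace the closed-graph step by the very elementary elliptic inequality $\|Sg\|_{\L^2}\gtrsim\|(1-\Pi_{\Upsilon,f})g\|_{\H^{-2}}$ obtained from $(\lambda+\Delta_f)S=1-\Pi_{\Upsilon,f}$ and the boundedness of $\lambda+\Delta_f:\L^2\to\H^{-2}$. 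You also place the flow-box chart directly at the level of the statement (choosing $\Psi$ so that $\Psi_*\nabla\varphi=\partial_1$ and hence $\Psi_*(A_\omega\varphi)=\partial_1 W$), where the paper instead absorbs a change of coordinates inside Lemma~\ref{lem:Za}. Your version is arguably cleaner on both counts: it makes explicit that the radiality hypothesis is checked in the same coordinates used to diagonalize $\nabla\varphi$, and it avoids the abstract closed-graph argument. These are good simplifications, and the residue computation ($S=\sum_{\mu\notin\Omega}(\lambda-\mu)^{-1}\Pi_{\mu,f}$, double pole cancellation, pair cancellation for poles inside $\Upsilon$) is correct.

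The weak point is the remainder estimate. The claim ``$\|\mathrm{Rem}\|_{\L^2\to\L^2}\lesssim\eta^{1-\varepsilon'}\|\omega\|_{\W^{-1,r}}$'' is not quite right, on two counts. First, you cannot extract the $\W^{-1,r}$ norm at the $\L^2\to\L^2$ level: the estimate $\|A_\omega\|_{\W^{3,p}\to\W^{-2,q}}\lesssim\|\omega\|_{\W^{-1,r}}$ requires the source to lie in $\W^{3,p}$ with $1/q=1/p+1/r$, and since $r<2$ this forces $q<1$ whenever the input regularity is limited to $\H^2$ (which is all $R_h$ gives you from $\L^2$). The remedy is precisely what you are allowed to use here but do not state: apply $\mathrm{Rem}$ only to the fixed, smooth $\varphi$, so that $R_h\varphi\in\W^{3,p}$ for $p$ arbitrarily large; then one can take $1/q=1/p+1/r$ with $q$ just above $2d/(d+2)$ and close the chain
$\W^{3,p}\xrightarrow{A_\omega}\W^{-2,q}\xrightarrow{R_f}\L^q\xrightarrow{A_\omega}\W^{-1,q}\xrightarrow{R_f}\W^{1,q}\hookrightarrow\L^2$,
giving $\|\mathrm{Rem}\,\varphi\|_{\L^2}\lesssim C_L\|\omega\|_{\W^{-1,r}}$. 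Second, the factor $\eta^{1-\varepsilon'}$ is overclaimed: the ``crude'' factor in the chain above is $A_\omega:\L^q\to\W^{-1,q}$, which costs $\|\omega\|_{\Cf^2}\lesssim L$ (the input has zero regularity, so you cannot land the interpolated $\cC^{1+\varepsilon'}$ bound there; you would need $\omega\in\cC^{2+\varepsilon}$). Fortunately this does not matter: you do not need the remainder to be small relative to $\|SA_\omega\varphi\|_{\L^2}$, only to be $\lesssim\|\omega\|_{\W^{-1,r}}$, which then folds into the $-C\|f-h\|_{\W^{-1,r}}$ error — exactly as you say. So the strategy survives, but the statement $\|\mathrm{Rem}\|_{\L^2\to\L^2}\lesssim\eta^{1-\varepsilon'}\|\omega\|_{\W^{-1,r}}$ as written is a real imprecision that should be replaced by the bound on $\|\mathrm{Rem}\,\varphi\|_{\L^2}$ with a constant depending on $L,\delta$ in place of $\eta^{1-\varepsilon'}$. (For comparison, the paper sidesteps this entirely: its ``remainder'' is $(S_{\Upsilon,h}(\lambda)-S_{\Upsilon,f}(\lambda))A_V\varphi$, bounded by $\|V\|_{\Cf^1}\|A_V\varphi\|_{\L^2}$, and then $\|A_V\varphi\|_{\L^2}$ is split into a part absorbed by the leading term and a finite-rank part controlled by $\|f-h\|_{\W^{-1,r}}$.)
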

\begin{remark}\label{remark:fourierradial}
We will use the above result for families of functions of the form $(h_{\varepsilon,y})$, with
$$
h_{\varepsilon,y}(x) = f(x)\left(1 + \varepsilon^{s} \chi\left(\frac{|\Psi(x)-\Psi(y)|}{\varepsilon}\right)\right), \quad \varepsilon > 0, \quad x,y \in B(x_\star, \rho),
$$
where $\chi \in C^\infty_c(\R_+)$ is a a radial function such that $\chi \equiv 1$ near $0$ and $\Psi : B(x_\star, \rho) \to \R^d$ is a chart centered at $x_\star$. Then for every $y$ we have
$$
|\mathcal F\Psi_*\log(h_{\varepsilon, y}/f)| = |\mathcal F\Psi_*\log(h_{\varepsilon, x_\star}/f)|,
$$
where $\Psi_*u = u \circ \Psi^{-1}$. However $|\mathcal F\Psi_*\log(h_{\varepsilon, x_\star}/f)|$ is radial, since
$$
\Psi_*\log(h_{\varepsilon, x_\star}/f)(\bar x) = \log\left(1 + \varepsilon^s \chi\left(\frac{\bar x}{\varepsilon}\right)\right), \quad \bar x \in \R^d.
$$
\end{remark}
Before proving Proposition \ref{prop:angle_lowerbound}, we need a preliminary result.
\begin{lemma}\label{lem:Za}
Let $Z$ be a $\Cf^2$ vector field on $\R^d$ such that $Z(0) \neq 0$. Then there are $\rho, C > 0$ such that for any function $a \in \Cf^2(\R^d)$ supported in $B(0, \rho)$ and such that $|\mathcal F a|$ is radially symmetric we have the estimate
$$
\|a\|_{\H^{-1}(\R^d)} \leqslant C\|Za\|_{\H^{-2}(\R^d)} + C\|a\|_{\H^{-2}(\R^d)}.
$$
\end{lemma}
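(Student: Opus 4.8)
The plan is to reduce the inequality to an exact Fourier-side identity for the constant-coefficient transport field $Z(0)$, and then to absorb the variable-coefficient error by a localization plus commutator argument.

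First I would set $v = Z(0) \in \R^d \setminus \{0\}$ and establish the model identity: if $|\mathcal F a|$ is radial, write $|\widehat a(\xi)|^2 = \Phi(|\xi|)$; then $\widehat{v\cdot\nabla a}(\xi) = i(v\cdot\xi)\widehat a(\xi)$ and, passing to polar coordinates and using $\int_{S^{d-1}}\omega_j\omega_k\,\dd\omega = \delta_{jk}|S^{d-1}|/d$, one gets $\|v\cdot\nabla a\|_{\H^{-2}}^2 = \tfrac{|v|^2|S^{d-1}|}{d}\int_0^\infty r^{d+1}\Phi(r)(1+r^2)^{-2}\,\dd r$. The algebraic identity $r^2(1+r^2)^{-2} = (1+r^2)^{-1} - (1+r^2)^{-2}$ together with the polar formulas for $\|a\|_{\H^{-1}}^2$ and $\|a\|_{\H^{-2}}^2$ turns this into the exact relation
\begin{equation}\label{eq:Za_identity}
\|v\cdot\nabla a\|_{\H^{-2}}^2 = \frac{|v|^2}{d}\bigl(\|a\|_{\H^{-1}}^2 - \|a\|_{\H^{-2}}^2\bigr)
\end{equation}
(all three quantities being finite since $a, v\cdot\nabla a \in L^2$, so \eqref{eq:Za_identity} is just a rearrangement). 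I would also record the companion identity $\|\nabla a\|_{\H^{-2}}^2 = \|a\|_{\H^{-1}}^2 - \|a\|_{\H^{-2}}^2$, valid for every $a$ by the same computation of $\int |\xi|^2(1+|\xi|^2)^{-2}|\widehat a|^2$.

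Since $Za - v\cdot\nabla a = (Z-v)\cdot\nabla a$, \eqref{eq:Za_identity} and the triangle inequality give $\|a\|_{\H^{-1}}^2 \le \tfrac{2d}{|v|^2}\|Za\|_{\H^{-2}}^2 + \tfrac{2d}{|v|^2}\|(Z-v)\cdot\nabla a\|_{\H^{-2}}^2 + \|a\|_{\H^{-2}}^2$, so the proof reduces to the error estimate
\begin{equation}\label{eq:Za_error}
\|(Z-v)\cdot\nabla a\|_{\H^{-2}} \le \theta(\rho)\,\|a\|_{\H^{-1}} + C_\rho\,\|a\|_{\H^{-2}}, \qquad \theta(\rho)\xrightarrow[\rho\to0]{}0,
\end{equation}
for $a\in\Cf^2$ supported in $B(0,\rho)$: one then picks $\rho$ so that $2d|v|^{-2}\theta(\rho)^2 \le \tfrac14$ and absorbs the $\|a\|_{\H^{-1}}$ term. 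To prove \eqref{eq:Za_error} I would localize: take $\eta\in\Cf^\infty_c(B(0,2\rho))$ equal to $1$ on $B(0,\rho)$, set $W=\eta(Z-v)$ (a compactly supported $\Cf^2$ field), note $(Z-v)\cdot\nabla a = W\cdot\nabla a$ because $\nabla a$ is supported where $\eta\equiv1$, and observe $\|W\|_{L^\infty}\le 2\Lip(Z)\rho =: \theta(\rho)$ since $W(0)=0$. Then, with $\Lambda^{-2}=(1-\Delta)^{-1}$ and $[\Lambda^{-2},\nabla]=0$, split
\[
\Lambda^{-2}(W\cdot\nabla a) = W\cdot\nabla(\Lambda^{-2}a) + [\Lambda^{-2},\,W\cdot\nabla]\,a .
\]
The first piece has $L^2$-norm $\le \|W\|_{L^\infty}\|\nabla\Lambda^{-2}a\|_{L^2} = \theta(\rho)\|\nabla a\|_{\H^{-2}}\le \theta(\rho)\|a\|_{\H^{-1}}$ by the companion identity; the commutator $[\Lambda^{-2},W\cdot\nabla]=\sum_j[\Lambda^{-2},W_j]\partial_j$ is one order more smoothing than $W\cdot\nabla$ (each $[\Lambda^{-2},W_j]$ being of order $-3$), hence bounded $\H^{-2}\to L^2$ by the pseudodifferential calculus, with norm $C_\rho$ depending only on $\rho$ and the $\Cf^2$-seminorms of $Z$; together these give \eqref{eq:Za_error}.

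I expect the main obstacle to be precisely this last estimate, specifically getting a \emph{small} constant multiplying $\|a\|_{\H^{-1}}$: the naive Leibniz expansion $(Z-v)\cdot\nabla a = \div((Z-v)a) - (\div Z)a$ fails because $(\div Z)a\approx(\div Z)(0)\,a$ is not small in $\H^{-1}$, and it is the commutator splitting that isolates the genuinely small contribution $W\cdot\nabla\Lambda^{-2}a$ from a lower-order remainder that can be thrown into the free term $C_\rho\|a\|_{\H^{-2}}$. (In the applications of this lemma $Z$ is in fact smooth, being built from eigenfunctions of $\Delta_f$ for smooth $f$, so the commutator estimate is entirely routine; with $Z$ merely $\Cf^2$ one relies on the compact support of $a$ and the corresponding $\Cf^2$-bounds on $W$, which are finite for each fixed $\rho$.)
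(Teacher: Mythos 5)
Your approach is genuinely different from the paper's, and it is a valid alternative route, with one caveat about regularity.

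The paper's proof straightens the vector field exactly: it picks a $\Cf^2$ flow-box change of coordinates in which $Z = \partial_1$ on $B(0,\rho)$, notes (without elaboration) that such a diffeomorphism preserves $\H^{\pm 2}_{\loc}$, and then applies the radial-symmetry identity directly, yielding $d\|\partial_1 a\|^2_{\H^{-2}} = \|a\|^2_{\H^{-1}} - \|a\|^2_{\H^{-2}}$. There is no commutator, no smallness, and no absorption step. (Implicit here is that the radial symmetry of $|\mathcal F a|$ has to be understood in the straightened coordinates; this is consistent with how the lemma is invoked in Proposition~\ref{prop:angle_lowerbound}, where the chart $\Psi$ is precisely the one supplied by the lemma, but it is slightly glossed in the lemma's statement.) You instead stay in the given coordinates where the radiality is assumed, establish the exact identity $\|v\cdot\nabla a\|^2_{\H^{-2}} = \tfrac{|v|^2}{d}(\|a\|^2_{\H^{-1}}-\|a\|^2_{\H^{-2}})$ for the frozen field $v = Z(0)$, and absorb the variable remainder $(Z-v)\cdot\nabla a$ via the commutator split $\Lambda^{-2}(W\cdot\nabla a) = W\cdot\nabla\Lambda^{-2}a + [\Lambda^{-2},W\cdot\nabla]a$, using $\|W\|_{\L^\infty}\lesssim\rho$ to get a small coefficient on $\|a\|_{\H^{-1}}$. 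Your route has the advantage of not touching the coordinate system, so the hypothesis on $|\mathcal Fa|$ is used verbatim, and it pinpoints exactly where the radiality and the smallness enter; the cost is the extra absorption step and the need for a commutator bound.

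The one place where your argument genuinely falls short of what the lemma claims is the step $\|[\Lambda^{-2},W\cdot\nabla]a\|_{\L^2}\lesssim C_\rho\|a\|_{\H^{-2}}$. For $W$ merely of class $\Cf^2$, $[\Lambda^{-2},W_j]\partial_j$ is an operator whose symbol has $\Cf^1$ coefficients (after writing $[\Lambda^{-2},W_j]=\Lambda^{-2}(\Delta W_j+2\nabla W_j\cdot\nabla)\Lambda^{-2}$), and multiplication by $\Cf^1$ on $\H^s$ is only bounded for $|s|<1$; the range $\H^{-2}\to\L^2$ that you need is outside this range, and the compact support of $a$ does not resolve this because the estimate must scale as $\|a\|_{\H^{-2}}$, not $\|a\|_{\L^2}$. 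So your remark that "with $Z$ merely $\Cf^2$ one relies on the compact support of $a$" does not actually close the gap. This is not fatal in context, since in the only application the field $Z$ is smooth (it is built from eigenfunctions of $\Delta$ on a smooth manifold with the uniform density), but as a proof of the lemma as stated your commutator step is not justified. The paper's flow-box approach avoids the commutator entirely, at the cost of needing invariance of $\H^{\pm 2}_{\loc}$ under a $\Cf^2$ diffeomorphism, which is a similarly borderline matter that the paper does not dwell on.
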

\begin{proof}
Up to taking $\rho$ small enough and using a change of coordinates of class $\Cf^2$ (which preserves $\H^2_{\mathrm{loc}}(\R^d)$ hence $\H^{-2}_{\mathrm{loc}}(\R^d)$), we can assume that $Z = \partial_{1}$ in $B(0, \rho),$ so that if $\supp a \subset B(0,\rho)$ one has
\begin{equation}\label{eq:Za}
\|Za\|_{\H^{-2}(\R^d)} = \|\partial_1 a\|_{\H^{-2}(\R^d)}.
\end{equation}
Recall that for $s \in \R$ the $\H^s$ norm on $\R^d$ is given by
$$
\|u\|_{\H^s(\R^d)}^2 = \int |\mathcal Fu(\xi)|^2 \langle \xi \rangle^{2s} \dd \xi, \quad u \in \Cf^\infty_c(\R^d),
$$
where $\langle \xi \rangle = \sqrt{1 + |\xi|^2}$. Then using $\mathcal F \partial_1a(\xi) = i \xi_1 \mathcal Fa(\xi)$ one gets
$$
d \|\partial_1 a\|_{\H^{-2}(\R^d)}^2 = d\int |\mathcal Fa(\xi)|^2 |\xi_1|^2 \langle \xi \rangle^{-4} \dd \xi.
$$
Then using the fact that $|\mathcal F a|$ is radially symmetric one obtains
$$
\begin{aligned}
d \|\partial_1 a\|_{\H^{-2}(\R^d)}^2 &= \sum_{j=1}^d \int |\mathcal Fa(\xi)|^2 |\xi_j|^2 \langle \xi \rangle^{-4} \dd \xi  \\
&= \int |\mathcal F a(\xi)|^2 \langle \xi \rangle^{-2} \dd \xi - \int |\mathcal F a(\xi)|^2 \langle \xi \rangle^{-4} \dd \xi\\
&= \|a\|_{\H^{-1}(\R^d)}^2 - \|a\|_{\H^{-2}(\R^d)}^2.
\end{aligned}
$$
where we used that $|\xi|^2 - \langle \xi \rangle^2 = - 1$ in the last equality. Thus we obtained the desired estimate, with a constant $C$ which depends on the derivatives of the flow-box chart we used to assume $Z = \partial_1$.
\end{proof}
\begin{proof}[Proof of Proposition \ref{prop:angle_lowerbound}]
We take the notation $f, h, \Pi_\tau, \Delta_\tau$ as in the proof of Proposition \ref{prop:eigenvalue_perturbative}. We have, if $V = \log(h/f)$, 
$$
\Pi_{\Upsilon, h} - \Pi_{\Upsilon, f} =- \frac{\alpha}{2\pi i} \int_\Upsilon R_h(z) A_{V} R_f(z) \dd z.
$$
Let $\varphi$ be a normalized eigenvector of $-\Delta_f$ for an eigenvalue $ \lambda>0$ which is contained inside $\Upsilon$. We want to bound from below $
\|(\Pi_{\Upsilon,h} - 1)\Pi_{\Upsilon, f}\varphi\|_{\L^2}.
$
To that aim, note that $\Pi_{\Upsilon, f} \varphi = \varphi$ which gives
$$
(\Pi_{\Upsilon, h} - 1)\Pi_{\Upsilon, f}\varphi = -\frac{\alpha}{2\pi i} \int_\Upsilon R_h(z)(z - \lambda)^{-1} A_{V}\varphi\,\dd z = S_{\Upsilon, h}(\lambda)A_V\varphi
$$
where the operator $S_{\Upsilon, h}(\lambda)$ is defined by
$$
S_{\Upsilon, h}(\lambda) = -\frac{\alpha}{2\pi i} \int_\Upsilon R_h(z)(z - \lambda)^{-1} \dd z.
$$
In fact, in a first step, we will bound from below $\|(1 - \Pi_{\Upsilon, f})A_V\varphi\|_{\H^{-2}}$ using \Cref{lem:Za}. Then the fact that $S_{\Upsilon, h}(\lambda)$ is invertible on $\operatorname{ran}(1 - \Pi_{\Upsilon, f})$ will allow us to conclude.

In order to bound $\|(1 - \Pi_{\Upsilon, f})A_V\varphi\|_{\H^{-2}}$, we start by writing
$$
A_{V} \varphi = \nabla \log(h/f) \cdot \nabla \varphi.
$$
Let $x_\star \in M$ such that $\nabla \varphi(x_\star) \neq 0$. Note that $\nabla \varphi$ is a smooth vector field since $f$ is a smooth positive function. Hence we can apply Lemma \ref{lem:Za} and there are $C, \rho >0$ such that the following holds. There is a chart $\Psi : B(x_\star,\rho) \to \R^d$ such that if $h/f \in \Cf^2$ is such that $|\mathcal F\Psi_*\log(h/f)|$ is radial and $\supp(\log(h/f)) \subset B(x_\star,\rho)$, we have
$$
\|\Psi_*\log(h/f)\|_{\H^{-1}(\R^d)} \leqslant {C} \|Z\Psi_*\log(h/f)\|_{\H^{-2}(\R^d)} + C\|\Psi_*\log(h/f)\|_{\H^{-2}(\R^d)}
$$
where $Z = \Psi_*\nabla \varphi \in \Cf^2(\R^d, T\R^d).$ Note that up to reducing $\rho$, we may assume that for any $s\in \R$ there is $B_s > 0$ such that
$$
B_s^{-1}\|u\|_{\H^{s}} \leqslant \|\Psi_*u\|_{\H^{s}(\R^d)} \leqslant B_s \|u\|_{\H^{s}} 
$$
for any $u$ such that $\supp u \subset B(0, \rho)$. Hence one obtains
$$
\|A_{V}\varphi\|_{\H^{-2}} \geqslant C_1^{-1}\|\log(h/f)\|_{\H^{-1}} - C_1\|\log(h/f)\|_{\H^{-2}}
$$
where $C_1$ depends only on $\Psi$ and $\varphi$. Next, note that 
$$
\log(h/f) = (h-f)\int_0^1 \frac{\dd \tau}{f + \tau(h-f)}.
$$
Therefore one sees that 
\begin{equation}\label{eq:logvsdiff}
C_2^{-1} \|h-f\|_{\H^{-1}} \leqslant \|\log(h/f)\|_{\H^{-1}} \leqslant C_2 \|h-f\|_{\H^{-1}} 
\end{equation}
where $C_2 > 0$ depends on $\|h\|_{\Cf^1}, \|f\|_{\Cf^1}$, $\inf f$, $\inf h$, that is $C_2$ depends on $L$, $\delta$ and $f$. A similar inequality holds for the $\H^{-2}$-norm. We therefore obtain
\begin{equation}\label{eq:boundbelowavphi}
\|A_{V}\varphi\|_{\H^{-2}} \geqslant C_3^{-1}\|f-h\|_{\H^{-1}} - C_3\|f-h\|_{\H^{-2}}
\end{equation}
provided that $\supp(f-h) \subset B(x_\star,\rho)$, for some $C_3 > 0$ depending on $\Psi, \varphi, L$ and $\delta$. Next, let $1 < q, r < 2$ and $p > 2$ such that $1/p + 1/r = 1/q$. By \eqref{eq:pibounded}, we have that $\Pi_{\Upsilon, h}$ is bounded $\W^{-2,q} \to \H^{-2}$, hence 
$$
\|\Pi_{\Upsilon, f}A_{V}\varphi\|_{\H^{-2}}  \leqslant C_4 \|A_{V}\varphi\|_{\W^{-2,q}},
$$
for some $C_4$ depending on $q,\delta, L$ and $\Upsilon.$
Moreover Lemma \ref{lem:operator_norm_Ah} and \eqref{eq:logvsdiff} give
$$
\|A_{V}\varphi\|_{\W^{-2,q}}=\|A_{\varphi}V\|_{\W^{-2,q}}\leqslant C_5 \|f-h\|_{\W^{-1,r}}\|\varphi\|_{\W^{3,p}},
 $$
where $C_5 > 0$ depends on $\delta$ and $L$, and the two last inequality yields 
\begin{equation}\label{eq:boundabovepiavphi}
\|\Pi_{\Upsilon, f}A_{V}\varphi\|_{\H^{-2}} \leqslant C_6\|f-h\|_{\W^{-1,r}}
\end{equation}
Putting everything together, one finally obtains
$$
\begin{aligned}
\|(1-\Pi_{\Upsilon, f})A_{V}\varphi\|_{\H^{-2}} &\geqslant \|A_{V}\varphi\|_{\H^{-2}} - \|\Pi_{\Upsilon, f}A_{V}\varphi\|_{\H^{-2}} \\
&\geqslant C_3^{-1}\|f-h\|_{\H^{-1}} - C_3 \|f-h\|_{\H^{-2}} - C_6\|f-h\|_{\W^{-1,r}}
\end{aligned}
$$
where $C_6$ depends on $\varphi$. Finally by a Sobolev embedding (see Appendix \ref{app:besov}), we have
$$
\|u\|_{\H^{-2}} \leqslant C_7 \|u\|_{\W^{-1,r}}, \quad u \in \Cf^2,
$$
provided that
$
\displaystyle\frac{2d}{d + 2} < r < 2.
$
Hence we proved 
\begin{equation}\label{eq:final1-pi}
\|(1-\Pi_{\Upsilon, f})A_{V}\varphi\|_{\H^{-2}} \geqslant C_3^{-1}\|f-h\|_{\H^{-1}} - C_8 \|f-h\|_{\W^{-1,r}}
\end{equation}
where $C_8 = C_3 + C_6C_7$. 

Next, note that $S_{\Upsilon,f}(\lambda) : \L^2 \to \L^{2}$ is continuous. However by definition of $S_{\Upsilon,f}(\lambda)$ one has $\ker S_{\Upsilon,f}(\lambda) \subset \operatorname{ran} \Pi_{\Upsilon, f}$ so the closed graph theorem gives us $c_1 > 0$ such that
\begin{equation}\label{eq:boundbelowS}
\|S_{\Upsilon,f}(\lambda)A_V\varphi\|_{\L^2} \geqslant c_1 \|(1 - \Pi_{\Upsilon, f})A_V\varphi\|_{\L^2}.
\end{equation}
On the other hand one as
$$
S_{\Upsilon, h}(\lambda) - S_{\Upsilon, f}(\lambda) = - \frac{\alpha}{2\pi i} \int_\Upsilon R_h(z)A_VR_f(z) (z - \lambda)^{-1}\dd z.
$$
\Cref{lem:operator_norm_Ah} and \Cref{prop:resolvbounded} yield
\begin{equation}\label{eq:sdiff}
\|S_{\Upsilon, h}(\lambda) - S_{\Upsilon, f}(\lambda)\|_{\L^2 \to \L^2} \leqslant C_9\|V\|_{\mathrm C^1}.
\end{equation}
This inequality combined with \eqref{eq:boundbelowS} gives us
$$
\begin{aligned}
\|S_{\Upsilon, h}(\lambda)A_V\varphi\|_{\L^2} &\geqslant \|S_{\Upsilon, f}(\lambda)A_V\varphi\|_{\L^2} - \|(S_{\Upsilon, h}(\lambda) - S_{\Upsilon, f}(\lambda))A_V\varphi\|_{\L^2} \\
&\geqslant c_1\|(1 - \Pi_{\Upsilon, f})A_V\varphi\|_{\L^2} - C_9\|V\|_{\mathrm C^1}\|A_V\varphi\|_{\L^2} \\
&\geqslant (c_1 - C_9\|V\|_{\mathrm C^1})\|(1 - \Pi_{\Upsilon, f})A_V\varphi\|_{\H^{-2}} - C_9\|V\|_{\mathrm C^1}\|\Pi_{\Upsilon, f}A_V\varphi\|_{\L^2},
\end{aligned}
$$
where we used $\|u\|_{\H^{-2}}\leqslant\|u\|_{\L^2}$ in the last inequality. Suppose that $\eta$ is small enough so that $\|V\|_{\mathrm C^1} \leqslant c_1 / 2C_9$, which is equivalent to $c_1 - C_9\|V\|_{\mathrm C^1} \geqslant c_1/2$. Then one gets 
\begin{equation}\label{eq:almostfin}
\|S_{\Upsilon, h}(\lambda)A_V\varphi\|_{\L^2} \geq \frac{c_1}{2}\|(1 - \Pi_{\Upsilon, f})A_V\varphi\|_{\H^{-2}} - \frac{c_1}{2}\|\Pi_{\Upsilon, f}A_V\varphi\|_{\L^2}.
\end{equation}
Since $\Pi_{\Upsilon, f} = \Pi_{\Upsilon, f}^2$ is bounded $\H^{-2} \to \L^2$, Eq. \eqref{eq:boundabovepiavphi} gives us $C_{10} > 0$ such that
$$
\|\Pi_{\Upsilon, f}A_V\varphi\|_{\L^2} \leqslant C_{10}\|f-h\|_{\W^{-1,r}}.
$$
Combining this inequality with \eqref{eq:almostfin} and \eqref{eq:final1-pi}, one finally obtains
\begin{equation}\label{eq:finalboundebelow}
\|S_{\Upsilon, h}(\lambda)A_V\varphi\|_{\L^2} \geq c\|f-h\|_{\H^{-1}} - C\|f-h\|_{\W^{-1,r}}
\end{equation}
where $c = c_1C_3^{-1}/2$ and $C = c_1C_8/2 + c_1C_{10}/2$.

By definition, $D_2(\Pi_{ \Upsilon,f}, \Pi_{ \Upsilon,h})$ is bounded from below by
$$
\begin{aligned}
\|(\Pi_{\Upsilon, h} - 1)\Pi_{\Upsilon, f}\|_{\L^2\to\L^2}   \geqslant \|(\Pi_{\Upsilon, h} - 1)\Pi_{\Upsilon, f}\varphi\|_{\L^{2}} = \|S_{\Upsilon, h}(\lambda)A_V\varphi\|_{\L^2}.
\end{aligned}
$$
Therefore by \eqref{eq:finalboundebelow} we obtain the desired estimate. The proof is complete.
 \end{proof}

We are now ready to prove \Cref{thm:lowerbound_eigenfunction} by leveraging Fano's method.
\begin{proof}[Proof of \Cref{thm:lowerbound_eigenfunction}]
    We assume first that $d\geq 3$. 
Let $f_0=1$ be the uniform density on $M$ (that we take without loss of generality of volume $1$). Let $\Upsilon \subset \C$ be a simple curve such that $\sigma(-\Delta) \cap \Upsilon = \emptyset$ and such that $\Upsilon$ encloses a \textit{positive} eigenvalue $\lambda$ of $-\Delta$. Let $\Pi_{\Upsilon,f_0}$ be the spectral projector of $-\Delta_{f_0}=-\Delta$ associated with $\Upsilon$ and let $\varphi$ be an eigenfunction associated with $\lambda$. We let $\delta$ be small enough and $L$ be large enough so that $\Upsilon$ satisfies \eqref{eq:rezl} and $f_0\in \cP_s(\Upsilon, \delta, L)$. 

The function $\varphi$ is a nonzero smooth function of mean zero. Thus, there is a point $x_\star$ with $\nabla \phi(x_\star)\neq 0$. We are in position to apply \Cref{prop:angle_lowerbound}. Let $r \in \left]2d/(d+2), 2\right[$. Consider the radius $\rho>0$ and the chart $\Psi:B(x_\star,\rho)\to \R^d$ given by the proposition. Let $\eps>0$. We consider a  set of $N$ points $y_1,\dots,y_N\in B(x_\star,\rho)$ such that the points $\Psi(y_i) \in \R^d$,\, $i = 1, \dots, N$,  are $\eps$-separated. As $\Psi$ is Lipschitz continuous, the number $N$ can be taken of order $\eps^{-d}$, we will choose it equal to $a\eps^{-d}$ for some small number $a$. Next, let $\chi\in \mathrm C_c^\infty(\R_+)$ be a function supported on $[0,1/2]$ such that $\chi \equiv 1$ near $0$. For each $i\in [N]$, we define 
$$
\chi_{i}(x) = \chi\left(\frac{|\Psi(x)-\Psi(y_i)|}{\eps}\right), \quad x \in M.
$$
Note that we can always make $\rho$ smaller so that the image of $\Psi$ contains $B(\Psi(y_i),\eps)$, so that the function $\chi_i$ is defined everywhere on $M$. 
For any $\tau \in \{0,1\}^N$, we define
\begin{equation}\label{eq:perturbation_f0}
\widetilde f_\tau=1+t \sum_{i=1}^N \tau_i \chi_i
\end{equation}
for some parameter $t>0$, that we fix equal to $\eps^s$.
Then by Remark \ref{remark:fourierradial} we have that
$|\mathcal F\Psi_*\log(f_\tau / f_0)|$ is radial. Furthermore, one has $\|\chi_i\|_{\L^\infty} \leq \|\chi\|_{\L^\infty}$. As the functions $\chi_i$ have disjoint supports, one obtains the inequality $\|f_\tau-1\|_{\L^\infty} \leq \eps^s\|\chi\|_{\L^\infty}$. In particular $f_\tau\geq 1/2$ provided that $\eps$ is small enough, so that if we set
$$
f_\tau = \frac{\widetilde f_\tau}{c_\tau}, \quad \text{where} \quad c_\tau = \int f_\tau(x) \dd x,
$$
then $f_\tau$ is a density. For $\tau, \tau' \in \{0,1\}^N$ we wish to bound $D_2(\Pi_{\Upsilon, f_\tau},\Pi_{\Upsilon, f_{\tau'}})$. To proceed, we first note that $\nabla \log f_\tau = \nabla \log \widetilde f_\tau$, which yields $\Delta_{f_\tau} = \Delta_{\widetilde f_\tau}$. Therefore
\begin{equation}\label{eq:d2pi}
D_2\bigl(\Pi_{\Upsilon, f_\tau},\Pi_{\Upsilon, f_{\tau'}}\bigr) = D_2\bigl(\Pi_{\Upsilon, \widetilde f_\tau},\Pi_{\Upsilon, \widetilde f_{\tau'}}\bigr).
\end{equation}
It will be more convenient to work with $\widetilde f_\tau$ and $\widetilde f_{\tau'}$ instead of $f_\tau$ and $f_{\tau'}$ since 
\begin{equation}\label{eq:supptildef}
\operatorname{supp}\log\bigl(\widetilde f_\tau / \widetilde f_{\tau'}\bigr)\subset B(x_\star, \rho),
\end{equation}
which will allow us to apply Proposition \ref{prop:angle_lowerbound}---the latter support property is not verified for $\log(f_\tau / f_\tau')$.

It is clear that for $k\geq 0$ an integer, one has $\|1-f_\tau\|_{\Cf^k}\lesssim \eps^{s-k}$. By interpolation, it follows that $\|f_\tau\|_{\cC^s}\lesssim \|f_\tau\|_{\Cf^s}\lesssim 1$. \Cref{prop:eigenfunctionsperturbative} implies that for $\eps$ small enough, all the densities $f_\tau$ belong to $\cP_s(\Upsilon,\delta,L)$, possibly with a smaller constant $\delta$ independent of $\tau$. Next, let $\tau,\tau'\in \{0,1\}^N$ be such that 
$$
|\tau-\tau'|=\sum_{i=1}^N \ones\{\tau_i\neq \tau'_i\}\geq N/8.
$$
We wish to use \Cref{prop:angle_lowerbound} to lowerbound $D_2(\Pi_{\Upsilon,\widetilde f_\tau},\Pi_{\Upsilon,\widetilde f_{\tau'}})$. By the dual formulation of the $\H^{-1}$-norm, 
\begin{align*}
\bigl\|\widetilde f_\tau- \widetilde f_{\tau'}\bigr\|_{\H^{-1}}^2 \geq t \sum_{i=1}^N  \ones\{\tau_i\neq \tau'_i\}\int \chi_i \frac{g}{\|g\|_{\H^1}}
\end{align*}
for any function  $g\in \H^1$. Letting $g = \sum_{i=1}^N  \ones\{\tau_i\neq \tau'_i\} \chi_i$, we have $\|g\|_{\H^1}^2 \lesssim N\eps^{d-2}$.  Thus,
\begin{equation}\label{eq:lowerboundh-1}
\bigl\|1-\widetilde f_\tau\bigr\|_{\H^{-1}} \gtrsim \frac{t\eps}{\sqrt{N\eps^d}} \sum_{i=1}^N  \ones\{\tau_i\neq \tau'_i\} \int \chi_i^2 \gtrsim  \sqrt{N\eps^d} t\eps = a^{\frac 12}t\eps.
\end{equation}
Furthermore, we may upper bound $\|\widetilde f_\tau-\widetilde f_{\tau'}\|_{\W^{-1,r}}$ through the dual formulation of the $\W^{-1,r}$-norm. Let $r_\star$ be the conjugate exponent of $r$, and let $g$ be a smooth function with $\|g\|_{W^{1,r_\star}}\leq 1$. Then we have
\begin{equation}\label{eq:intftau-ftau}
\int \bigl(\widetilde f_\tau-\widetilde f_{\tau'}\bigr)g = t \sum_{i=1}^N  \ones\{\tau_i\neq \tau'_i\}\int \chi_i g.
\end{equation}
Next, let $c_i$ is the integral of $g$ on the support $B_i$ of $\chi_i$, and write
\begin{equation}\label{eq:decompg}
\int \chi g = \int \chi_i (g - c_i) + \int \chi_i c_i.
\end{equation}
Applying a local Poincaré inequality, see \cite{bjorn2018local}, one gets
\begin{align*}
\left|\int \chi_i (g-c_i)\right| \leq \|\chi_i\|_{\L^r} \|g-c_i\|_{\L^{r_\star}} \lesssim  \eps\|\chi_i\|_{\L^r} \|\ones\{B_i\}\nabla g\|_{\L^{r_\star}}.
\end{align*}
A scaling argument shows that $\|\chi_i\|_{\L^r}$ is of order $\eps^{\frac dr}$. Thus, by H\"older's inequality
\begin{align*}
\left|\eps^s \sum_{i=1}^N  \ones\{\tau_i\neq \tau'_i\}\int \chi_i (g-c_i)\right|& \lesssim t\eps^{1+\frac dr}\sum_{i=1}^N \|\ones\{B_i\}\nabla g\|_{\L^{r_\star}} \\
&\lesssim  t\eps^{1+\frac dr}N^{\frac 1r}\left\| \sum_{i=1}^N \ones\{B_i\}\nabla g\right\|_{\L^{r_\star}}\\
&\lesssim (N \eps^d)^{\frac 1r} t\eps = a^{\frac 1r}t\eps. 
\end{align*}
On the other hand, since $\int |\chi_i|$ is of order $\varepsilon^d$, one has
$$
\begin{aligned}
\left|\eps^s \sum_{i=1}^N  \ones\{\tau_i\neq \tau'_i\}\int \chi_i c_i\right| \lesssim t\varepsilon^{d} \sum_{i=1}^N \ones\{\tau_i \neq \tau_i'\} |c_i| \lesssim t\varepsilon^{ d} \|g\|_{\L^{r_\star}} \lesssim t\varepsilon^{d}.
\end{aligned}
$$
Therefore, combining \eqref{eq:intftau-ftau}, \eqref{eq:decompg} and what precedes, one gets
$$
\left|\int \bigl(\widetilde f_\tau-\widetilde f_{\tau'}\bigr)g\right| \lesssim a^{\frac 1r} t\eps + t\eps^{ d}
$$
Therefore there is $c > 0$ such that if $\varepsilon > 0$ is small enough,
\begin{equation}\label{eq:upperboundw-1r}
\bigl\|\widetilde f_\tau - \widetilde f_{\tau'}\bigr\|_{\W^{-1,r}} \lesssim a^{\frac 1r} t\eps.
\end{equation}
By \eqref{eq:supptildef}, one can apply Proposition \ref{prop:angle_lowerbound} and combining \eqref{eq:lowerboundh-1} and \eqref{eq:upperboundw-1r} one gets
\begin{equation}
D_2(\Pi_{ \Upsilon,\widetilde f_\tau}, \Pi_{ \Upsilon,\widetilde f_{\tau'}}) \geq (c a^{\frac 12} - C a^{\frac 1r}) t\eps.
\end{equation}
for every small $\eps$, where $c$ and $C$ are positive constants independent of $\eps$.
Since $r<2$, for $a$ small enough, the number $\widetilde c = ca^{\frac 12} -Ca^{\frac 1r}$ is positive. For such a choice of $a$, one finally obtains
\begin{equation}\label{eq:lower_bound_angle}
D_2(\Pi_{ \Upsilon,f_\tau}, \Pi_{ \Upsilon,f_{\tau'}}) \geq \widetilde c\, t\eps
\end{equation}
for every small $\varepsilon$, by \eqref{eq:d2pi}. 

Recall that $t=\eps^s$. Since the densities $f_\tau$ and $f_{\tau'}$ are upper and lower bounded, the Kullback-Leibler divergence between $f_\tau$ and $f_{\tau'}$ is controlled by the squared $\L^2$-norm between the two densities. As each function $\chi_i$ has a squared norm of order $\eps^{2s+d}$, it follows that the Kullback-Leibler divergence between $f_\tau$ and $f_{\tau'}$ is of order $|\tau - \tau'| \eps^{2s+d}$. 

We are now in position to apply Fano's method, see \cite[Theorem 2.7]{tsybakov_introduction_2009} by relying on the Varshamov-Gilbert lower bound to construct a $(N/8)$-packing of $\{0,1\}^N$ of size at least $2^{N/8}$, see \cite[Lemma 2.9]{tsybakov_introduction_2009}. Such an application proves that the minimax rate is at least of order $\eps^{s+1}$, where $\eps$ has to be selected so that $n\,\eps^{2s+d}\leq c$ for some constant $c>0$. Thus, for $\eps$ of order $n^{-\frac{1}{2s+d}}$, we find that the minimax rate is at least of order $n^{-\frac{s+1}{2s+d}}$. This concludes the proof for $d\geq 3$.

The lower bound of order $n^{-\frac 12}$ for $d\leq 2$ is  simpler to obtain, and is based on Le Cam's lemma with two points applied to a perturbation of the uniform density $f_0$ of the form \eqref{eq:perturbation_f0} with $t$ of order $1/\sqrt{n}$, and $N$ and $\eps$  of constant order chosen so that \eqref{eq:lower_bound_angle} holds.
\end{proof}

\subsection{Minimax lower bound for eigenvalue estimation}

The minimax lower bound for eigenvalue estimation relies on the following general lower bound for functional estimation, see \cite{birge1995estimation}.

\begin{lemma}[Birgé-Massart lower bound]\label{lem:birge-massart}
Let $(\cX,\cF,\mu)$ be a measured space. For $g\in \L^1(\mu)$ a density, let $P_g$ be the associated probability measure. 
Let $\Theta\subset \L^1(\mu)$ be a set containing the constant function $1$ and let $\mathrm{T}:\Theta\to \R$. 
Let $a>0$ and let $m,n\geqslant 1$ be integers. Assume that there exist 
disjoint measurable sets $A_1,\dots,A_m$ in $\cX$ 
 and functions $g_1,\dots,g_m$ of $\L^\infty$ norm smaller than $1$, with each $g_i$ supported on $A_i$,  $\mu(g_i)=0$ and $\mu(g_i^2)\leqslant a$ (for $1\leqslant i\leqslant m$). 
For $\tau \in \{-1,+1\}^m$, define $g_\tau := 1+\sum_{i=1}^m \tau_i g_i\in \Theta$. Assume that $mn^2 a^2\leqslant 3/5$ and that $T(g_\tau)-T(g)\geqslant 2\beta>0$. Then, for any estimator $\hat {\mathrm{T}}_n : \mathcal X^n \to \R$ we have
$$
\sup_{g\in \Theta} \E_{P_g^{\otimes n}} [|\hat {\mathrm{T}}_n -T(g)|] \geqslant \frac{\beta}{10}.
$$
\end{lemma}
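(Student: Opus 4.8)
The plan is to apply the classical Le Cam ``one versus many'' reduction to a hypothesis-testing problem, and then to control the resulting total variation distance by a chi-square computation. Write $P_0=P_1^{\otimes n}$ for the product law under the constant density $1$ (so that $P_0=\mu^{\otimes n}$) and $\bar P=2^{-m}\sum_{\tau\in\{-1,+1\}^m}P_{g_\tau}^{\otimes n}$ for the uniform mixture over the $2^m$ alternatives; these are legitimate because $1\in\Theta$ and each $g_\tau=1+\sum_i\tau_ig_i\in\Theta$ is a genuine density (it is nonnegative since $\|g_i\|_{\L^\infty}\le1$ and the $g_i$ have pairwise disjoint supports, and it integrates to $1$ since $\mu(g_i)=0$). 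Given an estimator $\hat{\mathrm T}_n$, set $A=\{x\in\mathcal X^n:\hat{\mathrm T}_n(x)\ge\mathrm T(1)+\beta\}$. Then $\E_{P_0}[|\hat{\mathrm T}_n-\mathrm T(1)|]\ge\beta\,P_0(A)$, while for each $\tau$ the separation $\mathrm T(g_\tau)-\mathrm T(1)\ge2\beta$ forces $\E_{P_{g_\tau}^{\otimes n}}[|\hat{\mathrm T}_n-\mathrm T(g_\tau)|]\ge\beta\,P_{g_\tau}^{\otimes n}(A^c)$. Averaging the second family over $\tau$ and taking the mean with the first gives
\[
\sup_{g\in\Theta}\E_{P_g^{\otimes n}}\bigl[|\hat{\mathrm T}_n-\mathrm T(g)|\bigr]\ \ge\ \tfrac{\beta}{2}\bigl(P_0(A)+\bar P(A^c)\bigr)\ \ge\ \tfrac{\beta}{2}\bigl(1-\TV(\bar P,P_0)\bigr).
\]

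It then remains to show $\TV(\bar P,P_0)\le4/5$. I would use $\TV(\bar P,P_0)\le\tfrac12\sqrt{\chi^2(\bar P,P_0)}$ and compute the chi-square divergence $\chi^2(\bar P,P_0)=\E_{P_0}[(\tfrac{\dd\bar P}{\dd P_0})^2]-1$ exactly. Since $P_0$ has density $1$,
\[
1+\chi^2(\bar P,P_0)=2^{-2m}\sum_{\tau,\tau'}\Bigl(\int g_\tau g_{\tau'}\,\dd\mu\Bigr)^{n},
\]
and here the two structural hypotheses --- pairwise disjoint supports and $\mu(g_i)=0$ --- make every cross term vanish, so $\int g_\tau g_{\tau'}\,\dd\mu=1+\sum_{i=1}^m\tau_i\tau_i'\,\mu(g_i^2)$, a quantity that is $\ge0$ because $g_\tau g_{\tau'}\ge0$. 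Using $(1+x)^n\le e^{nx}$ (valid since $x=\sum_i\tau_i\tau_i'\mu(g_i^2)\ge-1$), then factorizing the double sum coordinatewise, then $\cosh t\le e^{t^2/2}$, then $\mu(g_i^2)\le a$,
\[
2^{-2m}\sum_{\tau,\tau'}\Bigl(\int g_\tau g_{\tau'}\,\dd\mu\Bigr)^{n}\ \le\ \prod_{i=1}^m\cosh\!\bigl(n\,\mu(g_i^2)\bigr)\ \le\ \exp\!\Bigl(\tfrac{n^2}{2}\sum_{i=1}^m\mu(g_i^2)^2\Bigr)\ \le\ \exp\!\bigl(\tfrac{mn^2a^2}{2}\bigr)\ \le\ e^{3/10}.
\]
Hence $\chi^2(\bar P,P_0)\le e^{3/10}-1<\tfrac25$, so $\TV(\bar P,P_0)<\tfrac12\sqrt{2/5}<\tfrac45$, and the previous display yields $\sup_{g\in\Theta}\E_{P_g^{\otimes n}}[|\hat{\mathrm T}_n-\mathrm T(g)|]\ge\beta/10$.

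I expect the main obstacle to be the chi-square step, and within it the bookkeeping around signs: one has to notice that disjoint supports together with the mean-zero condition collapse $\int g_\tau g_{\tau'}\,\dd\mu$ to the clean expression $1+\sum_i\tau_i\tau_i'\mu(g_i^2)$, and one has to invoke the positivity of the $g_\tau$ (as densities) to guarantee this lies in $[0,\infty)$, which is precisely what licenses $(1+x)^n\le e^{nx}$ --- an inequality that fails for even $n$ when $x<-1$. After that, the coordinatewise factorization of $2^{-2m}\sum_{\tau,\tau'}\exp(n\sum_i\tau_i\tau_i'\mu(g_i^2))$ into $\prod_i\cosh(n\mu(g_i^2))$, together with the elementary estimates $\cosh t\le e^{t^2/2}$ and $\TV\le\tfrac12\sqrt{\chi^2}$, is routine; likewise the test-reduction inequality of the first paragraph and the verification that each $g_\tau$ is an admissible density are one-liners.
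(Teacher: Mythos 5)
Your proof is correct. The paper does not actually prove this lemma---it is stated and attributed to Birg\'e and Massart \cite{birge1995estimation}---so there is no ``paper proof'' to compare against; your argument supplies a complete self-contained proof via the standard two-step route: Le Cam reduction to a test of the simple null $P_0=\mu^{\otimes n}$ against the uniform mixture $\bar P$ over the $2^m$ alternatives, followed by a second-moment (chi-square) bound on the likelihood ratio. All the key verifications are in place and correct: nonnegativity of each $g_\tau$ (disjoint supports plus $\|g_i\|_{\L^\infty}\leq 1$), the collapse of $\int g_\tau g_{\tau'}\,\dd\mu$ to $1+\sum_i\tau_i\tau_i'\mu(g_i^2)$ (disjoint supports plus $\mu(g_i)=0$), and---the one point that is easy to miss---the need for $1+\sum_i\tau_i\tau_i'\mu(g_i^2)\geq 0$ to legitimize $(1+x)^n\leq e^{nx}$ for even $n$, which you correctly derive from the positivity of the $g_\tau$. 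The coordinatewise factorization into $\prod_i\cosh(n\mu(g_i^2))$, the bound $\cosh t\leq e^{t^2/2}$, and the numeric step $mn^2a^2\leq 3/5\Rightarrow\chi^2\leq e^{3/10}-1<2/5\Rightarrow\TV<1/\sqrt{10}<4/5$ all check out and give the claimed $\beta/10$. This is essentially the argument in Birg\'e--Massart, and the constants are loose (you actually get roughly $0.34\beta$), which is fine since the lemma only claims $\beta/10$.
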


We will apply this lower bound to the eigenvalue functional on the flat torus. Computations are simplified when eigenvalues have multiplicity one, so that we consider the rescaled flat torus $M$ with side lengths $(\kappa_1,\dots,\kappa_d)\in \R_+^d$, where the family $(\kappa_1^{-2},\dots,\kappa_d^{-2})$ is $\Q$-free. Up to rescaling, we may assume that the volume $\kappa_1\cdots \kappa_d$ of $M$ is $1$. Let $f$ be the uniform density on the flat torus. For any multiindex $\kbf=(k_1,\dots,k_d)\in \Z^d$, we let 
$$
\omega_\kbf = 2\pi \left(k_1 / \kappa_1, \dots, k_d / \kappa_d\right) \in \R^d.
 $$
For any $\kbf \in \Z^d$ we set 
$$
\lambda_\kbf = |\omega_\kbf|^2 \qquad \text{and} \qquad \varphi_\kbf(x) = \exp i \omega_\kbf \cdot x \quad \text{for} \quad x \in M.
$$
Then $(\varphi_\kbf)_{\kbf \in \Z^d}$ is an ortho-normal basis of $\L^2$ and we have
$$
-\Delta_f \varphi_\kbf = \lambda_\kbf \varphi_\kbf, \quad \kbf \in \Z^d,
$$
so that $\sigma(-\Delta_f) = \{\lambda_\kbf~:~\kbf \in \Z^d\}$. As the family $(\kappa_1^{-2},\dots,\kappa_d^{-2})$ is $\Q$-free, all these eigenvalues are pairwise distinct. 

Let $N,I\geq 1$ be  integers.  
 Define a grid $(x_j)_j$ indexed by $\jbf=(j_1,\dots,j_d)\in [N]^d$, with $x_\jbf = N^{-1} (j_1\kappa_1,\dots, j_d\kappa_d)$. Let $\eps  \in \left]0,1\right]$ and $t= \eps N$. Let $\rho > 0$ small so that the ball $B(0,\rho) \subset \R^d$ is embedded in $M$ through the canonical projection $\pi : \R^d \to M$ (for example take any $\rho < \min_j \kappa_j / 2$). Let $\chi : \R^d \to \R$ be a smooth function supported on $B(0,\rho)$, and set
 $$
 \chi_\jbf(x) = \eps^s\, \widetilde \chi\left(\frac{x-x_\jbf}{\eps}\right), \quad x \in M, \quad \jbf \in [N]^d,
 $$
 where $\widetilde \chi : M \to \R$ is the smooth function supported in $\pi(B(0, \rho))$ induced by $\chi$, that is
 $$
 \widetilde \chi(x) = \sum_{\kbf \in \Z^d}\chi(\bar x + (2\pi)^{-1} \omega_{\kbf}), \quad x = \pi(\bar x), \quad \bar x \in \R^d.
 $$
We will choose $\chi$ such that
 \begin{equation}\label{eq:def_g}
\|\chi\|_{\L^\infty}\leq 1/2 \qquad \text{and} \qquad \partial_\xi^\mbf \mathcal F \chi(0) = 0 \quad \text{for all} \quad |\mbf|\leqslant I,
 \end{equation}
where we recall that
 $$
 |\mbf| = m_1 + \cdots + m_d \quad \text{and} \quad \partial_\xi^\mbf = \frac{\partial^{m_1}}{\partial \xi_1} \cdots \frac{\partial^{m_d}}{\partial \xi_d} \quad \text{for} \quad \mbf = (m_1, \dots, m_d) \in \N^d.
 $$
 In particular, one has 
$
\int_M \widetilde \chi(x) \dd x = \int_{\R^d} \chi(x) \dd x = \mathcal F \chi (0) = 0
$
so that 
\begin{equation}
\int_M \chi_\jbf(x) \dd x = 0, \quad \jbf \in [N]^d.
\end{equation}
Next, for any $\tau = (\tau_1, \dots, \tau_d) \in \{-1,1\}^d$, we set
\begin{equation}
 f_\tau = 1 + w_\tau \quad \text{where} \quad w_\tau= \sum_{\jbf\in [N]^d} \tau_\jbf \chi_\jbf.
\end{equation}
Remark that as long as $t< \min_{i=1,\dots,d}\kappa_i/2$, the supports of the $\chi_j$'s are pairwise distinct. This implies that $\|w_\tau\|_{\L^\infty}\leq  \eps^{s}\|\chi\|_{\L^\infty}$. One can also check that $\|f_\tau\|_{\cC^s}\lesssim \|f_\tau\|_{\Cf^s}$ is bounded by a constant depending only on $\chi$. In particular, for $\eps$ small enough, all the functions $f_\tau$ are bounded from below by a constant depending only on $\chi$. 

Next, fix $\kbf_\star \in \Z^d \setminus \{0\}$ and put $\lambda_\star = \lambda_{\kbf_\star} > 0$. Let $2\delta > 0$ be the spectral gap of $-\Delta_f$ at $\lambda_\star$, and let $\Upsilon$ be the circle of radius $\delta$ centered at $\lambda_\star$, oriented in the counterclockwise direction. Then, by \Cref{prop:eigenfunctionsperturbative}, provided that $\varepsilon$ is small enough, we have that $\Upsilon$ encloses exactly one eigenvalue $\lambda_\tau$ of $-\Delta_{f_\tau}$ for each $\tau.$ As $\|w_\tau\|_{\L^\infty}=O(\eps^{s})$,   we have according to  \Cref{prop:eigenvalue_perturbative} that
\begin{equation}\label{eq:lambda_tau_DL}
    \lambda_\tau-   \lambda_\star = \mu_{\Upsilon, f}^{(1)}[w_\tau] + \mu_{\Upsilon, f}^{(2)}[w_\tau,w_\tau] + O(\eps^{3s}),
\end{equation}
where $\mu_{\Upsilon, f}^{(1)}$ and $\mu_{\Upsilon, f}^{(2)}$ are forms that can be made explicit, as indicated in the proof of  \Cref{prop:eigenvalue_perturbative}. 
Let $\Pi$ be the orthogonal projection on the eigenspace associated with $\varphi_{\kbf_\star}$. According to the proof of \Cref{prop:eigenvalue_perturbative} we have for any smooth function $u$,
$$
\mu_{\Upsilon, f}^{(1)}[u] = -\frac{\alpha}{2\pi i}\tr \int_\Upsilon zR_f(z)A_{u}R_f(z) \dd z.
$$
Using the residue theorem, the cyclicity of the trace, \eqref{eq:laurent} and \eqref{eq:spicommute}, one sees that
\begin{equation}\label{eq:first_diff_is_zero}
\mu_{\Upsilon, f}^{(1)}[u] = - \alpha\tr(\Pi A_u \Pi) = -\alpha \langle A_u \varphi_{\kbf_\star}, \varphi_{\kbf_\star}\rangle.
\end{equation}
Note that $A_u \varphi_{\kbf_\star} = (\nabla u \cdot \omega_\kbf) \varphi_{\kbf_\star}$, hence
$$
-\langle A_u \varphi_{\kbf_\star}, \varphi_{\kbf_\star}\rangle = - \int_M \nabla u \cdot \omega_{\kbf_\star} = 0,
$$
where the last equality comes from an integration by parts. For the second order term, one obtains
$$
\mu_{\Upsilon, f}^{(2)}[u] = -\mu_{\Upsilon, f}^{(1)}[u^2/2] + \frac{\alpha^2}{2\pi i}\tr \int_\Upsilon zR_f(z)A_uR_f(z)A_{u}R_f(z) \dd z.
$$
By what precedes the term $\mu_{\Upsilon, f}^{(1)}[u^2/2]$ is equal to zero. For the second one, we proceed as follows. First, let $B_\eta = \Delta_f + \eta A_u$ for $\eta \in \R.$ Then for small $\eta$, we have that $\Upsilon$ encloses exactly one eigenvalue of $-B_\eta$, hence $\tr \int_\Upsilon (z + B_\eta)^{-1} \dd z = 2\pi i$. Differentiating twice with respect to $\eta$ one gets
$$
0 = \left.\frac{\dd^2}{\dd \eta^2}\right|_{\eta = 0} \tr \int_\Upsilon (z + B_\eta)^{-1} \dd z = 2\tr \int_\Upsilon R_f(z)A_uR_f(z)A_{u}R_f(z) \dd z.
$$
Therefore one can write
$$
\mu_{\Upsilon, f}^{(2)}[u, u] = \frac{\alpha^2}{2\pi i}\tr \int_\Upsilon (z - \lambda_\star)R_f(z)A_uR_f(z)A_{u}R_f(z) \dd z.
$$
Again using the residue theorem, the cyclicity of the trace, \eqref{eq:laurent} and \eqref{eq:spicommute} one gets
$$
\mu_{\Upsilon, f}^{(2)}[u, u] = \tr \Pi A_u S A_u \Pi,
$$
where $S = S_{\lambda_\star, f}(\lambda_\star)$ with the notations of \eqref{eq:laurent}. The latter operator is diagonal in the basis $(\varphi_\kbf)$: it satisfies $S\varphi_{\kbf_\star} = 0$ and
$$
S\varphi_\kbf = \frac{\varphi_\kbf}{|\omega_{\kbf_\star}|^2 - |\omega_{\kbf}|^2}, \quad \kbf \neq \kbf_\star.
$$
Next, let us write
 $$
 u = \sum_\kbf u_\kbf \varphi_\kbf \quad \text{where} \quad u_\kbf = \int u(x) e^{-i \omega_\kbf \cdot x } \dd x.
 $$ Then $\nabla u = i\sum_\kbf u_\kbf \varphi_\kbf \omega_{\kbf}$ and $\nabla \varphi_{\kbf_\star} = i\varphi_{\kbf_\star} \omega_{\kbf_\star}$ hence
$$
A_u \varphi_{\kbf_\star} = -\sum_\kbf (\omega_{\kbf} \cdot \omega_{\kbf_\star}) u_{\kbf} \varphi_{\kbf} \varphi_{\kbf_\star} = -\sum_\kbf (\omega_{\kbf} \cdot \omega_{\kbf_\star}) u_{\kbf} \varphi_{\kbf + \kbf_\star}.
$$
Therefore one gets
$$
S A_u \varphi_{\kbf_\star} = -\sum_{\kbf \neq 0} \frac{ (\omega_{\kbf} \cdot \omega_{\kbf_\star}) u_{\kbf}}{|\omega_{\kbf_\star}|^2 - |\omega_{\kbf + \kbf_\star}|^2} \varphi_{\kbf + \kbf_\star},
$$
which gives in turn
$$
A_u S A_u \varphi_{\kbf_\star} = -\sum_{\kbf \neq 0} \sum_{\ell} \frac{ (\omega_{\kbf} \cdot \omega_{\kbf_\star})(\omega_\ell \cdot \omega_{\kbf + \kbf_\star})u_\kbf u_\ell}{|\omega_{\kbf_\star}|^2 - |\omega_{\kbf + \kbf_\star}|^2} \varphi_{\ell + \kbf + \kbf_\star}.
$$
Note that if $u$ is real one has $u_{-\kbf} = \overline{u_{\kbf}}$. Hence taking the scalar product with $\varphi_{\kbf_\star}$ yields
$$
\mu_{\Upsilon, f}^{(2)}[u, u] = \tr \Pi A_u S A_u \Pi = \langle \varphi_{\kbf_\star}, A_u S A_u \varphi_{\kbf_\star}\rangle = \sum_{\kbf \neq 0} \frac{(\omega_{\kbf} \cdot \omega_{\kbf_\star})(\omega_{\kbf} \cdot \omega_{\kbf + \kbf_\star})}{ |\omega_{\kbf + \kbf_\star}|^2-|\omega_{\kbf_\star}|^2} |u_\kbf|^2.
$$
Indeed $\langle \varphi_{\ell + \kbf + \kbf_\star}, \varphi_{\kbf_\star}\rangle = 0$ unless $\ell = - \kbf$. Next, note that 
$$
\frac{1}{|\omega_{\kbf + \kbf_\star}|^2-|\omega_{\kbf_\star}|^2} = \frac{1}{|\omega_\kbf|^2 + 2 \omega_\kbf \cdot \omega_{\kbf_\star}} =\frac{1}{|\omega_\kbf|^2} - \frac{2 \omega_\kbf \cdot \omega_{\kbf_\star}}{|\omega_\kbf|^2(|\omega_\kbf|^2 + 2 \omega_\kbf \cdot \omega_{\kbf_\star})}.
$$
Hence we can write 
$$
\frac{(\omega_{\kbf} \cdot \omega_{\kbf_\star})(\omega_{\kbf} \cdot \omega_{\kbf + \kbf_\star})}{ |\omega_{\kbf + \kbf_\star}|^2-|\omega_{\kbf_\star}|^2}  = (\omega_{\kbf} \cdot \omega_{\kbf_\star}) + \gamma(\kbf, \kbf_\star) + \delta(\kbf, \kbf_\star)
$$
where we set
\begin{align*}
\gamma(\kbf, \kbf_\star) 
&=-\frac{(\omega_\kbf \cdot \omega_{\kbf_\star})^2}{|\omega_\kbf|^2}
\ \text{ and } \
\delta(\kbf,\kbf_\star) = -\frac{2 \gamma(\kbf, \kbf_\star) \omega_\kbf \cdot \omega_{\kbf_\star}}{|\omega_\kbf|^2 + 2 \omega_\kbf \cdot \omega_{\kbf_\star}}.
\end{align*}
In particular, as $|u_{-\kbf}|=|u_{\kbf}|$, one gets by symmetry that
$$
\mu_{\Upsilon, f}^{(2)}[u, u] = P[u,u] + Q[u,u]
$$
where we set
$$
P[u,u] = \sum_{\kbf \neq 0} \gamma(\kbf, \kbf_\star) |u_\kbf|^2
\quad \text{and} \quad
Q[u,u] = \sum_{\kbf \neq 0} \delta(\kbf, \kbf_\star) |u_\kbf|^2.
$$
Note that there is $C > 0$ such that 
$
|\delta(\kbf, \kbf_\star)|\leqslant {C}{|\kbf|^{-1}}
$
for each $\kbf \neq 0$.
In particular, one has
\begin{equation}\label{eq:boundquu}
|Q[u,u]|\leqslant C\sum_{\kbf \neq 0} \frac{|u_\kbf|^2}{|\kbf|} \leq C \|u\|_{\H^{-1/2}}^2.
\end{equation}
Since the $\chi_\jbf$'s have distinct support, we get for $v\in \Cf^{1/2}$
\begin{align*}
\dotp{\omega_\tau,v}= \sum_{\jbf} \tau_{\jbf}\dotp{\chi_\jbf,v} =  \sum_{\jbf} \tau_{\jbf}\dotp{\chi_\jbf,v-v(x_{\jbf})} \leq CN^d \eps^{s+d+1/2} =Ct^d \eps^{s+1/2}.
\end{align*}
Moreover, we also get $\|\omega_\tau\|_{\L^2}\leq C't^{d/2} \eps^{s}$ for some other constant $C'$. 
As the $\alpha$-interpolation of the dual of $\cC^{1/2}$ and $\L^2$ is continuously embedded into  $\H^{-1/2}$ for $\alpha$ close enough to $0$, we get 
\begin{equation}\label{eq:omega_H-1/2}
\|\omega_\tau\|_{\H^{-1/2}} \leq C'' t^{d(1/2+\alpha)}\eps^{s+\alpha/2}.
\end{equation}
Next, we wish to give a lower bound for $P[w_\tau,w_\tau]$. Let us write
\begin{equation}\label{eq:Ptautau}
P[w_\tau, w_\tau] = \sum_{\jbf} P[\chi_\jbf, \chi_\jbf] + \sum_{\jbf \neq \jbf'} \tau_\jbf \tau_{\jbf'} P[\chi_\jbf, \chi_{\jbf'}]
\end{equation}
where $P[u,v]$ is the sesquilinear form associated with the quadratic form $P[u,u]$. Note that for each $\kbf$ one has
$$
(\chi_{\jbf})_{\kbf} = \int_M \chi_\jbf(x) e^{-i \omega_\kbf \cdot x} \dd x = \varepsilon^{s + d} e^{i \omega_\kbf \cdot x_\jbf} \mathcal F \chi(\omega_{\varepsilon \kbf}).
$$
Therefore one obtains
$$
P[\chi_\jbf, \chi_\jbf] = \varepsilon^{2s + 2d} \sum_{\kbf \neq 0} \gamma(\kbf, \kbf_\star) |\mathcal F \chi(\omega_{\varepsilon \kbf})|^2 = \varepsilon^{2s+ d } \varepsilon^{-d} \sum_{\kbf \neq 0} \varrho(\varepsilon \kbf)
$$
where $\varrho : \R^d \to \R$ is defined by $\varrho(\xi) = \zeta(\xi)|\mathcal F \chi(\omega_{\xi})|^2$, with   $\zeta(0) = 0$ and 
$$
\zeta(\xi) = -\frac{(\omega_\xi \cdot \omega_{\kbf_\star})^2}{|\omega_\xi|^2}, \quad \xi \neq 0.
$$
The function $\varrho$ is smooth and rapidly decreasing on $\R^d \setminus \{0\}$ (since $\chi \in C^\infty_c(\R^d)$) and continuous on $\R^d$. Hence it is Riemann integrable and we have
$$
\varepsilon^{-d} \sum_{\kbf\neq 0} \varrho(\varepsilon \kbf)  = \varepsilon^{-d} \sum_{\kbf \in \Z^d} \varrho(\varepsilon \kbf) \underset{\varepsilon \to 0}{\longrightarrow} \int_{\R^d} \varrho(\xi) \dd \xi.
$$
Now it is always possible to choose $\chi$ so that the value $c$ of the latter integral is positive. Indeed, there is an even nonnegative function $\nu_0$ such that
\[
 \int \zeta(\xi)\nu_0(\xi) \dd \xi >0
\]
is non zero (take $\nu_0=-\zeta$ for instance). Such a function can be written as $\xi\mapsto |\cF{\chi_0}(\omega_\xi)|^2$ for some real-valued function $\chi_0$ which does not have compact support. By continuity of the integral, we can approximate $\chi_0$ by a compactly supported function $\chi$ without changing too much the value of the integral, ensuring that $\int \rho(\xi)\dd \xi = \int \zeta(\xi) |\cF{\chi}(\omega_\xi)|^2\dd \xi>0$. This is our final choice of $\chi$.

 Hence for $\varepsilon$ small, one obtains
\begin{equation}\label{eq:direct_term}
\sum_\jbf P[\chi_\jbf, \chi_\jbf] \geq c N\eps^{2s+d} \geq ct^d\varepsilon^{2s }
\end{equation}
for some constant $c>0$. 
Next, for $\jbf \neq \jbf'$, let us write
$$
\begin{aligned}
P[\chi_\jbf, \chi_{\jbf'}] &= \sum_{\kbf \neq 0} \gamma(\kbf, \kbf_\star)(\chi_{\jbf})_\kbf \overline{(\chi_{\jbf'})_\kbf} 
&=  \varepsilon^{2s+2d }\sum_{\kbf \neq 0} \varrho(\varepsilon \kbf) e^{i\omega_\kbf \cdot(x_\jbf - x_\jbf')}.
\end{aligned}
$$
By Poisson's summation formula, we have
$$
P[\chi_\jbf, \chi_{\jbf'}]  = \varepsilon^{2s+d} \sum_{\ell} \varrho\left(\frac{x_\jbf - x_{\jbf'} + \widetilde \omega_\ell}{\varepsilon}\right) \quad \text{where} \quad \widetilde \omega_\ell = (\kappa_1 \ell_1, \dots, \kappa_d \ell_d).
$$
Remark that $x_\jbf-x_{\jbf'}\in \prod_{i=1}^d [-\kappa_i,\kappa_i]$. Thus, if $\ell\not \in \{-1,0,1\}^d$, then $|x_\jbf-x_{\jbf'}-\widetilde \omega_\ell|\geq \delta |\widetilde \omega_\ell|$ for some $\delta>0$ depending on $(\kappa_1,\dots,\kappa_d)$. As the function $\varrho$ is rapidly decreasing, there is $K>0$ such that
\[
|\varrho(\xi)|\leq K(1+|\xi|)^{-d-1}, \quad \xi \in \R^d.
\]
Thus, by a standard integral comparison
\[
\sum_{\ell \notin\{-1,0,1\}^d} \varrho\left(\frac{x_\jbf - x_{\jbf'} + \widetilde \omega_\ell}{\varepsilon}\right)\leq \sum_{\ell \notin\{-1,0,1\}^d}  K\p{1+\frac{\delta |\widetilde \omega_\ell|}\eps}^{-d-1} \leq KC_0\varepsilon^{d}
\]
for some constant $C_0$ depending on $(\kappa_1,\dots,\kappa_d)$. We therefore obtain that for a fixed $\jbf$, it holds that
\begin{align*}
\sum_{\jbf':\ \jbf'\neq \jbf} P[\chi_{\jbf},\chi_{\jbf'}] &\leq KC_0\varepsilon^{2s+2d}N^d + \varepsilon^{2s+d}\sum_{\ell\in \{-1,0,1\}^d}\sum_{\jbf':\ \jbf'\neq \jbf}\varrho\left(\frac{x_\jbf - x_{\jbf'} + \widetilde \omega_\ell}{\varepsilon}\right) \\
&\leq KC_0 \eps^{2s+d}t^d +K \varepsilon^{2s+d}\sum_{\ell\in \{-1,0,1\}^d} \sum_{\jbf':\ \jbf'\neq \jbf} \left(1+\frac{|x_\jbf - x_{\jbf'} + \widetilde \omega_\ell|}{\varepsilon}\right)^{-d-1}.
\end{align*}
By another comparison with an integral, we bound
\[
\begin{aligned}
\sum_{\jbf':\ \jbf'\neq \jbf} \left(1+\frac{|x_\jbf - x_{\jbf'} + \widetilde \omega_\ell|}{\varepsilon}\right)^{-d-1}& \leq C_1N^d \int_{x\in [-1,1]^d} \left(1+\frac{|x + \widetilde \omega_\ell|}{\varepsilon}\right)^{-d-1} \\
&\leq C_2 N^d\eps^d
\end{aligned}
\]
for some constants $C_1,C_2$. Thus, as $N\eps=t$,
\[
\sum_{\jbf':\ \jbf'\neq \jbf} P[\chi_{\jbf},\chi_{\jbf'}] \leq K(C_0 +3^d C_1C_2)\eps^{2s+d}t^d.
\]
From this inequality, \eqref{eq:Ptautau} and \eqref{eq:direct_term}, we get that
\[
P[w_\tau,w_\tau] \geq \frac{ct^d}2 \eps^{2s} - K(C_0  + 3^d C_1C_2)\eps^{2s+d}N^dt^{d}= \p{\frac c2- K(C_0  + 3^d C_1C_2)t^d}\eps^{2s}t^d.
\]
Choose $t$ small enough so that $\p{\frac c2- K(C_0  + 3^d C_1C_2)t^d}\geq \frac c4$. For such a choice of $t$, we have by \eqref{eq:boundquu} and \eqref{eq:omega_H-1/2} that
\[
\mu^{(2)}_{\Upsilon,f}[w_\tau,w_\tau] \geq \frac c4 t^d \eps^{2s} - o(\eps^{2s}).
\]
From the Taylor expansion \eqref{eq:lambda_tau_DL}, we are now in position to apply the minimax lower bound given by \Cref{lem:birge-massart} with $\beta$ of order $\eps^{2s}$, $m$ of order $\eps^{-d}$, and $a$ of order $\eps^{2s+d}$. For $\eps$ of order $n^{-\frac 2{4s+d}}$, the minimax lower bound is of order $n^{-\frac{4s}{4s+d}}$.

It remains to show that the minimax rate is also at least of order $n^{-\frac 12}$. The argument  relies on Le Cam's lemma. Let $f\in \cP_s(\Upsilon, \delta,L)$ for some curve $\Upsilon$, such that  the domain bounded by $\Upsilon$ contains  a simple eigenvalue of $-\Delta_f$. 
According to \Cref{prop:eigenvalue_perturbative}, for $h\in \Cf^2$ with $\int h=0$ and $n$ large enough, it holds that
\begin{equation}
    \mu_{\Upsilon,f+n^{-\frac 12}h}-\mu_{\Upsilon,f} = n^{-\frac 12}\mu^{(1)}_{\Upsilon,f}[h]+ n^{-1}O(\|h\|_{\L^\infty}^2).
\end{equation}
As long as $\mu^{(1)}_{\Upsilon,f}\neq 0$, one can therefore choose $h$ with 
\[ |  \mu_{\Upsilon,f+n^{-\frac 12}h}-\mu_{\Upsilon,f}|\gtrsim n^{-\frac 12}.\] 
Furthermore, the Hellinger distance between $f$ and $f+n^{-\frac 12}h$ is of order $n^{-\frac 12}$. By Le Cam's lemma, this implies that the minimax risk is at least of order $n^{-\frac 12}$. The only nontrivial point in this construction is the existence of a density $f$ such that $\mu^{(1)}_{\Upsilon,f}\neq 0$. Indeed, the differential $\mu^{(1)}_{\Upsilon,f_0}$ vanishes when $f_0$ is the uniform density, as was already remarked in \eqref{eq:first_diff_is_zero}. However, when proving that the minimax rate was at least of order $n^{-\frac{4s}{4s+d}}$, we constructed a family of densities $f_\tau$ with $\mu_{\Upsilon,f_\tau}-\mu_{\Upsilon,f_0}$ that is nonzero. This implies that the functional $\mu_\Upsilon$ is not constant on a neighborhood of $f_0$, and therefore that there is a density $f$ close to $f_0$ with a nonzero differential. This concludes the proof.

\section*{Acknowledgements} We are particularly grateful to Clément Berenfeld for insightful discussions on the statistical properties of graph Laplacians that led to the formulation of the main questions addressed in this work. 
Y. Chaubet thanks Gilles Carron, Dorian Le Peutrec and Gabriel Rivière for interesting discussions and helpful comments about this work. V. Divol would also like to thank Eddie Aamari, Gilles Blanchard and Martin Wahl for thoughtful conversations related to this project.

\appendix

\section{Functional spaces}\label{app:besov}

In this section we recall useful definitions and properties of classical functional spaces. In  the sequel, $M$ is a smooth, closed Riemannian manifold.

\subsection{Multi-resolution analysis} \label{subsec:wavelet}
 
 Let $S \in \N$. Following \cite[Proposition 1.53 and Theorem 1.61(ii)]{triebel2006theory} (see also the introduction to wavelets in \cite[Section 4.2]{gine2015mathematical}), we consider a family of Daubechies wavelets of regularity $S$ on $\R^d$, as follows. There are countable sets
$$
\Psi_j = \left\{\psi_{j, \lambda}~:~\lambda \in \Lambda_j\right\} \subset \mathrm C_c^S(\R^n), \quad j \in \N_{\geqslant 0},
$$
of compactly supported $\mathrm C^S$ functions on $\R^d$ such that the following holds.
\begin{enumerate}[label=(\roman*)]
\item If $\Omega \subset \R^d$ is a bounded set, then there is $C_\Omega > 0$ such that
$$
\sharp\left\{\psi \in \Psi_j~:~\supp \psi_j \cap \Omega \neq \emptyset\right\} \leqslant C_\Omega 2^{jd}, \quad j \geqslant 0.
$$
\item There is $C > 0$ such that
$$
\|\psi_{j, \lambda}\|_{\mathrm C^s(\R^d)} \leqslant C 2^{j(S + 1/2)},  \quad \lambda \in \Lambda_j, \quad j \geqslant 0.
$$
\item For $j \geqslant 0$ and $\kbf \in \Z^d$, we have $\psi \in \Lambda_j$ if and only if $\psi(\cdot - \kbf) \in \Lambda_j$. If moreover $j \geqslant 1$ we also have $\psi \in \Lambda_j$ if and only if
$
2^{d/2}\psi(2 \, \cdot) \in \Lambda_{j+1}.
$
\item The family $\left\{\psi_{j, \lambda}~:~\lambda \in \Lambda_j,~j \in \N_{\geqslant 0}\right\}$ is an orthonormal family of $\L^2(\R^d)$.
\end{enumerate}
We will denote $\Psi = \bigcup_j \Psi_j$. 

\subsection{Besov spaces in $\R^d$} Let $t \in \R$ and $1 \leqslant p,q \leqslant \infty$. Given any sequence $a = (a_{j, \lambda})_{\lambda \in \Lambda_j,j\geqslant 0} \in \C^\Psi$, we define 
$$
\|a\|_{\mathrm b^t_{p,q}} = \left(\sum_{j \geqslant 0} 2^{jq(t + \frac d2 - \frac dp)} \left(\sum_{\lambda \in \Lambda_j}|a_{j,\lambda}|^p\right)^{\frac qp}\right)^{\frac 1q},
$$
with usual modifications if $p$ or $q$ is equal to $\infty$.
Let $\mathcal D'_{S}(\R^d)$ be the space of distributions on $\R^d$ of order $S$, that are, distributions $f \in \mathcal D'(\R^d)$ such that for each compact set $\Omega$, there is $C_\Omega > 0$ such that 
$$
|\langle f, \varphi\rangle|\leqslant C_\Omega\|\varphi\|_{\mathrm C^S(\R^d)}, \quad \varphi \in \mathrm C^\infty_c(\R^d), \quad \operatorname{supp} \varphi \subset \Omega.
$$
Any $f \in \mathcal D'_S(\R^d)$ gives rise to a sequence $a_\Psi(f) = (a_\Psi(f)_{j, \lambda})_{\lambda \in \Lambda_j,j\geqslant 0} \in \C^\Psi$ given by
$$
a_\Psi(f)_{j, \lambda} = \langle f, \psi_{j, \lambda}\rangle, \quad \lambda \in \Lambda_j, \quad j \geqslant 0.
$$
For any $|t| < S$, the space $\B^t_{p,q}(\R^d)$ is defined as 
$$
\B^t_{p,q}(\R^d) = \left\{f \in \mathcal D'_S(\R^d)~:~\|a_\Psi(f)\|_{b^t_{p,q}} < \infty \right\}.
$$
It is endowed with the norm
$$
\|f\|_{\B^t_{p,q}(\R^d)} = \|a_\Psi(f)\|_{\mathrm b^t_{p,q}}, \quad f \in \B^t_{p,q}(\R^d).
$$
By \cite[Theorem 1.64]{triebel2006theory} (see also \cite[Section 4.3]{gine2015mathematical}) the set $\B^t_{p,q}(\R^d)$ does not depend on the chosen family $\Psi$ and coincides with the standard Besov spaces on $\R^d$ (see \cite[\S2.3.1]{triebel1983theory}). Remark that by construction, 
\begin{equation}\label{eq:norm_wavelet}
\|\psi_{j, \lambda}\|_{\B^{t}_{pq}(\R^d)} = 2^{j(t+\frac d2-\frac dp)}, \quad \lambda \in \Lambda_j, \quad j \geqslant 0.
\end{equation}

\subsection{Besov spaces on manifolds}
If $M$ is a compact smooth manifold, we define Besov spaces through a system of charts. Namely, let $U_1,\dots,U_N$ be a finite cover of $M$ by open sets diffeomorphic to the unit ball $\mathbb B^d \subset \R^d$, through diffeomorphisms $\kappa_i : \mathbb B^d \to U_i.$ Consider a smooth partition of unity $(\chi_i)_{i=1,\dots,N}$ subordinated to this cover, such that $\chi_i = \zeta_i^2$ for some $\zeta_i \in \mathrm C^\infty_c(U_i)$, so that
\begin{equation}\label{eq:partunitquad}
1 = \sum_{i=1}^N \chi_i = \sum_{i=1}^N \zeta_i^2.
\end{equation}
We also take $\eta_i \in \mathrm C^\infty_c(U_i)$ such that $\eta_i = 1$ on $\operatorname{supp} \chi_i$. Given a distribution $f \in \mathcal D'_S(M)$ of order $S$, we define 
$$
\|f\|_{\B^{t}_{p,q}(M)} = \sum_{i=1}^N \|\kappa_i^*(\eta_i f)\|_{\B^{t}_{p,q}(\R^d)},
$$
where $\kappa_i^* : f \mapsto f \circ \kappa_i$ is the pull-back operator by $\kappa_i.$ Then the space $\B^{t}_{p,q}(M)$ (or simply $\B^t_{p,q}$) is the space of distributions $f \in \mathcal D'_S(M)$ for which $\|f\|_{\B^{t}_{p,q}(M)} < \infty.$

\subsection{Zygmund and Sobolev spaces}
For $s > 0$, we define the Zygmund space $\mathscr C^s$ on $M$ by
$
\mathscr C^s = \B^s_{\infty, \infty}.
$
Recall that $\mathscr C^s$ coincides with the usual H\"older space $\mathrm C^s$ when $s \notin \N$, and that $\mathrm C^s \hookrightarrow \mathscr C^s$ for $s \in \N$. For $s \in \R$ and $p \in [1, \infty]$, the Sobolev space $\W^{s,p}$ is defined by
$
\W^{s,p} = \Lambda^{-s}\L^p,
$
where $\Lambda = (1 - \Delta)^{1/2}$. Finally recall that $\H^s = \W^{s, 2}$.

For convenience of the reader we record below some useful properties regarding relations between the spaces $\mathscr C^s, \W^{s,p}, \H^s$ and $\B^s_{p,q}$ that we will use (see \cite{triebel1992theory}):
\begin{enumerate}[label=(\roman*)]
    \item for $s\in \R$, $\H^s=\B^s_{2,2}$;
    \item if $s>0$, then $\cC^s\hookrightarrow \L^\infty$;
    \item if $s_1\leq s_2$ and $1\leq p,q\leq \infty$, then $\B^{s_2}_{p,q}\hookrightarrow\B^{s_1}_{p,q}$;
    \item if $s\in \R$, $1\leq p_1\leq p_2\leq \infty$ and $1\leq q_2\leq q_1\leq \infty$, then $\B^s_{p_2,q_2}\hookrightarrow \B^s_{p_1,q_1}$;
    \item if $s\in \R$ and $2\leq p<\infty$, then $\B^s_{p,2}\hookrightarrow \W^{s,p}$;
    \item (Sobolev embedding) if $s_1\geq s_2$ and $p_1, p_2\geq 1$ satisfy $\frac 1{p_1} \leq  \frac 1{p_2} + \frac{s_1-s_2}d$,
 then $\W^{s_1,p_1} \hookrightarrow \W^{s_2,p_2}$;
    \item if $t>s>0$ and $p> d/(t-s)$, then $\W^{t,p}\hookrightarrow \cC^s$;
        \item if $p\in (1,+\infty)$ and $s,t$ are such that $s> |t|$, then the multiplication $\cC^s\times \W^{t,p}\to \W^{t,p}$ is continuous;
        \item if $1< p,q<\infty$ and $s\in \R$, then the dual space of $\B^s_{p,q}$ is $\B^{-s}_{p^\star,q^\star}$ where $p^\star,q^\star$ are the conjugate exponents of $p,q$. If $p$ or $q$ is equal to $\infty$, the dual space of $\B^s_{p,q}$ contains $\B^{-s}_{p^\star,q^\star}$;
        \item if $s >0$, then the multiplication $\cC^s\times \cC^s\to \cC^s$ is continuous.
\end{enumerate}

Finally, let us recall the definition of the Hilbert-Schmidt norm associated to an operator ${G}:\cH_1\to \cH_2$ between two separable Hilbert spaces. It is defined as
$$
\|G\|_{\mathrm{HS}} = \p{\sum_{i_1,i_2\geq 1} |\dotp{e^2_{i_2},G[e^1_{i_1}]}_{\cH_2}|^2}^{1/2},
$$
where $(e^j_i)_{i\geq 1}$ is an orthonormal basis of $\cH_j$ for $j=1,2$.  

\section{Approximations of identity}

This section is dedicated to the construction of admissible kernels in the sense of \Cref{def:admissible}.   These kernels are constructed by using charts to lift the wavelet basis on $\R^d$ introduced in \S\ref{subsec:wavelet} to $M$. In the process, we lose  orthonormality of the basis.   Still, the fact that the wavelet basis on the manifold is ``almost orthogonal'' (in the sense that it forms a Bessel frame, see \cite{christensen2003introduction}) is enough to show that the conditions of \Cref{def:admissible} are satisfied. 

For any $D \in [2^J, 2^{J+1}-1] \cap \N$ with $J \geq 1$, we let $\widetilde\pi_D : \L^2(\R^d) \to \L^2(\R^d)$ be the orthogonal projection on 
$$
V_D = \operatorname{ran}\{\psi_{j, \lambda}~:~\lambda \in \Lambda_j,~0 \leqslant j \leqslant J\}.
$$
From the definition of the Besov norms, it is clear that for any $-S < t < s < S$, $1\leq p' \leq p <\infty$ and $1\leq q, q'\leq\infty$, there exists $C$ such that for all $D\geq 1$,
\begin{align}
    \|1-\widetilde \pi_D\|_{\B^{s}_{p,q}\to \B^{t}_{p',q'}} &\leq C D^{-(s-t)},\label{eq:embedding_tilde} \\
    \|\widetilde \pi_D\|_{\B^s_{p,q}\to \B^s_{p,q}}&\leq C\label{eq:embedding_tilde_bis}.
\end{align} 

Next, we lift the projections $\widetilde \pi_D$ on $M$ through the use of charts. For $i=1,\dots,N$, we define the operators $A_i : \mathcal D'(M) \to \mathcal D'(\mathbb B^d)$ and $B_i : \mathcal D'(\mathbb B^d) \to \mathcal D'(M)$ by
$$
A_if  = \kappa_i^*(\zeta_i f) \quad \text{and} \quad B_i g = (\kappa_i^{-1})^*\bigl(\widetilde \zeta_i g\bigr) \quad \text{for}\quad f \in \mathcal D'(M) \quad \text{and} \quad g \in \mathcal D'(\mathbb B^d),
$$
where $\widetilde \zeta_i = \zeta_i \circ \kappa_i^{-1}$.
We also let $A_i^{\star}$ and $B_i^{\star}$ be the adjoints of $A_i$ and $B_i$ over $\L^2$. By a change of variables, the adjoint operator $A_i^\star$ (resp. $B_i^\star$) can be defined exactly as $B_i$ (resp. $A_i$), replacing $\widetilde \zeta_i$ (resp. $\zeta_i$) by another compactly supported function involving the Jacobian of $\kappa_i$. Because (local) Besov spaces are invariant by composition by smooth diffeomorphisms and multiplication by smooth functions, we have that
\begin{equation}\label{eq:boundedtrivial}
A_i, B_i^\star : \B^{t}_{p,q}(M) \to \B^{t}_{p,q}(\R^d) \quad \text{and} \quad B_i, A_i^\star :\B^{t}_{p,q}(\R^d) \to \B^{t}_{p,q}(M)
\end{equation}
are bounded.
Next, define the self-adjoint operator $\pi_D : \L^2(M) \to \L^2(M)$ by setting
\begin{equation}\label{eq:defpid}
 \pi_D  = \frac 12\sum_{i=1}^N (B_i \widetilde \pi_D A_i+ A_i^\star \widetilde \pi_D B_i^{\star}) .
 \end{equation}
 \begin{proposition}\label{prop:pidadmissible}
 The family $(\pi_D)_{D \geqslant 0}$ given by \eqref{eq:defpid} is an admissible family of symmetric operators of class $\mathrm C^{S}$ in the sense of \Cref{def:admissible} for any $0\leq s_\star \leq S - d/2$: for all $\ell,s\in \left[-s_\star,s_\star\right]$ with $\ell< s$ and $1\leq p,p'\leq \infty$ and $1\leq q,q' \leq \infty$, there exists $C_{\star}$ such that for all $D\geq 1$:
    \begin{enumerate}[start=1, label=\emph{(K\arabic*)}]
    \item  if $p\geq p'$, then $\|1-\pi_D\|_{\B^{s}_{p,q}\to \B^{\ell}_{p'q'}}\leq C_{\star} D^{-(s-\ell)}$;
    \item   $\|\pi_D\|_{\B^s_{p,q}\to \B^{s}_{p,q}}\leq C_{\star}$;
    \item  for any integer $J\geq 1$,  $\ell_1,\dots,\ell_J\in [0,s_\star]$, integers $D_1,\dots,D_J\geq 1$, and any multilinear form $G:\H^{\ell_1}\times \cdots\times \H^{\ell_J}\to \R$ with Hilbert-Schmidt norm smaller than $L$, $$\int |G[K_{D_1}(x_1,\cdot), \dots,K_{D_J}(x_J,\cdot)]|^2 \dd x_1\dots \dd x_J \leq C_{\star} L^2 \prod_{j=1}^J D_j^{2\ell_j}. $$
    \item $\|y\mapsto \|K_D(\cdot,y)\|_{\B^{\ell}_{p,p}}\|_{\L^p} \leq C_{\star} \kappa_{p,d,\ell}(D)$, where $\kappa_{p,d,\ell}(D)$ is equal to $D^{\ell+d(1-1/p)}$ if the exponent $\ell+d(1-1/p)$ is positive, $\log(D)$ if it is equal to zero, and $1$ if it is negative. 
\end{enumerate}
 \end{proposition}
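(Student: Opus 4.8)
The plan is to verify the four conditions \emph{(K1)}--\emph{(K4)} of \Cref{def:admissible} for the family $(\pi_D)$ defined in \eqref{eq:defpid}, the last one carrying essentially all the weight. The common backbone is a transfer principle to $\R^d$. First, $\pi_D$ is real and self-adjoint by construction, so its kernel $K_D$ is symmetric, and it is of class $\mathrm C^S$ because it is a finite sum of terms $a_i(x)\,\widetilde K_D(\kappa_i^{-1}(x),\kappa_i^{-1}(y))\,b_i(y)$ with $a_i,b_i\in \mathrm C^\infty_c(U_i)$, where $\widetilde K_D(x,y)=\sum_{j\le J}\sum_{\lambda\in\Lambda_j}\psi_{j,\lambda}(x)\psi_{j,\lambda}(y)$ is the kernel of $\widetilde\pi_D$. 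Second, one checks directly from the construction that $\tfrac12\sum_i(B_iA_i+A_i^\star B_i^\star)=\mathrm{id}$ (each $B_iA_i$, and likewise each $A_i^\star B_i^\star$, being multiplication by $\chi_i$), whence
\[
1-\pi_D=\tfrac12\sum_{i=1}^N\bigl[B_i(1-\widetilde\pi_D)A_i+A_i^\star(1-\widetilde\pi_D)B_i^\star\bigr].
\]
We shall use freely that $A_i,B_i^\star$ are bounded $\B^t_{p,q}(M)\to\B^t_{p,q}(\R^d)$, that $B_i,A_i^\star$ are bounded $\B^t_{p,q}(\R^d)\to\B^t_{p,q}(M)$, and the two $\R^d$-estimates \eqref{eq:embedding_tilde}--\eqref{eq:embedding_tilde_bis}, valid since $|\ell|,|s|\le s_\star<S$.

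Conditions \emph{(K1)} and \emph{(K2)} then follow immediately. For \emph{(K2)}, compose $A_i:\B^s_{p,q}(M)\to\B^s_{p,q}(\R^d)$, the uniformly bounded $\widetilde\pi_D$ on $\B^s_{p,q}(\R^d)$ given by \eqref{eq:embedding_tilde_bis}, and $B_i:\B^s_{p,q}(\R^d)\to\B^s_{p,q}(M)$, do the same with the adjoint pieces, and sum over $i$. For \emph{(K1)}, insert the displayed identity for $1-\pi_D$ into the same scheme and use $\|1-\widetilde\pi_D\|_{\B^s_{p,q}(\R^d)\to\B^\ell_{p',q'}(\R^d)}\lesssim D^{-(s-\ell)}$ from \eqref{eq:embedding_tilde} (here the hypothesis $p\ge p'$ enters), again summing over $i$. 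Both bounds are uniform in $D$.

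For \emph{(K4)}: since multiplication by a fixed smooth compactly supported function and composition with a fixed diffeomorphism are bounded on every local Besov space, unwinding $K_D$ reduces the estimate to bounding $\bigl\|z\mapsto\|\widetilde K_D(\cdot,z)\|_{\B^\ell_{p,p}(\R^d)}\bigr\|_{\L^p(\Omega)}$ for a fixed bounded $\Omega\subset\R^d$. Orthonormality of the $\psi_{j,\lambda}$ gives $\langle\widetilde K_D(\cdot,z),\psi_{j,\lambda}\rangle=\psi_{j,\lambda}(z)$ for $2^j\le 2^J$ and $0$ otherwise; combined with the wavelet characterisation of $\|\cdot\|_{\B^\ell_{p,p}}$, the scaling bound $\|\psi_{j,\lambda}\|_{\L^p}\lesssim 2^{jd(1/2-1/p)}$, and the bounded overlap $\#\{\lambda\in\Lambda_j:\supp\psi_{j,\lambda}\cap\Omega\neq\emptyset\}\lesssim 2^{jd}$, this reduces to summing the geometric series $\sum_{2^j\le 2^J}2^{jp(\ell+d(1-1/p))}$. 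Its three regimes, according to the sign of $\ell+d(1-1/p)$, produce exactly the three cases of $\kappa_{p,d,\ell}(D)$ (with the harmless improvement $(\log D)^{1/p}\le\log D$ in the borderline case); the case $p=\infty$ is identical using the $\B^\ell_{\infty,\infty}$ wavelet norm.

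The crux, and the main obstacle, is \emph{(K3)}. Fix $J$, exponents $\ell_1,\dots,\ell_J\in[0,s_\star]$, integers $D_1,\dots,D_J$, and $G$ on $\prod_{k=1}^J\H^{\ell_k}(M)$ with $\|G\|_{\mathrm{HS}}\le L$. Using the above description of $K_D$, expanding $G[K_{D_1}(x_1,\cdot),\dots,K_{D_J}(x_J,\cdot)]$ by multilinearity, and applying Cauchy--Schwarz over the (finitely many, for fixed $J$) resulting chart tuples $\mathbf i=(i_1,\dots,i_J)$, it suffices to treat one tuple. Pulling out the bounded scalars $a_{i_k}(x_k)$, introducing the transported form $\widetilde G_{\mathbf i}[h_1,\dots,h_J]:=G[M_{i_1}h_1,\dots,M_{i_J}h_J]$ with $M_i\colon h\mapsto b_i\cdot(h\circ\kappa_i^{-1})$ bounded $\H^{\ell_k}(\R^d)\to\H^{\ell_k}(M)$ (so that $\|\widetilde G_{\mathbf i}\|_{\mathrm{HS}}\lesssim L$ by submultiplicativity of the Hilbert--Schmidt norm under composition with bounded operators), and changing variables $z_k=\kappa_{i_k}^{-1}(x_k)$, the task becomes
\[
\int_{(\R^d)^J}\bigl|\widetilde G_{\mathbf i}\bigl[\widetilde K_{D_1}(z_1,\cdot),\dots,\widetilde K_{D_J}(z_J,\cdot)\bigr]\bigr|^2\,\dd z_1\cdots\dd z_J\ \lesssim\ L^2\prod_{k=1}^J D_k^{2\ell_k}.
\]
Writing $\widetilde K_{D_k}(z_k,\cdot)=\sum_{2^{j_k}\le 2^{J_{D_k}}}\sum_{\mu_k\in\Lambda_{j_k}}\psi_{j_k,\mu_k}(z_k)\,\psi_{j_k,\mu_k}$ and using that the tensor products $\bigotimes_k\psi_{j_k,\mu_k}$ are orthonormal in $\L^2((\R^d)^J)$, the left-hand side equals $\sum_{\mathbf j,\boldsymbol\mu\,:\,2^{j_k}\le 2^{J_{D_k}}}\bigl|\widetilde G_{\mathbf i}[\psi_{j_1,\mu_1},\dots,\psi_{j_J,\mu_J}]\bigr|^2$, a sum that is finite precisely by the Hilbert--Schmidt hypothesis. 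Since $\H^{\ell_k}=\B^{\ell_k}_{2,2}$, the rescaled wavelets $\{2^{-j\ell_k}\psi_{j,\mu}\}_{j,\mu}$ form a Riesz basis of $\H^{\ell_k}(\R^d)$, so $\sum_{\mathbf j,\boldsymbol\mu}\prod_k 2^{-2j_k\ell_k}\bigl|\widetilde G_{\mathbf i}[\psi_{j_1,\mu_1},\dots]\bigr|^2\lesssim\|\widetilde G_{\mathbf i}\|_{\mathrm{HS}}^2\lesssim L^2$; multiplying back by $\prod_k 2^{2j_k\ell_k}$, which over the truncated range $2^{j_k}\le 2^{J_{D_k}}\asymp D_k$ is $\lesssim\prod_k D_k^{2\ell_k}$ because every $\ell_k\ge 0$, gives the claim. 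The delicate point—the one place where naive bookkeeping fails—is that one must \emph{not} run this Parseval computation with the lifted wavelet system on $M$ itself, which is merely a Riesz sequence and not orthonormal; transporting $G$ down to $\R^d$ first, where the Daubechies basis is genuinely orthonormal, is exactly what lets the truncation level $\asymp D_k$ and the sign condition $\ell_k\ge 0$ combine into the asserted power of $\prod_k D_k$. Everything else—boundedness of charted multiplications and compositions on local Besov spaces, the bounded overlap of wavelet supports, and the elementary geometric-series estimates—is routine.
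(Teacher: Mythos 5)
Your proof is correct. Parts (K1), (K2), (K4) follow essentially the same track as the paper: the identity $\tfrac12\sum_i(B_iA_i+A_i^\star B_i^\star)=\mathrm{id}$ together with the $\R^d$-bounds on $\widetilde\pi_D$ transported through the chart operators handles (K1)--(K2), and the wavelet expansion of $\widetilde K_D$ with the bounded overlap count and the $\L^p$ scaling of the wavelets handles (K4) (the paper routes this through Lemma~\ref{lem:eqnorm}, but the content is the same geometric sum). For (K3), however, you take a genuinely different route. The paper establishes that the lifted systems $\Psi^{(i)}_t$ are Bessel sequences in $\H^t(M)$, which requires a nontrivial estimate on $\sum_{i,\psi}|\langle\psi^{(i)}_t,\psi^{(i_0)}_t\rangle_{\H^t}|$ via the $\B^{-t+d/2}_{1,1}$-duality and the mapping properties of $\Lambda^{2t}$, and then runs the Hilbert--Schmidt bound on $M$ with those non-orthonormal wavelets. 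You instead pull the form $G$ back to $\R^d$ through the chart multiplication/composition operators $M_i$, whose $\H^{\ell_k}$-boundedness is the same easy fact already used for (K1)--(K2); the Hilbert--Schmidt control then reduces to the elementary submultiplicativity $\|G\circ(M_{i_1}\otimes\cdots\otimes M_{i_J})\|_{\mathrm{HS}}\le\prod_k\|M_{i_k}\|_{\mathrm{op}}\,\|G\|_{\mathrm{HS}}$, and genuine orthonormality of the Daubechies basis on $\R^d$ replaces the frame lemma. Your approach buys a lighter technical load at the cost of a Cauchy--Schwarz over chart tuples; the paper's buys a reusable Bessel-frame fact on $M$. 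One nuance worth correcting: your closing remark that one ``must not run this Parseval computation with the lifted wavelet system on $M$'' overstates matters --- the paper does run essentially that computation, only with the Bessel inequality in place of Parseval equality; your version simply makes the underlying $\R^d$-orthonormality explicit rather than encapsulating it in a frame estimate.
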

 The rest of this subsection is dedicated to the proof of the above proposition.
\begin{proof}[Proof of Proposition \ref{prop:pidadmissible}]
By \eqref{eq:partunitquad}, there holds $\sum_{i=1}^N B_i A_i=\sum_{i=1}^N  A_i^{\star}B_i^{\star}=1$, which yields 
$$
 1-\pi_D = \frac 12\sum_{i=1}^N (B_i (1-\widetilde \pi_D)A_i+ A_i^{\star}(1-\widetilde \pi_D)B_i^{\star}).
$$
Due to \eqref{eq:boundedtrivial}, \eqref{eq:embedding_tilde} and \eqref{eq:embedding_tilde_bis},  $( \pi_D)_{D\geq 1}$ automatically satisfies \emph{\ref{K1}} and \emph{\ref{K2}}. Moreover, $\pi_D$ is symmetric by construction. 

Next, we prove that $\pi_D$ satisfies the property \emph{\ref{K4}}. Let $|\ell| < S.$ The kernel $K_D$ of $\pi_D$ is defined for $x,y\in M$ by
\begin{equation}
  K_D(x,y)= \frac 12\sum_{i=1}^N \sum_{j=0}^J \sum_{\lambda \in \Lambda_j} (A_i^\star \psi_{j, \lambda}(x) B_i\psi_{j, \lambda}(y)+ B_i\psi_{j, \lambda}(x)A_i^{\star}\psi_{j, \lambda}(y)).
\end{equation}
For $i\in [N]$ and $j\geq 0$, define $\Psi^{(i)}_j$ as the set of functions either of the form $\psi^{(i)}=B_i\psi$ or $\psi^{(i)}=A_i^{\star}\psi$ for $\psi \in \Psi_j$, with $\Psi^{(i)}=\bigcup_{j\geq 0}\Psi^{(i)}_j$. Remark that as the set of functions in $\Psi_j$ whose support intersect the unit ball is of order $2^{jd}$, the set of nonzero functions in $\Psi^{(i)}_j$ is also of order $2^{jd}$.

Let us rewrite $K_D(x,y)$ as 
\begin{equation}
      K_D(x,y)= \sum_{i=1}^N \sum_{j=0}^J \sum_{\psi^{(i)} \in \Psi_j^{(i)}} \check{\psi}^{(i)}(x) \psi^{(i)}(y),
\end{equation}
where $\check{\psi}^{(i)}= A_i^{\star}\psi/2$ if $\psi^{(i)} = B_i\psi$ and $\check{\psi}^{(i)}= B_i\psi/2$ if $\psi^{(i)} = A_i^{\star}\psi$. 
We will use the following lemma to bound $\|K_D(x,\cdot)\|_{\B^{\ell}_{p,p}}$.
\begin{lemma}\label{lem:eqnorm}
There is $C > 0$ such that for any sequence $(a^{(i)}_{\psi^{(i)}})$ indexed by $\psi^{(i)} \in \Psi_j^{(i)}$ for $j \geqslant 0$ and $i = 1, \dots, N$, one has
$$
\left\|\sum_{i=1}^N\sum_{j\geq 0}\sum_{\psi^{(i)} \in \Psi_j^{(i)}} a^{(i)}_{\psi^{(i)}} \psi^{(i)}\right\|_{\B^\ell_{p,p}} \leqslant C\|a^{(i)}\|_{\mathrm b^\ell_{p,p}}.
$$
\end{lemma}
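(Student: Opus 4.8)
The plan is to deduce \Cref{lem:eqnorm} from two facts that are already at hand: the wavelet characterisation of Besov spaces on $\R^d$ recalled in \S\ref{subsec:wavelet}, and the uniform boundedness of the lift operators $B_i$ and $A_i^\star$ between Besov spaces, stated in \eqref{eq:boundedtrivial}. Since the chart cover $U_1,\dots,U_N$ is finite, I would first invoke the triangle inequality to reduce the claim to a single chart: it suffices to bound $\|F_i\|_{\B^\ell_{p,p}(M)}$ by $C\|a^{(i)}\|_{\mathrm b^\ell_{p,p}}$, where $F_i=\sum_{j\geq 0}\sum_{\psi^{(i)}\in\Psi^{(i)}_j}a^{(i)}_{\psi^{(i)}}\psi^{(i)}$, and then sum over the $N$ charts (the final constant picking up a harmless factor $N$).

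For a fixed $i$, recall that $\Psi^{(i)}_j$ consists, counted with multiplicity, of the functions $B_i\psi$ and $A_i^\star\psi$ with $\psi\in\Psi_j$. Accordingly, I would split the coefficient family $(a^{(i)}_{\psi^{(i)}})$ into the subfamily $b^{(i)}\in\C^\Psi$ attached to the $B_i$-lifts and the subfamily $c^{(i)}\in\C^\Psi$ attached to the $A_i^\star$-lifts. Since $\mathrm b^\ell_{p,p}$ is an $\ell^p$-type norm over the index set, this partition gives $\|b^{(i)}\|_{\mathrm b^\ell_{p,p}},\|c^{(i)}\|_{\mathrm b^\ell_{p,p}}\leq\|a^{(i)}\|_{\mathrm b^\ell_{p,p}}$. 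Let $g^{(i)}_1$ (resp. $g^{(i)}_2$) be the element of $\B^\ell_{p,p}(\R^d)$ with wavelet coefficients $b^{(i)}$ (resp. $c^{(i)}$), which is well defined because $|\ell|\leq s_\star\leq S-d/2<S$; the wavelet characterisation (see \cite[Theorem 1.64]{triebel2006theory} together with \eqref{eq:norm_wavelet}) yields $\|g^{(i)}_1\|_{\B^\ell_{p,p}(\R^d)}\lesssim\|b^{(i)}\|_{\mathrm b^\ell_{p,p}}$, and likewise for $g^{(i)}_2$. Using the continuity of $B_i$ and $A_i^\star$ on $\mathcal D'_S$ to commute them past the wavelet series, I would write $F_i=B_i g^{(i)}_1+A_i^\star g^{(i)}_2$, and then apply the boundedness $B_i,A_i^\star:\B^\ell_{p,p}(\R^d)\to\B^\ell_{p,p}(M)$ from \eqref{eq:boundedtrivial} to conclude $\|F_i\|_{\B^\ell_{p,p}(M)}\lesssim\|b^{(i)}\|_{\mathrm b^\ell_{p,p}}+\|c^{(i)}\|_{\mathrm b^\ell_{p,p}}\lesssim\|a^{(i)}\|_{\mathrm b^\ell_{p,p}}$.

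I do not expect a genuine obstacle: conceptually the lemma simply records that the lifted (no longer orthonormal) wavelet system is a Bessel system for $\B^\ell_{p,p}(M)$, and this Bessel bound is the composition of the $\R^d$ synthesis bound with the fixed continuous operators $B_i$ and $A_i^\star$. The only points needing a little care are the bookkeeping when splitting $\Psi^{(i)}_j$ into its two lift families, the $\ell^p$-subadditivity of the sequence norm under that splitting, and the justification that $B_i$ and $A_i^\star$ may be interchanged with the wavelet series; the latter follows from their continuity on $\mathcal D'_S$, all the distributions involved belonging to $\mathcal D'_S$ since $|\ell|<S$.
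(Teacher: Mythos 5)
Your proposal is correct and takes essentially the same route as the paper. The only difference is organisational: you invoke the boundedness of $B_i$ and $A_i^\star$ on Besov spaces (the already-stated \eqref{eq:boundedtrivial}) as a black box after forming the $\R^d$-side synthesis series $g_1^{(i)}, g_2^{(i)}$, whereas the paper's proof unwinds the Besov norm on $M$ through its defining chart cover and bounds the resulting chart transition operators $f\mapsto(\kappa_i^{-1}\circ\kappa_{i_1})^*(\omega_{i_1,i}f)$ inline — but this is exactly the diffeomorphism-invariance fact that underlies \eqref{eq:boundedtrivial} in the first place. Both reduce to the wavelet characterisation of $\B^\ell_{p,p}(\R^d)$ plus a fixed finite collection of bounded chart/lift operators; your version is marginally more modular but not a different argument.
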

\begin{proof}
By definition of the $\B^s_{p,p}$ norm on $M$, the left-hand side of the above inequality is equal to
\begin{equation}\label{eq:bigsum}
\sum_{i_1=1}^N \left\|\sum_{i=1}^N\sum_{j=0}^D\sum_{\psi^{(i)} \in \Psi_j^{(i)}} a^{(i)}_{\psi^{(i)}} \kappa_{i_1}^*\left(\eta_{i_1}\psi^{(i)}\right) \right\|_{\B^\ell_{p,p}(\R^d)}.
\end{equation}
However note that for each $i_1, i \in [N]$ and $\psi^{(i)} \in \Psi_j^{(i)}$, one has
$$
\kappa_{i_1}^*\eta_{i_1} \psi^{(i)} = (\kappa_i^{-1} \circ \kappa_{i_1})^*(\omega_{i_1, i} \psi)
$$
for some $\omega_{i_1, i} \in \mathrm C^\infty_c(\mathbb B^d)$ supported in $\kappa_i^{-1}(U_i) \cap \kappa_{i_1}^{-1}(U_{i_1})$. Now notice that the operator $f \mapsto (\kappa_i^{-1} \circ \kappa_{i_1})^*(\omega_{i_1, i} f)$ is continuous on Besov spaces. Hence one gets that \eqref{eq:bigsum} is controlled by
$$
\left\|\sum_{j=0}^D\sum_{\psi^{(i)} \in \Psi_j^{(i)}} a^{(i)}_{\psi^{(i)}} \psi \right\|_{\B^\ell_{p,p}(\R^d)} \lesssim \left(\sum_{j=0}^D 2^{jp(s + \frac d2-\frac dp)}\sum_{\psi^{(i)} \in \Psi_j^{(i)}} |a^{(i)}_{\psi^{(i)}}|^p \right)^{1/p},
$$
which gives the sought result.
\end{proof}
The above lemma yields
$$
\begin{aligned}
\|K_D(x,\cdot)\|_{\B^{\ell}_{p,p}}^p  &\lesssim \sum_{j=0}^J 2^{jp(\ell + \frac d2-\frac dp)}\sum_{\psi^{(i)} \in \Psi_j^{(i)}} |\check \psi^{(i)}(x)|^p.
\end{aligned}
$$
Integrating this inequality, using that $\Psi_j^{(i)}$ contains a number of nonzero functions of order $2^{jd}$ and that $B_i$ and $A_i^{\star}$ are continuous $\L^p \to \L^p$ together with the estimate $\|\psi\|_{\L^p}^p \lesssim 2^{jd(\frac p2-1)}$, we conclude  that \emph{\ref{K4}} holds. A similar proof holds for $p=\infty$. 

It remains to prove \emph{\ref{K3}}. To do so, we need to establish more precise properties of the families of wavelets on manifolds. 
Let $\cH$ be a separable Hilbert space and let $(\phi_k)_{k\geq 0}$ be a family of vectors in $\cH$. We say that $(\phi_k)_{k\geq 0}$ is a Bessel sequence if  there exists $\alpha>0$ such that for all $f\in \cH$,
\begin{equation}
    \sum_{k\geq 0} |\dotp{f,\phi_k}_{\cH}|^2 \leq \alpha\left\|f\right\|_{\cH}^2.
\end{equation}
\begin{lemma}\label{lem:bessel}
    Let $(\phi_k)_{k\geq 0}$ be a Bessel sequence with constant $\alpha >0$. Then, for all linear forms $G:\cH\to \R$ and sequences of numbers $a = (a_k)_{k\geq 0}$,
    \begin{align*}
        &\sum_{k\geq 0} |G[\phi_k]|^2 \leq \alpha \|G\|_{\mathrm{HS}}^2\quad \text{ and }\quad \left\| \sum_{k\geq 0} a_k\phi_k \right\|_{\cH}^2 \leq \alpha \sum_{k\geq 0}a_k^2.
    \end{align*}
    Furthermore, if $(\phi_k)_{k\geq 0}$ is a family of vectors such that there exists $\alpha>0$ with $$\sup_{j\geq 0}\sum_{k\geq 0}|\dotp{\phi_j,\phi_k}_{\cH}|\leq \alpha,$$ then $(\phi_k)_{k\geq 0}$ is a Bessel sequence with constant $\alpha >0$.
\end{lemma}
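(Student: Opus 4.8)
The three claims are classical facts from frame theory, and the plan is to establish each one by first proving it for finite subsets of the index set and then passing to the limit. The single fact I will use repeatedly is that the Bessel condition $\sum_k|\dotp{f,\phi_k}_{\cH}|^2\leq\alpha\|f\|_{\cH}^2$ is exactly the statement that the analysis map $f\mapsto(\dotp{f,\phi_k}_{\cH})_k$ is bounded $\cH\to\ell^2$ with squared norm at most $\alpha$, and that its adjoint is the synthesis map $(a_k)_k\mapsto\sum_k a_k\phi_k$; all three assertions are then near-immediate consequences of this duality together with Cauchy--Schwarz.

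For the first inequality I may assume $\|G\|_{\mathrm{HS}}<\infty$ (otherwise there is nothing to prove), so that $G$ is a bounded linear form and the Riesz representation theorem gives $g\in\cH$ with $G[u]=\dotp{u,g}_{\cH}$. Expanding the Hilbert--Schmidt norm (as defined in \Cref{app:besov}, with the one-dimensional target $\R$) in an orthonormal basis of $\cH$ yields $\|G\|_{\mathrm{HS}}=\|g\|_{\cH}$, whence $\sum_k|G[\phi_k]|^2=\sum_k|\dotp{\phi_k,g}_{\cH}|^2\leq\alpha\|g\|_{\cH}^2$ is just the Bessel inequality applied to $g$. For the second inequality I would fix a finite index set $F$ and a unit vector $f\in\cH$, and estimate $|\dotp{\sum_{k\in F}a_k\phi_k,f}_{\cH}|^2=|\sum_{k\in F}a_k\dotp{\phi_k,f}_{\cH}|^2\leq(\sum_{k\in F}a_k^2)(\sum_{k\in F}|\dotp{\phi_k,f}_{\cH}|^2)\leq\alpha\sum_{k\in F}a_k^2$ by Cauchy--Schwarz and the Bessel bound; taking the supremum over $f$ controls $\|\sum_{k\in F}a_k\phi_k\|_{\cH}^2$, the same estimate on blocks $F=\{M,\dots,M'\}$ shows the partial sums are Cauchy when $\sum_k a_k^2<\infty$, and letting $F$ exhaust $\N$ gives the claim.

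For the last assertion the strategy is a Schur-test argument. Fixing $f\in\cH$ and a finite set $F$, and writing $c_k=\dotp{f,\phi_k}_{\cH}$, I would use $\sum_{k\in F}|c_k|^2=\dotp{f,\sum_{k\in F}\overline{c_k}\phi_k}_{\cH}\leq\|f\|_{\cH}\,\|\sum_{k\in F}\overline{c_k}\phi_k\|_{\cH}$, then expand $\|\sum_{k\in F}\overline{c_k}\phi_k\|_{\cH}^2=\sum_{j,k\in F}c_j\overline{c_k}\dotp{\phi_k,\phi_j}_{\cH}$ and bound its modulus by $\sum_{j,k\in F}|c_j||c_k|\,|\dotp{\phi_j,\phi_k}_{\cH}|$; applying $|c_j||c_k|\leq\tfrac12(|c_j|^2+|c_k|^2)$ together with the symmetry $|\dotp{\phi_j,\phi_k}_{\cH}|=|\dotp{\phi_k,\phi_j}_{\cH}|$ and the row-sum hypothesis gives $\|\sum_{k\in F}\overline{c_k}\phi_k\|_{\cH}^2\leq\alpha\sum_{j\in F}|c_j|^2$, hence $\sum_{k\in F}|c_k|^2\leq\alpha\|f\|_{\cH}^2$, and a supremum over finite $F$ concludes. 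There is no genuine obstacle here: the only point that requires a little care is the passage from finite to infinite sums, and this is harmless because every summand is nonnegative, so all the limits involved are monotone.
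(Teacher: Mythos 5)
Your proof is correct. For the first inequality your argument is essentially the paper's in disguise: the paper computes $\sum_k|G[\phi_k]|^2 = \|T^*(\sum_i G[e_i]e_i)\|^2_{\ell^2}\leq\alpha\|\sum_i G[e_i]e_i\|^2_{\cH}$ where $T^*$ is the analysis map $\psi\mapsto(\langle\psi,\phi_k\rangle)_k$, and $\sum_i G[e_i]e_i$ is (up to a conjugation) exactly the Riesz representative $g$ you name, while the bound $\|T^*\|\leq\sqrt{\alpha}$ is the Bessel inequality applied to $g$. For the second and third assertions the paper simply cites \cite{christensen2003introduction} (Theorem 3.2.3 and Proposition 3.5.4); your Cauchy--Schwarz argument for the synthesis bound and your Schur-test argument for the Bessel criterion are the standard proofs of precisely those textbook statements, so you are reconstructing the references rather than taking a genuinely different route, which has the benefit of making the lemma self-contained. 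One point, common to both your write-up and the paper's, that deserves to be made explicit: the identity $G[\phi_k]=\langle\phi_k,g\rangle$ (equivalently $G[\phi_k]=\sum_i\langle e_i,\phi_k\rangle\,G[e_i]$) uses continuity of $G$, and finiteness of $\sum_i|G[e_i]|^2$ in one fixed orthonormal basis does not by itself force a linear form to be continuous; so the lemma should be understood as applying to bounded $G$, which is always the case in the paper's applications.
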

\begin{proof}
    The second inequality is stated in \cite[Theorem 3.2.3]{christensen2003introduction}. Let 
    $$
    T:a\in \ell^2(\N_{\geqslant 0}) \mapsto \sum_{k\geq 0}a_k\phi_k\in \cH.
    $$
    Both $\mathrm{T}$ and its adjoint are continuous with norm smaller than $\sqrt{\alpha}$, and the adjoint $T^*$ is given by $T^*\psi = (\dotp{\psi,\phi_k}_{\cH})_{k\geq 0}$ for $\psi\in \cH$, see \cite[Lemma 3.2.1]{christensen2003introduction}. Let $(e_i)_{i\geq 0}$ be an orthonormal basis of $\cH$. Then, $\sum_{i\geq 0} G[e_i]T^*e_i$ is the sequence whose $k$-th entry is equal to $\sum_{i\geq 0} G[e_i]\dotp{e_i,\phi_k}=G[\phi_k]$. Thus,
    \begin{equation*}
        \sum_{k\geq 0} |G[\phi_k]|^2 = \|\sum_{i\geq 0} G[e_i]T^*e_i\|^2_{\ell^2} \leq \alpha \|\sum_{i\geq 0} G[e_i]e_i\|^2_{\cH}= \alpha \|G\|_{\mathrm{HS}}^2.
    \end{equation*}
    The last property is given in \cite[Proposition 3.5.4]{christensen2003introduction}.
\end{proof}
 For $|t| < S$, $j\geq 0$ and $\psi^{(i)}\in \Psi_j^{(i)}$, we let $\psi_t^{(i)} =2^{-jt}\psi^{(i)}$. Define $\Psi_{j,t}^{(i)}$ and $\Psi^{(i)}_t$ in a obvious way.

\begin{lemma}
Assume that $S > d/2$.    For $|t| < S - d / 2$, the family $\bigcup_{i\in [N]} \Psi^{(i)}_t$ is a Bessel sequence in $\H^t$.
\end{lemma}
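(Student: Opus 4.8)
The plan is to reduce everything to the flat case through the charts and to exploit that, on $\H^t=\B^t_{2,2}$, the \emph{defining} norm is literally the $\ell^2$-norm of the wavelet coefficients. First one should check the statement is meaningful: every $\psi^{(i)}\in\Psi^{(i)}$ is the image under $B_i$ or $A_i^{\star}$ of some $\psi\in\mathrm C^S_c(\R^d)$, hence a $\mathrm C^S$ function on $M$, and the chain of embeddings $\mathrm C^S\hookrightarrow\cC^S=\B^S_{\infty,\infty}\hookrightarrow\B^{S-d/2}_{2,\infty}\hookrightarrow\B^t_{2,2}=\H^t(M)$ (valid for $t<S-d/2$, and trivial via $\mathrm C^S\subset\L^2\subset\H^t$ when $t\le 0$) shows $\psi^{(i)}_t\in\H^t(M)$, so $\bigcup_i\Psi^{(i)}_t$ really sits inside $\H^t(M)$.

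For the Bessel property itself, rather than estimating inner products I would use the dual characterisation: $\bigcup_i\Psi^{(i)}_t$ is Bessel in $\H^t$ if and only if the synthesis operator $T\colon\ell^2\to\H^t(M)$, $T c=\sum_{\phi\in\bigcup_i\Psi^{(i)}_t}c_\phi\,\phi$, is bounded (then $T^{\star}f=(\langle f,\phi\rangle_{\H^t})_\phi$ yields $\sum_\phi|\langle f,\phi\rangle_{\H^t}|^2=\|T^{\star}f\|_{\ell^2}^2\le\|T\|^2\|f\|_{\H^t}^2$, with $\alpha=\|T\|^2$). To bound $T$, split the index set into the $2N$ branches given by $i\in[N]$ and the choice of $B_i$ versus $A_i^{\star}$. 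On the $B_i$-branch one has $\sum_{j,\lambda}c_{j,\lambda}2^{-jt}B_i\psi_{j,\lambda}=B_i u$ with $u=\sum_{j,\lambda}c_{j,\lambda}2^{-jt}\psi_{j,\lambda}\in\B^t_{2,2}(\R^d)$; since $\{\psi_{j,\lambda}\}$ is $\L^2$-orthonormal one gets $a_\Psi(u)_{j,\lambda}=c_{j,\lambda}2^{-jt}$, so the definition of the Besov norm gives $\|u\|_{\H^t(\R^d)}^2=\sum_j 2^{2jt}\sum_\lambda|c_{j,\lambda}2^{-jt}|^2=\|c\|_{\ell^2}^2$. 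Then $\|B_i u\|_{\H^t(M)}\lesssim\|u\|_{\H^t(\R^d)}=\|c\|_{\ell^2}$ by \eqref{eq:boundedtrivial}, and the $A_i^{\star}$-branches are identical. Summing the $2N$ contributions and applying Cauchy–Schwarz over the finitely many branches gives $\|Tc\|_{\H^t(M)}\lesssim\|c\|_{\ell^2}$, which is the claim (convergence of the series follows since the partial sums are Cauchy in the complete space $\H^t(M)$).

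I expect no serious obstacle: the only delicate points are bookkeeping, namely that every pairing stays in the admissible range — for negative $t$ the series for $u$ converges in $\mathcal D'_S(\R^d)$ and $a_\Psi$ is computed against $\psi_{j,\lambda}\in\mathrm C^S_c$, which needs $|t|<S$, and \eqref{eq:boundedtrivial} must be applied for such $t$; this is why the hypothesis $|t|<S-d/2$ (a fortiori $|t|<S$) is comfortable to assume. An alternative, more hands-on route, closer to the last part of Lemma~\ref{lem:bessel}, would be to verify $\sup_{\phi\in\bigcup_i\Psi^{(i)}_t}\sum_{\phi'}|\langle\phi,\phi'\rangle_{\H^t(M)}|<\infty$ by treating the $B_i\psi_{j,\lambda}$ and $A_i^{\star}\psi_{j,\lambda}$ as smooth molecules of scale $2^{-j}$ carrying $S$ near-vanishing moments and invoking a standard almost-orthogonality estimate: the spatial sum over the $O(2^{jd})$ molecules at scale $j$ converges precisely because $S>d/2$, and the scale sum converges by the exponential gain in $|j-j'|$. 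This version makes the constant $S-d/2$ transparent but is heavier, so I would present the synthesis-operator argument as the proof and keep the molecule approach as a remark.
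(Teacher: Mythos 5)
Your proof is correct and takes a genuinely different route from the paper's. The paper verifies the sufficient almost-orthogonality condition of \Cref{lem:bessel}, namely $\sup_{\psi_0^{(i_0)}}\sum_{\psi^{(i)}}|\dotp{\psi^{(i)}_t,\psi^{(i_0)}_{0,t}}_{\H^t}|<\infty$, and controls each inner product via the duality pairing between $\B^{t+d/2}_{1,1}$ and $\B^{-t+d/2}_{1,1}$, using that $\Lambda^{2t}$ maps the former into the latter; it is exactly the $\pm d/2$ shift that forces the hypothesis $|t|<S-d/2$. You instead prove Bessel-ness via the equivalent characterization (the converse direction of the synthesis-operator statement, which is the full content of Christensen's Theorem~3.2.3 rather than only the direction quoted in \Cref{lem:bessel}) and observe that, on the flat side, the synthesis operator is literally an isometry onto $\B^t_{2,2}(\R^d)$ because the family $\Psi$ is precisely the one used to define the Besov norm; \eqref{eq:boundedtrivial} then transports the bound to $M$. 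This is cleaner, avoids the $\B^{\pm t+d/2}_{1,1}$ detour entirely, and only needs $|t|<S$ (so the Besov wavelet calculus and \eqref{eq:boundedtrivial} are available), a strictly weaker hypothesis than the one stated in the lemma. Two cosmetic remarks: (i) in your ``the statement is meaningful'' preamble, the chain $\cC^S\hookrightarrow\B^{S-d/2}_{2,\infty}$ is a needlessly conservative (and on $\R^d$ actually false-direction) embedding; on the compact manifold $M$ one simply uses $\cC^S=\B^S_{\infty,\infty}\hookrightarrow\B^S_{2,\infty}\hookrightarrow\B^t_{2,2}$ for $t<S$, so the inclusion $\psi^{(i)}_t\in\H^t$ needs only $|t|<S$; (ii) $\|u\|_{\H^t(\R^d)}=\|c\|_{\ell^2}$ should read $\|u\|_{\B^t_{2,2}(\R^d)}=\|c\|_{\ell^2}$, hence $\|u\|_{\H^t(\R^d)}\simeq\|c\|_{\ell^2}$, since the paper's $\H^t$ norm is defined through $\Lambda$ and is only equivalent, not equal, to the wavelet-coefficient norm; this is harmless since you only need boundedness of $T$.
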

\begin{proof}
    We check that the sufficient condition given in \Cref{lem:bessel} holds. Fix $i_0\in [N]$ and $j_0 \geqslant 0$. Let $\psi_t^{(i_0)}=2^{-j_0t}\psi_0^{(i_0)}\in \Psi_t^{(i_0)}$, where $\psi_0^{(i_0)}=B_{i_0}\psi_0$ for some $\psi_0\in \Psi_{j_0}$. Then, 
    \begin{align*}
    \sum_{i=1}^N \sum_{\psi^{(i)} \in \Psi_t^{(i)}}& |\dotp{ \psi^{(i)}_t,\psi_t^{(i_0)}}_{\H^t}|    =\sum_{i=1}^N \sum_{j\geq 0} 2^{-(j+j_0)t}\sum_{\psi^{(i)} \in \Psi_j^{(i)}} |\dotp{ \psi^{(i)},\Lambda^{2t}\psi_0^{(i_0)}}| \\
        &=\frac 12\sum_{i=1}^N \sum_{j\geq 0} 2^{-(j+j_0)t}\sum_{\psi \in \Psi_j} (|\dotp{ B_i\psi,\Lambda^{2t}B_{i_0}\psi_0}|+|\dotp{ A_i^{\star}\psi,\Lambda^{2t}B_{i_0}\psi_0}|).\end{align*}
        By duality and using the definition of the Besov norm $\B_{1,1}^{-t+d/2}(\R^d)$, one sees that this quantity is equal to
\begin{align*}
       &\frac 12\sum_{i=1}^N \sum_{j\geq 0} 2^{-(j+j_0)t}\sum_{\psi \in \Psi_j} \left(|\dotp{ \psi,B_i^{\star}\Lambda^{2t}B_{i_0}\psi_0}|+|\dotp{ \psi,A_i\Lambda^{2t}B_{i_0}\psi_0}|\right)\\
        &\qquad \qquad \qquad =  \frac{2^{-j_0t}}{2}\sum_{i=1}^N  \left(\| B_i^\star \Lambda^{2t}B_{i_0}\psi_0\|_{\B^{-t+d/2}_{1,1}(\R^d)}+\| A_i \Lambda^{2t}B_{i_0}\psi_0\|_{\B^{-t+d/2}_{1,1}(\R^d)}\right).    \end{align*}
        By \eqref{eq:boundedtrivial} and the fact that $\Lambda^{2t}:\B^{t+d/2}_{1,1}\to \B^{-t+d/2}_{1,1}$ (see \cite[Section 13.9, Exercise 5]{taylor1996partial}), it holds that both $\| B_i^\star \Lambda^{2t}B_{i_0}\psi_0\|_{\B^{-t+d/2}_{1,1}(\R^d)}$ and $\| A_i \Lambda^{2t}B_{i_0}\psi_0\|_{\B^{-t+d/2}_{1,1}(\R^d)}$ are at most of order $ \|\psi_0\|_{\B^{t+d/2}_{1,1}(\R^d)}=2^{j_0t}$. This shows that 
        \begin{equation*}\sum_{i=1}^N \sum_{\psi^{(i)} \in \Psi_t^{(i)}} |\dotp{ \psi^{(i)}_t,\psi_t^{(i_0)}}_{\H^t}|\lesssim 1.
        \end{equation*}  The same condition holds if $\psi_0^{(i_0)}=A_{i_0}^*\psi_0$ for some $\psi_0\in \Psi_{j_0}$. By \Cref{lem:bessel}, the family is a Bessel sequence.
\end{proof}
One claims that \Cref{lem:bessel} implies that $\pi_D$ satisfies \emph{\ref{K3}}. Indeed, let $s_\star \in [0,S -d/2]$ and $\ell\in [0,s_\star]$. For a linear form $G:\H^\ell\to \R$, letting $D\geq 1$ be of order $2^{J}$, it holds
\begin{align*}
    G[K_D(x,\cdot)]= \sum_{i\in [N]}\sum_{j= 0}^J 2^{j\ell}\sum_{\psi\in \Psi_{j}^{(i)}} G\left[\check \psi^{(i)}\right]\psi^{(i)}(x).
\end{align*}
Thus, by properties of a Bessel frame, letting $\alpha$ be the constant of the frame,
\begin{align*}
    \int |G[K_D(x,\cdot)]|^2 \dd x &= \left\|\sum_{i\in [N]}\sum_{j= 0}^J 2^{j\ell}\sum_{\psi^{(i)}\in \Psi_{j}^{(i)}} G\left[\check \psi^{(i)}\right]\psi^{(i)} \right\|^2_{\L^2}\\
    & \leq \alpha  \sum_{i\in [N]} \sum_{j=0}^J\sum_{\psi^{(i)}\in \Psi_{j}^{(i)}} 2^{2j\ell} \left|G\left[\check \psi^{(i)}\right]\right|^2\\
    &\lesssim \alpha D^{2\ell} \sum_{i\in [N]} \sum_{j=0}^J \sum_{\psi^{(i)}\in \Psi_{j}^{(i)}} \left|G\left[\check \psi^{(i)}\right]\right|^2.
\end{align*}
The last term can be bounded by $\alpha^2 D^{2\ell} \|G\|_{\mathrm{HS}}^2$ by Lemma \ref{lem:bessel}, which proves \emph{\ref{K3}} for linear forms. For multilinear forms, the proof is similar and we leave it to the reader.
\end{proof}

\section{Density estimation with wavelets}\label{app:density}
We prove in this section that the kernels $(K_D)_{D\geq 1}$ can  be used to create minimax density estimators.

\begin{proposition}\label{prop:minimax_standard}
Let $m\geq 1$ be  integer and let $p\geq 2$, $R>0$, $s\in [0, s_\star]$ and $\ell\in \left[-s_\star,s\right[$. There exists $C>0$ such that for all $f\in \cC^s$ with $\|f\|_{\cC^s}\leq R$, if $\mu_n$ is the empirical measure associated  with $n$ i.i.d.~observations with density $f$ then, for $D$ of order $n^{\frac 1{2s+d}}$, the estimator $\hat f=\pi_D\mu_n$ satisfies
\begin{equation}\label{eq:minimax_wass}
\E[\|\hat f - f\|^2_{\W^{\ell}_{p}}]^{\frac 12} \leq C\begin{cases}
n^{-\frac{s-\ell}{2s+d}} &\text{if } \ell>-d/2,\\
n^{-\frac 12}\sqrt{\log n} &\text{if } \ell=-d/2,\\
n^{-\frac 12}&\text{if } \ell<-d/2.
\end{cases}
\end{equation}
Furthermore, if $0\leq \ell\leq s$, then 
\begin{equation}\label{eq:minimax_holder}
\E[\|\hat f - f\|^m_{\cC^{\ell}}]^{\frac 1m} \leq C\p{\frac{\log n}n}^{\frac{s-\ell}{2s+d}}.
\end{equation}
\end{proposition}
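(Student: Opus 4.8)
The plan is to run a bias--variance analysis for the linear estimator $\hat f = \pi_D\mu_n$, writing
\[
\hat f - f = (\pi_D f - f) + \pi_D(\mu_n - f),\qquad \pi_D(\mu_n - f) = \frac 1n\sum_{i=1}^n\Bigl(K_D(X_i,\cdot) - \E_f[K_D(X,\cdot)]\Bigr),
\]
so that the stochastic part is a normalized sum of i.i.d.\ centered random functions. The bias is immediate from admissibility: applying \emph{\ref{K1}} with source $\cC^s = \B^s_{\infty,\infty}$ and target either $\B^\ell_{p,2}$ (and then using $\B^\ell_{p,2}\hookrightarrow\W^{\ell,p}$ for $2\le p<\infty$) or $\cC^\ell = \B^\ell_{\infty,\infty}$ directly gives $\|\pi_D f - f\|_{\W^{\ell,p}} + \|\pi_D f - f\|_{\cC^\ell}\le C\,D^{-(s-\ell)}\|f\|_{\cC^s}$. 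For \eqref{eq:minimax_wass} I would take $D$ of order $n^{1/(2s+d)}$, making the bias $\lesssim R\,n^{-(s-\ell)/(2s+d)}$; for \eqref{eq:minimax_holder} I would instead take $D$ of order $(n/\log n)^{1/(2s+d)}$, so that the bias is $\lesssim R\,(\log n/n)^{(s-\ell)/(2s+d)}$ and balances the stochastic error.

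For the stochastic part and \eqref{eq:minimax_wass} I would expand $\pi_D(\mu_n-f)$ in the lifted Daubechies frame of \S\ref{subsec:wavelet} and \Cref{prop:pidadmissible}: with $J\asymp\log_2 D$ active scales and $\check\psi^{(i)}_{j,\lambda}$ the dual system, the frame coefficients of $\pi_D(\mu_n-f)$ are the empirical means $a^{(i)}_{j,\lambda} = \frac1n\sum_k\bigl(\check\psi^{(i)}_{j,\lambda}(X_k) - \E_f\check\psi^{(i)}_{j,\lambda}(X)\bigr)$ of variables bounded by $\|\check\psi^{(i)}_{j,\lambda}\|_{\L^\infty}\lesssim 2^{jd/2}$ with variance $\le\|f\|_{\L^\infty}\|\check\psi^{(i)}_{j,\lambda}\|_{\L^2}^2\lesssim R$, so $\E|a^{(i)}_{j,\lambda}|^2\lesssim R/n$ (and $\E|a^{(i)}_{j,\lambda}|^p\lesssim (R/n)^{p/2}$ up to a lower-order Rosenthal term). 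The almost-orthogonality estimate behind \Cref{lem:eqnorm} (in its $\B^\ell_{p,2}$ form, proved identically) bounds the $\W^{\ell,p}$-norm of the stochastic part by the corresponding coefficient-sequence norm, yielding
\[
\E\bigl[\|\pi_D(\mu_n-f)\|_{\W^{\ell,p}}^2\bigr]\ \lesssim\ \frac Rn\sum_{j=0}^J 2^{2j(\ell + \frac d2)}.
\]
The three cases of \eqref{eq:minimax_wass} are exactly the three behaviours of this geometric sum: dominated by its top term $\asymp D^{2\ell+d}$ when $\ell>-d/2$, of order $J\asymp\log n$ when $\ell=-d/2$, and bounded when $\ell<-d/2$; with $D\asymp n^{1/(2s+d)}$ this produces root-mean-square errors of orders $n^{-(s-\ell)/(2s+d)}$, $\sqrt{\log n/n}$ and $n^{-1/2}$ respectively, which combine with the bias. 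I should stress that in the borderline case one really needs this direct computation rather than the black-box bound \emph{\ref{K4}}, which only gives $\log D$ in place of the sharp $\sqrt{\log D}$.

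For \eqref{eq:minimax_holder} the second-moment bound on each coefficient must be upgraded to an exponential one, which is where the $\log n$ enters. By Bernstein's inequality, $|a^{(i)}_{j,\lambda}|$ exceeds a threshold of order $\sqrt{\log n/n} + 2^{jd/2}\log n/n$ with probability at most $n^{-\beta}2^{-Jd}$, so a union bound over the $\lesssim 2^{Jd}$ pairs $(j,\lambda)$ with $j\le J$ shows that on an event $E_n$ with $\P(E_n^c)\le n^{-\beta}$ one has $\sup_{j\le J,\lambda}2^{j(\ell+\frac d2)}|a^{(i)}_{j,\lambda}|\lesssim (\log n/n)^{(s-\ell)/(2s+d)}$ once $D\asymp(n/\log n)^{1/(2s+d)}$. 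By the wavelet characterization of $\cC^\ell=\B^\ell_{\infty,\infty}$ this controls $\|\hat f-f\|_{\cC^\ell}$ on $E_n$; on $E_n^c$ I would use the crude deterministic bound $\|\hat f-f\|_{\cC^\ell}\lesssim\|f\|_{\cC^\ell}+\sup_x\|K_D(x,\cdot)\|_{\cC^\ell}\lesssim\mathrm{poly}(n)$ (from \emph{\ref{K4}}) together with $\P(E_n^c)\le n^{-\beta}$, choosing $\beta=\beta(m)$ large enough to make the $E_n^c$-contribution to the $m$-th moment negligible. Combining with the bias estimate gives \eqref{eq:minimax_holder}.

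The main obstacle is \eqref{eq:minimax_holder}: obtaining the sharp rate $(\log n/n)^{(s-\ell)/(2s+d)}$ for \emph{every} moment $m$ forces one to coordinate the exponential concentration of the individual wavelet coefficients, the logarithmic price of the union bound over scales and translates, the correspondingly $\log$-corrected bandwidth, and the polynomial worst-case bound used off the good event; by comparison the $\W^{\ell,p}$ estimate is essentially the bookkeeping of a geometric series, the only delicate point there being the sharp $\sqrt{\log n}$ (rather than $\log n$) constant at $\ell=-d/2$. A further technical nuisance throughout is that the lifted wavelet system on $M$ is only almost orthonormal, so every passage between coefficient sequences and Besov or Sobolev norms has to be routed through \Cref{lem:eqnorm} and \Cref{lem:bessel} instead of through Parseval's identity.
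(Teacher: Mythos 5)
Your argument is correct, and for \eqref{eq:minimax_wass} it is essentially the paper's own proof: bias via \emph{\ref{K1}} and the embedding $\B^\ell_{p,2}\hookrightarrow\W^{\ell,p}$, variance via expansion in the lifted frame, Lemma~\ref{lem:eqnorm}, and a coefficientwise Rosenthal bound, with the three regimes read off from the geometric series $\sum_{j\leq J}2^{j(2\ell+d)}$. Your remark about the sharp $\sqrt{\log D}$ at $\ell=-d/2$ is accurate (though the paper never tries to route this through \emph{\ref{K4}}, so it is a side remark rather than a needed correction), and you correctly noticed that both bounds implicitly want $D\asymp(n/\log n)^{1/(2s+d)}$ for the H\"older part even though the proposition is phrased with $D\asymp n^{1/(2s+d)}$; the paper makes the same silent adjustment.

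For \eqref{eq:minimax_holder} you take a genuinely different, more hands-on route than the paper. The paper first bounds $\E\|\pi_D(\mu_n-f)\|_{\cC^\ell}$ by citing a black-box bound on the expected supremum of finitely many bounded centered empirical averages (Gin\'e--Nickl, Lemma~3.5.12), and then upgrades to higher moments via an exponential Orlicz-norm control of the supremum (van~der~Vaart--Wellner, Theorem~2.14.23), which handles all $m\geq 1$ in one stroke. You instead apply Bernstein's inequality coefficient by coefficient, union-bound over the $\lesssim D^d$ active $(j,\lambda)$ pairs to produce a high-probability good event $E_n$, and then kill the bad-event contribution to the $m$-th moment with the crude polynomial worst-case bound from \emph{\ref{K4}}. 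Both routes give the same rate; yours is more elementary and self-contained, at the cost of tracking the $\beta(m)$ in the union bound and the polynomial worst case, whereas the paper's is shorter once one accepts the two cited empirical-process results. The overall structure (bias via \emph{\ref{K1}}, coefficient analysis via Lemma~\ref{lem:eqnorm}, variance/sup bound, bandwidth choice) is the same.
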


\begin{proof}
We show the first inequality. 
Recall that $\B^\ell_{p,2}\hookrightarrow \W^\ell_p$. Thus, it suffices to bound the $\B^\ell_{p,2}$-norm. 
We use the triangle inequality to decompose the risk into a bias term $\|(\pi_D-1) f\|_{\B^{\ell}_{p,2}}$ and a variance term  $ \E[ \|\pi_D(\mu_n-f)\|_{\B^\ell_{p,2}}^2]$. By \emph{\ref{K1}}, 
the  bias term is controlled, up to a multiplicative constant, by $RD^{-(s-\ell)}$. 
We bound the variance with the help of Rosenthal inequality.
\begin{lemma}[Rosenthal inequality]\label{lem:rosenthal}
Let $m\geq 2$. 
 There exists a constant $C_m$ such that for all sequences $Y_1,\dots,Y_n$ of i.i.d.~random variables with $\E[|Y_1|^m]<+\infty$ and $\E[Y_1]=0$, it holds that
\begin{equation}
\E\left[ \left|\frac 1n \sum_{i=1}^n Y_i\right|^m \right] \leq C_m \p{ n^{-\frac m2}\E[|Y_1|^2]^{\frac m2} + n^{1-m} \E[|Y_1|^m]}.
\end{equation}
\end{lemma}
    We have $\pi_D(\mu_n-f)= \sum_{i=1}^N \sum_{j=0}^J \sum_{\psi\in \Psi^{(i)}_j} \dotp{\mu_n-f,\check{\psi}^{(i)}}\psi^{(i)}$ where $2^{Jd}$ is of order $D$. Then by Lemma \ref{lem:eqnorm} and Jensen's inequality, $ \E[ \|\pi_D(\mu_n-f)\|_{\B^\ell_{p,2}}^2]$ is controlled up to a multiplicative constant by
    \begin{equation*}
           \sum_{i=1}^N\sum_{j=0}^J 2^{2j\p{\ell+\frac d2-\frac dp}}\p{\sum_{\psi^{(i)}\in \Psi^{(i)}_j} \E[|\dotp{\mu_n-f,\check\psi^{(i)}}|^p]}^{\frac 2p}.
    \end{equation*}
    We apply Rosenthal inequality to each expectation to obtain the bound
    \begin{align*}
    \E[|\dotp{\mu_n-f,\check\psi^{(i)}}|^p]&\lesssim n^{-\frac p2}\|\check\psi^{(i)}\|_{\L^2}^{\frac p2}+ n^{1-p}\|\check\psi^{(i)}\|_{\L^p}^{p}\\
    &\lesssim n^{-\frac p2}+ n^{1-p}2^{jp\p{\frac d2-\frac dp}}.
    \end{align*}
Assume that $2\ell >-d$. Then, as the number of nonzero functions in $ \Psi^{(i)}_j$ is of order $2^{jd}$, 
    \begin{align*}
   \E[ \|\pi_D(\mu_n-f)\|_{\B^\ell_{p,2}}^2] &\lesssim n^{-1}\sum_{j=0}^J 2^{j\p{2\ell+d}} + n^{\frac 2p-2} \sum_{j=0}^J 2^{2j(\ell+d\p{1-\frac 1p})}\\
   & \lesssim n^{-1}D^{2\ell+d} + n^{\frac 2p-2}D^{2\ell+2d\p{1-\frac 1p}}.
    \end{align*}
    As $D^d/n\leq 1$ and $p\geq 2$, the second term is negligible. This shows that, for $D$ of order $n^{\frac 1{2s+d}}$,  $\E[\|\pi_D\mu_n - \pi_Df\|^2_{\B^{\ell}_{p,2}}]^{1/2}\lesssim n^{-\frac{s-\ell}{2s+d}}$. The bias term is of the same order for this value of $D$, concluding the proof in the case  $2\ell >-d$. Similar computations can be made in the two other cases.

        It remains to prove the second inequality.
        We first treat the case $m=1$. By Lemma \ref{lem:eqnorm}, the fluctuations are  of order
        \begin{equation}
             \E[ \|\pi_D(\mu_n-f)\|_{\cC^\ell}] \lesssim \sum_{i=1}^N\E[\max_{0\leq j\leq J, \psi\in \Psi^{(i)}_j} 2^{j\p{\ell+\frac d2}}|\dotp{\mu_n-f,\check\psi^{(i)}}|].
        \end{equation}
        This is the expectation of the supremum of an empirical process. Remark that for $0\leq j\leq J$ and $ \psi^{(i)}\in \Psi^{(i)}_j$, $2^{j\p{\ell+\frac d2}}\|\check\psi^{(i)}\|_{\L^\infty}\lesssim D^{\ell+d}$ and that $2^{j\p{\ell+\frac d2}}\|\check\psi^{(i)}\|_{\L^2}\lesssim D^{\ell+d/2}$. Thus, as the supremum is taken over a set of size of order $D^d$, a direct application of \cite[Lemma 3.5.12]{gine2015mathematical} yields
           \begin{equation}
             \E[ \|\pi_D(\mu_n-f)\|_{\cC^\ell}] \lesssim n^{-1/2}D^{\ell+d/2}\sqrt{\log D}+n^{-1} D^{\ell+d}\log D.
        \end{equation}
           Thus, for $m=1$, we obtain the conclusion  by letting $D$ be of order $(n/\log n)^{\frac 1{2s+d}}$.
       According to \cite[Theorem 2.14.23]{vaart2023empirical},  the exponential Orlicz norm of the supremum $\max_{ 0\leq j\leq J, \psi\in \Psi^{(i)}_j}|\dotp{\mu_n-f,\check\psi^{(i)}}|$ is at most of order
        \begin{equation*}
         \E[ \max_{ \psi\in \Psi^{(i)}_j}|\dotp{\mu_n-f,\check\psi^{(i)}}|] + n^{-1}(1+\log n) D^{\ell+d}. 
        \end{equation*}
As moments of a random variable are bounded by its exponential Orlicz norm, we find that  for $m\geq 1$, $    \E[ \|\pi_D(\mu_n-f)\|_{\cC^\ell}^m]^{\frac 1m} \lesssim \p{(\log n)/n}^{\frac{s-\ell}{2s+d}}$.
\end{proof}

The estimator $\pi_D \mu_n$ can be easily modified so that it belongs almost surely to a prescribed statistical model $\Theta$. This simple modification will turn out to be more convenient to use.

\begin{corollary}\label{cor:minimax_standard}
Let $m\geq 1$ be  integer and let $p\geq 2$, $R,\eta>0$, $0<s'<s\leq s_\star$. Let $\Theta_0\subset \Theta$ be  subsets of $\cC^s$ such that $\Theta$ is included in a $\eta$-neighborhood of $\Theta_0$ for the $\cC^{s'}$-norm. There exists  an estimator $\hat f\in \Theta$ based on $n$ i.i.d.~observations, such that if the density $f$ of the observations is in $f\in \Theta_0$ and satisfies $\|f\|_{\cC^s}\leq R$, then $\hat f$ satisfies the same rates of convergence as in \Cref{prop:minimax_standard}.
\end{corollary}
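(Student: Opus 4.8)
The plan is to post-process the wavelet density estimator of \Cref{prop:minimax_standard}: I keep it whenever it already lies in $\Theta$, and replace it by a fixed element of $\Theta$ otherwise. The key point is that on an event of overwhelming probability the unmodified estimator automatically lies in $\Theta$, since it is then $\cC^{s'}$-close to $f\in\Theta_0$ and $\Theta$ contains the $\eta$-neighbourhood of $\Theta_0$ for the $\cC^{s'}$-norm.

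Concretely, I would fix $D$ of order $n^{1/(2s+d)}$, set $\tilde f=\pi_D\mu_n$ (the estimator of \Cref{prop:minimax_standard}), and fix once and for all some $g_\star\in\Theta$ (if $\Theta_0$ is empty there is nothing to prove, and otherwise $\Theta\supseteq\Theta_0\neq\emptyset$). Then I define
\[
\hat f = \tilde f\,\ones\{\tilde f\in\Theta\} + g_\star\,\ones\{\tilde f\notin\Theta\},
\]
which is a measurable function of the sample taking values in $\Theta$ almost surely; measurability of $\{\tilde f\in\Theta\}$ is routine in the situations where the corollary is applied, since there $\Theta$ is carved out by norm and spectral-gap inequalities that depend continuously on the density. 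Introducing the event $E=\{\|\tilde f-f\|_{\cC^{s'}}<\eta\}$, on $E$ the function $\tilde f$ lies in the $\eta$-neighbourhood of $\Theta_0$ for the $\cC^{s'}$-norm, hence $\tilde f\in\Theta$ and $\hat f=\tilde f$.

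It remains to bound the risk by splitting along $E$ and $E^c$. For the $\W^{\ell}_{p}$-loss,
\[
\E[\|\hat f-f\|_{\W^{\ell}_{p}}^2] \le \E[\|\tilde f-f\|_{\W^{\ell}_{p}}^2] + \E[\|\hat f-f\|_{\W^{\ell}_{p}}^2\ones\{E^c\}],
\]
and the first term is exactly what \Cref{prop:minimax_standard} controls. On $E^c$ one has $\hat f\in\{\tilde f,g_\star\}$, and using $\|f\|_{\W^{\ell}_{p}}\lesssim\|f\|_{\cC^s}\le R$, the fixed finite quantity $\|g_\star\|_{\W^{\ell}_{p}}$, and a crude polynomial bound $\|\tilde f\|_{\W^{\ell}_{p}}\lesssim D^{N_0}\lesssim n^{N_0}$ obtained from property \ref{K4} (as in the proof of \Cref{prop:minimax_standard}), one gets $\|\hat f-f\|_{\W^{\ell}_{p}}\lesssim n^{N_1}$ for some $N_1$. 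By Markov's inequality and the moment bounds of \Cref{prop:minimax_standard} applied with the $\cC^{s'}$-loss and an auxiliary moment order $m'$, $\P(E^c)\le\eta^{-m'}\E[\|\tilde f-f\|_{\cC^{s'}}^{m'}]\lesssim\eta^{-m'}(\log n/n)^{m'(s-s')/(2s+d)}$, which decays faster than any prescribed negative power of $n$ once $m'$ is taken large enough — here $s>s'$ is essential. Choosing $m'$ large enough makes the $E^c$-term negligible in front of the rate of \Cref{prop:minimax_standard}. The $\cC^{\ell}$-loss bound for $0\le\ell\le s$ is handled identically, using $\|\tilde f\|_{\cC^{\ell}}\lesssim D^{\ell+d}$ on $E^c$ and choosing the auxiliary moment $m'$ large after the target moment $m$ has been fixed.

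The only genuinely delicate point is this $E^c$ bookkeeping: one must produce an a priori polynomial-in-$n$ control of the modified estimator in the loss norm (which is precisely what \ref{K4} furnishes) and then exploit the freedom in the moment order of the $\cC^{s'}$-rate of \Cref{prop:minimax_standard} to absorb it against the super-polynomial decay of $\P(E^c)$. Everything else is a direct combination of \Cref{prop:minimax_standard} with the defining property that $\Theta$ contains a $\cC^{s'}$-neighbourhood of $\Theta_0$.
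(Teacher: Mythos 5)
Your proposal is correct and follows essentially the same approach as the paper: define $\hat f$ as the truncated estimator $\pi_D\mu_n$ projected back to $\Theta$ when it leaves it, use the event $E=\{\|\pi_D\mu_n-f\|_{\cC^{s'}}<\eta\}$ on which $\hat f=\pi_D\mu_n$, bound $\P(E^c)$ with Markov's inequality and the $\cC^{s'}$-rate of \Cref{prop:minimax_standard} at an arbitrarily high moment order, and absorb the resulting super-polynomial decay against the crude polynomial-in-$n$ bound on the loss furnished by \emph{\ref{K4}}. Your extra remark on measurability of $\{\tilde f\in\Theta\}$ is a reasonable point that the paper leaves implicit, but otherwise the decomposition, the use of \emph{\ref{K4}}, and the moment-order bookkeeping match the paper's argument step for step.
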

\begin{proof}
We define $\hat f=\pi_D\mu_n$ if $\pi_D\mu_n\in \Theta$, and we let $\hat f$ be a fixed, deterministic function in $\Theta$ should this condition be not verified. Remark that $\hat f\in \Theta$ and that  $\hat f=\pi_D\mu_n$ as long as $\|\pi_D\mu_n-f\|_{\cC^{s'}}\leq \eta$. Let us call $E$ this event. By Markov's inequality and \Cref{prop:minimax_standard}, $\P(E^c)\leq C_k(\log n/n)^{k\frac{s-s'}{2s+d}}$ for all $k\geq 1$ and some constant $C_k$. Furthermore, we use \emph{\ref{K4}} to show that $\|\pi_D\mu_n\|_{\cC^{\ell}}$  can be bounded almost surely by $\sup_{x}  \|\pi_{D}\delta_x\|_{\cC^{\ell}}\leq C_{\star}\kappa_{\infty,d,\ell}(D)$, a quantity which grows at most polynomially with $D$ (and thus with $n$). These two ingredients are enough to show that the residual term $\E[\|\hat f-f\|_{\cC^\ell}^m\ones\{E^c\}]$ is negligible in front of $\E[\|\hat f-f\|_{\cC^\ell}^m\ones\{E\}]=\E[\|\pi_D\mu_n -f\|_{\cC^\ell}^m\ones\{E\}] $. We then conclude with \Cref{prop:minimax_standard}. The risk of $\E[\|\hat f-f\|_{\W^\ell_p}^2]$ is controlled in a similar fashion.
\end{proof}

\begin{remark}\label{remark:moment_control_density}
If $s$ is an integer and $f\in \Cf^s$ with $\|f\|_{\Cf^s}\leq L$, one can easily modify the previous proofs to see that one can also impose that for all $m\geq 1$, there is a constant $C>0$ such that $\E[\|\hat f\|_{\Cf^s}^m]^{1/m}\leq C$. This follows from the  injection $\B^s_{\infty,1}\hookrightarrow \Cf^s$.
\end{remark}

\section{Kernel estimates}\label{app:kernel_estimates}

In this section we give some estimates on the explosion of kernels of resolvent type operators.

\subsection{Resolvent-like kernels}

We say that a kernel 
$K \in \Cf^\infty(M \times M \setminus \Delta)$ is \emph{resolvent-like} if  for each $\alpha \in \N\times \N$ and $\beta \in \N^d\times \N^d$, with $m=d-2-\alpha_1-\alpha_2+|\beta_1|+|\beta_2|$, there exists $C_{\alpha,\beta}$ such that for all $x,y\in M,\ x\neq y$,  
\begin{equation}\label{eq:def_resolvent_kernel}
|\Lambda_x^{-\alpha_1}\Lambda_y^{-\alpha_2} \partial^{\beta_1}_x \partial^{\beta_2}_y K(x,y)| \leq C_{\alpha,\beta}\begin{cases}
|x-y|^{-m} &\text{ if } m>0 \\
1+|\log |x-y|| &\text{ if } m=0 \\
1&\text{ if } m<0.
\end{cases}
\end{equation}

\begin{lemma}\label{lem:kernel_estimates}
Let $u \in \Cf^1$ and let $K$ be a resolvent-like kernel.
 Consider $K_{1,u} = K$ and set for $n \in \N$,
$$
K_{n + 1,u}(x,y) = \int_{M} K(x,z) \nabla u(z) \cdot \nabla_z K_{n,u}(z, y) \dd z ,\quad  x,y\in M,\ x\neq y.
$$
Then $K_{n, u} \in \Cf^\infty(M\times M\setminus \Delta)$ is resolvent-like for each $n \in \N$, with constants in \eqref{eq:def_resolvent_kernel} depending only on the corresponding constants for $K$, on an upper bound on $\|u\|_{\Cf^1}$ and on $n$.
\end{lemma}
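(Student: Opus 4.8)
The plan is to prove the statement by induction on $n$, where the base case $n=1$ is the hypothesis that $K$ is resolvent-like. For the inductive step, I would assume $K_{n,u}$ is resolvent-like with constants controlled as stated, and estimate the composed kernel
$$
K_{n+1,u}(x,y) = \int_M K(x,z)\,\nabla u(z)\cdot \nabla_z K_{n,u}(z,y)\,\dd z.
$$
The smoothness of $K_{n+1,u}$ away from the diagonal is routine (differentiation under the integral sign, using the off-diagonal smoothness of $K$ and $K_{n,u}$ together with integrability of the singularities), so the heart of the matter is the quantitative bound \eqref{eq:def_resolvent_kernel}. Since $\|u\|_{\Cf^1}$ controls $\|\nabla u\|_{\L^\infty}$, the factor $\nabla u(z)$ contributes only a constant multiple; the real work is to bound
$$
\int_M |\partial^{\beta_1}_x K(x,z)|\, |\nabla_z \partial^{\beta_2}_y K_{n,u}(z,y)|\,\dd z
$$
(and its $\Lambda_x^{-\alpha_1}\Lambda_y^{-\alpha_2}$-smoothed variants), where one should note that the $x$-derivatives and $\Lambda_x^{-\alpha_1}$ fall only on the $K(x,z)$ factor, and symmetrically for $y$.

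\textbf{Key step: the convolution-type singularity estimate.} The main technical ingredient is a ``gluing'' lemma for singular kernels: if $a>0$, $b>0$ are exponents with $a+b>d$, then
$$
\int_M |x-z|^{-a}\,|z-y|^{-b}\,\dd z \;\lesssim\; \begin{cases} |x-y|^{d-a-b} & \text{if } a+b>d,\\ 1+|\log|x-y|| & \text{if } a+b=d,\\ 1 & \text{if } a+b<d,\end{cases}
$$
with the obvious modifications when one or both of $a,b$ is replaced by a logarithmic or bounded singularity. This is standard (split the domain into $|z-x|\le \tfrac12|x-y|$, $|z-y|\le\tfrac12|x-y|$, and the complementary region, and estimate each piece by a scaling/polar-coordinates argument). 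Applying it with $a = $ the singularity order of $\partial^{\beta_1}_x K(x,\cdot)$ and $b = $ the singularity order of $\nabla_z\partial^{\beta_2}_y K_{n,u}(\cdot,y)$, one obtains exactly a singularity of order $a+b-d$, and a bookkeeping check shows this matches $m = d-2-\alpha_1-\alpha_2+|\beta_1|+|\beta_2|$ for $K_{n+1,u}$: the $K$-factor contributes order $d-2-\alpha_1+|\beta_1|$ by the inductive/base hypothesis applied with $\alpha_2=0,\beta_2=0$ in that slot, the $K_{n,u}$-factor with an extra $z$-gradient contributes order $d-2-\alpha_2+|\beta_2|+1 = d-1-\alpha_2+|\beta_2|$, and summing and subtracting $d$ gives $d-2-\alpha_1-\alpha_2+|\beta_1|+|\beta_2|$, as required. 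The constant produced is a product of the $C_{\alpha,\beta}$'s for $K$ and $K_{n,u}$ times a geometric constant and $\|u\|_{\Cf^1}$, which by the inductive hypothesis depends only on the constants for $K$, on $\|u\|_{\Cf^1}$, and on $n$.

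\textbf{Subtleties and the main obstacle.} Two points need care. First, the $\Lambda_x^{-\alpha_1}$ and $\Lambda_y^{-\alpha_2}$ operators: these are nonlocal pseudodifferential operators of negative order, so ``$\Lambda_x^{-\alpha_1}$ lands only on $K(x,z)$'' must be justified by writing $\Lambda^{-\alpha_1}$ as an operator acting in the first variable and using that $\Lambda^{-\alpha_i}$ of order $\leq 0$ improves (or at worst preserves) the singularity order in the prescribed way, exactly as encoded in \eqref{eq:def_resolvent_kernel} for $K$ and $K_{n,u}$ themselves. Second, one must handle the borderline cases where $a+b = d$ (logarithmic output) or where $a$ or $b$ is already only logarithmic or bounded; these are covered by the case analysis in the gluing lemma, but writing them all out is tedious. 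I expect the main obstacle to be precisely this case bookkeeping — tracking how the three regimes (positive power, logarithmic, bounded) for the two input factors combine into the correct regime for the output, uniformly over all multi-indices — rather than any deep analytic difficulty. Once the gluing lemma is stated in sufficient generality, the induction closes mechanically.
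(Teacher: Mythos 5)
Your approach is essentially the same as the paper's, which reduces to $\R^d$ by a partition of unity, proceeds by induction on $n$, and invokes the classical convolution estimate for singular kernels (citing Stein and Landkof for what you call the gluing lemma). Two points are worth correcting or flagging.

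\textbf{Arithmetic.} Your bookkeeping contains an off-by-one error. With $a = d-2-\alpha_1+|\beta_1|$ for the $K$-factor and $b = d-1-\alpha_2+|\beta_2|$ for the $\nabla_z K_{n,u}$-factor, the gluing lemma yields a singularity of order
\[
a + b - d = (d-2-\alpha_1+|\beta_1|) + (d-1-\alpha_2+|\beta_2|) - d = d - 3 - \alpha_1 - \alpha_2 + |\beta_1| + |\beta_2| = m - 1,
\]
not $m$. The error is in the favorable direction: $K_{n+1,u}$ actually has a singularity one order milder than what \eqref{eq:def_resolvent_kernel} requires, so the bound is satisfied with room to spare. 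You should nonetheless be precise, since a sign error in the other direction would be fatal.

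\textbf{The range of validity of the gluing lemma.} The estimate $\int_M |x-z|^{-a}|z-y|^{-b}\,\dd z \lesssim |x-y|^{-(a+b-d)}$ that you state requires $a < d$ and $b < d$ for the integral to converge absolutely near each singularity, yet the resolvent-like bound must be verified for \emph{all} multi-indices $\alpha,\beta$. For $|\beta_1| \geq 2 + \alpha_1$ one has $a \geq d$, and for $|\beta_2| \geq 1 + \alpha_2$ one has $b \geq d$, so the lemma does not directly apply; you cannot simply differentiate under the integral sign and integrate a non-integrable singularity. Closing this requires exploiting cancellation, e.g., Taylor-expanding the smooth factor around the singular point and using that the remainder lowers the singularity order (the polynomial terms must then be handled by the approximate convolution structure, or absorbed by integration by parts). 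You do not acknowledge this, treating the high-derivative case as mere "case bookkeeping." To be fair, the paper's own one-line proof is equally elliptic and delegates the whole issue to the cited references, so the gap is a shared one; but as a self-contained argument yours would not close without an explicit treatment of the $a \geq d$ or $b \geq d$ regimes.
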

\begin{proof}
Using a partition of unity we may assume that $M = \R^d$ and $u$ is compactly supported. We proceed by induction on $n$. Assume that for some $n \in \N$, $K_{n,u}$ is resolvent-like. We have
$$
K_{n + 1,u}(x,y) = \sum_{j=1}^d \int_{\R^d} K(x,z) \partial_{z_j}u(z) \partial_{z_j} K_{n,u}(z, y) \dd z.
$$
We see that $K_{n + 1,u}$ is resolvent-like by the induction hypothesis and using standard estimates for Riesz kernels (see \cite[§V.1]{stein1970} or \cite[§1.1]{landkof1973}).
\end{proof}

\subsection{Resolvent of $\Delta_f$}
The resolvent of $\Delta$ is the Bessel potential, which is known to be resolvent-like \cite[Proposition 2.2]{taylor2013partial}. Likewise, we show that the resolvent of $\Delta_f$ is decomposed into the sum of an operator with a resolvent-like kernel and a regularizing operator.

\begin{lemma}\label{lem:decomposition_resolvent}
    Let $\delta,L>0$. 
There exists $C>0$ such that the following holds. Let $f\in \Cf^2$ be a function with $\inf f>\delta$ and $\|f\|_{\Cf^2}\leq L$ and let $z\in \C$ with $|z|\leq L$ and $\dist(z,\sigma(-\Delta_f))\geq \delta$. Then, there is a decomposition
    \[
R_f(z) = W_z + Q_z
    \]
    where $W_z$ has a resolvent-like kernel $K_z$ with constants in \eqref{eq:def_resolvent_kernel} depending only on $C$, 
    and $Q_z$ has an operator norm smaller than $C$ as an operator $\H^{-1}\to \cC^{1}$ and $\W^{-1,1}\to \H^1$.
\end{lemma}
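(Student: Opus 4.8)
The plan is to run a finite Neumann expansion of $R_f(z)$ around the Bessel potential $\Lambda^{-2}=(1+\Delta)^{-1}$, put the singular-near-the-diagonal terms into $W_z$, and check that the remainder is smoothing enough to be $Q_z$. Write $V=\log f$; since $f\in\Cf^2$ with $\inf f>\delta$ and $\|f\|_{\Cf^2}\leq L$, the function $V$ lies in $\Cf^2$ with $\|V\|_{\Cf^2}$ bounded in terms of $\delta,L$. Setting $P_z=(1-z)-\alpha A_V$, a first-order operator, one has $(1+\Delta)-(z+\Delta_f)=P_z$, hence the resolvent identity $R_f(z)=\Lambda^{-2}+R_f(z)P_z\Lambda^{-2}$. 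Iterating it $N+1$ times gives
\[
R_f(z)=\sum_{n=0}^N \Lambda^{-2}(P_z\Lambda^{-2})^n + R_f(z)(P_z\Lambda^{-2})^{N+1}=:W_z+Q_z,
\]
with $N=N(d)$ to be fixed below.

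First I would check that $W_z$ has a resolvent-like kernel. Expanding $P_z=(1-z)-\alpha\nabla V\cdot\nabla$, each summand of $W_z$ is a scalar multiple — of modulus $\leq(1+2L)^N$ since $|z|\leq L$ — of an operator $\Lambda^{-2k_0}(\nabla V\cdot\nabla)\Lambda^{-2k_1}\cdots(\nabla V\cdot\nabla)\Lambda^{-2k_m}$ with $k_i\geq1$. The kernel of $\Lambda^{-2k}$ is resolvent-like, being the kernel of a pseudodifferential operator of order $-2k\leq-2$ (as for $\Lambda^{-2}$, cf. \cite[Proposition 2.2]{taylor2013partial}); and the kernel of such a composition is produced by the iteration of \Cref{lem:kernel_estimates} with $u=-\alpha V$, whose proof applies verbatim when a possibly different resolvent-like kernel is used at each stage. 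Hence $W_z$ is resolvent-like, with constants depending only on the universal Bessel-type constants, $\|V\|_{\Cf^1}$ and $N$, i.e. only on $\delta,L$.

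The substance of the argument is to choose $N$ so that $Q_z=R_f(z)(P_z\Lambda^{-2})^{N+1}$ is bounded $\H^{-1}\to\cC^1$ and $\W^{-1,1}\to\H^1$. The ingredients are: \Cref{prop:resolvbounded} with $s=2$, so $R_f(z):\W^{t,q}\to\W^{t+2,q}$ for $t\in[-2,1]$, $1<q<\infty$ (including the integer edge cases since $f\in\Cf^2$), and $R_f(z):\cC^t\to\cC^{t+2}$ for $t\in[0,1]$, all with norms $\lesssim_{\delta,L}1$ as $|z|\leq L$; \Cref{lem:operator_norm_Ah}, \Cref{cor:operator_norm_Ah} and the integer edge cases, giving $A_V:\W^{1+t,p}\to\W^{t,p}$ for $t\in[-1,1]$, $1<p<\infty$, and $A_V:\cC^{1+t}\to\cC^t$ for $0<t\leq1$, with norm $\lesssim\|V\|_{\Cf^2}$; the fact that $\Lambda^{-2}$ raises every Sobolev or Hölder exponent by $2$; and the Sobolev embeddings of \Cref{app:besov}, notably $\W^{1,1}(M)\hookrightarrow\L^{r_0}(M)$ for some $r_0=r_0(d)>1$. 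The mechanism is that a single block $P_z\Lambda^{-2}$ sends $\W^{t,p}$ (resp. $\cC^t$), with $t$ in a suitable range, strictly into a better space — of higher regularity while the $\cC^2$-ceiling of $A_V$ has not yet been reached, and of higher integrability (via $\Lambda^{-2}$ followed by a Sobolev embedding) once it has — so after $N_0=N_0(d)$ blocks one lands in a space continuously embedded in $\cC^1$, and a final application of $R_f(z)$, which gains two derivatives, keeps us there. For the $\W^{-1,1}$ bound, the rightmost $\Lambda^{-2}$ already maps $\W^{-1,1}$ into $\W^{1,1}\hookrightarrow\L^{r_0}$, leaving the $\L^1$-endpoint where the mapping properties of $A_V$ are unavailable; the same finite iteration, plus a concluding $R_f(z)$, then lands in $\H^1$. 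One then sets $N:=N_0(d)$, and all constants produced depend only on $\delta,L$ (and the fixed data $M,\alpha,d$).

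The hard part will be exactly this last bootstrap: because $f$ is only $\Cf^2$, the operator $A_V$ regularizes by one derivative only inside a bounded window of Sobolev exponents, so the Neumann remainder does not gain smoothness indefinitely within a single scale. One must interleave the purely smoothing $\Lambda^{-2}$-factors with Sobolev embeddings that trade regularity for integrability; this makes the number $N$ of required steps depend on $d$ (it grows linearly in $d$), but it is always finite, and the resulting operator-norm bounds on $Q_z$ depend only on $\delta$ and $L$.
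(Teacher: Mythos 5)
Your proof is correct and follows essentially the same route as the paper: a finite Neumann expansion of $R_f(z)$ around a fixed elliptic inverse, with the singular part $W_z$ handled by (a mild extension of) \Cref{lem:kernel_estimates} and the remainder $Q_z$ shown smoothing via \Cref{lem:operator_norm_Ah}, \Cref{prop:resolvbounded} and Sobolev embeddings. The only deviation is cosmetic: you expand around $\Lambda^{-2}=(1+\Delta)^{-1}$ and absorb the scalar $(1-z)$ into $P_z$, whereas the paper expands around $(z+\Delta)^{-1}$; your choice is arguably tidier since it keeps the free resolvent $z$-independent and thus manifestly away from $\sigma(-\Delta)$.
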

\begin{proof}
Write $f = e^U$. 
For $n\in \N$, we use the resolvent identity (see \eqref{eq:devresolve})
\[
R_f(z) = \sum_{i=0}^n (-\alpha)^i (z+\Delta)^{-1} (A_U (z+\Delta)^{-1} )^i + (-\alpha)^{n+1}R_f(z) (A_U (z+\Delta)^{-1} )^{n+1}.
\]
\Cref{lem:kernel_estimates} indicates that for any $n\geq 1$, the operator $W_z = \sum_{i=0}^n (-1)^i (z+\Delta)^{-1} (A_U (z+\Delta)^{-1} )^i$ has a resolvent-like kernel. 
Furthermore, applying iteratively \Cref{lem:operator_norm_Ah} and \Cref{prop:resolvbounded} together with Sobolev embeddings show that the remainder term $Q_z= (-1)^{n+1}(z+\Delta_f)^{-1} (A_U (z+\Delta)^{-1} )^{n+1}$ maps $\H^{-1}\to \cC^{1}$ and $\W^{-1,1}\to \H^1$ for $n$ large enough (depending on $d$).
\end{proof}

\section{Control of Hilbert-Schmidt norms}\label{app:hilbert-schmidt}

In this section, we give some bounds on Hilbert-Schmidt norms of operators appearing in  the article.

Let $(\phi_k)_{k\geq 0}$ be an orthonormal basis of eigenfunctions of the Laplace operator $-\Delta$, with $(\lambda_k)_{k\geq 0}$ the associated sequence of eigenvalues. For $t\in \R$, let $\phi_{k,t} = (1+\lambda_k)^{-t/2}\phi_k$. By definition of the Sobolev space $\H^t$, $(\phi_{k,t})_{k\geq 0}$ is an orthonormal basis of $\H^t$. 

\subsection{Hilbert-Schmidt bounds for tri-bounded operators}

We first show that the  condition given in \Cref{remark:sufficient_G3} implies that the Hilbert-Schmidt condition in \emph{\ref{G3}} is satisfied.

\begin{lemma}\label{lem:sufficient_G3}
    If $G:(\L^2)^3\to \R$ is continuous, then the Hilbert-Schmidt condition in \emph{\emph{\ref{G3}}} is satisfied.
\end{lemma}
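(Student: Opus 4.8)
The plan is to expand the Hilbert--Schmidt norm in the eigenbasis of the Laplacian and reduce the statement to the convergence of a spectral zeta-type series. Recall that the Hilbert--Schmidt requirement in \emph{\ref{G3}} concerns $G : \H^{t}\times\H^{t}\times\L^2\to\R$ with $t=(s+d)/2$. Using the orthonormal basis $(\phi_{i,t})_{i\geq 0}$ of $\H^t$ for the first two slots and $(\phi_k)_{k\geq 0}$ of $\L^2$ for the third, and recalling that $\phi_{i,t}=(1+\lambda_i)^{-t/2}\phi_i$, one writes
\begin{equation*}
\|G\|_{\mathrm{HS}}^2=\sum_{i,j,k\geq 0}\bigl|G[\phi_{i,t},\phi_{j,t},\phi_k]\bigr|^2=\sum_{i,j\geq 0}(1+\lambda_i)^{-t}(1+\lambda_j)^{-t}\sum_{k\geq 0}\bigl|G[\phi_i,\phi_j,\phi_k]\bigr|^2 .
\end{equation*}

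Next I would bound the inner sum. For fixed $i,j$, the map $v\mapsto G[\phi_i,\phi_j,v]$ is a bounded linear form on $\L^2$ with norm at most $\|G\|_{(\L^2)^3\to\R}\,\|\phi_i\|_{\L^2}\|\phi_j\|_{\L^2}=\|G\|_{(\L^2)^3\to\R}$, so Parseval's identity applied to its Riesz representative gives $\sum_{k\geq 0}|G[\phi_i,\phi_j,\phi_k]|^2\leq\|G\|_{(\L^2)^3\to\R}^2$. Substituting,
\begin{equation*}
\|G\|_{\mathrm{HS}}^2\leq\|G\|_{(\L^2)^3\to\R}^2\Bigl(\sum_{i\geq 0}(1+\lambda_i)^{-t}\Bigr)^{2}.
\end{equation*}

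It then remains to observe that $\sum_{i\geq 0}(1+\lambda_i)^{-t}=\Tr\bigl((1-\Delta)^{-t}\bigr)$ is finite whenever $2t>d$: by Weyl's law on the compact manifold $M$ one has $\lambda_i\asymp i^{2/d}$, hence $(1+\lambda_i)^{-t}$ is of order $i^{-1-s/d}$, which is summable since $s>0$. This yields $\|G\|_{\mathrm{HS}}\leq C\,\|G\|_{(\L^2)^3\to\R}$ with $C=\Tr\bigl((1-\Delta)^{-t}\bigr)$ depending only on $M$, $d$ and $s$, which is precisely the claimed implication: a bound on the $(\L^2)^3$-operator norm of $G$ forces the Hilbert--Schmidt bound in \emph{\ref{G3}}, up to the constant $C$. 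There is no real obstacle here; the only nontrivial ingredient is the classical fact that $(1-\Delta)^{-t}$ is trace class for $2t>d$, which is exactly where the hypothesis $s>0$ enters.
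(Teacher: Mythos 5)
Your proposal is correct and takes essentially the same approach as the paper: expand the Hilbert--Schmidt norm in the Laplace eigenbasis, reduce the inner sum over the third index to the $(\L^2)^3$-operator norm of $G$ (you via Parseval for the Riesz representative, the paper via the associated bilinear map $\widetilde G$ into $\L^2$, which is the same thing), and conclude by Weyl's law that $\sum_i(1+\lambda_i)^{-t}<\infty$ for $t=(s+d)/2>d/2$.
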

\begin{proof}
    Let $t>d/2$. We show that $G:\H^{t}\times  \H^{t}\times \L^2$ has a finite Hilbert-Schmidt norm. It holds that
    \begin{align*}
        \sum_{k_1,k_2,k_3\geq 0}& |G[\phi_{k_1,t},\phi_{k_2,t},\phi_{k_3}]|^2 =    \sum_{k_1,k_2,k_3\geq 0} (1+\lambda_{k_1})^{-t}(1+\lambda_{k_2})^{-t} |G[\phi_{k_1},\phi_{k_2},\phi_{k_3}]|^2.
    \end{align*}
    As $G: (\L^2)^3 \to \R$ is continuous, there exists $\widetilde G:(\L^2)^2 \to \L^2$ such that $G[u,v,w]=\dotp{\widetilde G[u,v],w}$. But then,
    \begin{align*}
        \sum_{k_3\geq 0}|G[\phi_{k_1},\phi_{k_2},\phi_{k_3}]|^2 = \|\widetilde G[\phi_{k_1},\phi_{k_2}]\|^2_{\L^2}\lesssim \|\phi_{k_1}\|^2_{\L^2}\|\phi_{k_2}\|^2_{\L^2}\lesssim 1.
    \end{align*}
    Thus, the sum $\sum_{k_1,k_2,k_3\geq 0} |G[\phi_{k_1,t},\phi_{k_2,t},\phi_{k_3}]|^2$ is of order at most 
    $$
    \sum_{k_1,k_2\geq 0} (1+\lambda_{k_1})^{-t}(1+\lambda_{k_2})^{-t}.
    $$
    However Weyl's law yields $\lambda_k\sim Ck^{2/d}$, hence this sum is finite.
\end{proof}

\subsection{Hilbert-Schmidt bounds for resolvent-like kernels}
We write $|x-y|$ for the geodesic distance between two points $x,y\in M$. We let $\Delta = \{(x,x):\  x\in M\}$ be the diagonal of $M\times M$. If $K$ is a distribution on $M\times M$ and $\Gamma$ is an operator, we let $\Gamma_y K(x,y)$ be the distribution obtained by applying $\Gamma$ to the variable $y$ of $K$, evaluated at $(x,y)$.
\begin{lemma}\label{lem:G3_from_kernel}
Let $t>d/2$ and $C>0$. 
Let $K_1,K_2,K_3$ be  distributions on $M \times M$ such that for $i=1,2,3$, $(x,y)\mapsto \Lambda_y^{-t}K_i(x,y)$ is continuous on $(M \times M) \setminus \Delta$ and such that
$$
|\Lambda_y^{-t} K_i(x,y)|\leq C(1+|x-y|^{-d+t}).
$$
Let $B_i$ be the operators with kernel $K_i$, that we assume have operator norms $\L^2\to \L^2$ smaller than $C$. Then, the symmetrization of the operator $ \H^t\times \H^t\times \L^2 \to \R$ given by $(u_1,u_2,u_3)\mapsto \int (B_1u_1)\cdot (B_2u_2)\cdot(B_3u_3)$ has finite Hilbert-Schmidt norm, bounded by $C_1C^3$ for some constant $C_1$ depending on $t$ and $d$.
\end{lemma}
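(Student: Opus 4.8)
The plan is to expand the Hilbert--Schmidt norm in the tensor product of the eigenbasis $(\phi_k)$ of $-\Delta$ and its $\H^t$-renormalisation $(\phi_{k,t})$, and to exploit that, after precomposing each $B_i$ with $\Lambda^{-t}$, the resulting kernel is square-integrable in each variable since $2(d-t)<d$. Concretely, I set $\widetilde B_i=B_i\Lambda^{-t}$ for $i=1,2,3$. As $\Lambda^{-t}$ is bounded on $\L^2$ and formally self-adjoint, $\widetilde B_i$ is an integral operator with kernel $\widetilde K_i(x,y)=(\Lambda_y^{-t}K_i)(x,y)$, which by hypothesis is continuous off the diagonal and satisfies $|\widetilde K_i(x,y)|\le C(1+|x-y|^{-(d-t)})$. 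The elementary input is then
\[
\sup_{x\in M}\int_M|\widetilde K_i(x,y)|^2\,\dd y\le C_0C^2,
\]
with $C_0=C_0(t,d,M)<\infty$ precisely because $2(d-t)<d$, so $|x-y|^{-2(d-t)}$ is integrable uniformly in $x$ on the compact manifold $M$. Choosing the $\phi_k$ real-valued (possible since $-\Delta$ is a real operator) and applying Parseval, this gives the pointwise bound $\sum_{k\ge 0}|\widetilde B_i\phi_k(x)|^2=\|\widetilde K_i(x,\cdot)\|_{\L^2}^2\le C_0C^2$ for every $x\in M$ and $i=1,2,3$.

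Next I reduce the symmetrised form to an unsymmetrised one. Since $\H^t\hookrightarrow\L^2$ and the symmetrisation is an average over the six permutations of the arguments, the triangle inequality for the HS norm reduces matters to bounding $\|G\circ\sigma\|_{\mathrm{HS}}$ for each permutation $\sigma$; these fall into three types according to which factor $B_i$ acts on the (bare) $\L^2$-variable, all estimated identically, so I treat the case where that factor is $B_3$. Using $B_i\phi_{k,t}=\widetilde B_i\phi_k$ ($i=1,2$, from $\phi_{k,t}=\Lambda^{-t}\phi_k$), the relevant squared HS norm is $\sum_{k_1,k_2,k_3}\bigl|\int_M(\widetilde B_1\phi_{k_1})(\widetilde B_2\phi_{k_2})(B_3\phi_{k_3})\,\dd x\bigr|^2$. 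For fixed $k_1,k_2$ set $g=(\widetilde B_1\phi_{k_1})(\widetilde B_2\phi_{k_2})\in\L^2(M)$; then $\int_M g\,(B_3\phi_{k_3})=\langle\phi_{k_3},B_3^{*}\overline g\rangle$, so Parseval together with $\|B_3^{*}\|_{\L^2\to\L^2}=\|B_3\|_{\L^2\to\L^2}\le C$ yields
\[
\sum_{k_3}\Bigl|\int_M g\,(B_3\phi_{k_3})\Bigr|^2=\|B_3^{*}\overline g\|_{\L^2}^2\le C^2\int_M|\widetilde B_1\phi_{k_1}(x)|^2\,|\widetilde B_2\phi_{k_2}(x)|^2\,\dd x.
\]
Summing over $k_1,k_2$, interchanging sum and integral (Tonelli), and invoking the pointwise bound twice gives a total of $C^2\cdot C_0C^2\cdot C_0C^2\cdot\Vol(M)=C_0^2\Vol(M)\,C^6$, i.e.\ this HS norm is $\le C_0\Vol(M)^{1/2}C^3$. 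The polarisations with $B_1$ or $B_2$ in the $\L^2$-slot are handled the same way (only the labels permute), so $\|G^{\mathrm{sym}}\|_{\mathrm{HS}}\le C_1C^3$ with $C_1=C_0\Vol(M)^{1/2}$, depending only on $t$, $d$ and $M$.

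I do not anticipate a genuine obstacle; the only delicate points are bookkeeping. The first is justifying that $B_i\Lambda^{-t}$ has kernel $\Lambda_y^{-t}K_i$: since $t>d/2$ one has $\H^t\hookrightarrow\mathrm{C}^0$, so for $u\in\L^2$ the function $\Lambda^{-t}u$ is continuous, the pairing $\langle K_i(x,\cdot),\Lambda^{-t}u\rangle$ makes sense, and formal self-adjointness of $\Lambda^{-t}$ gives $\langle K_i(x,\cdot),\Lambda^{-t}u\rangle=\langle\Lambda^{-t}K_i(x,\cdot),u\rangle=\langle\widetilde K_i(x,\cdot),u\rangle$, the last pairing being an honest $\L^2$ pairing by hypothesis. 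The second is the handling of complex conjugates in the sesquilinear pairing, which is cleanly avoided by working with a real eigenbasis; with that choice the identities $\sum_k|\langle\phi_k,h\rangle|^2=\|h\|_{\L^2}^2$ and $\sum_k|\widetilde B_i\phi_k(x)|^2=\|\widetilde K_i(x,\cdot)\|_{\L^2}^2$ hold verbatim. Everything else reduces to the integrability $2(d-t)<d$ and two applications of Parseval's identity.
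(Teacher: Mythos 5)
Your argument is correct and follows essentially the same route as the paper's: reduce the symmetrized form to (permuted) unsymmetrized forms, use the $\L^2\to\L^2$ operator-norm bound of the $B_i$ on the $\L^2$-slot via Parseval, and use the pointwise kernel bound $\|\Lambda_y^{-t}K_i(x,\cdot)\|_{\L^2}^2\lesssim C^2$ (integrability of $|x-y|^{-2(d-t)}$, i.e.\ $2(d-t)<d$) for the two $\H^t$-slots. You add a few bookkeeping points the paper elides (choosing a real eigenbasis, explicitly justifying that $B_i\Lambda^{-t}$ has kernel $\Lambda_y^{-t}K_i$, spelling out the triangle inequality over the six permutations), but the substance is identical.
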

\begin{proof}
 The Hilbert-Schmidt norm of the (nonsymmetric) operator is equal to the sum
     \begin{align*}
        \sum_{k_1,k_2,k_3\geq 0} |\dotp{\phi_{k_1}, B_1^*\left[B_2\phi_{k_2,t}B_3\phi_{k_3,t}\right]}|^2 &=    \sum_{k_2,k_3\geq 0} \| B_1^*\left[B_2\phi_{k_2,t}B_3\phi_{k_3,t}\right]\|_{\L^2}^2 \\
        &\leq C^2 \sum_{k_2,k_3\geq 0} \int |B_2\phi_{k_2,t}(x) B_3\phi_{k_3,t}(x)|^2 \dd x \\
        &= C^2 \int \rho_2(x)\rho_3(x) \dd x
    \end{align*}
    where  $\rho_i(x) = \sum_{k\geq 0}| B_i\phi_{k,t}(x)|^2$ for $i=2,3$. 
    We have
    \[
    \rho_i(x) = \|\Lambda_y^{-t}K(x,\cdot)\|^2_{\L^2}\leq C^2\int (1+|x-y|^{-d+t})^2 \dd y \leq C^2 C_{d,\eps}
    \]
    for some constant $C_{d,t}$ depending on $d$ and $t$. Thus, $\int \rho_2(x)\rho_3(x) \dd x$ is of order $C^4$. This proves  that the Hilbert-Schmidt norm of $(u_1,u_2,u_3)\in \L^2\times \H^t\times \H^t\mapsto \int (B_1u_1)\cdot (B_2u_2)\cdot(B_3u_3)$ is finite. The other symmetrizations are treated similarly.
\end{proof}

We can in particular apply this lemma with $B_1=B_2=B_3=1$. Indeed, for $t>d/2$, $\Lambda_y^{-t}\delta(x-y)$ is the Bessel potential of order $t$, which verifies
\[
|\Lambda_y^{-t}\delta(x-y)|\lesssim 1+|x-y|^{-d+t},
\]
see \cite{bessel}. 
As $-d+t>-d/2$, we may apply \Cref{lem:G3_from_kernel}, which proves that the trilinear form $(u_1,u_2,u_3)\mapsto \int u_1u_2u_3$ satisfies the Hilbert-Schmidt condition in \emph{\ref{G3}}.

\subsection{Control of Hilbert-Schmidt norms for differentials of eigenvalues}\label{subsec:endproofeiganvalue}
In this subsection we give the missing details in the proof of Proposition \ref{prop:eigenvalue_perturbative}. More precisely, we explain here how to bound the Hilbert-Schmidt norms of 
\[
 G_1[u_1u_2u_3],\ G_2[u_1u_2,u_3],\ G_2[u_1u_3,u_2],\ G_2[u_2u_3,u_1],\ G_3[u_1,u_2,u_3]
\]
over $\H^t\times \H^t\times \L^2$ for $t>d/2$ (see \Cref{sec:asymptotic_expansion}).
By definition of the $G_j$'s, it suffices to bound the Hilbert-Schmidt norm of the symmetrization of the forms
\begin{align}
 \dotp{B_z[u_1u_2u_3]\phi,\phi}_f,
\quad \dotp{B_z[u_1u_2]B_z[u_3]\phi,\phi}_f \quad 
 \text{and } \quad \dotp{B_z[u_1]B_z[u_2]B_z[u_3]\phi,\phi}
\end{align}
for $z\in \Upsilon$ and $\phi$ an eigenfunction of $\Delta_f$. We denote the symmetrizations of these three terms by $H_1(u_1, u_2, u_3)$, $H_{2}(u_1, u_2, u_3)$ and $H_3(u_1, u_2, u_3)$ respectively.\\

\noindent \textbf{Control of $H_1$}.
First, an integration by parts show that the trilinear form $H_1$ can be written as 
$$
H_1(u_1, u_2, u_3) = \langle u_1 u_2 u_2, \psi_1 \rangle
$$
for some $\psi_1 \in \cC^s$, so that the control of the Hilbert-Schmidt norm of this form  follows from \Cref{lem:example_form}. \\

\noindent \textbf{Control of $H_{2}$}.
 Define $\Gamma=A_{\overline \phi}-\overline \phi z$, which is a differential operator of order $1$ with $\mathscr C^s$ coefficients.
The computations in the proof of \Cref{lem:sym_Sj_map} show that the symmetrization of $\dotp{ B_z[u]B_z[v]\phi,\phi}$ can be written as a linear combination of the symmetrization of
\begin{equation}\label{eq:decompc2}
\dotp{u v,\psi_2}_f \ \text{ and }\ \dotp{\Gamma R_f(z) \nabla v \nabla \phi,u}_f
\end{equation}
for some function $\psi_2\in \cC^s$. Hence we can write $H_{2}(u_1, u_2, u_3)$ as a linear combination of the symmetrization of
$$
\dotp{u_1u_2 u_3,\psi_2}_f  \quad \text{and} \quad \dotp{\Gamma R_f(z) \nabla u_3 \nabla \phi,u_1u_2}_f
$$
Applying again \Cref{lem:example_form}, we see that the trilinear form $\dotp{u_1u_2u_3,\psi_2}_f $ has finite Hilbert-Schmidt norm. One decomposes the second term of \eqref{eq:decompc2} as
\[
\dotp{\Gamma R_f(z) \nabla v \nabla \phi,u}_f=\dotp{\Gamma W_z \nabla v \nabla \phi,u}+\dotp{\Gamma Q_z \nabla v \nabla \phi,u}_f.
\]
The mapping properties of $Q_z$ stated in \Cref{lem:decomposition_resolvent} imply that the symmetrization of the trilinear form $\dotp{\Gamma Q_z \nabla u_3 \nabla \phi,u_1u_2}_f$ is continuous over $(\L^2)^3$ so its Hilbert-Schmidt norms over $\H^t\times \H^t \times \L^2$ is bounded according to \Cref{lem:sufficient_G3}. On the other hand, the kernel of the operator $P:v\mapsto \Gamma W_z \nabla v \nabla \phi$ is equal to
\begin{align*}
   - \Gamma_x \div_y(K_z(x,y) \nabla\phi(y)),\quad (x,y) \in (M\times M) \setminus \Delta.
\end{align*}
As the kernel $K_z$ is resolvent-like and $\Gamma$ is a differential operator of order $1$, we have  
$$
|\Lambda_y^{-t} \Gamma_x \div_y(K_z(x,y) \nabla\phi(y))| \leqslant C|x-y|^{-d+t}
$$
As $P$ is also bounded $\L^2\to \L^2$ according to \Cref{prop:resolvbounded}, we may apply \Cref{lem:G3_from_kernel}, which implies that the symmetrization of the map 
\begin{equation}\label{eq:map}
(u_1,u_2,u_3)\mapsto \dotp{Pu_3,u_1u_2}_f 
\end{equation}
has a finite Hilbert-Schmidt norm over $\H^t\times \H^t\times \L^2$. 

\noindent \textbf{Control of $H_3$}. At last, we must bound the Hilbert-Schmidt norm of $H_3$. By applying two integration by parts as in the proof of \Cref{lem:sym_Sj_map}, we find that the symmetrization $H_3(u_1, u_2, u_3)$ of $\dotp{B_z[u_1]B_z[u_2]B_z[u_3]\phi,\phi}$ can be written as a linear combination of 
\begin{equation}\label{eq:first_term}
\dotp{u_1u_2u_3, \psi_3}_f
\end{equation}
for some $\psi_3\in\cC^s$ 
and of the symmetrization of 
\[
\dotp{\Gamma R_f(z) \nabla u_3\cdot \nabla\phi, u_1u_2}_f\quad \text{ and } \quad \dotp{\Gamma R_f(z) \nabla u_2\cdot \nabla R_f(z) \nabla u_3\cdot\nabla \phi ,u_1}_f.
\]
The form defined in \eqref{eq:first_term} satisfies the second condition in \emph{\ref{G2}} according to \Cref{lem:example_form}. The second term was already studied when we bounded the Hilbert-Schmidt norm of $H_2$.  Two additional integration by parts are enough to write the symmetrization of the last term as a linear combination of the symmetrizations of
\begin{equation}
    \dotp{u_2, \nabla (R_f(z)\Gamma^* u_1)\cdot \nabla R_f(z) \nabla u_3\cdot\nabla \phi}_f,\quad \dotp{u_2, (R_f(z)\Gamma^* u_1)\cdot R_f(z) \nabla u_3\cdot\nabla \phi}_f
\end{equation}
and 
\[
\dotp{u_1, \Gamma R_f(z) \nabla (u_2u_3)\cdot\nabla \phi}_f,
\]
where $\Gamma^*$ is the adjoint of $\Gamma$ in $\L^2(f^\alpha)$.
For each of these terms, we decompose  each instance of $R_f(z)$   into $W_z+Q_z$ and expand the product. The terms containing at least one $Q_z$ are shown to be bounded over $(\L^2)^3$ thanks to the regularizing properties of $Q_z$ stated in \Cref{lem:decomposition_resolvent}, together with \Cref{lem:operator_norm_Ah} and \Cref{prop:resolvbounded}. We may therefore replace $R_f(z)$ by $W_z$ in each of these terms. But then, we show as we did for $H_2$ that the corresponding terms satisfy the assumptions of \Cref{lem:G3_from_kernel}, hence they have finite Hilbert-Schmidt norm over $\H^t \times \H^t \times \L^2$.

This completes the proof of Proposition \ref{prop:eigenvalue_perturbative}.

\small
\bibliography{biblio}
\bibliographystyle{alpha}

\end{document}